\newtheorem{theorem}{Theorem}
\crefname{theorem}{theorem}{Theorems}
\Crefname{Theorem}{Theorem}{Theorems}
\newtheorem*{lemma_nonumber*}{Lemma}
\newaliascnt{lemma}{theorem}
\newtheorem{lemma}[lemma]{Lemma}
\crefname{lemma}{lemma}{lemmas}
\Crefname{Lemma}{Lemma}{Lemmas}
\newaliascnt{corollary}{theorem}
\newtheorem{corollary}[corollary]{Corollary}
\crefname{corollary}{corollary}{corollaries}
\Crefname{Corollary}{Corollary}{Corollaries}
\newaliascnt{proposition}{theorem}
\newtheorem{proposition}[proposition]{Proposition}
\crefname{proposition}{proposition}{propositions}
\Crefname{Proposition}{Proposition}{Propositions}
\newaliascnt{definition}{theorem}
\newtheorem{definition}[definition]{Definition}
\crefname{definition}{definition}{definitions}
\Crefname{Definition}{Definition}{Definitions}
\newaliascnt{remark}{theorem}
\crefname{remark}{remark}{remarks}
\Crefname{Remark}{Remark}{Remarks}
\crefname{example}{example}{examples}
\Crefname{Example}{Example}{Examples}
\crefname{figure}{figure}{figures}
\Crefname{Figure}{Figure}{Figures}
\newtheorem{assumptionF}{\textbf{F}\hspace{-3pt}}
\Crefname{assumptionB}{\textbf{B}\hspace{-3pt}}{\textbf{B}\hspace{-3pt}}
\crefname{assumptionB}{\textbf{B}}{\textbf{B}}
\Crefname{assumptionC}{\textbf{C}\hspace{-3pt}}{\textbf{C}\hspace{-3pt}}
\crefname{assumptionC}{\textbf{C}}{\textbf{C}}
\newtheorem{assumptionH}{\textbf{H}\hspace{-3pt}}
\Crefname{assumptionH}{\textbf{H}\hspace{-3pt}}{\textbf{H}\hspace{-3pt}}
\crefname{assumptionH}{\textbf{H}}{\textbf{H}}
\Crefname{assumptionT}{\textbf{T}\hspace{-3pt}}{\textbf{T}\hspace{-3pt}}
\crefname{assumptionT}{\textbf{T}}{\textbf{T}}
\Crefname{assumptionT}{\textbf{T}\hspace{-3pt}}{\textbf{T}\hspace{-3pt}}
\crefname{assumptionT}{\textbf{T}}{\textbf{T}}
\Crefname{assumptionL}{\textbf{L}\hspace{-3pt}}{\textbf{L}\hspace{-3pt}}
\crefname{assumptionL}{\textbf{L}}{\textbf{L}}
\Crefname{assumptionQ}{\textbf{Q}\hspace{-3pt}}{\textbf{Q}\hspace{-3pt}}
\crefname{assumptionQ}{\textbf{Q}}{\textbf{Q}}
\Crefname{assumptionAR}{\textbf{AR}\hspace{-3pt}}{\textbf{AR}\hspace{-3pt}}
\crefname{assumptionAR}{\textbf{AR}}{\textbf{AR}}
\def\Ltt{\mathtt{L}}
\def\lsi{\mathtt{c_{LS}}}
\newcommand{\poly}[2]{\mathtt{Poly}(#1, #2)}
\def\sigmoid{\mathrm{sigm}}
\def\macroeps{\rho_\vareps}
\def\energygap{\emph{thermodynamic barrier}}
\def\Pens{\mathscr{P}}
\def\Adj{\mathrm{Adj}}
\def\rmL{\mathrm{L}}
\def\L{\mathcal{L}}
\def\C{\mathcal{C}}
\def\I{\mathcal{I}}
\def\J{\mathcal{J}}
\newcommand{\hessinvx}[2]{\det(\nabla^2_x u(#1,#2))^{-1/2}}
\newcommand{\jacinv}[1]{\mathrm{J}F(#1)^{-1}}
\newcommand{\jacinvx}[1]{\mathrm{J}F_x(#1)^{-1}}
\newcommand{\jacinvpower}[2]{\mathrm{J}F(#1)^{#2}}
\newcommand{\jac}[1]{\mathrm{J}F(#1)}
\newcommand{\tta}{\mathtt{A}}
\newcommand{\Capprox}{\tta}
\def\ctt{\mathtt{c}}
\newcommandx\ctun[1][1=T]{\Capprox_{#1,1}}
\newcommand{\rref}[1]{\tup{\Cref{#1}}}
\newcommandx{\expec}[2]{{\mathbb E}\left[#1 \middle \vert #2  \right]} 
\def\calH{\mathcal{H}}
\def\eps{\vareps}
\newcommand{\bvareps}{\bar{\vareps}}
\newcommand{\transference}{\mathbf{T}}
\newcommand{\rme}{\mathrm{e}}
\newcommand{\Lip}{\mathtt{L}}
\newcommand{\Lipset}{\mathrm{Lip}}
\newcommand{\Mtt}{\mathtt{M}}
\newcommand{\measfun}{\mathbb{F}}
\newcommandx{\norm}[2][1=]{\ifthenelse{\equal{#1}{}}{\left\Vert #2 \right\Vert}{\left\Vert #2 \right\Vert^{#1}}}
\newcommandx{\normLigne}[2][1=]{\ifthenelse{\equal{#1}{}}{\Vert #2 \Vert}{\Vert #2\Vert^{#1}}}
\def\bfc{\mathbf{c}}
\def\bfX{\mathbf{X}}
\def\bfM{\mathbf{M}}
\def\bfB{\mathbf{B}}
\def\msa{\mathsf{A}}
\def\msk{\mathsf{K}}
\def\msc{\mathsf{C}}
\def\mse{\mathsf{E}}
\def\msu{\mathsf{U}}
\def\msw{\mathsf{W}}
\def\msv{\mathsf{V}}
\def\msx{\mathsf{X}}
\def\msz{\mathsf{Z}}
\def\msy{\mathsf{Y}}
\newcommand{\mcb}[1]{\mathcal{B}(#1)}
\def\mcz{\mathcal{Z}}
\def\mcy{\mathcal{Y}}
\def\mcx{\mathcal{X}}
\def\rset{\mathbb{R}}
\def\nset{\mathbb{N}}
\def\nsets{\mathbb{N}^{\star}}
\def\rmd{\mathrm{d}}
\def\rme{\mathrm{e}}
\def\rmc{\mathrm{C}}
\def\rmC{\mathrm{C}}
\newcommand{\R}{\mathbb R}
\def\trace{\operatorname{Tr}}
\newcommandx{\functionspace}[2][1=+]{\mathbb{F}_{#1}(#2)}
\newcommand{\argmax}{\operatorname*{arg\,max}}
\newcommand{\argmin}{\operatorname*{arg\,min}}
\newcommandx{\VarDeux}[3][3=]{\operatorname{Var}^{#3}_{#1}\left\{#2 \right\}}
\newcommand{\1}{\mathbbm{1}}
\newcommand{\LeftEqNo}{\let\veqno\@@leqno}
\newcommand{\ceil}[1]{\left\lceil #1 \right\rceil}
\newcommand{\ceilLigne}[1]{\lceil #1 \rceil}
\newcommand{\N}{\ensuremath{\mathbb{N}}}
\newcommand{\PE}{\mathbb{E}}
\newcommand{\abs}[1]{\left\vert #1 \right\vert}
\newcommand{\absLigne}[1]{\vert #1 \vert}
\newcommandx{\Vnorm}[2][1=V]{\| #2 \|_{#1}}
\newcommandx{\VnormEq}[2][1=V]{\left\| #2 \right\|_{#1}}
\newcommand{\parenthese}[1]{\left(#1 \right)}
\newcommandx\probaMarkovTilde[2][2=]
\newcommand{\expeLigne}[1]{\PE [ #1 ]}
\newcommand{\bigO}{\ensuremath{\mathcal O}}
\def\ie{\textit{i.e.}}
\def\eqsp{\;}
\newcommand{\coint}[1]{\left[#1\right)}
\newcommand{\ocint}[1]{\left(#1\right]}
\newcommand{\ooint}[1]{\left(#1\right)}
\newcommand{\ccint}[1]{\left[#1\right]}
\newcommand{\ocintLigne}[1]{(#1]}
\newcommand{\oointLigne}[1]{(#1)}
\newcommand{\ccintLigne}[1]{[#1]}
\newcommandx{\weight}[2][2=n]{\omega_{#1,#2}^N}
\newcommand{\ballinfty}[2]{\operatorname{B}_\infty(#1,#2)}
\newcommand{\ball}[2]{\operatorname{B}(#1,#2)}
\newcommand{\cball}[2]{\bar{\operatorname{B}}(#1,#2)}
\newcommand{\cballinfty}[2]{\bar{\operatorname{B}}_\infty(#1,#2)}
\newcommand{\diameter}{\operatorname{diam}}
\newcommandx\sequence[3][2=,3=]
\newcommandx\sequenceD[3][2=,3=]
\newcommandx{\sequencen}[2][2=n\in\N]{\ensuremath{\{ #1_n, \eqsp #2 \}}}
\newcommandx\sequenceDouble[4][3=,4=]
\newcommandx{\sequencenDouble}[3][3=n\in\N]{\ensuremath{\{ (#1_{n},#2_{n}), \eqsp #3 \}}}
\newcommand{\wrt}{w.r.t.}
\newcommand{\opnorm}[1]{{\left\vert\kern-0.25ex\left\vert\kern-0.25ex\left\vert #1
    \right\vert\kern-0.25ex\right\vert\kern-0.25ex\right\vert}}
\def\Id{\operatorname{Id}}
\newcommandx{\CPE}[3][1=]{{\mathbb E}_{#1}\left[#2 \middle \vert #3  \right]} 
\newcommandx{\CPELigne}[3][1=]{{\mathbb E}_{#1}[#2  \vert #3  ]} 
\newcommandx{\CPEsq}[3][1=]{{\mathbb{E}^{1/2}}_{#1}\left[#2 \middle \vert #3  \right]} 
\newcommandx{\CPVar}[3][1=]{\mathrm{Var}^{#3}_{#1}\left\{ #2 \right\}}
\newcommand{\CPP}[3][]
{\ifthenelse{\equal{#1}{}}{{\mathbb P}\left(\left. #2 \, \right| #3 \right)}{{\mathbb P}_{#1}\left(\left. #2 \, \right | #3 \right)}}
\def\measSet{\mathbb{M}}
\newcommandx{\osc}[2][1=]{\mathrm{osc}_{#1}(#2)}
\def\Id{\operatorname{Id}}
\def\v{v}
\def\y{y}
\def\bgamma{\bar{\gamma}}
\def\bdelta{\bar{\delta}}
\newcommand{\ensembleLigne}[2]{\{#1\,:\eqsp #2\}}
\def\rmD{\mathrm{D}}
\newcommand\coupling[2]{\Gamma(\mu,\nu)}
\newcommand{\complementary}{\mathrm{c}}
\def\diam{\mathrm{diam}}
\def\Lebesgue{\lambda}
\def\Leb{\lambda}
\def\interior{\mathrm{int}}
\def\vareps{\varepsilon}
\def\bvareps{\bar{\varepsilon}}
\newcommandx{\KL}[2]{\operatorname{KL}\left( #1 | #2 \right)}
\newcommandx{\KLLigne}[2]{\operatorname{KL}( #1 | #2 )}
\def\gaStep
\def\QKer{Q}
\def\distance{\mathbf{d}}
\newcommandx{\wasserstein}[3][1=\distance,3=]{\mathbf{W}_{#1}^{#3}\left(#2\right)}
\newcommandx{\wassersteinLigne}[3][1=\distance,3=]{\mathbf{W}_{#1}^{#3}(#2)}
\newcommandx{\wassersteinD}[1][1=\distance]{\mathbf{W}_{#1}}
\newcommandx{\wassersteinDLigne}[1][1=\distance]{\mathbf{W}_{#1}}
\def\Rcoupling{\mathrm{R}}
\def\Sker{\mathrm{S}}
\def\Kcoupling{\mathrm{K}}
\def\sigmaD{\sigma^2}
\newcommandx{\phibfs}[1][1=]{\pmb{\varphi}_{\sigmaD_{#1}}}
\newcommandx\sequenceg[3][2=,3=]
\def\Kker{\Kcoupling}
\def\Rker{\Rcoupling}
\def\Qker{\mathrm{Q}}
\newcommandx{\distV}[1][1=\bfc]{\mathbf{W}_{#1}}
\newcommandx{\distVdeux}[1][1=W_2]{\mathbf{d}_{#1}}
\def\mtt{\mathtt{m}}
\newcommand{\tup}[1]{\textup{#1}}
\definecolor{linkcol}{rgb}{0,0,0.4} 
\definecolor{citecol}{rgb}{0.5,0,0}
\definecolor{urlcol}{rgb}{0,0,0.75}
\begin{document} 

\author[1]{Valentin De Bortoli \footnote{corresponding author: valentin.debortoli@gmail.com}}
\author[2]{Agn\`{e}s Desolneux \footnote{agnes.desolneux@ens-paris-saclay.fr}}

\affil[1]{Department of Statistics, University of Oxford, UK}
\affil[2]{Centre Borelli, CNRS and ENS Paris-Saclay, France}

\title{On quantitative Laplace-type convergence results for some exponential
  probability measures, with two applications}

\maketitle

\begin{abstract}
  Laplace-type results characterize the limit of sequence of measures
  $(\pi_\vareps)_{\vareps >0}$ with density w.r.t the Lebesgue measure
  $(\rmd \pi_\vareps / \rmd \Leb)(x) \propto \exp[-U(x)/\vareps]$ when the
  temperature $\vareps>0$ converges to $0$. If a limiting distribution $\pi_0$
  exists, it concentrates on the minimizers of the potential $U$. Classical
  results require the invertibility of the Hessian of $U$ in order to establish
  such asymptotics. In this work, we study the particular case of norm-like
  potentials $U$ and establish quantitative bounds between $\pi_\vareps$ and
  $\pi_0$ w.r.t. the Wasserstein distance of order $1$ under an invertibility
  condition of a generalized Jacobian. One key element of our proof is the use
  of geometric measure theory tools such as the coarea formula. We apply our
  results to the study of maximum entropy models (microcanonical/macrocanonical
  distributions) and to the convergence of the iterates of the Stochastic Gradient
  Langevin Dynamics (SGLD) algorithm at low temperatures for non-convex minimization.
\end{abstract}

\section{Introduction}

Asymptotic expansions of integrals are ubiquitous in probability theory with
applications in simulated annealing
\citep{gelfand1993metropolis,pelletier1998weak}, Bayesian inference
\citep{haughton1988choice,tierney1986accurate,rue2009approximate},
statistical 
physics \citep{ellis1982laplace} or chaos expansion
\citep{korshunov2015asymptotic}. In this paper, we are interested in families of
Gibbs probability measures $\pi_\vareps$ given by the density w.r.t. the Lebesgue
measure
$(\rmd \pi_\vareps / \rmd \Leb)(x) \propto \exp[-\normLigne{F(x)}^k/\vareps]$,
for any $x \in \rset^d$, where $k \in \nset$ and $F: \ \rset^d \to \rset^p$ is
some function with $F(0)=0$. In particular, we derive non asymptotic bounds
between $(\pi_\vareps)_{\vareps > 0}$ and $\pi_0$ when $\vareps \to
0$. 

Taking a step back, one way to deal with  integrals of the form
$\int_{\rset^d} \exp[-U(x)/\vareps] \rmd x$ where $U : \ \rset^d \to \rset_+$ is
to rely on Laplace-type techniques, see
\cite{bleistein1975asymptotic,olver1997asymptotics,de1981asymptotic,evgrafov2020asymptotic,erdelyi1956asymptotic,fedoryuk1989asymptotic,bender2013advanced,wong2001asymptotic}
for instance. Assuming that $U$ admits a unique minimizer $U(0)=0$ and under
additional regularity conditions, it can be shown that if the Hessian of $U$
evaluated at $0$ is invertible then
$\int_{\rset^d} \exp[-U(x)/\vareps] \rmd x \sim C_U \vareps^{d/2}$, with
explicit constant $C_U > 0$ (series expansions are also available under similar
conditions). This result on asymptotic integrals can be immediately used to
prove the convergence of probability measures with density w.r.t. the Lebesgue
measure $(\rmd \pi_\vareps / \rmd \Leb)(x) \propto \exp[-U(x)/\vareps]$, see
\cite{hwang1980} for instance. However the invertibility of the Hessian is a
restricting condition which is not satisfied in the setting of this paper where
$U(x) = \normLigne{F(x)}^k$ in the case $p \leq d$.

When the Hessian $\nabla^2 U(0)$ is not invertible, the asymptotic integral expansion
becomes \emph{degenerate} and is more difficult to analyze. However, several
approaches have been proposed in order to tackle this issue. For instance in
\cite{rytova2016multidimensional} a multidimensional version of the Watson lemma
is established under restrictive conditions on the potential $U$ near its
singularities. Another approach consists in integrating over the manifold of
minimizers and apply local change of variables \cite{korshunov2015asymptotic},
see also \cite{barbe2003approximation,breitung2006asymptotic}. In particular,
\cite{korshunov2015asymptotic} derives an asymptotic integral expansion similar
to the one obtained in the non-degenerate case, under some invertibility
condition of the minors of the Hessian. Similarly, \cite{hwang1980} uses the
tubular neighborhood theorem to obtain that $\pi_\vareps$ converges to a
limiting measure under the assumption that the minimizers of $U$ can be
partitioned into a collection of manifolds. Closer to the method introduced in
the present paper, \cite{combet2006integrales,arnold2012singularities} propose
to use the so-called \emph{Gelfand-Leray forms} to tackle the non-degeneracy
problem. By integrating over the level sets it can be shown that
\begin{equation}
  \label{eq:leray_formula}
  \textstyle{ \int_{\msv} \varphi(x) \exp[-U(x)/\vareps] \rmd x = \int_0^{+\infty} \exp[-t/\vareps](\int_{U^{-1}(t) \cap \msv} \varphi(x) \omega_U(x)) \rmd t \eqsp , }
\end{equation}
where $\msv$ is an open set, $\varphi \in \rmc^\infty(\rset^d, \rset)$ a test
function and $\omega_U$ the so-called Gelfand-Leray form associated with
$U$. Then by establishing regularity properties for the functional
$t \mapsto \int_{U^{-1}(t)} \varphi(x) \omega_U(x)$ one can recover asymptotic
integral expansion in the case where $U$ is analytic, see
\cite{combet2006integrales,arnold2012singularities}. This theory does not rely
on any invertibility condition on the Hessian of $U$ but the exponents appearing
in the asymptotic expansion are usually not available in closed form in general
since their derivation relies on resolution of singularities
\citep{hironaka1964resolution}.

In this paper, we consider a different approach which also does not rely on
invertibility conditions on the Hessian of $U$ at singularity points but allows
us to derive quantitative expansions. To do so, we restrict the set of functions
$U: \ \rset^d \to \rset_+$ to the set of norm-like functions, \ie \ $U$ is
norm-like if there exists $F: \ \rset^d \to \rset^p$ and $k \in \nset$ such that
for any $x \in \rset^d$, $U(x) = \normLigne{F(x)}^k$ (however our results can be
extended to general potentials $U$ under classical invertibility conditions on
the Hessian of $U$). Instead of invertibility conditions of the Hessian of $U$
we require invertibility conditions on the generalized Jacobian of $F$ to be
defined below. {The setting of norm-like potentials is of
  particular interest in machine learning as it arises in many applications such
  as Variational AutoEncoders (VAEs) or macrocanonical/microcanonical
  distributions, as we will discuss}. Our approach relies on tools from the
geometric measure theory and in particular the coarea formula, see
\cite{ambrosio2000functions} for instance. Indeed, using this formula we are
able to establish a similar result as \eqref{eq:leray_formula} where
the 
Gelfand-Leray form is replaced by a twisted Hausdorff measure $\nu$. By
establishing the Lipschitz regularity of the mapping
$t \mapsto \int_{U^{-1}(t) \cap \msv} \varphi(x) \rmd \nu(x)$ we are able to
provide quantitative bounds for the integral expansion. This expansion is the
key to our main result which establishes quantitative bounds w.r.t. the
Wasserstein distance of order $1$ between $\pi_\vareps$ and a limit probability
measure $\pi_0$ for $\vareps > 0$ small enough. Concurrently to this work,
\cite{bras2021convergence} establish similar quantitative results in the case
where the Hessian of $U$ is invertible.

One of our main motivation for this study is the application of Laplace-type
results to maximum entropy distributions. In particular, using our quantitative bounds,
we are able to provide a link between two possible maximum entropy distributions
commonly used in statistical physics and image processing
\citep{BrunaMallat,de2021maximum}. Given a reference measure $\mu$ and
constraints $F: \ \rset^d \to \rset^p$ it is possible to define the
\emph{microcanonical distribution}, which corresponds to the probability measure
$\pi_{\mathrm{mic}}$ with the minimum Kullback-Leibler divergence w.r.t. $\mu$
and such that $\pi_{\mathrm{mic}}$ is supported on the set
$\ensembleLigne{x \in \rset^d}{F(x)=0}$. Similarly, one can define the
\emph{macrocanonical distribution} with level $\vareps > 0$ denoted
$\pi_{\mathrm{mac}}^\vareps$ such that $\pi_{\mathrm{mac}}^\vareps$ minimizes
the Kullback-Leibler divergence w.r.t. $\mu$ and satisfies the integrability
condition
$\int_{\rset^d} \normLigne{F(x)}^2 \rmd \pi_{\mathrm{mac}}^\vareps(x) =
\vareps$. Using results from information geometry \citep{csiszar1975divergence}
and under mild regularity conditions, $\pi_{\mathrm{mac}}^\vareps$ can be written as a Gibbs
measure and its limiting behavior when $\vareps \to 0$ can be investigated in
the context of asymptotic expansions of integrals. In this work, we show that
the limit of the macrocanonical distributions when $\vareps \to 0$ is a twisted
microcanonical distribution and propose an algorithm to asymptotically recover the
original microcanonical distribution. {Another motivation for the
  study of the limiting behavior of distributions with norm-like potentials comes
  from the study of the posterior of the latent variables in VAE. In particular,
  we show that under assumptions on the decoding neural network, the posterior
  distribution of a VAE concentrates with explicit rates.}

Finally, we also consider the non-asymptotic study of
Stochastic Gradient Langevin Dynamics (SGLD) \citep{welling2011bayesian}, a
popular algorithm used to approximate the minimizers of non-convex functions in a
machine learning setting. Given a potential (or population risk)
$U: \ \rset^d \to \rset_+$ which can be written for any $x \in \rset^d$ as
$U(x) = \int_{\msz} u(x, z) \rmd \mu(z)$, we assume that we have access to an
empirical version of this risk for any $n \in \nset$ and
$z^{1:n} = \{z_i\}_{i=1}^n \in \msz^n$ given by
$U_n(x,z^{1:n}) = (1/n) \sum_{i=1}^n u(x,z_i)$ as well as an unbiased estimator
of its gradient denoted $g$. Conditionally to the samples $z^{1:n}$, SGLD
corresponds to the recursion associated with an unadjusted Langevin dynamics
\citep{roberts1996exponential,durmus2017nonasymptotic,dalalyan2017theoretical}
with target $U_n(\cdot, z^{1:n})/\vareps$ for a small value of $\vareps > 0$ and
where the gradient of $U_n(\cdot, z^{1:n})$ is replaced by its unbiased
estimator $g$. Since the invariant measure of the underlying Langevin diffusion
is given by $\pi_\vareps$ with density w.r.t. the Lebesgue measure
$(\rmd \pi_\vareps / \rmd \Leb)(x) \propto \exp[-U_n(x, z^{1:n})/\vareps]$ we
obtain that for small values of the parameter $\vareps > 0$, the samples
$(X_k)_{k \in \nset}$ are approximately concentrated around the minimizers of
$U_n(\cdot, z^{1:n})$.  In existing works on SGLD
\citep{raginsky2017non,chau2019stochastic,gao2018global,erdogdu2018global,nguyen2019non,xu2017global}
all quantitative theoretical guarantees are obtained w.r.t.  the sequence of
$(\expeLigne{U(X_k)})_{k \in \nset}$. On the contrary, we characterize the
limiting distribution of SGLD and establish non-asymptotic quantitative bounds
between SGLD and this limit. Finally, we show that the support of the limiting
distribution is included into (but not necessarily equal to) the set of
minimizers of $U$ when $n$ is large. This last result relies on the notion of
\energygap \ which we introduce in the context of integral
expansion. To our
knowledge this is the first time that the importance of such a barrier to derive
parametric Laplace-type results is highlighted (as opposed to the \emph{kinetic
  barrier} which is a well-known quantity in simulated annealing
\citep{hajek1988cooling}).

To summarize, our main contributions are three-fold:
\begin{enumerate}[wide, labelwidth=!, labelindent=0pt, label=(\roman*)]
\item We establish quantitative bounds between $\pi_\vareps$ and $\pi_0$ under
  the assumption that the potential $U$ is norm-like and additional regularity
  conditions. In particular we provide an upper-bound between $\pi_\vareps$ and
  $\pi_0$ w.r.t. the Wasserstein distance of order $1$. We emphasize that our
  results also hold under the more classical invertibility condition on the
  Hessian of $U$. {We show a first application of our results with a study
    of the concentration of posteriors in VAEs.}
\item We apply our results to show that under mild conditions the limit of a
  natural class of macrocanonical distributions is \emph{not} the original
  microcanonical distribution. However, we prove that a twisted sequence of
  macrocanonical distributions converges to the microcanonical distribution. This
  observation allows us to construct a Langevin-based algorithm in order to
  sample from this distribution. We illustrate our method in low-dimensional
  settings.
\item Finally, we apply our theory to study the behavior of the iterates of SGLD
  in the context of non-convex optimization. In particular, we characterize the
  limiting distribution of SGLD at low temperature and show that it concentrates
  on the minimizers of the population loss for a large number of samples using
  the notion of \energygap . Note that in this study, we no longer assume that
  $U$ is norm-like but instead rely on invertibility conditions on the Hessian
  of $U$.
\end{enumerate}

The rest of the paper is organized as follows. In \Cref{sec:main-results} we
present our main results, \ie \ quantitative bounds between $\pi_\vareps$ and
the limiting distribution $\pi_0$. In \Cref{sec:applications}, we present our
two main applications: the links between the macrocanonical and microcanonical
maximum entropy distributions in \Cref{sec:from-macr-micr}, and a study of the
convergence of SGLD for non-convex minimization in
\Cref{sec:non_cvx_mon}. The proofs of our results are gathered in
\Cref{sec:proof}.


\section*{Notation}
\label{sec:notation}

Let $d \in \nsets$. We denote $\{e_i\}_{i=1}^d$ the canonical basis of
$\rset^d$.  Let $\langle \cdot, \cdot \rangle$ be the Euclidean scalar product
over $\rset^d$, and $\normLigne{\cdot}$ be the corresponding norm. We denote
$\ball{x}{r}$ the ball with center $x \in \rset^d$ and radius $r > 0$ w.r.t. the
norm $\normLigne{\cdot}$. Similarly, we denote $\ballinfty{x}{r}$ the ball with
center $x \in \rset^d$ and radius $r > 0$ w.r.t. the norm
$\normLigne{\cdot}_\infty$ given for any $x = (x_1, \dots, x_d) \in \rset^d$ by
$\normLigne{x}_\infty = \sup_{i \in \{1, \dots, d\}} \abs{x_i}$.

Let $\msa \subset \rset^d$, we denote
$\diameter(\msa) = \sup_{x,y \in \msa} \normLigne{x -y}$ its diameter and
$\msa^\complementary$ its complementary set.  Let $\mcb{\rset^d}$ denote the
Borel $\sigma$-field of $\rset^d$. Let $\msu$ be an open set of $\rset^d$,
$n \in \nsets$ and let $\rmC^n(\msu, \rset^p)$ be the set of the
$n$-differentiable $\rset^p$-valued functions defined over $\msu$. If $p=1$ we
simply denote $\rmC^n(\msu)$.  Let $f \in \rmC^1(\msu)$, we denote by $\nabla f$ its
gradient. Furthermore, if $f \in \rmC^2(\msu)$ we denote $\nabla^2f$ its Hessian
and $\Delta f$ its Laplacian. By convention we denote $\nabla^0 f = f$.  We also
denote $\rmC(\msu,\rset^p)$ the set of continuous functions defined over $\msu$.
Let $f: \msa \to \rset^p$ with $p \in \nsets$ and $\msa \subset \rset^d$. The
function $f$ is said to be $\Ltt$-Lipschitz with $\Ltt \geq 0$ if for any
$x,y \in \msa$, $\norm{f(x) - f(y)} \leq \Ltt \norm{x-y}$. Let
$F: \ \rset^d \to \rset^p$ such that $F$ is differentiable at $x \in \rset^d$
and denote
$\rmD F(x) = (\partial_j F_i(x))_{i \in \{1, \dots, p\}, j \in \{1, \dots, d\}}$
the Jacobian matrix of $F$ which is a $p \times d$ matrix. We define the
generalized Jacobian $\mathrm{J}F: \ \rset^d \to \rset_+$ given for any
$x \in \rset^d$ by
\begin{equation}
  \mathrm{J}F(x) = \left\lbrace \begin{matrix}
      \textstyle{\det(\rmD F(x) \rmD F(x)^\top)^{1/2} \eqsp , \qquad  \text{if $d \geq p$,}} \\
      \textstyle{\det(\rmD F(x)^\top \rmD F(x))^{1/2} \eqsp , \qquad \text{if $d \leq p$.}}
    \end{matrix} \right.
\end{equation}
Finally, if $F \in \rmc^\ell(\rset^d, \rset^p)$ with $\ell \in \nset$, we define
$\rmD^j F$ for any $j \in \{1, \dots, \ell\}$, recursively by
$\rmD^{j+1} F = \rmD(\rmD^j F)$. Note that for any $j \in \{0, \dots, \ell\}$,
$\rmD^j F$ can be represented as a $p \times d \times \dots \times d$ tensor
(where $d$ appears $j$ times) with symmetric last $j$ coordinates. Hence, we can
define $\rmD^{j+1} F^\top$ the tensor where the first and last dimension have
been exchanged, which is a $d \times \dots d \times p$ tensor.  If $p=1$ then,
we write $\nabla^j F = \rmD^j F$ for any $j \in \{1, \dots, \ell\}$.

Let $(\msx,\mcx)$ be a measurable space.  We denote by $\measfun(\msx)$ the set
of the $\mcx/\mcb{\rset}$-measurable real functions over $\msx$.  Let
$\measSet(\mcx)$ be the set of finite signed measures over $\mcx$ and let
$\mu \in \measSet(\mcx)$. For $f \in \measfun(\msx)$ a $\mu$-integrable function
we denote 
\begin{equation}
 \textstyle{ \mu[f] = \int_{\msx} f(x) \rmd \mu(x) \eqsp , }
\end{equation}
 the integral of $f$ \wrt \ to $\mu$ when it is
well-defined. We also define $\Pens(\msx, \mcx)$ the set
of probability measures over $\mcx$ and when there is no ambiguity on the
sigma-field we simply denote it by $\Pens(\msx)$. We denote by $\Leb$ the Lebesgue
measure on $\rset^d$. Let $(\msx, \mcx)$ and $(\msy, \mcy)$ be two measurable
spaces. A Markov kernel $\Kker$ is a mapping
$\Kker: \ \msx \times \mcy \to \ccint{0,1}$ such that for any $x \in \msx$,
$\Kker(x, \cdot) \in \Pens(\msy, \mcy)$ and for any $\msa \in \mcy$,
$\Kker(\cdot, \msa)$ is measurable.

Let $\mu, \nu \in \Pens(\msx, \mcx)$.  A probability measure
$\zeta \in \Pens(\msx \times \msx, \mcx \otimes \mcx)$ is said to be a
transference plan between $\mu$ and $\nu$ if for any $\msa \in \mcx$,
$\zeta(\msa \times \mcx) = \mu(\msa)$ and $\zeta(\mcx \times \msa) =
\nu(\msa)$. We denote by $\transference(\mu, \nu)$ the set of all transference
plans between $\mu$ and $\nu$.  We define the Wasserstein metric/distance of order $\ell \geq 1$
$\wassersteinD[\ell](\mu, \nu)$ between $\mu$ and $\nu$ by
\begin{equation}
  \label{eq:def_distance_wasser}
  \textstyle{
    \wassersteinD[\ell](\mu, \nu)^\ell = \inf_{\zeta \in \transference(\mu,\nu)} \int_{\msx^2}  \normLigne{x-y}^\ell \rmd \zeta (x,y) \eqsp.
    }
\end{equation}
We denote $\calH^r$ the Hausdorff measure of order $r > 0$, given for any
$\msa \subset \rset^d$ by
$\calH^r(\msa) = \lim_{\delta \to 0} \calH^{r,\delta}(\msa)$, where for any
$\delta > 0$ we have
\begin{equation}
  \textstyle{
    \calH^{r,\delta}(\msa) = \inf \ensembleLigne{(\alpha_r/2^r) \sum_{i \in \nset} \diam(\msu_i)^r}{\msa \subset \cup_{i \in \nset} \msu_i \eqsp , \ \diam(\msu_i) \leq \delta}\eqsp ,
    }
\end{equation}
where $\alpha_r=\pi^{r/2}/\Gamma(\frac{r}{2}+1)$ with $\Gamma$ the usual Gamma function.
Basic facts on the Hausdorff measure are gathered in
\Cref{sec:basics-geom-meas}. Finally, we denote by $\poly{k}{\msa}$ the set of
polynomials of $k \in \nset$ variables with coefficients in
$\msa \subset \rset$.


\section{Limit theorems}
\label{sec:main-results}

In this section, we state our main theorem and draw links with previous
approximation results for probability integrals. Let $k \in \nset$,
$\bvareps > 0$, $F:\ \rset^d \to \rset^p$ with $d, p \in \nsets$ such that for
any $\vareps \in \oointLigne{0, \bvareps}$,
$\int_{\rset^d} \exp[-\norm{F(x)}^k/\vareps] \rmd x < +\infty$.  For any
$\vareps \in \oointLigne{0, \bvareps}$, we define
$\pi_\vareps \in \Pens(\rset^d)$ such that for any $\msa \in \mcb{\rset^d}$
\begin{equation}
  \label{eq:pi_eps_def}
  \textstyle{
    \pi_\vareps(\msa) = \left. \int_{\msa} \exp[-\norm{F(x)}^k/\vareps] \rmd x \middle/ \int_{\rset^d} \exp[-\norm{F(x)}^k/\vareps] \rmd x \right. \eqsp .
    }
\end{equation}
Our goal is to study the behavior of $\pi_\vareps$ when $\vareps \to 0$. In
particular, we identify a limit $\pi_0 \in \Pens(\rset^d)$ and derive
non-asymptotic convergence bounds.  For any $\msa \in \mcb{\rset^d}$ we let
(when it is well-defined)
\begin{equation}
  \label{eq:pi_0_def_both}
  \textstyle{
    \pi_0(\msa) = \left. \int_{F^{-1}(0) \cap \msa} \jacinv{x} \rmd \calH^{d-\min(d,p)}(x) \middle/ \int_{F^{-1}(0)} \jacinv{x} \rmd \calH^{d-\min(d,p)}(x) \right .\eqsp .
    }
\end{equation}
In what follows we consider the following assumption on $F$ which implies that
$\pi_\vareps$ is well-defined for any $\vareps \geq 0$.
\begin{assumptionH}
  \label{assum:F}
  $F \in \rmc^\infty(\rset^d, \rset^p)$ and $F(0)=0$. For any $x \in F^{-1}(0)$,
  $\jac{x} \neq 0$. There exist $\mtt, \upalpha > 0$ and $R \geq 0$ such that for any
  $x \in \rset^d$ with $\norm{x} \geq R$, $\norm{F(x)} \geq \mtt \norm{x}^\upalpha$.
\end{assumptionH}

A few remarks are in order. First, the assumption $F(0)=0$ is only technical and
can be replaced by $F^{-1}(0) \neq \emptyset$. Similarly, the smoothness
assumption $F \in \rmc^\infty(\rset^d, \rset^p)$ allows us to avoid some
technicalities in the proofs but can be relaxed. Second, we assume that
$x \mapsto \normLigne{F(x)}$ grows at least polynomially when
$\norm{x} \to +\infty$. This condition can also be relaxed to handle
sub-polynomial growth at infinity. However, changing the rate of growth might
affect the quantitative convergence properties of $(\pi_\vareps)_{\vareps > 0}$
towards $\pi_0$. A study of sub-polynomial growth is left for future
work. Finally the assumption that for any $x \in F^{-1}(0)$, $\jac{x} \neq 0$ is
necessary in order to define $\pi_0$.

\begin{theorem}
  \label{thm:big_theo}
  Assume \rref{assum:F}. Then for any $\vareps \geq 0$,
  $\pi_\vareps$ is well-defined. Let $\msu \subset \rset^d$ be open, bounded and 
  such that $F^{-1}(0) \subset \msu$. Let $\varphi \in \rmc(\rset^d, \rset)$ and
  $C_\varphi \geq 0$ such that for any $x \in \rset^d$
\begin{equation}
  \label{eq:cond_varphi}
  \textstyle{
    \abs{\varphi(x)} \leq C_\varphi \exp[C_\varphi \norm{x}^{\upalpha k}] \eqsp ,
    }
  \end{equation}
  with $\upalpha > 0$ defined in \rref{assum:F} and $k \in \nsets$ in
  \eqref{eq:pi_eps_def}.  Then
  $\lim_{\vareps \to 0} \abs{\pi_\vareps[\varphi] - \pi_0[\varphi]} = 0$. In
  addition, if $\varphi$ is $M_{1, \varphi}$-Lipschitz on $\msu$ with
  $M_{1, \varphi} \geq0$, then there exist
  $A \in \rmc(\rset_+, \rset_+), \bvareps \in \rmc(\rset_+, \rset_+)$ such that
  for any $\vareps \in \oointLigne{0, \bvareps(C_\varphi)}$ we have
  \begin{equation}
    \abs{\pi_{\vareps}[\varphi] - \pi_0[\varphi]} \leq A(C_\varphi) (1 + M_{0,\varphi} + M_{1,\varphi}) \vareps^{1/k} \eqsp ,
  \end{equation}
  with
  $M_{0, \varphi} = \sup \ensembleLigne{\absLigne{\varphi(x)}}{x \in \msu}$,
  $A, \bvareps$ functions that do not depend on $\varphi$, $A$ is non-decreasing and
  $\bvareps$ is non-increasing.
\end{theorem}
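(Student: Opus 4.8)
The plan is to obtain precise asymptotics as $\vareps \to 0$ of the numerator $I_\vareps(\varphi) := \int_{\rset^d} \varphi(x) \exp[-\norm{F(x)}^k/\vareps] \rmd x$ and of the normalizer $Z_\vareps := I_\vareps(1)$ separately, and then to divide. First I would localize near $F^{-1}(0)$. Since $F^{-1}(0)$ is closed and, by the growth condition in \rref{assum:F}, bounded, it is compact; as $\jac{\cdot}$ is continuous and nonzero on it, fix a bounded open set $\msv$ with $F^{-1}(0) \subset \msv \subset \msu$ and $\inf_{\msv} \jac{\cdot} > 0$. On $\msv^\complementary$ one has $\norm{F} \ge \delta > 0$ for some $\delta$ (on the compact set $\overline{\msu \setminus \msv}$ because $F$ does not vanish there; on $\{\norm{x} \ge R\}$ because $\norm{F(x)} \ge \mtt\norm{x}^\upalpha$). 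Combining this with the growth bound to beat the exponential growth of $\varphi$ allowed by \eqref{eq:cond_varphi}, one gets $\int_{\msv^\complementary} \abs{\varphi(x)}\exp[-\norm{F(x)}^k/\vareps] \rmd x \le A(C_\varphi)\rme^{-c/\vareps}$ for $\vareps$ small, with $c>0$ and $A(C_\varphi)$ depending only on $C_\varphi$ and the geometry, not on $M_{0,\varphi}, M_{1,\varphi}$; this also shows $\pi_\vareps$ is well-defined for $\vareps>0$, and $\pi_0$ for $\vareps=0$ since, by the implicit function theorem (using $\jac{\cdot} \neq 0$ on $F^{-1}(0)$), $F^{-1}(0)$ is a compact rectifiable set of dimension $d-\min(d,p)$ on which $\jac{\cdot}^{-1}$ is bounded and positive.

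\textbf{Coarea formula and the origin of the rate.} Assume $d \ge p$ (the case $d \le p$ is analogous via the area formula and the counting measure $\calH^0$ on the finite sets $F^{-1}(y)$). On $\msv$ I would write $\varphi\exp[-\norm{F}^k/\vareps] = \bigl(\varphi\,\jac{\cdot}^{-1}\exp[-\norm{F}^k/\vareps]\bigr)\,\jac{\cdot}$ and apply the coarea formula; since $\exp[-\norm{F(x)}^k/\vareps]$ equals $\exp[-\norm{y}^k/\vareps]$ on $F^{-1}(y)$, this yields, with $\Psi_\varphi(y) := \int_{F^{-1}(y)\cap\msv} \varphi(x)\,\jac{x}^{-1}\rmd\calH^{d-p}(x)$,
\[
  \int_{\msv} \varphi(x)\exp[-\norm{F(x)}^k/\vareps]\rmd x = \int_{\rset^{p}} \exp[-\norm{y}^k/\vareps]\,\Psi_\varphi(y)\,\rmd y .
\]
Note that $\Psi_1(0)$ is exactly the normalizer in \eqref{eq:pi_0_def_both} and $\Psi_\varphi(0)/\Psi_1(0) = \pi_0[\varphi]$. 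Because $\msv$ is bounded, $\Psi_\varphi$ is supported in a fixed ball; cutting the $y$-integral at $\norm{y}=\rho$ ($\rho$ small), the part $\norm{y}>\rho$ is $O(\rme^{-\rho^k/\vareps})$, and on $\norm{y}\le\rho$ I would replace $\Psi_\varphi(y)$ by $\Psi_\varphi(0)$. The exponent $\vareps^{1/k}$ then falls out of the scaling identities $\int_{\rset^{p}}\exp[-\norm{y}^k/\vareps]\rmd y = c_0\vareps^{p/k}$ and $\int_{\rset^{p}}\norm{y}\exp[-\norm{y}^k/\vareps]\rmd y = c_1\vareps^{(p+1)/k}$ (substitute $y=\vareps^{1/k}z$), whose ratio is of order $\vareps^{1/k}$.

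\textbf{Lipschitz regularity of $\Psi_\varphi$ --- the main obstacle.} The hard part will be to show there are $\rho>0$ and $\Lip$, depending on the geometry but \emph{affine} in $(M_{0,\varphi},M_{1,\varphi})$, i.e.\ $\Lip \le C(1 + M_{0,\varphi} + M_{1,\varphi})$, such that $\abs{\Psi_\varphi(y)-\Psi_\varphi(0)} \le \Lip\norm{y}$ for $\norm{y}\le\rho$. Since $\jac{\cdot}\neq 0$ on $F^{-1}(0)$ means $\rmD F$ has full rank there, the implicit function theorem gives, around each point of $F^{-1}(0)$, local coordinates in which $F$ is a linear projection (a linear embedding if $d\le p$); hence $\{F^{-1}(y):\norm{y}\le\rho\}$ foliates a tubular neighborhood of $F^{-1}(0)$, and in each chart $\Psi_\varphi(y)$ is an integral over a \emph{fixed} parameter domain of $\varphi(\psi_y(\xi))\,\jac{\psi_y(\xi)}^{-1}$ against a smooth Jacobian weight, with $(y,\xi)\mapsto\psi_y(\xi)$ jointly smooth. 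Covering the compact $F^{-1}(0)$ by finitely many such charts, using a subordinate partition of unity, and differentiating under the integral sign, the $y$-dependence of $\Psi_\varphi$ splits into (a) the variation of $\varphi$ between $\psi_y(\xi)$ and $\psi_0(\xi)$, controlled by $M_{1,\varphi}\sup_\xi\norm{\psi_y(\xi)-\psi_0(\xi)}\lesssim M_{1,\varphi}\norm{y}$, and (b) the variation of the smooth weights $\jac{\psi_y(\xi)}^{-1}$ and of the Jacobian, controlled by $M_{0,\varphi}\norm{y}$ times geometric constants. (Equivalently, one may first pass to polar coordinates $y=r\theta$ and study the one-parameter map $r\mapsto\int_{\mss^{p-1}}\Psi_\varphi(r\theta)\rmd\theta$, which is the level-set integral of the potential $\norm{F}^k$.) This is precisely where smoothness of $F$ and nondegeneracy of $\jac{\cdot}$ on $F^{-1}(0)$ are used, and tracking the affine dependence of $\Lip$ on $M_{0,\varphi},M_{1,\varphi}$ uniformly over the charts is the delicate bookkeeping.

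\textbf{Assembling the bound.} Putting the three steps together, for $\vareps$ small $I_\vareps(\varphi) = \Psi_\varphi(0)\,c_0\vareps^{p/k} + R_\varphi$ with $\abs{R_\varphi}\le C(1+M_{0,\varphi}+M_{1,\varphi})\vareps^{(p+1)/k}+A(C_\varphi)\rme^{-c/\vareps}$; in particular $Z_\vareps = \Psi_1(0)c_0\vareps^{p/k}+R_1$ with $\Psi_1(0)>0$, so $Z_\vareps\ge\tfrac12\Psi_1(0)c_0\vareps^{p/k}$ once $\vareps$ is small enough (depending on $C_\varphi$). Then
\[
  \pi_\vareps[\varphi]-\pi_0[\varphi] = \frac{I_\vareps(\varphi)}{Z_\vareps}-\frac{\Psi_\varphi(0)}{\Psi_1(0)} = \frac{\Psi_1(0)R_\varphi-\Psi_\varphi(0)R_1}{\Psi_1(0)\,Z_\vareps},
\]
and using $\abs{\Psi_\varphi(0)}\le CM_{0,\varphi}$ and $\abs{R_1}\le C\vareps^{(p+1)/k}+A(C_\varphi)\rme^{-c/\vareps}$, the numerator is $\le C(1+M_{0,\varphi}+M_{1,\varphi})\vareps^{(p+1)/k}+A(C_\varphi)\rme^{-c/\vareps}$; dividing by $Z_\vareps$ (of order $\vareps^{p/k}$) and absorbing $\rme^{-c/\vareps}\le\vareps^{(p+1)/k}$ for $\vareps$ small gives $\abs{\pi_\vareps[\varphi]-\pi_0[\varphi]}\le A(C_\varphi)(1+M_{0,\varphi}+M_{1,\varphi})\vareps^{1/k}$, with $A$ non-decreasing and $\bvareps(C_\varphi)$ non-increasing in $C_\varphi$. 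Lastly, for a merely continuous $\varphi$ satisfying \eqref{eq:cond_varphi}, the contribution of $\msv^\complementary$ to $\pi_\vareps[\varphi]$ vanishes as $\vareps\to0$; approximating $\varphi$ uniformly on the compact $\overline{\msv}$ by a bounded Lipschitz $g$ on $\rset^d$ (which trivially satisfies \eqref{eq:cond_varphi}), applying the quantitative bound to $g$, and using that $\pi_0$ is carried by $F^{-1}(0)\subset\msv$ where $\abs{\varphi-g}$ is small, one lets the approximation error tend to $0$ to conclude $\lim_{\vareps\to0}\abs{\pi_\vareps[\varphi]-\pi_0[\varphi]}=0$.
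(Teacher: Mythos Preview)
Your proposal is correct and follows essentially the same architecture as the paper: localize near $F^{-1}(0)$, apply the coarea formula to obtain the fibre integral $\Psi_\varphi(y)=\L_y(\varphi)$, establish Lipschitz regularity of $y\mapsto\Psi_\varphi(y)$ near $0$ with a constant affine in $(M_{0,\varphi},M_{1,\varphi})$, and combine with the scaling $\int_{\rset^p}\|y\|^j\exp[-\|y\|^k/\vareps]\rmd y\asymp\vareps^{(p+j)/k}$.

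There are two execution differences worth noting. First, for the key Lipschitz step you propose local implicit-function-theorem charts glued by a partition of unity; the paper instead builds a \emph{single global} diffeomorphism $\bar{\Phi}_t:F^{-1}(t)\to F^{-1}(0)$ by integrating the vector fields $g_i=\sum_k (G^{-1})_{ik}\nabla F_k$ (with $G=\rmD F\,\rmD F^\top$) and then pushes $\L_t(\varphi)$ forward to an integral over $F^{-1}(0)$ with a smooth $t$-dependent weight. The flow construction avoids partition-of-unity bookkeeping and makes the derivative bounds (your ``affine in $M_{0,\varphi},M_{1,\varphi}$'' claim) cleaner to track, at the cost of a technical lemma controlling the flow and its Jacobian. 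Your chart approach is equally valid and perhaps more elementary. Second, for $d\le p$ you dismiss the case as ``analogous via the area formula''; this is a little too quick, since the area formula integrates against $\calH^d$ on the $d$-dimensional image $F(\msv)\subset\rset^p$ rather than Lebesgue on $\rset^p$, so the ``same'' scaling argument does not apply verbatim. The paper handles this case separately through the Morse lemma (a local change of variable on $\rset^d$ making $\|F\|^2$ into $\|x\|^2$), which is closer in spirit to your implicit-function-theorem charts but phrased on the source rather than the target. Finally, the passage from $\rmc^1$ to Lipschitz $\varphi$ in the paper goes through a mollification lemma that preserves the growth bound \eqref{eq:cond_varphi}; your uniform-approximation-on-$\overline{\msv}$ argument for the qualitative convergence is fine.
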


\begin{proof}
  We provide a sketch of the proof. The whole proof is postponed to
  \Cref{sec:proof_thm_macro}. We first start by showing that we can restrict our
  study to versions of $\pi_\vareps$ and $\pi_0$ with truncated support.
  {This is done by studying the decay of
    $\pi_\vareps(\msk^\complementary)$ where $\msk$ is some compact set.} Then
  our study differs depending on if $d \geq p$ or $d \leq p$. If $d \geq p$ then
  we use tools from geometric measure theory, and in particular the coarea
  formula in combination with the Lipschitz property of applications of the form
  $t \mapsto \int_{F^{-1}(t)} \Phi(x) \rmd \calH^{d-p}(x)$ for regular mappings
  $\Phi$. {More precisely, we apply the coarea formula with the
    mapping $F$. Doing so, we have 
    $\int_\msu \exp[-\normLigne{F(x)}^k/\vareps] \varphi(x) \rmd x = \int_\msv
    \exp[-\normLigne{t}^k/\vareps] \int_{F^{-1}(t)} \varphi(x) \jacinv{x} \rmd
    \calH^{d-p}(x) \rmd t$, where $\msu, \msv$ are two open sets. Hence,
    controlling the regularity of
    $t \mapsto \int_{F^{-1}(t)} \varphi(x) \jacinv{x} \rmd \calH^{d-p}(x)$ is the key
    to control the convergence rate.} In the case $d \leq p$ we adapt arguments
  from Morse theory and Laplace theory to derive quantitative bounds on
  $\abs{\pi_{\vareps}[\varphi] - \pi_0[\varphi]}$. These analyses are first conducted
  with smooth test functions $\varphi \in \rmc^1(\rset^d, \rset)$ and then we relax
  this hypothesis using smoothing arguments.
\end{proof}

We highlight a few key points from this theorem and draw links with the existing
literature.

\begin{enumerate}[wide, labelwidth=!, labelindent=0pt, label=(\alph*)]
\item Our result is related to the Laplace-type convergence results of
  \cite{wong2001asymptotic, korshunov2015asymptotic,
    barbe2003approximation, breitung2006asymptotic, 
    bleistein1975asymptotic, olver1997asymptotics, de1981asymptotic,
    evgrafov2020asymptotic, erdelyi1956asymptotic, fedoryuk1989asymptotic,
    bender2013advanced}. In these works, the authors study integrals of the form
  $\int_{\rset^d} \varphi(x) \exp[-U(x)/\vareps] \rmd x$ in the limiting case
  where $\vareps \to 0$ under regularity assumptions on $U$ and $\varphi$, and
  the non-degeneracy condition that for any $x \in \argmin_{\rset^d} U$,
  $\nabla^2 U(x)$ is invertible. We say that $U : \ \rset^d \to \rset_+$ is
  norm-like with exponent $2$ if there exist $p \in \nsets$ and
  $F: \ \rset^d \to \rset^p$ such that $U(x) = \norm{F(x)}^2$. In this case, and
  under additional regularity assumptions, we have for any $x \in \rset^d$
  \begin{equation}
    \nabla^2 U(x) = 2 \rmD^2F(x)^\top F(x) + 2 \rmD F(x)^\top \rmD F(x) \eqsp . 
  \end{equation}
  In particular for any $x \in F^{-1}(0)$, $\nabla^2 U(x)$ is invertible if and
  only if $\jac{x} \neq 0$ which is precisely the non-degeneracy condition
  imposed in \rref{assum:F} (note that this directly implies that $d \leq
  p$). For any $U \in \rmc^{3}(\rset^d, \rset_+)$ with Lipschitz third order
  derivatives, there exist $p \in \nsets$ and $F \in \rmc^1(\rset^d, \rset^p)$
  with Lipschitz derivatives such that for any $x \in \rset^d$,
  $U(x) = \norm{F(x)}^2$, see
  \cite{bony2006nonnegative,fefferman1978positivity}. Hence, our restriction to
  functions $U : \ \rset^d \to \rset_+$ of the form $U = \norm{F}^k$ for some
  $F: \ \rset^d \to \rset^p$ with $p,k \in \nsets$ is not too constraining under
  additional regularity assumptions.
  \item We emphasize that our results can be
  extended to a general potential $U \in \rmc^\infty(\rset^d, \rset_+)$ if the
  Hessian of $U$ is invertible on the set $U^{-1}(0)$, thus recovering the usual
  setting of Laplace-type convergence results.
\item As highlighted previously, we aim at providing quantitative bounds for
  $\abs{\pi_{\vareps}[\varphi] - \pi_0[\varphi]}$. In the case where $d \leq p$
  we adapt classical arguments from Morse and Laplace theory. Our main
  contribution is the use of geometric measure theory to cover the case
  $d \geq p$ which cannot be treated with classical Laplace arguments. The case
  $d \geq p$ is of particular interest in machine learning applications
  requiring to sample from distributions of the form
  $(\rmd \pi / \rmd \Leb)(x) \propto \exp[-\norm{F(x)}^2]$, where
  $F: \ \rset^d \to \rset^p$ is an encoder neural network with $d \geq p$.  In
  \Cref{sec:from-macr-micr} we investigate this situation in details in the case
  of generative modeling with macrocanonical and microcanonical distributions.
\item {We notice that choosing $k >1$ hinders the convergence
    towards $\pi_0[\varphi]$. The intuition behind this result is that for large
    values of $k \in \nset$ it is harder to distinguish global minimizers from
    their neighborhood (for example in the case where $F(x) = x$ and $d=p=1$, we
    have that $\normLigne{F(0.1)}^1 = 0.1$ whereas
    $\normLigne{F(0.1)}^{3} = 10^{-3}$).}
\item {Finally, in \cite{raginsky2017non} we have that
    $\pi_\vareps[U] - \min_{\rset^d} U \leq C \vareps$ under mild
    assumptions. In the case where $U = \normLigne{F}^2$ our results suggest
    that $\pi_\vareps[U] - \min_{\rset^d} U \leq C \vareps^{1/2}$ which appears
    to be suboptimal. However, our result is more general as it is valid for
    every locally Lipschitz function. In order to recover the rate
    $\bigO(\vareps)$ one must modify the proof of \Cref{thm:big_theo} to
    leverage the fact that $\nabla U(x) = 0$ for any $x \in \rset^d$ which is a
    minimizer of $U$. More precisely, under the assumptions of
    \Cref{thm:big_theo} and if $\varphi \in \rmc^\infty(\rset^d, \rset)$ with
    $\nabla^\ell \varphi(x)= 0$ for any $\ell \in \{0, \dots, p\}$ with
    $p \in \nset$ and $x \in \rset^d$ which is a minimizer of $U$, we get that
    $\abs{\pi_{\vareps}[\varphi] - \pi_0[\varphi]} \leq C_\varphi \vareps^{(1 +
      p)/2}$.}
\end{enumerate}

As a by-product of \Cref{thm:big_theo} we obtain the following corollary which
establishes quantitative bounds for the Wasserstein distance of order $1$ 
between $\pi_\vareps$ and $\pi_0$.

\begin{corollary}
  \label{coro:wass_dist}
  Assume \rref{assum:F}. Then for any $\vareps \geq 0$,
  $\pi_\vareps$ is well-defined. In addition, there exist $A_1 \geq 0$ and
  $\bvareps > 0$ such that for any $\vareps \in \coint{0, \bvareps}$ we have
  \begin{equation}
    \wassersteinD[1](\pi_\vareps, \pi_0) \leq A_1 \vareps^{1/k} \eqsp ,
  \end{equation}
  where we recall that $k \in \nsets$ is defined in \eqref{eq:pi_eps_def}.
\end{corollary}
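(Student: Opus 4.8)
The plan is to derive the corollary from \Cref{thm:big_theo} via the Kantorovich--Rubinstein duality
\begin{equation*}
  \wassersteinD[1](\pi_\vareps, \pi_0) = \sup\defEnsLigne{ \absLigne{\pi_\vareps[\varphi] - \pi_0[\varphi]} \eqsp : \eqsp \varphi \in \rmc(\rset^d, \rset) \text{ is } 1\text{-Lipschitz}} \eqsp ,
\end{equation*}
which is valid as soon as $\pi_\vareps$ and $\pi_0$ have finite first moments. This integrability is a consequence of \rref{assum:F}: the growth bound $\normLigne{F(x)} \geq \mtt \normLigne{x}^\upalpha$ for $\normLigne{x} \geq R$ forces $F^{-1}(0) \subset \cball{0}{R}$, so $\pi_0$ is compactly supported, while $(\rmd \pi_\vareps / \rmd \Leb)(x)$ is bounded above, up to the normalizing constant, by $\exp[-\mtt^k \normLigne{x}^{\upalpha k}/\vareps]$ outside $\cball{0}{R}$ and hence has finite moments of every order. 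In particular, for any $1$-Lipschitz $\varphi$ the quantity $\pi_\vareps[\varphi] - \pi_0[\varphi]$ is finite and the supremum above makes sense.

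The key step is to upgrade the $\varphi$-dependent estimate of \Cref{thm:big_theo} into one that is uniform over the whole $1$-Lipschitz class. Fix once and for all an open bounded set $\msu$ with $F^{-1}(0) \subset \msu$, for instance $\msu = \ball{0}{R+1}$. Given a $1$-Lipschitz $\varphi$, subtracting the constant $\varphi(0)$ changes neither $\pi_\vareps[\varphi] - \pi_0[\varphi]$ nor any Lipschitz constant, so I may assume $\varphi(0) = 0$, whence $\absLigne{\varphi(x)} \leq \normLigne{x}$ for all $x \in \rset^d$. Since for any $\upalpha, k > 0$ there is a constant $C_0 = C_0(\upalpha, k) \geq 1$ with $t \leq C_0 \exp[C_0 t^{\upalpha k}]$ for all $t \geq 0$, the growth condition \eqref{eq:cond_varphi} holds with the \emph{$\varphi$-independent} constant $C_\varphi = C_0$. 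Moreover $\varphi$ is $1$-Lipschitz on $\msu$, so $M_{1,\varphi} \leq 1$, and $M_{0,\varphi} = \sup_{x \in \msu} \absLigne{\varphi(x)} \leq \sup_{x \in \msu} \normLigne{x} = R+1$, again independently of $\varphi$.

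It then remains to apply \Cref{thm:big_theo} with this choice $C_\varphi = C_0$: the theorem produces functions $A$ and $\bvareps$ that do not depend on $\varphi$, and for every $\vareps \in \oointLigne{0, \bvareps(C_0)}$ and every $1$-Lipschitz $\varphi$ with $\varphi(0) = 0$,
\begin{equation*}
  \absLigne{\pi_\vareps[\varphi] - \pi_0[\varphi]} \leq A(C_0)\,(1 + M_{0,\varphi} + M_{1,\varphi})\,\vareps^{1/k} \leq A(C_0)\,(R + 3)\,\vareps^{1/k} \eqsp .
\end{equation*}
Taking the supremum over such $\varphi$ and invoking the duality identity yields the claim with $A_1 = A(C_0)(R+3)$ and $\bvareps$ as provided by \Cref{thm:big_theo}, the case $\vareps = 0$ being trivial since $\wassersteinD[1](\pi_0, \pi_0) = 0$. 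The only genuinely delicate point is the normalization $\varphi(0) = 0$, which produces a single admissible exponential envelope valid for the entire $1$-Lipschitz class; once it is used, and given that the constants $A$ and $\bvareps$ of \Cref{thm:big_theo} depend on $\varphi$ only through $C_\varphi$, the remaining argument is pure bookkeeping.
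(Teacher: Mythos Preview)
Your proposal is correct and follows essentially the same approach as the paper: both invoke Kantorovich--Rubinstein duality, normalize to $\varphi(0)=0$ so that $\absLigne{\varphi(x)}\leq\normLigne{x}$, exhibit a single $\varphi$-independent constant $C_0$ (the paper uses $\beta_r=\ceilLigne{1/(\upalpha k)}$ explicitly) for which every such $\varphi$ satisfies the growth condition \eqref{eq:cond_varphi}, and then apply \Cref{thm:big_theo} with that fixed $C_\varphi$ to obtain a bound uniform over the $1$-Lipschitz class. Your extra paragraph on finite first moments is a welcome justification that the paper leaves implicit.
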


\begin{proof}
  Denote
  $\Lipset = \ensembleLigne{\varphi: \ \rset^d \to \rset}{\abs{\varphi(x) -
      \varphi(y)} \leq \norm{x-y} \eqsp , \ \text{for any } x,y \in
    \rset^d}$. In addition, denote
  $\Lipset_0 = \ensembleLigne{\varphi: \ \rset^d \to \rset}{\abs{\varphi(x) -
      \varphi(y)} \leq \norm{x-y} \eqsp , \ \text{for any } x,y \in \rset^d, \
    \varphi(0) = 0}$. First, using \cite[Theorem 5.10]{villani2009optimal} we
  have for any $\vareps > 0$.
  \begin{align}
    \label{eq:def_wass}
    \wassersteinD[1](\pi_\vareps, \pi_0) = \sup \ensembleLigne{\pi_\vareps[\varphi] - \pi_0[\varphi]}{\varphi \in \Lipset} = \sup \ensembleLigne{\abs{\pi_\vareps[\varphi] - \pi_0[\varphi]}}{\varphi \in \Lipset_0} \eqsp . 
  \end{align}
  Let $r = \upalpha k$ and $\beta_r = \ceil{1/r}$. For any
  $\varphi \in \Lipset_0$ and $x \in \rset^d$ with $\norm{x} \geq 1$ we have
  \begin{equation}
    \beta_r \exp[\beta_r \norm{x}^{\upalpha k}] \geq \beta_r^{\beta_r}/(\beta_r !) \norm{x}^{r \beta_r} \geq \norm{x} \geq \abs{\varphi(x)} \eqsp .
  \end{equation}
  Similarly, if $\norm{x} \leq 1$ we have that
  $\abs{\varphi(x)} \leq 1 \leq \beta_r \exp[\beta_r \norm{x}]$. Hence for any
  $x \in \rset^d$, $\abs{\varphi(x)} \leq \beta_r \exp[\beta_r \norm{x}]$. For
  any $\varphi \in \Lipset_0$ we have $M_{1, \varphi} = 1$. In addition, the set
  $F^{-1}(0)$ is compact since
  $\lim_{\norm{x} \to +\infty} \norm{F(x)} = +\infty$ and
  $F \in \rmc(\rset^d, \rset^p)$. Therefore there exists $R > 0$ such that
  $F^{-1}(0) \subset \ball{0}{R}$. In particular, note that $\msu = \ball{0}{R}$
  is open and bounded. We have that for any $\varphi \in \Lipset_0$ and
  $x \in \msu$
  \begin{equation}
    \abs{\varphi(x)} \leq \norm{x} \leq R \eqsp . 
  \end{equation}
  Therefore we get that $M_{0,\varphi} = R$. Combining these results with
  \Cref{thm:big_theo} we have for any $\varphi \in \Lipset_0$ and
  $\vareps \in \coint{0, \bvareps(\ceil{1/r})}$
  \begin{equation}
    \abs{\pi_{\vareps}[\varphi] - \pi_0[\varphi]} \leq A(C_\varphi) (1 + M_{0,\varphi} + M_{1,\varphi}) \vareps^{1/k} \leq 2 A(\ceil{1/r})(1 + R) \vareps^{1/k} \eqsp . 
  \end{equation}
We conclude the proof upon combining this result and \eqref{eq:def_wass}.
\end{proof}

Concurrently to our work, \cite{bras2021convergence} establish similar
quantitative bounds w.r.t. the Wasserstein distance of order $1$ without the
norm-like assumption on the potential $U$ but assuming that the Hessian of $U$
is invertible on $\argmin \ensembleLigne{U(x)}{x \in \rset^d}$, see \cite[Lemma
4.6]{bras2021convergence}. To do so, the authors derive estimates of the form
$\wassersteinD[1](\pi_{\vareps_1}, \pi_{\vareps_2}) \leq
C\absLigne{\vareps_2^{1/2} - \vareps_1^{1/2}}$ with $C \geq 0$ for
$\vareps_1, \vareps_2 > 0$ using a coupling lemma and results from Morse
theory. The final bound is obtained by letting $\vareps_2 \to 0$ in the previous
inequality and the fact that $\lim_{\vareps \to 0} \pi_\vareps = \pi_0$
weakly. \Cref{coro:wass_dist} extends these bounds to the case where the Hessian
of $U$ is no longer invertible under a norm-like condition on the potential.

Also, note that \cite{hwang1980} establishes that
$\lim_{\vareps \to 0} \pi_\vareps = \pi_0$ weakly without any norm-like
assumption on $U$ under the condition that 
$\msc^\star = \argmin \ensembleLigne{U(x)}{x \in \rset^d}$ can be partitioned
into a collection of manifolds and that for any $x \in \msc^\star$, the Hessian
of $U$ along the normal plan to the manifold associated with $x$ is invertible
(we denote by $\nabla_{\mathrm{N}}^2 U(x)$ this quantity).  In
\Cref{lemma:hessian_normal}, we show that if $U = \normLigne{F}^2$ then we have
that $\det(\nabla_{\mathrm{N}}^2 U(x)) = \jac{x}^2$ for any $x \in
F^{-1}(0)$. Hence, the invertibility condition can be written as: for any
$x \in F^{-1}(0)$, $\jac{x} \neq 0$ similarly to \Cref{assum:F} and the limiting
measure identified by \cite{hwang1980} is exactly $\pi_0$.

The next result is an extension of \Cref{thm:big_theo} where, given
$k \in \nsets$, $\Psi: \ \rset^d \to \rset_+$, $\pi_\vareps$ is replaced by
$\pi_\vareps^\Psi$ defined for any $\msa \in \mcb{\rset^d}$ by
\begin{equation}
  \label{eq:twisted_def}
  \textstyle{
  \pi_\vareps^\Psi(\msa) = \left. \int_{\msa} \Psi(x) \exp[-\norm{F(x)}^k/\vareps] \rmd x \middle/ \int_{\rset^d} \Psi(x) \exp[-\norm{F(x)}^k/\vareps] \rmd x \right. \eqsp . }
\end{equation}
Similarly, for any $\msa \in \mcb{\rset^d}$ we let
(when it is well-defined)
\begin{equation}
  \label{eq:pi_0_def_both_psi}
  \textstyle{
    \pi_0^\Psi(\msa) = \left. \int_{F^{-1}(0) \cap \msa} \Psi(x) \jacinv{x} \rmd \calH^{d-\min(d,p)}(x) \middle/ \int_{F^{-1}(0)} \Psi(x) \jacinv{x} \rmd \calH^{d-\min(d,p)}(x) \right .\eqsp .
    }
\end{equation}
We consider the following assumption on $\Psi$ which ensures that
$\pi_\vareps^\Psi$ is well-defined for $\vareps \geq 0$ when combined with
\rref{assum:F}.

\begin{assumptionH}
  \label{assum:psi}
  $\Psi \in \rmc(\rset^d, \rset_+)$, there exists $C_\Psi$ such that for any
  $x \in \rset^d$, $\Psi(x) \leq C_\Psi \exp[C_\Psi \norm{x}^{\upalpha k}]$ and
  $\Psi(x) > 0$ for any $x \in F^{-1}(0)$.
\end{assumptionH}

Note that in \rref{assum:psi}, $\upalpha > 0$ is given in \rref{assum:F} and
$k \in \nsets$ is given in \eqref{eq:pi_eps_def}. Under this assumption, we
derive the following quantitative bounds.

\begin{theorem}
  \label{thm:big_theo_extension}
  Assume \rref{assum:F} and 
  \textup{\rref{assum:psi}}. Then for any $\vareps \geq 0$, $\pi_\vareps^\Psi$
  is well-defined.  Let $\msu \subset \rset^d$ be open, bounded and such that
  $F^{-1}(0) \subset \msu$. Let $\varphi \in \rmc(\rset^d, \rset)$ and
  $C_\varphi \geq 0$ such that for any $x \in \rset^d$
\begin{equation}
    \label{eq:cond_varphi}
    \abs{\varphi(x)} \leq C_\varphi \exp[C_\varphi \norm{x}^{\upalpha k}] \eqsp ,
  \end{equation}
  with $\upalpha > 0$ defined in \rref{assum:F} and $k \in \nsets$ in
  \eqref{eq:pi_eps_def}.  Then
  $\lim_{\vareps \to 0} \abs{\pi_\vareps^\Psi[\varphi] - \pi_0^\Psi[\varphi]} =
  0$. In addition, if $\varphi$ and $\Psi$ are respectively
  $M_{1, \varphi}$-Lipschitz and $M_{1, \Psi}$-Lipschitz on $\msu$ with
  $M_{1, \varphi}, M_{1, \Psi} \geq 0$, then there exist
  $A \in \rmc(\rset_+, \rset_+)$, $\bvareps \in \rmc(\rset_+, \rset_+^\star)$
  such that for any $\vareps \in \oointLigne{0, \bvareps(C_\varphi)}$ we have
  \begin{equation}
    \textstyle{
      \absLigne{\pi_{\vareps}^\Psi[\varphi] - \pi_0^\Psi[\varphi]} \leq A(C_\varphi) (1 + M_{0,\varphi} + M_{1,\varphi})  \vareps^{1/k} \eqsp ,
      }
  \end{equation}
  with
  $M_{0, \varphi} = \sup \ensembleLigne{\absLigne{\varphi(x)}}{x \in \msu}$,
  $A, \bvareps$ functions that do not depend on $\varphi$, $A$ non-decreasing and
  $\bvareps$ non-increasing.
\end{theorem}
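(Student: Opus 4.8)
The plan is to reduce \Cref{thm:big_theo_extension} to \Cref{thm:big_theo} by a ratio identity. Unpacking \eqref{eq:pi_eps_def}, \eqref{eq:pi_0_def_both}, \eqref{eq:pi_0_def_both_psi} and the definitions of $\pi_\vareps^\Psi$ and $\pi_0^\Psi$, one has
\begin{equation}
  \pi_\vareps^\Psi[\varphi] = \frac{\pi_\vareps[\varphi \Psi]}{\pi_\vareps[\Psi]} \et \pi_0^\Psi[\varphi] = \frac{\pi_0[\varphi \Psi]}{\pi_0[\Psi]} \eqsp ,
\end{equation}
valid as soon as the integrals involved are finite; for the left identity this holds for $\vareps$ small enough, since $\norm{F(x)}^k \geq \mtt^k \norm{x}^{\upalpha k}$ for $\norm{x} \geq R$ by \rref{assum:F} dominates the $\exp[C_\Psi \norm{x}^{\upalpha k}]$ growth of $\Psi$ from \rref{assum:psi}, and the integrals are positive because $\Psi$ is positive near $0$. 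The first point to settle is the strict positivity $\pi_0[\Psi] > 0$: by \Cref{thm:big_theo}, $\pi_0$ is well-defined, so its normalising constant $Z_0 = \int_{F^{-1}(0)} \jacinv{x} \rmd \calH^{d-\min(d,p)}(x)$ lies in $\ooint{0, +\infty}$; since $F^{-1}(0)$ is compact (because $\norm{F(x)} \to +\infty$) and $\Psi$ is continuous and strictly positive on it, $\psi_{\min} \defeq \inf_{F^{-1}(0)} \Psi > 0$, whence $\pi_0[\Psi] \geq \psi_{\min} > 0$, and likewise $\pi_0^\Psi$ (and $\pi_0^\Psi[\varphi]$) is well-defined.

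Next I would apply \Cref{thm:big_theo} twice, with the \emph{same} open bounded set $\msu \supset F^{-1}(0)$: once with test function $\varphi \Psi$ and once with test function $\Psi$, both of which lie in $\rmc(\rset^d, \rset)$. Both satisfy a bound of the form \eqref{eq:cond_varphi}: by \rref{assum:psi} and \eqref{eq:cond_varphi} for $\varphi$ one has $\absLigne{\varphi(x)\Psi(x)} \leq C_\varphi C_\Psi \exp[(C_\varphi + C_\Psi)\norm{x}^{\upalpha k}]$, so $C_{\varphi\Psi} \defeq C_\varphi + C_\Psi + C_\varphi C_\Psi$ works and is non-decreasing in $C_\varphi$ (recall $\Psi$, hence $C_\Psi$, is fixed), while $\Psi$ satisfies \eqref{eq:cond_varphi} with the fixed constant $C_\Psi$. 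Writing $M_{0,\Psi} = \sup_{\msu} \absLigne{\Psi}$ and assuming $\varphi$ and $\Psi$ are respectively $M_{1,\varphi}$- and $M_{1,\Psi}$-Lipschitz on $\msu$, the product $\varphi\Psi$ is Lipschitz on $\msu$ with constant at most $M_{0,\Psi} M_{1,\varphi} + M_{0,\varphi} M_{1,\Psi}$, and $M_{0,\varphi\Psi} \leq M_{0,\Psi} M_{0,\varphi}$. \Cref{thm:big_theo} then yields, for $\vareps \in \ooint{0, \min(\bvareps(C_{\varphi\Psi}), \bvareps(C_\Psi))}$,
\begin{align}
  \absLigne{\pi_\vareps[\varphi\Psi] - \pi_0[\varphi\Psi]} &\leq A(C_{\varphi\Psi})\,(1 + M_{0,\varphi\Psi} + M_{1,\varphi\Psi})\,\vareps^{1/k} \eqsp , \\
  \absLigne{\pi_\vareps[\Psi] - \pi_0[\Psi]} &\leq A(C_\Psi)\,(1 + M_{0,\Psi} + M_{1,\Psi})\,\vareps^{1/k} \eqsp ;
\end{align}
in particular $\pi_\vareps[\Psi] \to \pi_0[\Psi] > 0$, so there is $\bvareps_0 > 0$ (depending on $\Psi, \msu, F$, not on $\varphi$) with $\pi_\vareps[\Psi] \geq \pi_0[\Psi]/2$ for $\vareps \in \ooint{0, \bvareps_0}$.

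Finally, I would combine the ratio identities with the elementary perturbation bound $\absLigne{a/b - a'/b'} \leq \absLigne{a - a'}/b + \absLigne{a'}\absLigne{b - b'}/(bb')$, applied with $a = \pi_\vareps[\varphi\Psi]$, $b = \pi_\vareps[\Psi]$, $a' = \pi_0[\varphi\Psi]$, $b' = \pi_0[\Psi]$, and use that $\pi_0$ is supported in $\msu$ so that $\absLigne{\pi_0[\varphi\Psi]} \leq M_{0,\varphi} M_{0,\Psi}$, to get for $\vareps \in \ooint{0, \min(\bvareps_0, \bvareps(C_{\varphi\Psi}), \bvareps(C_\Psi))}$
\begin{equation}
  \absLigne{\pi_\vareps^\Psi[\varphi] - \pi_0^\Psi[\varphi]} \leq \frac{2}{\pi_0[\Psi]} \absLigne{\pi_\vareps[\varphi\Psi] - \pi_0[\varphi\Psi]} + \frac{2 M_{0,\varphi} M_{0,\Psi}}{\pi_0[\Psi]^2}\absLigne{\pi_\vareps[\Psi] - \pi_0[\Psi]} \eqsp .
\end{equation}
Substituting the two estimates above and the bounds $M_{0,\varphi\Psi} \leq M_{0,\Psi}M_{0,\varphi}$, $M_{1,\varphi\Psi} \leq M_{0,\Psi} M_{1,\varphi} + M_{0,\varphi}M_{1,\Psi}$, every term is bounded by $(\text{const depending on } \Psi, \msu, F)\cdot(1 + M_{0,\varphi} + M_{1,\varphi})\,\vareps^{1/k}$ times either $A(C_{\varphi\Psi})$ or $A(C_\Psi)$; since $A$ is non-decreasing, $A(C_{\varphi\Psi})$ is a non-decreasing function of $C_\varphi$, and collecting everything gives the claimed bound with a non-decreasing $A' \in \rmc(\rset_+, \rset_+)$ and a non-increasing threshold $\bvareps' \in \rmc(\rset_+, \rset_+^\star)$, both independent of $\varphi$. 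The qualitative statement $\lim_{\vareps \to 0}\absLigne{\pi_\vareps^\Psi[\varphi] - \pi_0^\Psi[\varphi]} = 0$ for merely continuous $\varphi$ satisfying \eqref{eq:cond_varphi} follows from the same identities, the qualitative part of \Cref{thm:big_theo} ($\pi_\vareps[\varphi\Psi] \to \pi_0[\varphi\Psi]$ and $\pi_\vareps[\Psi] \to \pi_0[\Psi] > 0$) and continuity of the quotient. The only genuinely new input beyond \Cref{thm:big_theo} is the positivity $\pi_0[\Psi] > 0$; the remaining work is bookkeeping of constants to match the precise stated form, the one point requiring (mild) care being to keep all thresholds small enough that $\pi_\vareps^\Psi$ is well-defined and $\pi_\vareps[\Psi]$ stays bounded away from $0$.
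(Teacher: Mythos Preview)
Your reduction via the ratio identity $\pi_\vareps^\Psi[\varphi] = \pi_\vareps[\varphi\Psi]/\pi_\vareps[\Psi]$ is correct and gives a clean route to the extension. However, it differs from the paper's approach in an important structural way. In the paper the logical order is reversed: \Cref{sec:proof_thm_macro} proves \Cref{thm:big_theo_extension} \emph{directly}, carrying the weight $\Psi$ through the entire analysis (the coarea/Morse computations in \Cref{prop:upper_bound_i_in} and \Cref{prop:upper_bound_i_laplace}, the truncation bound \Cref{prop:upper_bound_i_out}, the lower bound \Cref{prop:lower_bound_j_eps}, and the smoothing step via \Cref{lemma:smoothing_lemma} and \Cref{prop:final_d_geq_p}), and then deduces \Cref{thm:big_theo} by specialising to $\Psi \equiv 1$. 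So within the paper's framework your argument is circular: the proof of \Cref{thm:big_theo} that you invoke \emph{is} the proof of \Cref{thm:big_theo_extension}.

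That said, the circularity is easily broken: running the machinery of \Cref{sec:proof_thm_macro} with $\Psi \equiv 1$ yields an independent proof of \Cref{thm:big_theo}, after which your ratio reduction recovers the weighted case. The trade-off is that the paper's direct route handles both theorems with a single set of estimates (the weight $\Psi$ simply multiplies $\varphi$ throughout, at essentially no extra cost), whereas your route isolates the $\Psi$-dependence into elementary bookkeeping at the end but still requires the full underlying machinery for the unweighted case. Your approach has the virtue of making transparent that the extension is a formal consequence of the base theorem, and your tracking of the monotonicity of $A$ and $\bvareps$ through $C_{\varphi\Psi} = C_\varphi + C_\Psi + C_\varphi C_\Psi$ is done correctly; the one point deserving slightly more care is that your $\bvareps_0$ (ensuring $\pi_\vareps[\Psi] \geq \pi_0[\Psi]/2$) should be obtained from the quantitative bound on $\absLigne{\pi_\vareps[\Psi] - \pi_0[\Psi]}$ rather than merely the qualitative convergence, so that it is explicit and the final $\bvareps'(C_\varphi)$ is manifestly continuous.
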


\begin{proof}
  The proof is postponed to
  \Cref{sec:proof_thm_macro}.
\end{proof}

The same remarks formulated after \Cref{thm:big_theo} hold for this
extension. In particular the norm-like assumption on $U$ can be omitted and
replaced by a classical invertibility condition on the Hessian at $U^{-1}(0)$.
Note that \Cref{thm:big_theo} is a direct consequence of
\Cref{thm:big_theo_extension} upon letting $\Psi = 1$.
{Reciprocally, we do not have that \Cref{thm:big_theo_extension}
  is a consequence of \Cref{thm:big_theo} upon replacing $\varphi$ by
  $\varphi \Psi$. Indeed, it must be noted that, contrary to $\varphi$, $\Psi$ also
  appears in the normalizing constant of $\pi_\vareps^\Psi$. } An important
consequence of \Cref{thm:big_theo_extension} is the case where for any
$x \in \rset^d$, $\Psi(x) = \jac{x}$. Indeed, doing so we obtain that
$\pi_0^\Psi$ is the uniform distribution on $F^{-1}(0)$, \ie \ the maximum
entropy distribution with support $F^{-1}(0)$. We discuss this special case in
\Cref{sec:from-macr-micr}. Similarly to \Cref{coro:wass_dist} we can also
establish the following corollary.

\begin{corollary}
  \label{coro:wass_dist_extension}
  Assume \rref{assum:F} and 
  \textup{\rref{assum:psi}}. Then for any $\vareps \geq 0$, $\pi_\vareps^\Psi$
  is well-defined.  In addition, assume that $\Psi$ is Lipschitz
  continuous. Then there exist $A_2 \geq 0$ and $\bvareps > 0$ such that for any
  $\vareps \in \ooint{0, \bvareps}$ we have
  \begin{equation}
    \wassersteinD[1](\pi_\vareps^\Psi, \pi_0^\Psi) \leq A_2 \vareps^{1/k} \eqsp ,
  \end{equation}
  where we recall that $k \in \nsets$ is defined in \eqref{eq:pi_eps_def}.
\end{corollary}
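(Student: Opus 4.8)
The plan is to follow the argument of \Cref{coro:wass_dist} line by line, simply replacing \Cref{thm:big_theo} by \Cref{thm:big_theo_extension}. The first assertion, that $\pi_\vareps^\Psi$ and $\pi_0^\Psi$ are well-defined for every $\vareps \geq 0$, is already contained in \Cref{thm:big_theo_extension}. Next, by the Kantorovich--Rubinstein duality \cite[Theorem 5.10]{villani2009optimal}, and since adding a constant to a test function leaves $\pi_\vareps^\Psi[\varphi] - \pi_0^\Psi[\varphi]$ unchanged (both measures being probability measures), one has for every $\vareps > 0$
\begin{equation*}
  \wassersteinD[1](\pi_\vareps^\Psi, \pi_0^\Psi) = \sup \ensembleLigne{\pi_\vareps^\Psi[\varphi] - \pi_0^\Psi[\varphi]}{\varphi \in \Lipset} = \sup \ensembleLigne{\absLigne{\pi_\vareps^\Psi[\varphi] - \pi_0^\Psi[\varphi]}}{\varphi \in \Lipset_0} \eqsp ,
\end{equation*}
where $\Lipset$ denotes the $1$-Lipschitz functions $\rset^d \to \rset$ and $\Lipset_0$ those that moreover vanish at $0$. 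So it suffices to bound $\absLigne{\pi_\vareps^\Psi[\varphi] - \pi_0^\Psi[\varphi]}$ uniformly over $\varphi \in \Lipset_0$.

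The key step is then to check that all the parameters entering the bound of \Cref{thm:big_theo_extension} can be fixed uniformly over $\Lipset_0$. Setting $r = \upalpha k$ and $\beta_r = \ceil{1/r}$, the same elementary computation as in the proof of \Cref{coro:wass_dist} (using $\mathrm{e}^t \geq t^{\beta_r}/\beta_r!$ for $t \geq 0$, $r\beta_r \geq 1$, and $\absLigne{\varphi(x)} \leq \norm{x}$ for $\varphi \in \Lipset_0$) shows that $\absLigne{\varphi(x)} \leq \beta_r \exp[\beta_r \norm{x}^{\upalpha k}]$ for every $x \in \rset^d$ and every $\varphi \in \Lipset_0$, so the growth condition required in \Cref{thm:big_theo_extension} holds with the \emph{fixed} constant $C_\varphi = \beta_r$. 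Also $M_{1,\varphi} = 1$ for every $\varphi \in \Lipset_0$. Since \rref{assum:F} forces $\norm{F(x)} \to +\infty$ as $\norm{x} \to +\infty$ and $F$ is continuous, $F^{-1}(0)$ is compact, hence included in some open ball $\msu = \ball{0}{R}$; for $x \in \msu$ and $\varphi \in \Lipset_0$ this gives $\absLigne{\varphi(x)} \leq \norm{x} \leq R$, so $M_{0,\varphi} = R$. Finally, global Lipschitz continuity of $\Psi$ makes it $M_{1,\Psi}$-Lipschitz on $\msu$ for some $M_{1,\Psi} \geq 0$, and \rref{assum:psi} is assumed, so \Cref{thm:big_theo_extension} is applicable.

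It then remains to insert these uniform choices into \Cref{thm:big_theo_extension}: there are $A \in \rmc(\rset_+, \rset_+)$ and $\bvareps \in \rmc(\rset_+, \rset_+^\star)$, depending on $F$, $\Psi$, $\msu$ but not on $\varphi$, such that for every $\varphi \in \Lipset_0$ and every $\vareps \in \ooint{0, \bvareps(\beta_r)}$
\begin{equation*}
  \absLigne{\pi_\vareps^\Psi[\varphi] - \pi_0^\Psi[\varphi]} \leq A(\beta_r)(1 + M_{0,\varphi} + M_{1,\varphi}) \vareps^{1/k} \leq 2 A(\beta_r)(1 + R) \vareps^{1/k} \eqsp .
\end{equation*}
Taking the supremum over $\varphi \in \Lipset_0$ and combining with the duality identity yields the claim with $A_2 = 2 A(\beta_r)(1 + R)$ and $\bvareps = \bvareps(\beta_r)$. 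I do not expect any real obstacle here: the whole analytic content is already in \Cref{thm:big_theo_extension}, and the only two points needing (minor) attention are that the constants $A$ and $\bvareps$ there be independent of the test function — which, once $C_\varphi$ has been fixed uniformly to $\beta_r$, is exactly their stated independence from $\varphi$ — and that $M_{1,\Psi}$ be finite, which the global Lipschitz hypothesis on $\Psi$ guarantees.
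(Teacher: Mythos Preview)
Your proposal is correct and follows exactly the approach indicated by the paper, which simply states that the proof is similar to the one of \Cref{coro:wass_dist}. You have carried out this similarity in full detail, replacing \Cref{thm:big_theo} by \Cref{thm:big_theo_extension} and noting that the extra Lipschitz hypothesis on $\Psi$ is precisely what is needed to apply the latter.
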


\begin{proof}
  The proof is similar to the one of \Cref{coro:wass_dist}.
\end{proof}

{ We conclude this section with a first application of our
  results to the study of the concentration of the posterior of a Variational
  AutoEncoder (VAE). Let $p, d \in \nset$ with $d > p$. Assume we have access to
  a dataset $x_{1:n} = \{x_i\}_{i=1}^n \in (\rset^d)^n$ with $n \in \nset$ such
  that $\{x_i\}_{i=1}^n$ are i.i.d. samples from $\mu \in \Pens(\rset^d)$ a
  target data distribution. We consider a generative model to approximately sample
  from $\mu$ defined as follows: let $\eta$ be an easy-to-sample distribution in
  $\rset^p$ with density $p_\eta$ and for any $z \in \msz$, let
  $p_\theta(\cdot|z) = \mathcal{N}(m_\theta(z), \vareps \Id/2)$ with
  $\vareps > 0$ and $m_\theta : \ \rset^p \to \rset^d$ given by a neural network
  \footnote{Note that in practice the covariance matrix is also parameterized by
    a neural network but for simplicity we keep it fixed.} with parameters
  $\theta \in \Theta$. The generative model is trained by maximizing an Evidence
  Lower Bound (ELBO) which requires introducing an encoding probability
  distribution. Assuming that the generative model is trained we obtain a set of
  parameters $\theta^\star \in \Theta$ and the generative model is then given
  for any $x \in \rset^d$ by
  \begin{equation}
    \textstyle{
      p_{\theta^\star}(x) = \int_{\rset^p} p_\eta(z) p_{\theta^\star}(x|z) \rmd z \eqsp . 
      }
    \end{equation}
    For encoding purposes, we are also interested in the posterior distribution given for any
    $x \in \rset^d$, $z \in \rset^p$ by
    $p_{\theta^\star}(z|x) = p_\eta(z) p_{\theta^\star}(x|z) /
    p_{\theta^\star}(x)$. Note that for any $x \in \rset^d$ and $\msa \in \mcb{\rset^p}$ we have 
    \begin{equation}
      \textstyle{
        \int_{\msa} p_{\theta^\star}(z|x) \rmd z = \int_{\msa} p_\eta(z) \exp[-\normLigne{x - m_\theta(z)}^2/\vareps] \rmd z / \int_{\rset^p} p_\eta(z) \exp[-\normLigne{x - m_\theta(z)}^2/\vareps] \rmd z \eqsp . 
        }
    \end{equation}
    In particular, we have that for any $x \in \rset^d$ the distribution
    $\pi_{x, \vareps}$ with density $p_{\theta^\star}(\cdot|x)$ is of the form
    \eqref{eq:twisted_def} with $\Psi \leftarrow p_\eta$ and
    $F_x(z) = x - m_\theta(z)$. Under \rref{assum:F} and
    \textup{\rref{assum:psi}}, we define $\pi_{x, 0}$ such that for any
    $\msa \in \mcb{\rset^p}$
\begin{equation}
  \label{eq:pi_ll}
  \textstyle{
    \pi_{x,0}(\msa) = \left. \int_{m_\theta^{-1}(x) \cap \msa} p_\eta(z) \jacinvx{z} \rmd \calH^{0}(z) \middle/ \int_{m_\theta^{-1}(x)} p_\eta(z) \jacinvx{z} \rmd \calH^{0}(z) \right .\eqsp .
    }
  \end{equation}
  Assuming that $p_\eta$ is Lipschitz, we can apply \Cref{coro:wass_dist_extension}
  and there exist $A_{2,x} \geq 0$ and $\bvareps_x > 0$ such that for any
  $\vareps \in \ooint{0, \bvareps_x}$ we have
  \begin{equation}
    \wassersteinD[1](\pi_{x,\vareps}, \pi_{x,0}) \leq A_{2,x} \vareps^{1/2} \eqsp .
  \end{equation}
  This result provides quantitative convergence bounds for the 
  posterior distribution $p_{\theta^\star}(\cdot|x)$ towards a limiting distribution
  $\pi_{x,0}$. Note that $\pi_{x,0}$ will assign more mass on the points where 
(a) the prior distribution $p$ is high ; 
(b) the generalized Jacobian is small, i.e. the mapping $m_{\theta^\star}$
  does not fluctuate too much around $z \in m_{\theta^\star}^{-1}(x)$.
}


\section{Applications}
\label{sec:applications}

In this section, we present two applications of our results. First, we draw links
between two different maximum entropy distributions in \Cref{sec:from-macr-micr}. Then,
we study the convergence of SGLD for non-convex minimization in
\Cref{sec:non_cvx_mon}.

\subsection{Maximum entropy distributions}
\label{sec:from-macr-micr}

\subsubsection{Maximum entropy distributions}
\label{sec:maxim-entr-models}

Let $\msx \subset \rset^d$ be a compact space with
$\interior(\msx) \neq \emptyset$ and $G: \msx \to \rset^p$ measurable. The
maximum entropy distribution with constraint $G$ is denoted $\pi_{\mathrm{Ent}}$ and is
given by
\begin{equation}
  \label{eq:maximum_compact}
  \textstyle{\pi_{\mathrm{Ent}} = \argmax \ensembleLigne{H(\pi)}{\pi \in \Pens_{\Lebesgue}(\msx), \ \pi[\normLigne{G}] < +\infty, \ \pi[G] =0}} \eqsp ,
\end{equation}
where $\Pens_\Lebesgue(\msx)$ is the set of probability measures on $\msx$ which
admit a density w.r.t. the Lebesgue measure,
$\pi[G] = \int_\msx G(x) \rmd \pi(x)$ and
$H(\pi) = -\int_{\msx} \log (\rmd \pi / \rmd \Lebesgue)(x) \rmd \pi(x)$ for any
$\pi \in \Pens_\Lebesgue(\msx)$, where $\Lebesgue$ is the Lebesgue measure on
$\rset^d$. Note that $H(\pi)$ is well-defined (but can be infinite) since $\msx$ is 
compact and in
this case $\KLLigne{\pi}{\mu} = -H(\pi) + \log(\Leb(\msx))$, where $\mu$ is the
uniform measure on $\msx$ and we recall that $\Leb$ is the Lebesgue measure.

Such maximum entropy distributions naturally arise in many areas such as statistical
physics \citep{jaynes1957info,BrunaMallat}, econometrics
\citep{golan2008information}, generative modeling
\citep{portilla2000parametric,lu2015learning,de2021maximum}, reinforcement
learning \citep{ziebart2008maximum} or image processing
\citep{geman1984stochastic,leclaire2017stochastic}. {In texture
  synthesis applications such as \cite{lu2015learning,de2021maximum}, the
  feature mapping $G$ is given by a pretrained neural network as in
  \cite{gatys2015texture}.} In what follows, we introduce two extensions of
\eqref{eq:maximum_compact}.  More precisely, we use the analogy between the
maximization of the entropy and the minimization of an appropriate
Kullback-Leibler divergence to extend \eqref{eq:maximum_compact} on non-compact
spaces, following \cite{de2021maximum}.

\paragraph{Macrocanonical distributions} Given $\mu \in \Pens(\rset^d)$ and
$G: \ \rset^d \to \rset^p$ measurable, we say that
$\pi^\star \in \Pens(\rset^d)$ is a macrocanonical distribution
\citep{BrunaMallat} with reference measure $\mu$ and constraint $G$ if it
satisfies
\begin{equation}
  \textstyle{\pi^\star \in \argmin \ensembleLigne{\KLLigne{\pi}{\mu}}{\pi \in \Pens(\rset^d),\ \pi[\normLigne{G}]<+\infty, \ \pi[G]=0}} \eqsp . 
\end{equation}
Using results from information geometry \citep{csiszar1975divergence} any
macrocanonical distribution can be written as an exponential distribution under mild
assumptions.
\begin{proposition}[\citep{de2021maximum}]
  \label{prop:gibbs_measure}
  Assume that $G \in \rmc(\rset^d, \rset^p)$ and that there exist
  $\upalpha, \upbeta, \eta >0$ with $\upbeta > \upalpha$ such that
  \begin{equation}
    \textstyle{\sup \ensembleLigne{\normLigne{G(x)}(1+\normLigne{x}^\upalpha)^{-1}}{x \in \rset^d}<+\infty \eqsp , \qquad \int_{\rset^d} \exp[\eta \normLigne{x}^\upbeta] \rmd \mu(x) < +\infty \eqsp .}    
  \end{equation}
  In addition, assume that for any $\theta \in \rset^p$ with
  $\normLigne{\theta} = 1$, we have
  $\mu (\ensembleLigne{x \in \rset^d}{\langle \theta, G(x) \rangle < 0}) > 0$.
  Then there exists a unique macrocanonical distribution with constraint $G$ and
  reference measure $\mu$ denoted $\pi^\star$. In addition, there exists
  $\theta^\star \in \rset^p$ such that for any $x \in \rset^d$
  \begin{equation}
    \label{eq:gibbs}
    \textstyle{(\rmd \pi^\star / \rmd \mu)(x) = \exp[-\langle \theta^\star, G(x) \rangle - L(\theta^\star)] \eqsp , \qquad L(\theta^\star) = \log \int_{\rset^d} \exp[-\langle \theta^\star, G(x) \rangle ] \rmd \mu(x) \eqsp . }
  \end{equation}
\end{proposition}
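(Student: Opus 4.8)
The plan is to prove the statement by convex duality, along the classical maximum-entropy/information-geometry route: introduce the log-partition function
\[
  L(\theta) = \log \int_{\rset^d} \exp[-\langle \theta, G(x)\rangle] \rmd \mu(x) \eqsp , \qquad \theta \in \rset^p \eqsp ,
\]
show that it is finite, $\rmc^1$, convex and \emph{coercive}, deduce that it attains its minimum at some $\theta^\star$, and then verify that the Gibbs measure $\pi^\star$ defined by \eqref{eq:gibbs} is a macrocanonical distribution and the only one.

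\textbf{Finiteness, smoothness and convexity of $L$.} From the growth hypothesis there is $\Ktt \geq 0$ with $\normLigne{G(x)} \leq \Ktt(1 + \normLigne{x}^\upalpha)$, so for each fixed $\theta$,
\[
  \exp[-\langle\theta, G(x)\rangle] \leq \exp[\normLigne{\theta}\Ktt(1+\normLigne{x}^\upalpha)] \leq \rmC_\theta \exp[\eta \normLigne{x}^\upbeta] \eqsp ,
\]
the last step because $\upbeta > \upalpha$; since $\int \exp[\eta\normLigne{x}^\upbeta]\rmd\mu < +\infty$ this gives $L(\theta) \in \rset$. The same domination — now carrying the extra polynomial factors $\normLigne{G(x)}$ and $\normLigne{G(x)}^2$, still absorbed by $\exp[\eta\normLigne{x}^\upbeta]$ and locally uniform in $\theta$ — would let me differentiate under the integral, so that $\theta \mapsto Z(\theta) = \int \exp[-\langle\theta,G\rangle]\rmd\mu$ is $\rmc^1$ with $\nabla Z(\theta) = -\int G(x)\exp[-\langle\theta,G(x)\rangle]\rmd\mu(x)$, while Hölder's inequality gives convexity of $L$; hence $L \in \rmc^1(\rset^p)$ with $\nabla L(\theta) = -\int_{\rset^d} G(x) \exp[-\langle\theta,G(x)\rangle - L(\theta)] \rmd \mu(x)$.

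\textbf{Coercivity of $L$.} This is where the hypothesis "$\mu(\{x : \langle\theta, G(x)\rangle < 0\}) > 0$ for every unit $\theta$" is used, and I expect it to be the main obstacle: the pointwise positivity must be upgraded to a \emph{uniform} lower bound over the unit sphere. Fix a unit vector $u$. Writing $\{\langle u, G\rangle < 0\}$ as the increasing union of the sets $\{\langle u, G\rangle < -1/n\}\cap\cball{0}{n}$, there are $\delta_u, R_u > 0$ with $\mu(\Omega_u) > 0$, where $\Omega_u = \{x \in \cball{0}{R_u} : \langle u, G(x)\rangle < -\delta_u\}$. Since $G$ is continuous, hence uniformly continuous on $\cball{0}{R_u}$, there is a relatively open neighborhood $\mcv_u \ni u$ of the unit sphere such that $\langle v, G(x)\rangle < -\delta_u/2$ for all $x \in \Omega_u$ and $v \in \mcv_u$, so $\mu(\{\langle v, G\rangle < -\delta_u/2\}) \geq \mu(\Omega_u)$ for every $v \in \mcv_u$. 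Covering the compact unit sphere by finitely many $\mcv_{u_1}, \dots, \mcv_{u_m}$ and setting $\delta = \min_j \delta_{u_j}/2 > 0$, $c = \min_j \mu(\Omega_{u_j}) > 0$, I obtain, for every unit $u$ and $t > 0$,
\[
  Z(tu) \geq \int_{\{\langle u,G\rangle < -\delta\}} \exp[-t\langle u, G(x)\rangle]\rmd\mu(x) \geq c\exp[t\delta] \eqsp ,
\]
hence $L(\theta) \geq \log c + \normLigne{\theta}\delta \to +\infty$ as $\normLigne{\theta}\to+\infty$.

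\textbf{Existence and uniqueness of $\pi^\star$.} A finite convex coercive function on $\rset^p$ has compact sublevel sets, so it attains its infimum; take a minimizer $\theta^\star$, for which $\nabla L(\theta^\star) = 0$, i.e. $\int_{\rset^d} G(x)\exp[-\langle\theta^\star, G(x)\rangle]\rmd\mu(x) = 0$. Set $(\rmd\pi^\star/\rmd\mu)(x) = \exp[-\langle\theta^\star,G(x)\rangle - L(\theta^\star)]$: this is a probability measure with $\pi^\star[\normLigne{G}] < +\infty$ (same domination as in Step 1) and $\pi^\star[G] = 0$ (the stationarity equation), so $\pi^\star$ is feasible. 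Finally, for any feasible $\pi$ — i.e. $\pi[\normLigne{G}] < +\infty$ and $\pi[G] = 0$ — either $\pi \not\ll\mu$, whence $\KLLigne{\pi}{\mu} = +\infty$, or $\pi \ll \mu$, and then, since $\rmd\pi^\star/\rmd\mu > 0$ everywhere, the chain rule together with $\pi[G] = \pi^\star[G] = 0$ yields
\[
  \KLLigne{\pi}{\mu} = \KLLigne{\pi}{\pi^\star} + \int_{\rset^d}\log(\rmd\pi^\star/\rmd\mu)\rmd\pi = \KLLigne{\pi}{\pi^\star} - L(\theta^\star) = \KLLigne{\pi}{\pi^\star} + \KLLigne{\pi^\star}{\mu} \geq \KLLigne{\pi^\star}{\mu} \eqsp ,
\]
with equality if and only if $\KLLigne{\pi}{\pi^\star} = 0$, i.e. $\pi = \pi^\star$. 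Thus $\pi^\star$ is the unique macrocanonical distribution and it has the form \eqref{eq:gibbs}. Apart from the coercivity step, the argument is routine bookkeeping with the two growth/integrability assumptions and the standard Pythagorean identity of information geometry.
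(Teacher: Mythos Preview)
The paper does not give its own proof of this proposition: it is stated with a citation to \cite{de2021maximum} and used as a black box. So there is no in-paper argument to compare against, only the approach of the cited reference, which is the classical Csisz\'ar information-geometry route (also alluded to in the paper's introduction via \cite{csiszar1975divergence}).

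Your proof is correct and is precisely this classical approach: show the log-partition function $L$ is finite, $\rmc^1$ and convex via the growth/integrability trade-off $\upalpha < \upbeta$, upgrade the pointwise negativity hypothesis to a uniform one by a compactness argument on the unit sphere to get coercivity, pick a minimizer $\theta^\star$, and conclude by the Pythagorean identity for the Kullback--Leibler divergence. The coercivity step, which you correctly single out as the crux, is handled cleanly; the only mild sloppiness is phrasing (``continuity of $G$'' is not what you use --- you use boundedness of $G$ on $\cball{0}{R_u}$ and linearity in $v$), but the argument you actually write is right. The final Pythagorean computation is also fine: feasibility gives $\pi[\normLigne{G}] < +\infty$, so $\log(\rmd\pi^\star/\rmd\mu) = -\langle\theta^\star,G\rangle - L(\theta^\star)$ is $\pi$-integrable and the decomposition is legitimate.
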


We refer to \cite{de2021maximum} for a relaxation of these conditions and a
detailed study of the macrocanonical distribution. \Cref{prop:gibbs_measure} ensures
that macrocanonical distributions can be written as Gibbs measures. We emphasize
that these distributions satisfy a constraint in expectation. An alternative way
to define maximum entropy distributions under constraints is to ensure almost sure
equality. This tighter constraint yields new maximum entropy distributions called
microcanonical distributions.

\paragraph{Microcanonical distributions} A microcanonical
distribution $\pi^\star \in \Pens(\rset^d)$ with reference measure $\mu$ and constraint
$G$ can be defined as 
\begin{equation}
  \label{eq:micro}
  \textstyle{\pi^\star \in \argmin \ensembleLigne{\KLLigne{\pi}{\mu}}{\pi \in \Pens(\rset^d),\ \pi[\normLigne{G}]=0}} \eqsp . 
\end{equation}
We emphasize that imposing \eqref{eq:micro} is equivalent to impose that
$\pi^\star$ has minimum Kullback-Leibler divergence w.r.t. $\mu$ among all
distributions which satisfy $G=0$ almost surely.  Note that if there exists a
microcanonical distribution $\pi^\star$ with $\KLLigne{\pi^\star}{\mu}<+\infty$ then
$\mu(\msa) > 0$ where $\msa = \ensembleLigne{x \in \rset^d}{G(x) = 0}$ and in
this case $\pi^\star = \mu(\cdot \cap \msa)/\mu(\msa)$. In what follows, we will
say that $\pi^\star$ is the uniform microcanonical distribution with constraint
$G$ if $G^{-1}(0)$ is compact with $\calH^{d-\min(d,p)}(G^{-1}(0))<+\infty$ and
$\pi^\star$ is the uniform distribution on $G^{-1}(0)$, \ie \ for any
$\msa \in \mcb{\rset^d}$, we have
\begin{equation}
  \label{eq:micro_model}
  \textstyle{
    \pi^\star(\msa) = \int_{G^{-1}(0) \cap \msa} \rmd \calH^{d-\min(d,p)}(x) / \calH^{d-\min(d,p)}(G^{-1}(0)) \eqsp .
    }
  \end{equation}

  Note that contrary to macrocanonical distributions which admit a
  representation as a Gibbs measure, see \Cref{prop:gibbs_measure}, the
  microcanonical distributions are concentrated on the set $G^{-1}(0)$. In the next
  section, we draw links between these two maximum entropy distributions.

\subsubsection{From macrocanonical to microcanonical}
\label{sec:from-macr-micr-1}

In order to draw links between macrocanonical and microcanonical distributions we
consider a specific sequence of macrocanonical distributions associated with the
constraint of the form $G_\vareps = \normLigne{F}^2 - \vareps$ for $\vareps > 0$, and where $F: \ \rset^d \to \rset^p$. Let
$\mu \in \Pens(\rset^d)$ be a reference probability measure. Under the
conditions of \Cref{prop:gibbs_measure}, we define a family of measures
$\{\macroeps\}_{\vareps > 0}$ such that for any $\vareps > 0$, $\macroeps$
is the macrocanonical distribution associated with $G_\vareps$ and $\mu$. More
precisely, for any $\vareps > 0$, there exists $\theta_\vareps \in \rset$ such
that for any $x \in \rset^d$
\begin{equation}
  \label{eq:macro_eps}
  \textstyle{(\rmd \macroeps / \rmd \mu)(x) = \exp[-\theta_\vareps \normLigne{F(x)}^2 - L_\vareps] \eqsp , \quad L_\vareps = \log \int_{\rset^d} \exp[-\theta_\vareps \normLigne{F(x)}^2] \rmd \mu(x) \eqsp . }
\end{equation}
Our goal is to study the behavior of the family $\{\macroeps\}_{\vareps > 0}$
when $\vareps \to 0$. In particular, we show that there exists a reference
measure $\mu$ such that the limit $\pi_0 = \lim_{\vareps \to 0} \macroeps$
exists where $\pi_0$ is the uniform microcanonical distribution
\eqref{eq:micro_model}. We start with the following proposition which ensures
that $\lim_{\vareps \to 0} \theta_\vareps = +\infty$ with linear rate.

\begin{proposition}
  \label{prop:behavior_theta}
  Let $\mu \in \Pens(\rset^d)$ and $F: \ \rset^d \to \rset^p$. Assume that the
  conditions of \Cref{prop:gibbs_measure} with
  $G_\vareps = \normLigne{F}^2 - \vareps$ for any $\vareps > 0$ are satisfied
  and  let $\macroeps$ be the macrocanonical distribution
  with constraint $G_\vareps$ and reference measure $\mu$. Assume that there
  exists $\Psi: \ \rset^d \to \rset_+$ such that $\mu$ admits a density
  w.r.t. the Lebesgue measure given by $\Psi$. In addition, assume that
  \rref{assum:F} and \textup{\rref{assum:psi}} hold. Then, we have that
  $\theta_\vareps \sim_{\vareps \to 0} p / (2\vareps)$.
\end{proposition}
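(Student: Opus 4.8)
We recast the statement analytically. By \Cref{prop:gibbs_measure} the macrocanonical distribution $\macroeps$ is unique and has density proportional to $\Psi(x)\exp[-\theta_\vareps\normLigne{F(x)}^2]$ with respect to the Lebesgue measure, and its defining constraint $\macroeps[G_\vareps]=0$ reads $\macroeps[\normLigne{F}^2]=\vareps$. Hence, writing for $\theta\geq0$
\begin{equation*}
  h(\theta)=\left.\int_{\rset^d}\normLigne{F(x)}^2\Psi(x)\exp[-\theta\normLigne{F(x)}^2]\rmd x\,\middle/\int_{\rset^d}\Psi(x)\exp[-\theta\normLigne{F(x)}^2]\rmd x\right.\eqsp ,
\end{equation*}
the scalar $\theta_\vareps$ is the unique solution of $h(\theta)=\vareps$. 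The map $h$ is well defined on $\coint{0,+\infty}$: the denominator is finite and positive since $\mu=\Psi\Leb\in\Pens(\rset^d)$, and the numerator is finite since $\sup_{t\geq0}t\exp[-\theta t]=1/(\rme\theta)$. The claim $\theta_\vareps\sim p/(2\vareps)$ is equivalent to the conjunction of (i) $\theta_\vareps\to+\infty$ as $\vareps\to0$ and (ii) $h(\theta)\sim p/(2\theta)$ as $\theta\to+\infty$, since these yield $\vareps=h(\theta_\vareps)=(p/2)\theta_\vareps^{-1}(1+o(1))$.

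Claim (i) is elementary. Differentiating under the integral sign (licit for $\theta>0$) gives $h'(\theta)=-\mathrm{Var}_{\pi^{(\theta)}}(\normLigne{F}^2)<0$, where $\pi^{(\theta)}$ has density proportional to $\Psi\exp[-\theta\normLigne{F}^2]$; the variance is \emph{strictly} positive because $\normLigne{F}^2$ vanishes on the Lebesgue-null set $F^{-1}(0)$ (a submanifold of dimension $d-\min(d,p)<d$) and is positive $\Leb$-a.e., hence is not $\pi^{(\theta)}$-a.s.\ constant. So $h$ is strictly decreasing on $\coint{0,+\infty}$ and $h>0$ there. As $\mu(F^{-1}(0))=0$, we have $h(0)=\mu[\normLigne{F}^2]>0$, so $\theta_\vareps>0$ for all small $\vareps$; and if $\theta_\vareps$ stayed bounded along some sequence $\vareps\to0$, it would converge (up to extraction) to some $\theta_\star\in\coint{0,+\infty}$, whence $\vareps=h(\theta_\vareps)\to h(\theta_\star)>0$, a contradiction. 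Thus $\theta_\vareps\to+\infty$.

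The core is (ii); I treat the case $d\geq p$, which is the one relevant to \Cref{sec:from-macr-micr} (the case $d\leq p$ is analogous, with the area formula replacing the coarea formula). By \rref{assum:F}, $F^{-1}(0)$ is compact and $\jac{x}\neq0$ on it; since $\{\normLigne{F}^2<\epsilon\}\downarrow F^{-1}(0)$ as $\epsilon\downarrow0$, fix a bounded open $\msu\supset F^{-1}(0)$ on which $\jac{\cdot}$ is bounded away from $0$ and $\Psi$ is bounded, and such that $\normLigne{F}^2\geq\epsilon_0>0$ on $\msu^\complementary$. Bounding $\exp[-\theta\normLigne{F}^2]\leq\exp[-(\theta-1)\epsilon_0]\exp[-\normLigne{F}^2]$ on $\msu^\complementary$ (and $t\exp[-(\theta-1)t]\leq\epsilon_0\exp[-(\theta-1)\epsilon_0]$ for $t\geq\epsilon_0$ and $\theta$ large, for the weighted integral) together with $\int_{\rset^d}\Psi\exp[-\normLigne{F}^2]\rmd x<+\infty$, the contributions of $\msu^\complementary$ to the numerator and denominator of $h(\theta)$ are $O(\exp[-\epsilon_0\theta/2])$, so $\msu$ may replace $\rset^d$ in both.

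On $\msu$, the coarea formula applied to $x\mapsto\Psi(x)\jacinv{x}\exp[-\theta\normLigne{F(x)}^2]\indi{\msu}(x)$ gives, with $g(t):=\int_{F^{-1}(t)\cap\msu}\Psi(x)\jacinv{x}\rmd\calH^{d-p}(x)$,
\begin{equation*}
  \int_\msu\Psi(x)\exp[-\theta\normLigne{F(x)}^2]\rmd x=\int_{\rset^p}\exp[-\theta\normLigne{t}^2]g(t)\rmd t\eqsp ,\qquad \int_\msu\normLigne{F(x)}^2\Psi(x)\exp[-\theta\normLigne{F(x)}^2]\rmd x=\int_{\rset^p}\normLigne{t}^2\exp[-\theta\normLigne{t}^2]g(t)\rmd t\eqsp ,
\end{equation*}
and $\int_{\rset^p}g(t)\rmd t=\mu(\msu)<+\infty$. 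The key fact is that $g$ is continuous at $t=0$ with $g(0)=\int_{F^{-1}(0)}\Psi(x)\jacinv{x}\rmd\calH^{d-p}(x)\in\ooint{0,+\infty}$ and $g$ bounded near $0$ --- this is exactly the Lipschitz regularity of $t\mapsto\int_{F^{-1}(t)\cap\msu}\varphi\,\rmd\calH^{d-p}$ established on the way to \Cref{thm:big_theo}. Splitting off $\{\normLigne{t}\geq\rho\}$ (again $O(\exp[-c\theta])$, by the same bound and $\int g<\infty$) and substituting $s=\theta^{1/2}t$, dominated convergence yields $\int_{\rset^p}\exp[-\theta\normLigne{t}^2]g(t)\rmd t=\pi^{p/2}g(0)\theta^{-p/2}(1+o(1))$ and $\int_{\rset^p}\normLigne{t}^2\exp[-\theta\normLigne{t}^2]g(t)\rmd t=(p/2)\pi^{p/2}g(0)\theta^{-p/2-1}(1+o(1))$, using $\int_{\rset^p}\normLigne{s}^2\exp[-\normLigne{s}^2]\rmd s=(p/2)\pi^{p/2}$. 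Dividing and absorbing the $O(\exp[-c\theta])$ remainders (negligible against $\theta^{-p/2}$ as $g(0)>0$) gives $h(\theta)=(p/2)\theta^{-1}(1+o(1))$, which is (ii); combined with (i) this gives $\theta_\vareps\sim p/(2\vareps)$. The principal obstacle is this last step: reducing the two global integrals to a neighborhood of $F^{-1}(0)$ via the truncations in $x$ and in $t$, invoking the geometric-measure-theoretic continuity of the level-set functional $g$ at the origin with $g(0)$ finite and strictly positive (the technical heart shared with \Cref{thm:big_theo}), and justifying the dominated-convergence passage in the rescaled integrals; everything else is routine.
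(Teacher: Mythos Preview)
Your proof is correct and mirrors the paper's route: localize to a neighborhood of $F^{-1}(0)$, apply the coarea formula to pass to the level-set functional $g(t)=\L_t(1)$, invoke its continuity at $0$ (the paper's \Cref{prop:lip_l}), rescale by $s=\theta^{1/2}t$, and pass to the limit by dominated convergence---this is exactly the content of \Cref{prop:limite_d_geq_p} and \Cref{prop:lim_d_geq_p}; your explicit monotonicity argument $h'(\theta)=-\mathrm{Var}_{\pi^{(\theta)}}(\normLigne{F}^2)<0$ for establishing $\theta_\vareps\to+\infty$ is a welcome addition that the paper's proof of \Cref{prop:behavior_theta_gene} leaves implicit. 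One small caveat: for $d\leq p$ the paper does not simply swap in the area formula but uses Morse's lemma (\Cref{prop:upper_bound_i_laplace_exp}) to linearize near each isolated zero---integrating $\exp[-\theta\normLigne{y}^2]$ over the $d$-dimensional image $F(\msu)\subset\rset^p$ with respect to $\calH^d$ is less transparent than your phrase ``analogous'' suggests, though the claim there is standard Laplace and equally routine.
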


\begin{proof}
  We provide a sketch of the proof. The whole proof is postponed to
  \Cref{sec:proof-macro}. In the case where $d \geq p$ we use tools from
  geometric measure theory to derive an equivalent of
  $\int_{\rset^d} \Psi(x) \normLigne{F(x)}^2 \exp[-\normLigne{F(x)}^2/\vareps] \rmd x$
  when $\vareps \to 0$ akin to \Cref{prop:upper_bound_i_in}. In the case
  $d \leq p$, using Morse lemma and classical Laplace analysis we show a similar
  result. We conclude upon combining these results and that
  $\macroeps[\normLigne{F}^2] = \vareps$.
\end{proof}

Note that the equivalence in \Cref{prop:behavior_theta} does not depend on the reference measure $\mu$. 
In \Cref{sec:proof-macro}, we prove an extension of \Cref{prop:behavior_theta},
see \Cref{prop:behavior_theta_gene}. In particular we show that
$G_\vareps = \normLigne{F}^2 - \vareps$ can be replaced by
$G_\vareps = \normLigne{F}^k - \vareps$ with $k \in \nsets$ under similar
conditions. In particular, we get that for any $k \in \nsets$,
$\theta_\vareps \sim_{\vareps \to 0} \mathcal{C}_k / \vareps$ with
$\mathcal{C}_k$ explicit if $k=2$.  Finally note that under
the same conditions as \Cref{prop:behavior_theta}, we have that
$\pi_{\vareps}^\Psi[\normLigne{F}^2] \sim_{\vareps \to 0} p /(2\vareps)$, where we
recall that for any $\vareps >0$ and $\msa \in \mcb{\rset^d}$
\begin{equation}
  \textstyle{\pi_\vareps^\Psi(\msa) = \int_{\msa} \Psi(x) \exp[-\normLigne{F(x)}^2/\vareps] \rmd x / \int_{\rset^d} \Psi(x) \exp[-\normLigne{F(x)}^2/\vareps] \rmd x \eqsp . }
\end{equation}
This comes from the facts that $\macroeps=\pi_{\theta_\vareps}^\Psi$ and $\macroeps[\normLigne{F}^2] = \vareps$.

The following proposition establishes quantitative bounds w.r.t. the Wasserstein
of order $1$ between $\{\macroeps\}_{\vareps >0}$ and a limiting measure. We
obtain this result upon combining \Cref{prop:behavior_theta} and
\Cref{coro:wass_dist}.

\begin{proposition}
  \label{prop:convergence_macro}
  Assume the same conditions as \Cref{prop:behavior_theta}. Then there exist
  $\bvareps > 0$ and $A_3 \geq 0$ such that for any
  $\vareps \in \ocint{0, \bvareps}$ we have
  \begin{equation}
    \wassersteinD[1](\macroeps, \pi_0^\Psi) \leq A_3 \vareps^{1/2} \eqsp ,
  \end{equation}
  where $\pi_0^\Psi$ is given in \eqref{eq:pi_0_def_both_psi}.
\end{proposition}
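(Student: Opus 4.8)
The plan is to recognise $\macroeps$ as a member of the family $\{\pi_\delta^\Psi\}_{\delta > 0}$ from \Cref{thm:big_theo_extension} (with exponent $k = 2$) after a reparametrisation, and then to feed the asymptotics of $\theta_\vareps$ into \Cref{coro:wass_dist_extension}. Since $\mu$ admits the Lebesgue density $\Psi$, \eqref{eq:macro_eps} shows that $\macroeps$ has density w.r.t. $\Leb$ proportional to $\Psi(x)\exp[-\theta_\vareps\norm{F(x)}^2]$; comparing with the definition of $\pi_\vareps^\Psi$, and using that two probability measures with proportional densities coincide, this gives exactly $\macroeps = \pi_{1/\theta_\vareps}^\Psi$. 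Hence $\wassersteinD[1](\macroeps, \pi_0^\Psi) = \wassersteinD[1](\pi_{1/\theta_\vareps}^\Psi, \pi_0^\Psi)$, and it remains to bound the right-hand side by $A_3\vareps^{1/2}$.

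\textbf{Rate of $\theta_\vareps$ and combination.} By \Cref{prop:behavior_theta}, $\theta_\vareps \sim_{\vareps \to 0} p/(2\vareps)$, so there is $\bvareps_1 > 0$ with $\theta_\vareps \geq p/(4\vareps)$, hence $1/\theta_\vareps \leq (4/p)\vareps$, for all $\vareps \in \ocint{0, \bvareps_1}$; in particular $1/\theta_\vareps \to 0$ as $\vareps \to 0$. The hypotheses of \Cref{prop:behavior_theta} include \rref{assum:F} and \textup{\rref{assum:psi}}, and here $k = 2$, so \Cref{coro:wass_dist_extension} (applied with $\delta$ playing the role of $\vareps$) yields constants $A_2 \geq 0$ and $\bvareps_2 > 0$ such that $\wassersteinD[1](\pi_\delta^\Psi, \pi_0^\Psi) \leq A_2\delta^{1/2}$ for every $\delta \in \ooint{0, \bvareps_2}$. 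Taking $\bvareps = \min\{\bvareps_1, (p/4)\bvareps_2\}$, for any $\vareps \in \ocint{0, \bvareps}$ we then have $1/\theta_\vareps \leq (4/p)\vareps < \bvareps_2$, so that $\wassersteinD[1](\macroeps, \pi_0^\Psi) \leq A_2(1/\theta_\vareps)^{1/2} \leq A_2(4/p)^{1/2}\vareps^{1/2}$, which is the claim with $A_3 = A_2(4/p)^{1/2}$.

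\textbf{Main obstacle.} There is essentially no hard step in this argument itself: all the substance lies in \Cref{prop:behavior_theta} (whose proof, using the coarea formula when $d \geq p$ and Morse and Laplace analysis when $d \leq p$, is the genuinely technical part) and in \Cref{coro:wass_dist_extension}. The only point requiring care is the matching of regularity hypotheses, since \Cref{coro:wass_dist_extension} is stated for $\Psi$ Lipschitz continuous (Lipschitzness on a bounded neighbourhood of $F^{-1}(0)$ would already suffice), whereas \textup{\rref{assum:psi}} only requires $\Psi$ continuous; this mild extra regularity should therefore be made explicit, or else one can bypass the corollary altogether by invoking \Cref{thm:big_theo_extension} directly with Lipschitz test functions vanishing at the origin and absorbing the $M_{1,\Psi}$ term exactly as in the proof of \Cref{coro:wass_dist}.
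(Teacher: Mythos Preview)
Your argument is correct and matches the paper's approach exactly: the paper states (just before the proposition) that the result is obtained ``upon combining \Cref{prop:behavior_theta} and \Cref{coro:wass_dist}'', and gives no further proof. Your identification $\macroeps = \pi_{1/\theta_\vareps}^\Psi$ together with the asymptotic $\theta_\vareps \sim p/(2\vareps)$ is precisely the intended combination; you are in fact slightly more careful than the paper, both in citing the $\Psi$-version \Cref{coro:wass_dist_extension} (which is the pertinent one here) and in flagging the Lipschitz hypothesis on $\Psi$ that this corollary carries but \textup{\rref{assum:psi}} alone does not guarantee.
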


\Cref{prop:convergence_macro} establishes quantitative bounds between the family
of macrocanonical distributions $\{\macroeps\}_{\vareps > 0}$ and a limiting
distribution $\pi_0^\Psi$. Note that this limiting distribution is \emph{not}
the uniform microcanonical distribution $\pi^\star$ defined in
\eqref{eq:micro_model} in general, even though it is supported on the set
$F^{-1}(0)$. However, it is possible to sample from this $\pi^\star$ by
choosing $\mu$ such that for any $x \in F^{-1}(0)$,
$\Psi(x) = (\rmd \mu / \rmd \Leb)(x) = \jac{x}$.

\subsubsection{Some simple experiments}

\paragraph{Methodology}

In this experimental section we consider two simple examples of functions
$F:\R^d\to\R^p$ with $d=1$ or $2$ and $p=1$. We consider the associated distributions
$\{\pi_{\vareps}\}_{\vareps > 0}$ and $\{\pi_{\vareps}^\Psi\}_{\vareps > 0}$
  and their limit as $\eps$ goes to $0$, showing in particular that for
  $\Psi=\mathrm{J}F$, it converges to the uniform microcanonical distribution that is the
  uniform distribution on $F^{-1}(0)$. We also check experimentally the scaling
  relation of Proposition \ref{prop:behavior_theta}.

\paragraph{Zeros of a polynomial}
In this first example, let $P:\R\to\R$ be a polynomial. We are interested in the
zeros of $P$, and, more precisely in sampling in a uniform way on the set of the
zeros of $P$. Using the notations of the previous sections, we set $F=P$.  For
$\eps>0$, let us define the two distributions, with respective densities with
respect to the Lebesgue measure on $\R$ given for any $x \in \rset$ by 
\begin{align}
&\textstyle{(\rmd \pi_{\vareps} / \rmd \Leb)(x) =
  \exp[-F(x)^2/\eps]/\int_{\R}\exp[-F(\tilde{x})^2/\eps] \rmd \tilde{x} } \eqsp , \\
  &\textstyle{(\rmd \pi_{\vareps}^\Psi / \rmd \Leb)(x) =
\jac{x} \exp[-F(x)^2/\eps]/\int_{\R}\jac{\tilde{x}} \exp[-F(\tilde{x})^2/\eps] \rmd \tilde{x} }  \eqsp ,
\label{PQpolynom:eq}
\end{align}
where for any $x \in \rset$, $\Psi(x)=\jac{x}=|P'(x)|$.

In \Cref{polynomial.ex1:fig}, we display the polynomial $P$ where for any
$x \in \rset$, $P(x)=x(x-0.5)(x-1.7)(x-2.5)$. We also check numerically the
scaling relation of Proposition \ref{prop:behavior_theta}. Then, in 
\Cref{polynomial.ex2:fig}, we illustrate the different behaviors of $\pi_{\vareps}$
and $\pi_{\vareps}^\Psi$ when $\eps$ is small ($\eps=10^{-3}$ in our experiments). We
also present the histograms obtained using of $N=10^4$ independent samples of
$\pi_{\vareps}$ and of $\pi_{\vareps}^\Psi$. { These samples were simply obtained by the numerical CDF inversion method.} The stars indicate the target limit
distributions (that is the uniform distribution on the zeros of $P$ for the
limit of $\pi_{\vareps}^\Psi$).

\begin{figure}[h]
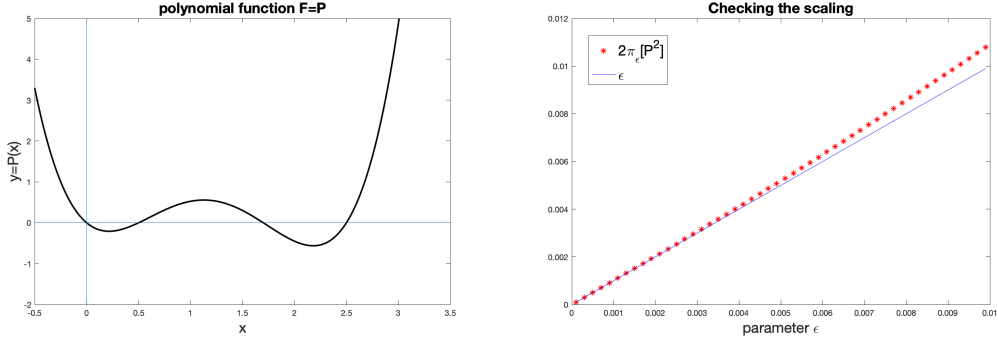

\begin{center}
\includegraphics[width=6.5cm]{FigPolynomP2.png} \hspace{0.5cm}
\includegraphics[width=6.5cm]{FigScaling1d2.png} 
\end{center}
\caption{Left: graph of the polynomial $x\mapsto P(x)$, that has $4$
  zeros. Right: verifying the scaling relation of \Cref{prop:behavior_theta} by
  plotting $\eps\mapsto 2 \pi_{\vareps}[P^2]$, which is equivalent to $\eps$ as $\eps$
  goes to $0$.}
\label{polynomial.ex1:fig}
\end{figure}

\begin{figure}[h]
\centerline{\includegraphics[width=13cm]{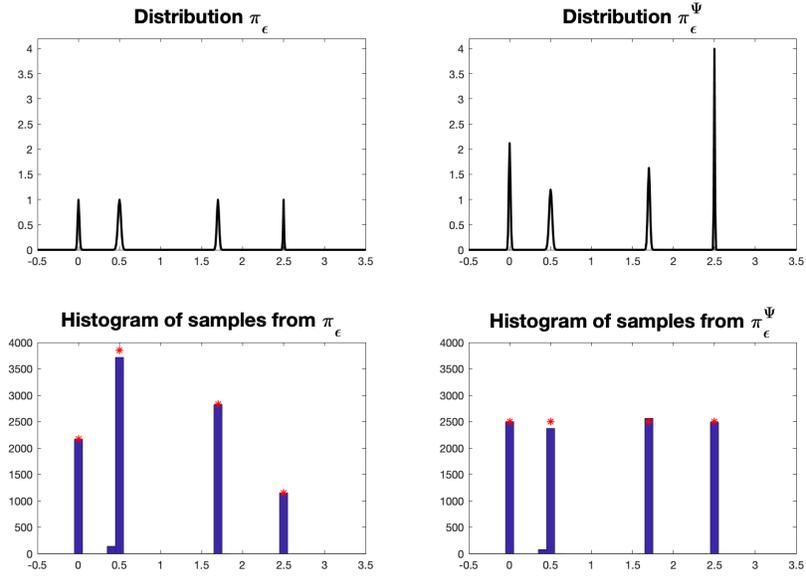}}
\caption{Distributions $\pi_{\vareps}$ and $\pi_{\vareps}^\Psi$ (first line),
  and histogram of their samples from them (second line). This experiment shows
  that the limit distribution of $\pi_{\vareps}^\Psi$ as $\eps$ goes to $0$ is
  the uniform microcanonical distribution given by the uniform distribution on the
  zeros of $P$.}
\label{polynomial.ex2:fig}
\end{figure}

\paragraph{Two-dimensional ellipse}

In this second example, we consider the function $F:\R^2\to\R$ given for any
$x=(x_1,x_2)\in\R^2$ by $F(x)=a_1 x_1^2 + a_2 x_2^2 -1$ with $a_1, a_2 >0$.  For
$\eps>0$, we define $\pi_\vareps$ and $\pi_\vareps^\Psi$ whose densities w.r.t. 
the Lebesgue measure are given for any $x \in \rset^2$ by
\begin{align}
&\textstyle{(\rmd \pi_\vareps / \rmd \Leb)(x) =
  \exp[-\normLigne{F(x)}^2/\eps]/\int_{\R^2}\exp[-\normLigne{F(\tilde{x})}^2/\eps] \rmd \tilde{x} } \eqsp , \\
  &\textstyle{(\rmd \pi_\vareps^\Psi / \rmd \Leb)(x) =
\jac{x} \exp[-\normLigne{F(x)}^2/\eps]/\int_{\R^2}\jac{\tilde{x}} \exp[-\normLigne{F(\tilde{x})}^2/\eps] \rmd \tilde{x} }  \eqsp .
\label{PQellipse:eq}
\end{align}
To sample from $\pi_\vareps$ and $\pi_\vareps^\Psi$, we use two Markov chains given by 
the Unadjusted Langevin Algorithm
\citep{roberts1996exponential,durmus2017nonasymptotic,dalalyan2017theoretical},
given respectively by $X_0, Y_0 \in \rset^2$ and the recursions 
\begin{align}
  &X_{n+1} = X_n - \gamma (\nabla \|F\|^2(X_n) /\eps ) + \sqrt{2\gamma}
    Z_{n+1} \eqsp , \\
  &Y_{n+1} = Y_n  - \gamma (\nabla \|F\|^2(Y_n) /\eps - \nabla \log \jac{Y_n} ) + \sqrt{2\gamma}
Z_{n+1} \eqsp , 
\end{align}
where $\gamma > 0$ is a stepsize and $\{Z_n\}_{n \in \nset}$ is a family of
independent Gaussian random variables with zero mean and identity covariance
matrix. For any $x \in \rset^2$ we have
\begin{align}
  &\textstyle{\jac{x} =2(a_1^2 x_1^2 + a_2^2 x_2^2)^{1/2} \eqsp ,} \quad \textstyle{\nabla \|F\|^2(x)= 2 F(x) \nabla F(x) = 4 (a_1 x_1^2 + a_2 x_2^2
    -1) \left( \begin{array}{l} a_1 x_1 \\  a_2 x_2 \end{array} \right) \eqsp ,} \\
  &\textstyle{\nabla \log \jac{x} = (a_1^2 x_1^2 + a_2^2
  x_2^2)^{-1}  \left( \begin{array}{l} a_1^2 x_1 \\  a_2^2 x_2 \end{array} \right)} \eqsp .
\end{align}
The set $F^{-1}(0)$ is an ellipse, of cartesian equation
$a_1 x_1^2 + a_2 x_2^2 =1$ and has a polar parametrization given for any
$\theta \in [0,2\uppi)$ by
$$\textstyle{r(\theta) = (a_2 + (a_1-a_2)\cos^2\theta)^{1/2} \eqsp .  }$$
This is not an arc-length parametrization, and the infinitesimal length of the
curve element between $\theta$ and $\theta + d\theta$ is given by
$\ell(\theta)d\theta$ where for any $\theta \in \ocint{0,2\uppi}$ we have
$$\ell(\theta) = (r(\theta)^2 + r'(\theta)^2)^{1/2} = (a_2^2 +
(a_1^2-a_2^2)\cos^2\theta)^{1/2}(a_2 + (a_1-a_2)\cos^2\theta)^{-3/2} \eqsp .$$
In \Cref{ellipse.ex1:fig}, we set $a_1=1$, $a_2=4$, $\eps=10^{-3}$,
$\gamma=10^{-5}$, $N=9\times 10^{7}$ iterations in the Markov chains, and
histograms with bins of size $0.05\uppi$.  In \Cref{ellipse.ex1:fig}, we compare
two different histograms: the ones of the angle values ($\theta$) for the points
of the chain $X_n$ (diamonds), of the chain $Y_n$ (stars), and the two
distributions with density
$\theta \mapsto \ell(\theta) / \int_{-\pi}^\pi \ell(\theta) \rmd \theta$ (black
curve) and
$\theta \mapsto \ell(\theta) \jacinv{\theta} / \int_{-\pi}^\pi \ell(\theta)
\jacinv{\theta} \rmd \theta$ (red curve). Note that
$\theta \mapsto \ell(\theta) / \int_{-\pi}^\pi \ell(\theta) \rmd \theta$
corresponds to the Hausdorff measure on the ellipse pushed by the polar
parametrisation $\theta \mapsto r(\theta)$, \ie \ the uniform microcanonical
distribution pushed by $\theta \mapsto r(\theta)$. We observe that the Markov chain
corrected with the generalized Jacobian indeed achieves the uniform 
microcanonical distribution on the set $F^{-1}(0)$. Finally, we also check experimentally in Figure \ref{ellipse.ex2:fig}
the scaling relation of Proposition \ref{prop:behavior_theta}.

\begin{figure}
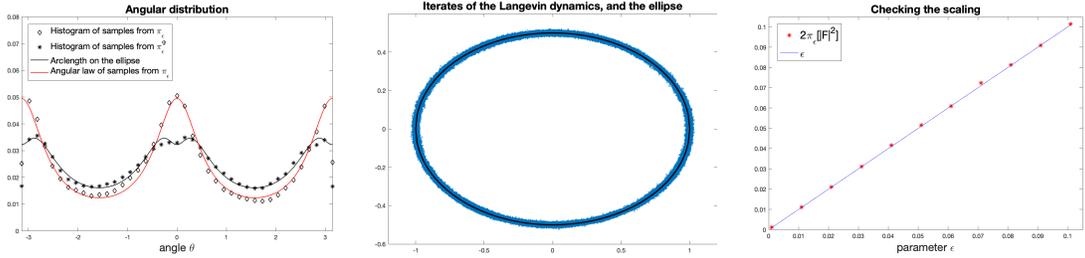

\begin{center}
\begin{tabular}{cc}
\includegraphics[width=.5\linewidth]{FigEllipseDistribSamples2.png} &
\includegraphics[width=.5\linewidth]{FigEllipseIterates2.png} 
\end{tabular}
\end{center}
\caption{Left: histogram of angles of the samples $(X_n)_{n \in \nset}$ and
  $(Y_n)_{n \in \nset}$, showing that $\pi_\vareps^\Psi$ is close to the uniform
  distribution on the ellipse for small values of $\vareps >0$. Right: the
  trajectory of the first $10^6$ samples of $Y_n$ (with a burnin of $10^3$ iterates).}
\label{ellipse.ex1:fig}
\end{figure}

\begin{figure}
\begin{center}
\includegraphics[width=.6\linewidth]{FigScaling2d2.png} 
\end{center}
\caption{Checking the scaling
  relation of Proposition \ref{prop:behavior_theta}.}
\label{ellipse.ex2:fig}
\end{figure}



\subsection{Non-convex minimization}
\label{sec:non_cvx_mon}

\subsubsection{Non-convex setting and related work}
\label{sec:related-work}

In this section, we consider the following minimization problem:
\begin{equation}
  \text{find } x^\star \in \argmin \ensembleLigne{U(x)}{x \in \rset^d} \eqsp ,
\end{equation}
with $U \in \rmc^1(\rset^d, \rset)$ and
$\argmin \ensembleLigne{U(x)}{x \in \rset^d}\neq \emptyset$. Here, we do not
assume that $U$ is convex. Hence, classical first-order optimization schemes
such as gradient descent might get trapped in saddle points. Adding
isotropic Gaussian noise to this dynamics circumvents this issue, see
\cite{duflo1996algorithmes,pelletier1998weak} for instance. The algorithm is
then given by the following recursion: $X_0 \in \rset^d$ and for any
$k \in \nset$
\begin{equation}
  \label{eq:sgd_lazy}
  X_{k+1} = X_k - \gamma_k \nabla U(X_k) + Z_{k+1} \eqsp ,
\end{equation}
where $(Z_k)_{k \in \nset}$ is a sequence of independent Gaussian random
variables with zero mean and covariance matrix $\sigma_k^2 \Id$, with
$(\gamma_k)_{k \in \nset} \in (\rset_+)^\nset$ a sequence of stepsizes and
$(\sigma_k)_{k \in \nset} \in (\rset_+)^\nset$. If $\sigma_k^2 = \gamma_k^2$
for any $k \in \nset$, $\sum_{k \in \nset} \gamma_k = +\infty$ and
$\sum_{k \in \nset} \gamma_k^2 < +\infty$ then the algorithm is in the
\emph{weakly disturbed} regime and under additional assumptions on $U$ one can
show that $(X_k)_{k \in \nset}$ converges almost surely to a local minimizer of
$U$, see \cite{pelletier1998weak} for instance. However,
$(U(X_k)_{k \in \nset})_{k \in \nset}$ does not necessarily converge to the
global minimum of $U$. The intuition behind this behavior is that the variance
of the noise decreases too quickly for the sequence to explore efficiently the
landscape of $U$.

In order to perform global optimization, one can consider \emph{simulated
  annealing} algorithms where $\sigma_k^2 = \gamma_k T_k$ for any $k \in \nset$,
with $(T_k)_{k \in \nset} \in (\rset_+)^\nset$ a sequence of temperatures which
slowly decrease. These algorithms were introduced in the context of discrete
optimization in \cite{kirkpatrick1984optimization} and have been thoroughly
investigated in \cite{gidas1985nonstationary} (discrete state-space),
\cite{gelfand1991recursive,gelfand1993metropolis} (discrete-time algorithm),
\cite{geman1986diffusions,chiang1987diffusion,holley1989asymptotics}
(diffusion), \cite{pelletier1998weak} (CLT type results),
\cite{romeijn1994simulated} (constrained optimization),
\cite{yang2000convergence} (non Gaussian noise) and
\cite{andrieu2001convergence} (control of the sequence in total variation) for
instance. In \cite{hajek1988cooling,hajek1985tutorial} a sufficient and
necessary condition is given on the rate of decrease of $(T_k)_{k \in \nset}$ so
that $U(X_k)_{k \in \nset}$ converges towards the minimum of $U$, more precisely
the condition reads $\sum_{k \in \nset} \exp[-b^\star/T_k] = +\infty$, where
$b^\star>0$ is a parameter which depends only on $U$ called the \emph{kinetic
  barrier} (or depth in \cite{hajek1988cooling,hajek1985tutorial}).  One of the
main limiting factor of simulated annealing is the slow rate of convergence of
$(T_k)_{k \in \nset}$ towards $0$, which is often set as $T_k = C/\log(k+1)$ for
any $k \in \nset$ and for some constant $C \geq 0$. Note that recently the
convergence of modifications of the simulated annealing algorithms under faster
cooling rates have been investigated in discrete spaces by either changing the
Markov chain transitions \citep{choi2019improved} or the energy landscape
\citep{choi2020convergence}, see also \cite{catoni1998energy}. Finally, we
emphasize that most of the results regarding the convergence of
\eqref{eq:sgd_lazy} can be extended to the case where $\nabla U$ is replaced by
an unbiased estimator under additional conditions.

With the advent of neural networks, numerous schemes exploiting the annealing
structure have been proposed to minimize the non-convex losses which arise in
deep learning applications, see \cite{ye2017langevin} for instance. Drawing
connections with unadjusted Langevin algorithms, and in particular Stochastic
Gradient Langevin Dynamics (SGLD) \citep{welling2011bayesian},
\cite{raginsky2017non,zhang2017hitting} replace $\nabla U$ by an unbiased
estimator and let $\gamma_k = \gamma > 0$, $\sigma_k^2 = 2 \gamma \vareps$ for
any $k \in \nset$ with $\vareps > 0$ in \eqref{eq:sgd_lazy}.  Under curvature
and regularity assumptions on the potential $U$, the authors derive quantitative
bounds on $(\expeLigne{U(X_k) - \min_{\rset^d}U})_{k \in \nset}$. Since then
several accelerations have been proposed in the literature
\citep{chau2019stochastic,gao2018global,erdogdu2018global,nguyen2019non,zhang2019nonasymptotic,xu2017global}.

In the next section, we improve the results of \cite{raginsky2017non} by
providing upper bounds w.r.t. the first order Wasserstein distance between the
distribution of $(X_k)_{k \in \nset}$ given by SGLD and an explicit limiting
distribution. Our results complete the ones of \cite{raginsky2017non} which
deal with the behavior of
$(\expeLigne{U(X_k) - \min_{\rset^d}U})_{k \in \nset}$. To the best of our
knowledge, this is the first result establishing quantitative bounds on the
distance between the iterates of SGLD and a limiting measure concentrated on the
minimizers of the target potential $U$.

\subsubsection{Quantitative convergence for SGLD}
\label{sec:quant-conv-sgld}

\paragraph{Setting and notation} In this section we start by recalling the
setting considered in \cite{raginsky2017non}. We assume that there exist a
topological space $(\msz, \mcb{\msz})$, a probability measure
$\mu \in \Pens(\msz, \mcb{\msz})$ and $u: \ \rset^d \times \msz \to \rset_+$
such that for any $z \in \msz$, $u(\cdot, z) \in \rmc^1(\rset^d, \rset_+)$, and
define for any $x \in \rset^d$
\begin{equation}
  \label{eq:def_U}
  \textstyle{U(x) = \int_{\msz} u(x,z) \rmd \mu(z) \eqsp . }
\end{equation}
We also denote by $U^\star$ the global minimum of $U$.  We do not have access to
$\mu$ and $U$ directly but instead consider an empirical version of the target.
Let $n \in \nset$. We define $U_n: \ \rset^d \times \msz^n \to \rset_+$ given
for any $z^{1:n} = \{z_i\}_{i=1}^n \in \msz^n$ and $x \in \rset^d$ by
\begin{equation}
  \textstyle{U_{n}(x, z^{1:n}) = (1/n) \sum_{i=1}^n u(x, z_i) \eqsp . }
\end{equation}
Let $(\msy, \mcy)$ be a measurable space,
$\Rker: \ \msz^n \times \mcy \to \ccint{0,1}$ a Markov kernel and
$g: \ \rset^d \times \msy \to \rset^d$ such that for any $x \in \rset^d$ and
$z^{1:n} \in \msz^n$ we have
\begin{equation}
  \label{eq:sto_grad}
  \textstyle{ \nabla_x U_{n}(x, z^{1:n}) = \int_{\msy} g(x, y) \Rker(z^{1:n}, \rmd y) \eqsp .}
\end{equation}
Let $Z$ be a random variable on $\msz^n$ with distribution $\mu^{\otimes
  n}$. Let $\{Y_k\}_{k \in \nset}$ be a family of independent random variables
on $\msy$ such that conditionally to $Z$ we have for any $k \in \nset$ that
$Y_k$ has distribution $\updelta_Z \Rker$. Finally, let $\vareps > 0$. We
consider the Stochastic Gradient Langevin Dynamics (SGLD) sequence
$(X_k)_{k \in \nset}$ given by the following recursion: $X_0 \in \rset^d$ and
for any $k \in \nset$
\begin{equation}
  \label{eq:recursion_SGLD}
  X_{k+1} = X_k - \gamma g(X_k, Y_k) + \sqrt{2 \gamma \vareps} G_{k+1} \eqsp ,
\end{equation}
where $(G_k)_{k \in \nset}$ is a sequence of independent Gaussian random
variables with zero mean and identity covariance matrix and $\gamma > 0$ is a
stepsize. We also assume that $\{Y_k\}_{k \in \nset}$ and
$\{G_k\}_{k \in \nset}$ are independent. For any $k \in \nset$, we denote by
$\Qker_k: \ \msz^n \times \mcb{\rset^d} \to \ccint{0,1}$, the Markov kernel such
that for any random variable $Z$ on $\msz^n$ with distribution
$\mu^{\otimes n}$, we have that $X_k$ has distribution $\updelta_Z \Qker_k$
conditionally to $Z$. {
\begin{table}[h!]
  {
  \centering
\begin{tabular}{ |c|c|c|c| } 
  \hline
  Notation & Source & Target & Description \\
  \hline \hline
  $\mu$ & $\cdot$ & $\msz$ & Data distribution, see \eqref{eq:def_U}. \\
  \hline
  $\Rker$ & $\msz^n$ & $\msy$ & \makecell{Markov kernel defining the  stochastic gradient \\conditionally to the data, see \eqref{eq:sto_grad}.} \\
  \hline
  $\Qker_k$ & $\msz^n$ & $\rset^d$ & \makecell{Markov kernel associated with SGLD at step $k$ \\ conditionally to the data, see \eqref{eq:recursion_SGLD}.} \\
  \hline 
  $\Sker_\vareps$ & $\msz^n$ & $\rset^d$ & \makecell{Gibbs measure with temperature $\vareps >0$ \\ (conditional to the data) defined by \eqref{eq:def_s_eps}.} \\
  \hline 
  $\Sker_0$ & $\msz^n$ & $\rset^d$ & \makecell{ Limiting measure at temperature zero \\ (conditional to the data) defined by \eqref{eq:def_s_zero}.} \\
  \hline
\end{tabular}
\caption{Summary of Markov kernels and probability measures used in this section.}
  \label{table:markov_kernels}}
\end{table}
  Finally, we also define for any $\vareps >0$ the Markov kernel
$\Sker_\vareps: \ \msz^n \times \mcb{\rset^d} \to \ccint{0,1}$ associated with
the Gibbs measure with temperature $\vareps$ such that for any
$\msa \in \mcb{\rset^d}$ and $z^{1:n} \in \msz^n$
\begin{equation}
  \label{eq:def_s_eps}
    \textstyle{\updelta_{z^{1:n}}\Sker_\vareps(\msa) = \left. \int_{\msa} \exp[-U_n(x,z^{1:n})/\vareps] \rmd x \middle/ \int_{\rset^d} \exp[-U_n(x,z^{1:n})/\vareps] \rmd x \right.  \eqsp .}
  \end{equation}
  Similarly at temperature $0$, we define the Markov kernel
  $\Sker_0: \ \msz^n \times \mcb{\rset^d} \to \ccint{0,1}$ such that for any
  $\msa \in \mcb{\rset^d}$ and $z^{1:n} \in \msz^n$
  \begin{align}
    \label{eq:def_s_zero}
    \updelta_{z^{1:n}}\Sker_0(\msa) &\textstyle{= \left. \int_{\msc(z^{1:n}) \cap \msa}  \det(\nabla_x^2U_n(x,z^{1:n}))^{-1/2} \rmd \calH^{0}(x) \right. }\\
    & \qquad \qquad \textstyle{\left./ \int_{\msc(z^{1:n})} \det(\nabla_x^2U_n(x,z^{1:n}))^{-1/2} \rmd \calH^{0}(x) \right .\eqsp ,}
\end{align}
with $\msc(z^{1:n}) = \argmin \ensembleLigne{U_n(x,z^{1:n})}{x \in \rset^d}$. We
summarize the different probability measures and Markov kernels used in this
section in Table \ref{table:markov_kernels}.
}

\paragraph{Assumptions} We first consider the following assumption, which is similar
to the one of \cite{raginsky2017non}.
\begin{assumptionH}[$n$]
  \label{assum:raginsky}
  For any $z \in \msz$, $u(\cdot, z) \in \rmc^\infty(\rset^d, \rset)$ and the following hold:
  \begin{enumerate}[wide, labelwidth=!, labelindent=0pt, label=(\alph*)]
  \item There exist $A, B \geq 0$ such that for any $z \in \msz$, $\abs{u(0,z)} \leq A$ and $\normLigne{\nabla_x u(0,z)} \leq B$.
  \item There exists $\Mtt \geq 0$ such that for any $x_1,x_2 \in \rset^d$, 
    $z \in \msz$,
    $\normLigne{\nabla_x^k u(x_1,z) - \nabla_x^k u(x_2,z)}\leq \Mtt\normLigne{x_1 -
      x_2}$ for any $k \in \{1, 2, 3\}$.
  \item There exist $\mtt > 0$, $\ctt \geq 0$ such that for any $x \in \rset^d$,
    $z \in \msz$,
    $\langle \nabla_x u(x,z), x \rangle \geq \mtt \normLigne{x}^2 - \ctt$.
  \item There exists $\kappa \geq 0$ such that for any $x \in \rset^d$ and $z^{1:n} \in \msz^n$ we have
    \begin{equation}
      \textstyle{\int_{\msz^n} \int_{\msy} \normLigne{\nabla_x U_{n}(x, z^{1:n}) - g(x,y)}^2 \Rker(z^{1:n}, \rmd y) \rmd \mu^{\otimes n}(z^{1:n}) \leq \kappa(1 + \normLigne{x}^2) \eqsp . }
    \end{equation}
  \end{enumerate}
\end{assumptionH}

Similar to \cite{raginsky2017non} we define, {for any $\vareps > 0$} the uniform spectral gap
\begin{equation}  
  \textstyle{\lambda^\star(\vareps) = \inf_{z^{1:n} \in \msz^n, \ h \in \rmc^1(\rset^d) \cap \rmL^2(\updelta_{z^{1:n}} \Sker_\vareps)} \ensembleLigne{\updelta_{z^{1:n}}  \Sker_\vareps[ \normLigne{\nabla h}^2]  /  \updelta_{z^{1:n}} \Sker_\vareps[ \absLigne{h}^2] }{\ h \neq 0 , \ \updelta_{z^{1:n}} \Sker_\vareps[h] = 0 } \eqsp , }
\end{equation}
and note that under \Cref{assum:raginsky}($n$), $\lambda^\star(\vareps) > 0$.
{We emphasize that \Cref{assum:raginsky}($n$) is satisfied in the
  case of a quadratic loss with a predictor given by a smooth and bounded neural
  network with bounded derivatives up to order $4$ and a quadratic
  regularization. More precisely, we can consider
  $u(x,z) = \normLigne{f(x, z_1) - z_2}^2+ \alpha \normLigne{x}^2$ with
  $f \in \rmc^\infty(\rset^d \times \msz_1, \msz_2)$ and
  $\msz = \msz_1 \times \msz_2$ with $f$ and its derivatives bounded up to order
  $4$ and $\alpha >0$. In this setting $f$ can be seen as a predictor of $z_2$ given $z_1$ and
  $x$ the parameters of $f$.} Finally, we also consider the following assumption
which ensures that the limiting measures we consider are well-defined.
\begin{assumptionH}[$n$]
  \label{assum:U_n_param}
  $u \in \rmc(\rset^d \times \msz, \rset)$ and the following hold:
  \begin{enumerate}[wide, labelwidth=!, labelindent=0pt, label=(\alph*)]
  \item $\msz$ is compact. 
  \item For any $\upbeta > 0$,
    $\int_{\msz^n} \sigma_\upbeta^\star(z^{1:n}) \rmd \mu^{\otimes n}(z^{1:n})
    < +\infty$, where 
    \begin{equation}
      \label{eq:def_sigma_star}
      \textstyle{\sigma_\upbeta^\star(z^{1:n}) = \sum_{x \in \msc(z^{1:n})} \det(\nabla_x^2 U(x,z^{1:n}))^{-\upbeta}  \eqsp . }
    \end{equation}
  \end{enumerate}
\end{assumptionH}
{Note that \Cref{assum:U_n_param} is satisfied if the number of
  minimizers is bounded w.r.t. $z \in \msz$ and if for any $z^{1:n} \in \msz^n$,
  $x^\star \in \msc(z^{1:n})$, $\nabla_x^2 U(x^\star,z^{1:n}) \succeq \eta \Id$
  with $\eta >0$. Note that this condition is a slight strengthening of the
  condition that for any $z^{1:n} \in \msz^n$, $x^\star \in \msc(z^{1:n})$,
  $\nabla_x^2 U(x^\star,z^{1:n}) \succ 0$. In particular, we impose that the
  landscape of $U(\cdot, z)$ is not too flat around the minimizers.} We also
introduce the \energygap \ $c^\star: \ \msz^n \to \rset$ such that for any
$z^{1:n} \in \msz^n$, $c^\star(z^{1:n}) = +\infty$ if $U_n(\cdot, z^{1:n})$ does
not admit a local minimizer which is not a global minimizer and
\begin{equation}
  c^\star(z^{1:n}) = \inf \ensembleLigne{U_n(x,z^{1:n})}{\text{$x$ local min. of $U_n(\cdot,z)$ but not global min.}} - U_n^\star(z^{1:n}) \eqsp ,
\end{equation}
with $U_n^\star(z^{1:n}) = \inf \ensembleLigne{U_n(x,z^{1:n})}{x \in \rset^d}$.  We
refer to \Cref{sec:import-local-glob} for a discussion on the \energygap \ and
its importance in non-convex optimization.

\paragraph{Main results} We are now ready to state our main results. First,
under \Cref{assum:raginsky}($n$) and \Cref{assum:U_n_param}($n$), we derive
quantitative bounds for the sequence $(X_k)_{k \in \nset}$.

\begin{proposition}
  \label{prop:quantitative_sgld}
  Let $n \in \nset$. Assume \tup{\rref{assum:raginsky}($n$)} and
  \tup{\rref{assum:U_n_param}($n$)}. Then there exist $C \geq 0$,
  $\bvareps, \bgamma, \upbeta > 0$ and $k_0 \in \nset$ such that for any
  $\vareps \in \ocintLigne{0, \bvareps}$, $\gamma \in \ocintLigne{0, \bgamma}$
  and $k \in \nset$ with $k \geq k_0$ we have
  \begin{align}
    \label{eq:convergence_wass}
    \wassersteinD[1](\mu^{\otimes n} \Qker_k, \mu^{\otimes n} \Sker_0) &\leq C(1/\vareps + d)^2(\kappa^{1/4} \log(1/\gamma) + \gamma)/(\lambda^\star(\vareps) \vareps)  \\
                                                                       & \qquad \qquad \textstyle{+ C(1 + D_n) (\vareps^{1/2} + \vareps^{-d/2} \int_{\msz^{1:n}} \exp[-c^\star(z^{1:n})/\vareps] \rmd \mu^{\otimes n}(z^{1:n})) \eqsp , }            
  \end{align}
  with
  $D_n = \int_{\msz^n} \sigma_\upbeta^\star(z^{1:n}) \rmd \mu^{\otimes
    n}(z^{1:n}) < +\infty$.
\end{proposition}

\begin{proof}
  We provide a sketch of the proof. The whole proof is postponed to
  \Cref{sec:application_to_sgld}. First, we assess the convergence of
  $(\mu^{\otimes n} \Qker_k)_{k \in \nset}$ by splitting the error in two parts. A first
  part is bounded using the geometric ergodicity of SGLD as in
  \cite{raginsky2017non} and controls the distance between $\mu^{\otimes n} \Qker_k$ and
  $\mu^{\otimes n} \Sker_\vareps$.
  Then, using a parametric version of \Cref{thm:big_theo}, see
  \Cref{prop:conclusion_param_bounds}, we bound the distance between
  $\mu^{\otimes n} \Sker_\vareps$ and $\mu^{\otimes n} \Sker_0$.  
\end{proof}

A few remarks are in order:
\begin{enumerate}[wide, labelwidth=!, labelindent=0pt, label=(\alph*)]
\item The condition that
  $\int_{\msz^n} \upsigma_1^\star(z^{1:n}) \rmd \mu^{\otimes n}(z^{1:n}) <
  +\infty$ is necessary to ensure that $\mu^{\otimes n} \Sker_0$ is
  well-defined. In \Cref{prop:quantitative_sgld} we assume the 
  condition that
  $\int_{\msz^n} \upsigma_\upbeta^\star(z^{1:n}) \rmd \mu^{\otimes n}(z^{1:n})
  < +\infty$ for any $\upbeta > 0$. In fact the condition could be relaxed to
  $\int_{\msz^n} \upsigma_\upbeta^\star(z^{1:n}) \rmd \mu^{\otimes n}(z^{1:n})
  < +\infty$ for any $\upbeta \in \ocint{0, \upgamma}$ with $\upgamma > 0$ an
  explicit constant. Obtaining such a result requires to derive explicit bounds
  in the quantitative Morse lemma \cite[Theorem 3.2]{le2014numerical}, see also
  \Cref{prop:quantitative_morse_lemma}. We leave this analysis to future
  works. Note also that this condition can be satisfied upon regularizing the
  function $u$ and assuming that the number of global minimizers is
  bounded. Indeed, if we replace $u(x,z)$ by $u(x,z) + (\alpha/2) \normLigne{x}^2$
  (with $\alpha > 0$ some regularization parameter) then we get that for any
  $\upbeta >0$,
  $\int_{\msz^n} \upsigma_\upbeta^\star(z^{1:n}) \rmd \mu^{\otimes n}(z^{1:n})
  \leq N_0\alpha^{-\upbeta d}$ where $N_0$ is an upper-bound on the number of global
  minimizers.
\item The upper-bound in \eqref{eq:convergence_wass} also depends on the
  \energygap . This constant quantifies how close the local
  minima which are not global minima are to the global minima and is
  crucial to establish quantitative parametric Laplace-type results. We
  illustrate this situation in \Cref{sec:import-local-glob} with a simple 
  example.
\item \Cref{prop:quantitative_sgld} ensures that for a given precision level
  $r > 0$, there exist $\vareps, \gamma > 0$ small enough and $k \in \nset$
  large enough such that
  $\wassersteinD[1](\mu^{\otimes n} \Qker_k, \mu^{\otimes n} \Sker_0) \leq
  r$. However, note that $\mu^{\otimes n} \Sker_0$ is not concentrated on the
  minimizers of $U$. This highlights the fact that the minimization of the
  empirical risk does not guarantee that the population risk is small.
\end{enumerate}

In order to verify that the population risk is small in expectation
  w.r.t. the target measure $\mu^{\otimes n} \Sker_0$ we use stability tools to establish
  the following proposition. 

  \begin{proposition}
    \label{prop:stability}
{    Assume that \tup{\rref{assum:raginsky}($n$)} and
    \tup{\rref{assum:U_n_param}($n$)} hold uniformly w.r.t. $n \in \nset$. Assume that 
    \begin{equation}
      \lim_{\vareps \to 0} \sup \ensembleLigne{\textstyle{\vareps^{-d/2} \int_{\msz^{1:n}}\exp[-c^\star(z^{1:n})/\vareps] \rmd \mu^{\otimes n}(z^{1:n})}}{n \in \nset} = 0 \eqsp ,
    \end{equation}
    Then
\begin{equation}
 \lim_{n \to +\infty} \{ \mu^{\otimes n} \Sker_0 [U] - U^\star\} = 0 \eqsp .
\end{equation}
    In
    addition, assume that there exist $C_0, \alpha, \bvareps  > 0$ such that for
    any $\vareps \in \ocintLigne{0, \bvareps}$
    \begin{equation}
      \label{eq:thermo_cond}
      \textstyle{\vareps^{-d/2} \int_{\msz^{1:n}}\exp[-c^\star(z^{1:n})/\vareps] \rmd \mu^{\otimes n}(z^{1:n})} \leq C_0 \vareps^{\alpha} \eqsp ,
    \end{equation}
    Then for any
    $\eta \in \ooint{0,1}$, there exist $C_\eta \geq 0$ and $n_0 \in \nset$ such
    that for any $n \geq n_0$ we have
\begin{equation}
  \mu^{\otimes n} \Sker_0 [U] - U^\star \leq C_\eta / \log(n)^{s \eta} \eqsp ,
\end{equation}
with $s = \min(1/4, \alpha/2)$ and where we recall that $U^\star$ is the global minimum of $U$.}
\end{proposition}

\begin{proof}
The proof is postponed to \Cref{sec:stab-limit-meas}.
\end{proof}

Note that \Cref{prop:stability} implies that the sequence
$(\mu^{\otimes n} \Sker_0 [U])_{n \in \nset}$ is tight since under
\tup{\rref{assum:raginsky}($n$)} we have that there exist $R \geq 0$ and
$\upalpha > 0$ such that for any $x \in \rset^d$ with $\normLigne{x} \geq R$
$U(x) \geq \mtt \normLigne{x}^\upalpha$. Furthermore, \Cref{prop:stability}
implies that each limiting point of the sequence
$(\mu^{\otimes n} \Sker_0 [U])_{n \in \nset}$ is concentrated on the minimizers
of $U$ when $n \to +\infty$, \ie \ when the number of training points
$\{z_i\}_{i=1}^n$ grows to infinity. However, we do not necessarily have that
$(\mu^{\otimes n} \Sker_0 [U])_{n \in \nset}$ converges towards $\pi_0$ given
for any $\msa \in \mcb{\rset^d}$ by
\begin{equation}
  \textstyle{\pi_0(\msa) = \int_{\msa \cap \msc} \det(\nabla^2U(x))^{-1/2} \rmd \calH^0(x) / \int_{\msc} \det(\nabla^2U(x))^{-1/2} \rmd \calH^0(x)} \eqsp , 
\end{equation}
where $\msc = \argmin \ensembleLigne{U(x)}{x \in
  \rset^d}$. {\Cref{prop:stability} depends crucially on the
  \energygap . In particular \eqref{eq:thermo_cond} allows us to derive 
  quantitative stability results. The \energygap \ and the associated conditions
  are discussed in \Cref{sec:import-local-glob}.}

In what follows, we describe a counter-example for which
$(\mu^{\otimes n} \Sker_0)_{n \in \nset}$ does not converge weakly towards
$\pi_0$. Let $\msx = \rset$, $\msz = \ccint{-1/2,1/2}$ and
$u: \ \msx \times \msz \to \rset$ such that for any $x \in \rset$ and
$z \in \ccint{-1/2,1/2}$ we have
\begin{equation}
  \label{eq:def_u_counter}
  \textstyle{
  u(x,z) = \left\lbrace
  \begin{matrix*}[l]
    &(x+\uppi)^4/(1 + (x+\uppi)^2) + \cos(3x) + zx \eqsp , & \text{if $x<-\uppi \eqsp ,$} \\
    &\cos(3x) + zx \eqsp , & \text{if $-\uppi\leq x\leq\uppi \eqsp ,$} \\
    &(x-\uppi)^4/(1 + (x-\uppi)^2) + \cos(3x) + zx \eqsp , & \text{if $x>\uppi \eqsp . $} \\
  \end{matrix*}
\right. }
\end{equation}
We also let $\mu$ to be the uniform measure on $\ccintLigne{-1/2,1/2}$. 
We obtain that for any $n \in \nset$, $z^{1:n} \in \msz^n$ and $x \in \msx$ 
\begin{equation}
  \textstyle{U(x) = u(x,0) \eqsp , \qquad U_n(x,z^{1:n}) = u(x,(1/n) \sum_{k=1}^n z^{k}) \eqsp .}
\end{equation}
Note that for any $z \neq 0$, $x \mapsto u(x,z)$ admits a unique global
minimizer, see the proof of \Cref{prop:counter}, whereas if $z=0$,
$x \mapsto u(x,z)$ admits four global minimizers
$\{x_i^\star\}_{i=1}^4 = \{-\uppi, -\uppi/3, \uppi/3, \uppi\}$, see
\Cref{fig:counter} for an illustration. In the next proposition, we show that
$\lim_{n \to +\infty} \mu^{\otimes n} \Sker_0[U]=U^\star$ but that
$(\mu^{\otimes n} \Sker_0)_{n \in \nset}$ does \emph{not} converge weakly 
towards $\pi_0$.

\begin{proposition}
  \label{prop:counter}
  Let $u$ be given by \eqref{eq:def_u_counter} and $\mu$ be the uniform measure
  on $\ccint{-1/2,1/2}$. Then, we have that for any $n \in \nset$
  \begin{equation}
    \textstyle{\lim_{n \to +\infty} \mu^{\otimes n} \Sker_0 =  (\updelta_{-\uppi} +
\updelta_{\uppi})/2\eqsp \quad \text{and} \quad \mu^{\otimes n} \Sker_0[U] - U^\star \leq (\uppi/(6\sqrt{3})) n^{-1/2} \eqsp . }      \end{equation}
\end{proposition}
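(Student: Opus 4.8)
Everything reduces to a one‑dimensional study of the family $\zeta \mapsto u(\cdot,\zeta)$, because $U_n(\cdot,z^{1:n}) = u(\cdot,\bar z)$ with $\bar z = (1/n)\sum_{i=1}^n z^i$, so $\Sker_0(z^{1:n},\cdot)$ depends on $z^{1:n}$ only through $\bar z$ and $\mu^{\otimes n}\Sker_0 = \E[\Sker_0(Z^{1:n},\cdot)]$ with $\bar Z := (1/n)\sum_{i=1}^n Z_i$. First I record the elementary facts I will use: on $\ccint{-\uppi,\uppi}$ one has $U = u(\cdot,0) = \cos(3\cdot)$ (the quartic tails and their first three derivatives vanish at $\pm\uppi$), so $U^\star = -1$ and $\argmin U = \{-\uppi,-\uppi/3,\uppi/3,\uppi\}$; and, $Z_i$ being i.i.d.\ uniform on $\ccint{-1/2,1/2}$, the law of $\bar Z$ is symmetric and has a density, so $\PP(\bar Z = 0)=0$, $\PP(\bar Z>0)=\PP(\bar Z<0)=1/2$, and $\E[\bar Z^2] = \E[Z_1^2]/n = 1/(12n)$. (The Hessian — here $\partial_x^2 u(x,\zeta) = -9\cos(3x)$ up to the flat quartic contribution — is positive at every local minimizer, so $\Sker_0$ is well defined.)

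Next I analyse $\msc(\zeta) := \argmin_{x\in\rset} u(x,\zeta)$ for $\zeta\neq 0$. Since $u(-\uppi\sign\zeta,\zeta) = -1-|\zeta|\uppi$, every global minimizer $x$ satisfies $u(x,\zeta)\le -1-|\zeta|\uppi$. This forces $\sign(\zeta)x\le\uppi$ (on $\sign(\zeta)x>\uppi$ the quartic is nonnegative and $\sign(\zeta)\,\zeta x>|\zeta|\uppi$, so $u(x,\zeta)\ge\cos(3x)+|\zeta|\uppi\ge -1+|\zeta|\uppi$); and if $\sign(\zeta)x<-\uppi$, writing $w=|x|-\uppi>0$ and using $\cos(3x)\ge -1$, the bound $u(x,\zeta)\le -1-|\zeta|\uppi$ becomes $w^4/(1+w^2)\le |\zeta|w\le w/2$, i.e.\ $2w^3\le 1+w^2$, i.e.\ $(w-1)(2w^2+w+1)\le 0$, whence $w\le 1<\uppi/3$. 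For the weak‑convergence claim I moreover show that for $|\zeta|$ small $\msc(\zeta)$ is a single nondegenerate point near $-\uppi\sign\zeta$: the four critical points of $\cos(3\cdot)$ in $\ccint{-\uppi,\uppi}$ are nondegenerate minima, so the implicit function theorem produces for small $\zeta$ nondegenerate local minimizers $x_i(\zeta)\to x_i^\star$ with $u(x_i(\zeta),\zeta)=-1+\zeta x_i^\star+O(\zeta^2)$; for $\zeta>0$ the smallest of these values is that of the branch continuing $-\uppi$, and a short monotonicity argument for $u(\cdot,\zeta)$ to the left of that well rules out any further minimizer. Hence for $0<|\zeta|<\zeta_0$, $\msc(\zeta)=\{x^\star(\zeta)\}$ with $x^\star(\zeta)\to -\uppi$ as $\zeta\downarrow 0$ and, symmetrically, $x^\star(\zeta)\to\uppi$ as $\zeta\uparrow 0$.

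The risk bound is then immediate: $\mu^{\otimes n}\Sker_0[U]-U^\star = \E[\Sker_0(Z^{1:n},U+1)]$, and $\Sker_0(z^{1:n},U+1)$ is a convex combination of the values $u(x,0)+1$ over $x\in\msc(\bar z)$, so it is bounded above by their maximum. For $x\in\msc(\bar z)$, $u(x,\bar z)\le -1-|\bar z|\uppi$, hence $u(x,0)+1 = u(x,\bar z)-\bar z x+1 \le -|\bar z|\uppi-\bar z x$; this last quantity is $\le 0$ when $\sign(\bar z)x\in\ccint{-\uppi,\uppi}$, equals $|\bar z|\,(|x|-\uppi)=|\bar z|\,w\le|\bar z|$ when $\sign(\bar z)x<-\uppi$ (using $w\le 1$), and the case $\sign(\bar z)x>\uppi$ is excluded. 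Since $1<\uppi/3$ we get $u(x,0)+1\le(\uppi/3)|\bar z|$ in all cases, so $\mu^{\otimes n}\Sker_0[U]-U^\star \le (\uppi/3)\,\E[|\bar Z|]\le (\uppi/3)\,\E[\bar Z^2]^{1/2} = (\uppi/3)\,(12n)^{-1/2} = (\uppi/(6\sqrt3))\,n^{-1/2}$.

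For the weak limit, fix a bounded continuous $f$ on $\rset$ and $\zeta_0>0$ small. On $\{0<|\bar Z|<\zeta_0\}$ one has $\Sker_0(Z^{1:n},f)=f(x^\star(\bar Z))$ with $x^\star(\zeta)$ within $o_{\zeta_0}(1)$ of $-\uppi\sign\zeta$; splitting $\E[\Sker_0(Z^{1:n},f)]$ according to the sign of $\bar Z$ and to whether $|\bar Z|<\zeta_0$, and using $\PP(\bar Z>0)=\PP(\bar Z<0)=1/2$, $\PP(|\bar Z|\ge\zeta_0)\to 0$, and continuity of $f$ at $\pm\uppi$, yields $\limsup_n\bigl|\mu^{\otimes n}\Sker_0[f]-\tfrac12(f(-\uppi)+f(\uppi))\bigr|$ bounded by a quantity tending to $0$ as $\zeta_0\to 0$; letting $\zeta_0\to 0$ gives $\mu^{\otimes n}\Sker_0 \Rightarrow \tfrac12(\updelta_{-\uppi}+\updelta_{\uppi})$. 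The one genuinely delicate point is the local description of $\msc(\zeta)$ near $\zeta=0$ — that it consists of a single nondegenerate minimizer converging to $-\uppi\sign\zeta$ — which requires a careful but elementary global count of the critical points of $u(\cdot,\zeta)$, including those lying just outside $\ccint{-\uppi,\uppi}$ where the quartic tail becomes active; by contrast the quantitative bound comes down to the one‑line cubic inequality $(w-1)(2w^2+w+1)\le 0$.
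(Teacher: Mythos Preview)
Your proof is correct. Both parts reach the same conclusions as the paper, but the arguments differ in interesting ways.

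For the weak convergence, the paper works by explicit hand computation: for $z\in[-1/2,0)$ it shows directly (via elementary estimates on $h(w)=w^4/(1+w^2)$ and on $\cos(3\cdot)$) that the unique global minimizer of $u(\cdot,z)$ lies in $[\uppi,\uppi+\uppi/6]$, and then passes to the limit $z\to 0^-$. Your implicit--function--theorem route (continuation of the four nondegenerate wells of $\cos(3\cdot)$ plus the first--order expansion $u(x_i(\zeta),\zeta)=-1+\zeta x_i^\star+O(\zeta^2)$) is less explicit but perfectly adequate; you are right that the delicate step is ruling out competing minimizers outside $[-\uppi,\uppi]$, which the paper handles by brute force and you sketch.

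For the risk bound the two proofs genuinely diverge. The paper compares $U_n(x^\star,\cdot)$ with $U_n(\uppi/3,\cdot)$ and uses $|U_n(\uppi/3,\cdot)-U(\uppi/3)|\le(\uppi/3)|\bar z|$; as written this controls $U_n(x^\star,\cdot)$ rather than $U(x^\star)$, and closing that gap with the paper's comparison point $\uppi/3$ actually gives a larger constant than the stated $\uppi/(6\sqrt3)$. You instead compare with the point $-\uppi\,\mathrm{sign}\,\bar z$, obtaining $u(x,\bar z)\le -1-|\bar z|\uppi$ for every $x\in\msc(\bar z)$, and then bound $U(x)-U^\star=u(x,\bar z)-\bar z x+1$ directly by the three--case analysis, with the cubic factorization $(w-1)(2w^2+w+1)\le 0$ delivering $w\le 1$. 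This is cleaner, does not require uniqueness of the minimizer, and in fact yields the sharper estimate $U(x)-U^\star\le |\bar z|$ (you then relax to $(\uppi/3)|\bar z|$ only to match the statement). So on this half your argument is both different and an improvement over the paper's.
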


\begin{proof}
  The proof is postponed to \Cref{proof:prop:counter}.
\end{proof}
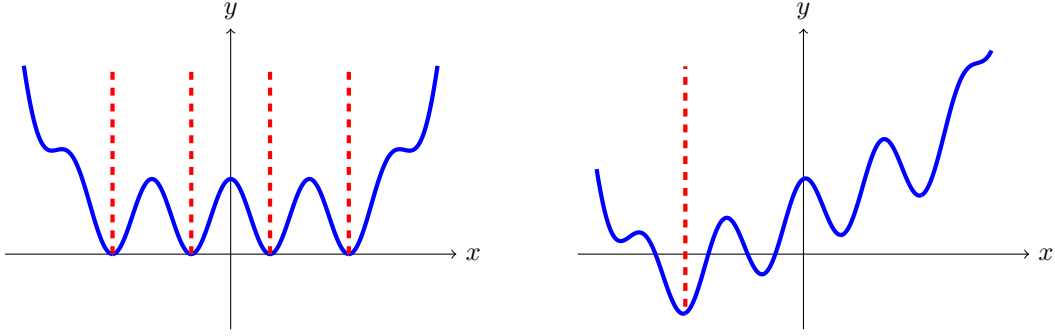
\begin{figure}
  \centering
  \hfill
\begin{tikzpicture}
  \draw[->] (-3, 0) -- (3, 0) node[right] {$x$};
  \draw[->] (0, -1) -- (0, 3) node[above] {$y$};
  
  \draw[scale=0.5, domain=-pi:pi, smooth, variable=\x, blue, ultra thick] plot[samples=100] ({\x}, {cos(3*\x*180/pi)+1});
  \draw[scale=0.5, domain=pi:5.5, smooth, variable=\x, blue, ultra thick] plot[samples=100] ({\x}, {(\x-pi)^4/(1+(\x-pi)^2)+cos(3*\x*180/pi)+1});
  \draw[scale=0.5, domain=-5.5:-pi, smooth, variable=\x, blue, ultra thick] plot[samples=100] ({\x}, {(\x+pi)^4/(1+(\x+pi)^2)+cos(3*\x*180/pi)+1});
  
  \draw[dashed, scale=0.5, domain=0:5, smooth, variable=\y, red, ultra thick] plot[samples=100] (pi, {\y});
  \draw[dashed, scale=0.5, domain=0:5, smooth, variable=\y, red, ultra thick] plot[samples=100] (pi/3, {\y});
  \draw[dashed, scale=0.5, domain=0:5, smooth, variable=\y, red, ultra thick] plot[samples=100] (-pi/3, {\y});
  \draw[dashed, scale=0.5, domain=0:5, smooth, variable=\y, red, ultra thick] plot[samples=100] (-pi, {\y});
\end{tikzpicture}
\hfill
\begin{tikzpicture}
  \draw[->] (-3, 0) -- (3, 0) node[right] {$x$};
  \draw[->] (0, -1) -- (0, 3) node[above] {$y$};
  
  \draw[scale=0.5, domain=-pi:pi, smooth, variable=\x, blue, ultra thick] plot[samples=100] ({\x}, {0.5*\x+cos(3*\x*180/pi)+1});
  \draw[scale=0.5, domain=pi:5, smooth, variable=\x, blue, ultra thick] plot[samples=100] ({\x}, {0.5*\x+(\x-pi)^4/(1+(\x-pi)^2)+cos(3*\x*180/pi)+1});
  \draw[scale=0.5, domain=-5.5:-pi, smooth, variable=\x, blue, ultra thick] plot[samples=100] ({\x}, {0.5*\x+(\x+pi)^4/(1+(\x+pi)^2)+cos(3*\x*180/pi)+1});

  \draw[dashed, scale=0.5, domain=-1.4:5, smooth, variable=\y, red, ultra thick] plot[samples=100] (-pi, {\y});
\end{tikzpicture}
\caption{Left: the function $x \mapsto u(x,0)$ with $u$ given in
  \eqref{eq:def_u_counter}. Right: the function $x \mapsto u(x,0.5)$. The global
  minimizers are given by the dotted red lines.  \label{fig:counter}}
\end{figure}


In particular, we have that $(\mu^{\otimes n} \Sker_0)_{n \in \nset}$ converges
towards a limiting probability measure supported on the set of minimizers of
$U$.

\subsubsection{The importance of the \energygap}
\label{sec:import-local-glob}

To conclude this section, we investigate the role of the \energygap \ in order
to establish quantitative parametric Laplace-type results. This quantity should
not be confused with the concept of \emph{kinetic barrier} which has been
investigated in the context of simulated annealing, see
\cite{hajek1988cooling,hajek1985tutorial} for instance. We refer to
\cite{wang2016tuning} for an introduction to the concept of \energygap \ and
\emph{kinetic barrier} in the context of chemistry, see also
\Cref{fig:energy_barrier} for an illustration.

In a general setting, we
consider a function $f: \ \msx \times \msz \to \rset$ where $\msx$ and $\msz$
are topological spaces and $f(\cdot, z)$ admits a global minimizer for any
$z \in \msz$. Let $z \in \msz$, if $f(\cdot,z)$ admits a local minimizer which
is not a global minimizer we recall that the \energygap \ $c^\star(z)$ is given
by
\begin{equation}
 c^\star(z) = \inf \ensembleLigne{f(x,z)}{\text{$x$ is a local minimizer of $f(\cdot,z)$ but not a global minimizer}} - f^\star(z) \eqsp ,
\end{equation}
with $f^\star(z) = \inf \ensembleLigne{f(x,z)}{x \in \msx}$. The \energygap \
quantifies how close the values of the local minima are to the global
ones. Let $\msx = \rset^d$ and for any $\vareps > 0$, $z \in \msz$ and $\msa \in \mcb{\rset^d}$, define
\begin{align}
  &\textstyle{\updelta_z \Sker_\vareps(\msa) = \int_{\msa} \exp[-f(x,z)/\vareps] \rmd x / \int_{\rset^d} \exp[-f(x,z)/\vareps] \rmd x \eqsp ,} \\
    &\textstyle{\updelta_z \Sker_0(\msa) = \int_{\msa \cap \msc(z)} \det(\nabla_x^2f(x,z))^{-1/2} \rmd \calH^0(x) / \int_{\msc(z)} \det(\nabla_x^2f(x,z))^{-1/2} \rmd \calH^0(x)  ,}
\end{align}
with $\msc(z) = \argmin \ensembleLigne{f(x,z)}{x \in \rset^d}$.  In
\Cref{prop:conclusion_param_bounds} we show (under assumptions on $f$) that for
any $z \in \msz$ and for any $\varphi \in \rmc(\rset^d, \rset)$ which satisfies
the conditions of \Cref{prop:conclusion_param_bounds} there exist
$A, \upbeta \geq 0$ and $\bvareps >0$ such that for any
$\vareps \in \ooint{0, \bvareps}$
\begin{equation}
  \label{eq:upper_bound_cstar}
        \abs{\updelta_z \Sker_\vareps[\varphi] - \updelta_z \Sker_0[\varphi]} \leq A(1 + \sigma_{\upbeta}^\star(z)) \{ \vareps^{1/2} + \vareps^{-d/2} \exp[-c^\star(z)/\vareps]\} \eqsp ,
      \end{equation}
      with $A, \upbeta$ that do not depend on $z$. The dependency of the right-hand
      side w.r.t. $\sigma_{\upbeta}^\star(z)$ comes from the fact that
      $\updelta_z \Sker_0$ is well-defined if and only if $\nabla_x^2 f(x,z)$ is
      invertible at the global minimizers of $f(\cdot,z)$.

      We now investigate the dependency w.r.t. the \energygap \ $c^\star$. We are
      going to build a simple example for which the \energygap \ plays a
      crucial role. In particular, we will show that the dependency of the form
      $\exp[-c^\star(z)/\vareps]$ is tight in \eqref{eq:upper_bound_cstar}. Let
      $k \in \nset$, $\msx = \{0,1\}$, $\msz = \rset$ and
      $f: \ \msx \times \msz \to \rset$ such that for any $x \in \msx$ and
      $z \in \msz$, $f(x,z) = xz^{2k+1}$. For any $\vareps > 0$ and
      $z \in \rset$ we define $\updelta_z \Sker_\vareps$ by
      \begin{align}
        \updelta_z \Sker_\vareps &= (\updelta_0 \exp[-f(0,z)/\vareps]+ \updelta_1 \exp[-f(1,z)/\vareps])/(1 + \exp[-f(1,z)/\vareps]) \\
        &= \updelta_0 \sigmoid(z^{2k+1}/\vareps)+ \updelta_1 \sigmoid(-z^{2k+1}/\vareps) \eqsp ,
      \end{align}
      where $\sigmoid: \ \rset \to \rset$ is the sigmoid function given for any
      $t \in \rset$ by $\sigmoid(t) = (1 + \exp[-t])^{-1}$.  When $z > 0$ the
      minimum of $f(\cdot,z)$ is $0$ and is attained at $x=0$. When $z = 0$, we
      have $f=0$ (and the minimum is therefore attained at $x=0$ and
      $x=1$). When $z < 0$ the minimum of $f(\cdot,z)$ is $-z^{2k+1}$ and is
      attained at $x=1$. Therefore we have that
      $\updelta_z \Sker_0 = \updelta_0$ if $z>0$,
      $\updelta_z \Sker_0 = \updelta_1$ if $z<0$ and
      $\updelta_z \Sker_0 = (\updelta_0 + \updelta_1)/2$ if $z=0$.  Using that
      the Wasserstein distance of order $1$ between two Bernoulli distributions
      with parameter $p_1$ and $p_2$ is given by $\absLigne{p_1 - p_2}$ we get
      that for any $z \in \msz$ and $\vareps > 0$
      \begin{equation}
        \wassersteinD[1](\updelta_z \Sker_\vareps, \updelta_z \Sker_0) = \sigmoid(-\abs{z}^{2k+1}/\vareps) \eqsp . 
      \end{equation}
      Hence, for a \emph{fixed} value of $z \in \msz$, we get that
      $\wassersteinD[1](\updelta_z \Sker_\vareps, \updelta_z \Sker_0)$ is of
      order $\bigO(\exp[-\abs{z}^{2k+1}/\vareps])$. In particular, we get that
      for any $z \in \msz$,
      $\limsup_{\vareps \to 0} \wassersteinD[1](\updelta_z \Sker_\vareps,
      \updelta_z \Sker_0)/\vareps$ is bounded.  In what follows, we show that
      $\liminf_{\vareps \to 0} \wassersteinD[1](\mu \Sker_\vareps,
      \mu \Sker_0)/\vareps = +\infty$, for some probability measure
      $\mu \in \Pens(\rset)$.

      Let $\chi: \ \msx \to \rset$ with $\chi(0) = 1$ and $\chi(1) = 0$. First, note that
      for any $z > 0$ we have
      \begin{equation}
        \updelta_z \Sker_\vareps[\chi] - \updelta_z \Sker_0[\chi]  = \sigmoid(-z^{2k+1}/\vareps) \eqsp . 
      \end{equation}
      Hence, using that $c^\star(z) = \abs{z}^{2k+1}$ for any $z \in \rset$ and that 
      $\sigmoid(-t) \geq \exp[-2t]$ for any $t > 0$ we have  for any $z > 0$,
      \begin{equation}
        \label{eq:lower_bound}
        \updelta_z \Sker_\vareps[\chi] - \updelta_z \Sker_0[\chi] \geq  \exp[-2z^{2k+1}/\vareps] \geq \exp[-2c^\star(z)/\vareps] \eqsp . 
      \end{equation}
      Let $\mu \in \Pens(\rset_+)$ such that $\mu(\{0\})=0$. Using
      \eqref{eq:lower_bound} and that $\chi$ is $1$-Lipschitz, we have
      \begin{equation}
        \textstyle{\wassersteinD[1](\mu \Sker_\vareps, \mu \Sker_0) \geq \int_{0}^{+\infty} \exp[-2c^\star(z)/\vareps] \rmd \mu(z) \eqsp .}
      \end{equation}
      Assume that $\mu$ is the uniform distribution on $\ccint{0,1}$. Then, we
      have for any $\vareps \in \ooint{0,1}$
      \begin{align}
        \textstyle{\wassersteinD[1](\mu \Sker_\vareps, \mu \Sker_0)}  
                                                                      \geq \textstyle{ \int_{0}^{1} \exp[-2z^{2k+1}/\vareps] \rmd x \geq 2^{-1/(2k+1)} (\int_0^1 \exp[-z^{2k+1}]\rmd z) \vareps^{1/(2k+1)} \eqsp .}
      \end{align}
      This shows that the order of
      $\wassersteinD[1](\mu \Sker_\vareps, \mu \Sker_0)$ is at most
      $\bigO(\vareps^{1/(2k+1)})$. This is in stark contrast with the order
      identified for
      $\wassersteinD[1](\updelta_z \Sker_\vareps, \updelta_z \Sker_0)$.

      This latter observation highlights the crucial role of the \energygap \
      when establishing uniform Laplace-type results w.r.t. some parameter
      $z \in \msz$. Note that we recover that
      $\wassersteinD[1](\mu \Sker_\vareps, \mu \Sker_0)$ is of order at least
      $\bigO(\vareps)$ if $\mu$ is supported on $\coint{z_0, +\infty}$ with
      $z_0 > 0$. Similar conclusions hold if we show that $c^\star(z) \geq c_0$
      for any $z \in \rset$ with $c_0 > 0$. Hence, (assuming that $c^\star$ is
      continuous) the discrepancy between the order of
      $\wassersteinD[1](\mu \Sker_\vareps, \mu \Sker_0)$ and the one of
      $\wassersteinD[1](\updelta_z \Sker_\vareps, \updelta_z \Sker_0)$ might
      arise if:
  \begin{enumerate*}[label=(\alph*)]
  \item At least one of the local minima (which is not a global minimum)
    converges towards a global minimum when $z \to z^\star$ for some value of
    $z^\star \in \msz$, \item $z^\star$ belongs to the support of $\mu$.
  \end{enumerate*}
  If these two conditions are fulfilled then a more careful study of
  $\int_\msz \exp[-c^\star(z)/\vareps] \rmd \mu(z)$ is needed in order to obtain
  quantitative bounds.


  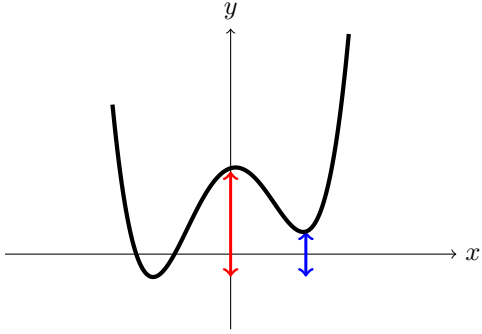
\begin{figure}
  \centering
\begin{tikzpicture}
  \draw[->] (-3, 0) -- (3, 0) node[right] {$x$};
  \draw[->] (0, -1) -- (0, 3) node[above] {$y$};

  \draw[<->, line width=0.4mm, red] (0, -.3) -- (0, 1.1) node[right] {};
  \draw[<->, line width=0.4mm, blue] (1, -.3) -- (1, .3) node[right] {};
  
  \draw[scale=0.5, domain=-pi:pi, smooth, variable=\x, black, ultra thick] plot[samples=100] ({\x}, {(\x-2)^2*(\x+2)^2/7 + .3*\x});
\end{tikzpicture}
\caption{Difference between the \energygap \ (blue) and the \emph{kinetic barrier} (red).}
    \label{fig:energy_barrier}
\end{figure}



\section{Proofs}
\label{sec:proof}

In this section, we gather the proofs of the previous sections. In
\Cref{sec:proof_thm_macro} we prove \Cref{thm:big_theo_extension}. Then, in
\Cref{sec:proof-macro} we provide the proofs of the results of
\Cref{sec:from-macr-micr}. Finally, the proofs of the results of
\Cref{sec:non_cvx_mon} are given in \Cref{sec:quant_sgld}. 

\subsection{Proof of \Cref{thm:big_theo_extension}}
\label{sec:proof_thm_macro}

In this section, we prove \Cref{thm:big_theo_extension}. We recall that
\Cref{thm:big_theo} is a straightforward consequence of
\Cref{thm:big_theo_extension} upon letting $\Psi = 1$.  We let $k \in \nsets$,
$F: \ \rset^d \to \rset^p$ and $\Psi: \ \rset^d \to \rset_+$.  For any
$\varphi: \ \rset^d \to \rset_+$ and $\vareps > 0$ we define
\begin{align}
  \label{eq:i_vareps_int}
  &\textstyle{\I_{\vareps}(\varphi) = C_{\vareps}^{-1} \int_{\rset^d} \varphi(x) \Psi(x) \exp[-\norm{F(x)}^k/\vareps] \rmd x \eqsp , \quad \J_{\vareps} = \I_{\vareps}(1) \eqsp ,} \\ &\textstyle{C_{\vareps} = \int_{\rset^d} \exp[-\norm{x}^k/\vareps] \rmd x = \vareps^{d/k} \int_{\rset^d} \exp[-\norm{x}^k]  \rmd x = \vareps^{d/k} C_1 \eqsp .}
\end{align}
In addition, we define
\begin{equation}
  \label{eq:i_0_int}
  \textstyle{
    \I_0(\varphi) =  \int_{F^{-1}(0)} \varphi(x) \Psi(x) \jacinv{x} \rmd \calH^{d-\hat{d}}(x) \eqsp , \quad \J_0 = \I_0(1) \eqsp ,
    }
\end{equation}
where $\hat{d} = \min(d, p)$ and we recall that for any $x \in \rset^d$,
$\jac{x} =\det(\rmD F(x) \rmD F(x)^\top)^{1/2}$ if $d \geq p$ and
$\jac{x} =\det(\rmD F(x)^\top \rmD F(x))^{1/2}$ otherwise.  If
$\varphi: \ \rset^d \to \rset$ and $I_{\vareps}(\abs{\varphi}) < +\infty$ for
some $\vareps \geq 0$ we define $\I_{\vareps}(\varphi)$ similarly as in
\eqref{eq:i_vareps_int} and \eqref{eq:i_0_int}. Note that for any
$\vareps \geq 0$ and $\varphi: \ \rset^d \to \rset^p$ such that it is defined we
have $\pi_{\vareps}[\varphi] = \I_\vareps(\varphi) / \J_\vareps$.

The rest of this section is organized as follows.  In \Cref{sec:case-d-geq}, we
prove our main result , \ie \ a quantitative Laplace-type result in the case
$d \geq p$ using the coarea formula. In \Cref{sec:case-d-leq}, we prove similar
results in the case $d \leq p$ using Laplace's method and Morse theory. Finally,
we conclude with the proof of \Cref{thm:big_theo_extension} in
\Cref{sec:proof-crefthm:big}. Additional technical results are postponed to
\Cref{sec:technical-bounds}.

\subsubsection{The case $d \geq p$}
\label{sec:case-d-geq}

In what follows, we assume that $d \geq p$ and for any
$\varphi: \ \rset^d \to \rset_+$, $t \in \rset^p$ we define $\L_t(\varphi)$ by
\begin{equation}
  \label{eq:l_t}
  \textstyle{
    \L_t(\varphi) = \int_{F^{-1}(t)} \varphi(x)  \Psi(x)  \jacinv{x} \rmd \calH^{d-p}(x) \eqsp .
    }
  \end{equation}
  Note that $\L_0(\varphi) = \I_0(\varphi)$.  Let
  $\varphi: \ \rset^d \to \rset$. Then, if $\L_t(\abs{\varphi}) <+\infty$, we
  define $\L_t(\varphi)$ similarly to \eqref{eq:l_t}. The following proposition
  establishes that $t \mapsto \L_t(\varphi)$ is Lipschitz under mild regularity
  conditions. We emphasize that this proposition is no longer true if
  $d \leq p$. Indeed, let us consider the following counterexample. Let
  $F: \ \rset \to \rset^2$ given for any $x \in \rset$ by
  $F(x) = ((1-x^2)/(1+x^2), x(1-x^2)/(1+x^2))$. The set $F(\rset)$ defines a right
  strophoid
  . Then, for any $t \neq 0$ we have $\calH^0(F^{-1}(t)) = 1$ or $0$, but
  $\calH^0(F^{-1}(0)) = 2$ and therefore $\L_t(\mathrm{J}F/\Psi) = 1$ or $0$
  near $t=0$ but $\L_0(\mathrm{J}F/\Psi) = 2$ with $\Psi=1$. Hence
  $t \mapsto \L_t(\mathrm{J}F/\Psi)$ is not even continuous.

\begin{proposition}
  \label{prop:lip_l}
  Assume \rref{assum:F}, \textup{\rref{assum:psi}} and that $d \geq p$. Let
  $\msu \subset \rset^d$ be open and such that $F^{-1}(0) \subset \msu$, and let
  $\varphi \in \rmc(\bar{\msu}, \rset)$.  Then
  $\lim_{t \to 0} \L_t(\varphi) = \L_0(\varphi)$. In addition, assume that
  $\Psi, \varphi \in \rmc^1(\bar{\msu}, \rset)$. Then there exist $B_0 \geq 0$
  and $\eta >0$ such that for any $t \in \cball{0}{\eta}$,
  \begin{equation}
    \abs{\L_t(\varphi) - \L_0(\varphi)} \leq B_0 (1 + M_{0, \varphi} + M_{1, \varphi})(1 + M_{0, \Psi} + M_{1, \Psi}) \norm{t} \eqsp ,
  \end{equation}
  with for any $i \in \{0,1\}$ and $f \in \rmc^1(\rset^d, \rset)$,
  $M_{i, f} = \sup \ensembleLigne{\normLigne{\nabla^i f(x)}}{x \in
    F^{-1}(\cball{0}{\eta})}$, $B_0$ and $\eta$ do not depend on $\varphi$ and
  $\Psi$, and $F^{-1}(\cball{0}{\eta}) \subset \msu$.
\end{proposition}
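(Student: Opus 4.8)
The plan is to \emph{straighten} the level sets of $F$ near $0$. Under \rref{assum:F}, $0$ is a regular value of $F$, so $F^{-1}(0)$ is a compact $(d-p)$-dimensional submanifold (in particular $\calH^{d-p}(F^{-1}(0)) < +\infty$) and $\jac{\cdot}$ is continuous and bounded below on a neighbourhood of it; the idea is to build a smooth family of diffeomorphisms $\Phi_t \colon F^{-1}(0) \to F^{-1}(t)$ with $\Phi_0 = \Id$, pull $\L_t(\varphi)$ back onto the \emph{fixed} manifold $F^{-1}(0)$, and then differentiate under the integral sign. First I would localise: since $F^{-1}(0)$ is compact, contained in the open set $\msu$, and $\norm{F(x)} \to +\infty$ as $\norm{x}\to+\infty$, there is $\eta > 0$ with $K := F^{-1}(\cball{0}{\eta})$ compact, $K \subset \msu$, and $c_0 := \inf_{x\in K}\jac{x} > 0$. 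On the open set $W := \ensembleLigne{x\in\rset^d}{\jac{x} > 0}\supset K$ the matrix $\rmD F(x)\rmD F(x)^\top$ is invertible, so $A(x) := \rmD F(x)^\top(\rmD F(x)\rmD F(x)^\top)^{-1}$ is a smooth $d\times p$ matrix field on $W$ with $\rmD F(x)A(x)t = t$ for every $t \in \rset^p$.

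For fixed $t\in\cball{0}{\eta}$ I would take $\theta^t_s$ to be the flow of the vector field $x\mapsto A(x)t$ on $W$. Then $\frac{\rmd}{\rmd s}F(\theta^t_s(x)) = \rmD F(\theta^t_s(x))A(\theta^t_s(x))t = t$, so $F(\theta^t_s(x)) = F(x) + st$, and a trajectory issued from $F^{-1}(0)$ stays inside $F^{-1}(\cball{0}{\eta}) = K$ for $s \in [-1,1]$, hence exists on $[-1,1]$ (being confined to the compact set $K\subset W$). Set $\Phi(x,t) := \theta^t_1(x)$. Smooth dependence of ODE flows on initial data and parameters makes $\Phi$ smooth in $(x,t)$; moreover $\Phi(\cdot,0) = \Id$ (the field vanishes at $t=0$), $F(\Phi(x,t)) = t$, and $\Phi_t := \Phi(\cdot,t)$ maps $F^{-1}(0)$ bijectively onto $F^{-1}(t)$ with inverse $\theta^t_{-1}$ (the flow relations $\theta^t_1\circ\theta^t_{-1} = \theta^t_0 = \Id = \theta^t_{-1}\circ\theta^t_1$ give the inverse, and $F(\theta^t_{-1}(y)) = F(y) - t$ sends $F^{-1}(t)$ into $F^{-1}(0)$; since $F^{-1}(t)\subset K$ this covers the whole fibre). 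Thus $\Phi_t$ is a diffeomorphism $F^{-1}(0)\to F^{-1}(t)$, smooth in $(x,t)$ and equal to the identity at $t=0$.

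Next, by the area formula for $\Phi_t$ between the two $(d-p)$-dimensional submanifolds,
\[
  \L_t(\varphi) = \int_{F^{-1}(0)} \varphi(\Phi_t(x))\,\Psi(\Phi_t(x))\,\jacinv{\Phi_t(x)}\,j_t(x)\,\rmd\calH^{d-p}(x) =: \int_{F^{-1}(0)} g_t(x)\,\rmd\calH^{d-p}(x),
\]
where $j_t(x)$ is the tangential Jacobian of $\Phi_t$ at $x$ (the factor by which $\rmD\Phi_t(x)\colon T_xF^{-1}(0)\to T_{\Phi_t(x)}F^{-1}(t)$ scales $(d-p)$-volume); $j$ depends only on $F$, is smooth in $(x,t)$, and $j_0\equiv 1$. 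For the qualitative claim, $g_t(x)\to g_0(x) = \varphi(x)\Psi(x)\jacinv{x}$ uniformly on the compact set $F^{-1}(0)$ (all factors converge uniformly, using continuity of $\varphi,\Psi$, of $\jac{\cdot}^{-1}$ on $K$, and $\jac{\cdot}\ge c_0>0$), so $\L_t(\varphi)\to\L_0(\varphi)$ as $\calH^{d-p}(F^{-1}(0))<+\infty$. When moreover $\varphi,\Psi\in\rmc^1(\bar{\msu},\rset)$, $t\mapsto g_t(x)$ is $\rmc^1$ near $0$, and I would write $g_t(x)-g_0(x) = \int_0^1 \frac{\rmd}{\rmd s} g_{st}(x)\,\rmd s$ with $\absLigne{\frac{\rmd}{\rmd s}g_{st}(x)}\le\norm{t}\sup_{\norm{\tau}\le\eta}\norm{\nabla_\tau g_\tau(x)}$. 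Expanding $\nabla_\tau g_\tau(x)$ by the chain rule gives a finite sum of products in which $\varphi$ or $\nabla\varphi$ appears evaluated at $\Phi_\tau(x)\in K = F^{-1}(\cball{0}{\eta})$ (hence bounded by $M_{0,\varphi}$ or $M_{1,\varphi}$), $\Psi$ or $\nabla\Psi$ likewise (bounded by $M_{0,\Psi}$ or $M_{1,\Psi}$), and otherwise only $\jac{\cdot}^{-1},\nabla(\jac{\cdot}^{-1})$ on $K$ and $\Phi,\partial_t\Phi,\rmD\Phi,j,\partial_t j$ on $F^{-1}(0)\times\cball{0}{\eta}$, all bounded by a constant $B_1$ depending only on $F,\msu,\eta$. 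Grouping, $\sup_{\norm{\tau}\le\eta}\norm{\nabla_\tau g_\tau(x)}\le B_1(1+M_{0,\varphi}+M_{1,\varphi})(1+M_{0,\Psi}+M_{1,\Psi})$, and integrating over $F^{-1}(0)$ gives
\[
  \abs{\L_t(\varphi)-\L_0(\varphi)} \le B_1\,\calH^{d-p}(F^{-1}(0))\,(1+M_{0,\varphi}+M_{1,\varphi})(1+M_{0,\Psi}+M_{1,\Psi})\,\norm{t}
\]
for $\norm{t}\le\eta$, i.e.\ the claim with $B_0 = B_1\,\calH^{d-p}(F^{-1}(0))$ (independent of $\varphi,\Psi$) and $F^{-1}(\cball{0}{\eta}) = K\subset\msu$.

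The main obstacle is the second step: producing the straightening family $\Phi_t$ with controlled regularity and, crucially, checking it is onto the \emph{entire} fibre $F^{-1}(t)$. This is precisely where the submersion property near $F^{-1}(0)$ (i.e.\ $\jac{\cdot}\ne 0$ there), the compactness of $F^{-1}(0)$, and the uniform lower bound $c_0$ are all used; the right-strophoid example preceding the statement shows the conclusion genuinely fails when $d\le p$, where no such fibration of the level sets exists near $t=0$. Once $\Phi$ is in hand, the remaining steps — the area formula, dominated convergence, differentiation under the integral, and the bookkeeping of which constants depend only on $F$ — are routine.
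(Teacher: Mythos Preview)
Your proposal is correct and takes essentially the same route as the paper: build a flow-based diffeomorphism $F^{-1}(0)\to F^{-1}(t)$ from the right inverse $A(x)=\rmD F(x)^\top(\rmD F(x)\rmD F(x)^\top)^{-1}$, pull $\L_t(\varphi)$ back to the fixed manifold $F^{-1}(0)$ via the area formula, then use dominated convergence for the limit and a $t$-derivative bound on the integrand for the Lipschitz estimate. The only cosmetic difference is that you flow once along the $t$-dependent field $x\mapsto A(x)t$, whereas the paper flows separately along each $x\mapsto A(x)e_i$ and composes the $p$ resulting time-$t_i$ maps (its $\bar{\Phi}_t$); your single-flow variant is slightly more direct and spares some of the bookkeeping the paper carries out in its derivative-control lemma, but the content is identical.
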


\begin{proof}
  First, we show that there exists an explicit diffeomorphism between
  $F^{-1}(t)$ and $F^{-1}(0)$ for $\normLigne{t}$ small enough.  Then we use the
  coarea formula to express $\L_t$ as an integral over $F^{-1}(0)$ and use the
  dominated convergence theorem to conclude the first part of the proof. For the
  second part of the proof we differentiate the diffeomorphism w.r.t. the
  parameter $t$ and provide explicit bounds for the derivative.
  \begin{enumerate}[wide, labelwidth=!, labelindent=0pt, label=(\alph*)]
  \item The set $F^{-1}(0)$ is compact since
    $\lim_{\norm{x} \to +\infty} \norm{F(x)} =+\infty$. First, there exists
    $\eta_0 > 0$ such that $F^{-1}(\cballinfty{0}{\eta_0}) \subset \msu$. Indeed,
    since $F^{-1}(0)$ is compact, there exists $\vareps_0 > 0$ such that
    $F^{-1}(0)+\cballinfty{0}{\vareps_0} \subset \msu$. We now show that for any
    $\vareps > 0$, there exists $\eta_\vareps > 0$ such that
    $F^{-1}(\cballinfty{0}{\eta_\vareps}) \subset F^{-1}(0) +
    \cballinfty{0}{\vareps}$. If this is false, we let $\vareps > 0$ such that
    for any $\eta > 0$,
    $F^{-1}(\cballinfty{0}{\eta}) \not\subset F^{-1}(0) +
    \cballinfty{0}{\vareps}$. Hence there exists a sequence
    $(x_k)_{k \in \nset} \in (\rset^d)^\nset$ such that
    $\normLigne{F(x_k)}_\infty \leq 1/(k+1)$ and
    $d(x_k, F^{-1}(0)) \geq \vareps$. But, up to taking a subsequence, there exists
    $x^\star \in F^{-1}(\cballinfty{0}{1})$ such that
    $\lim_{k \to +\infty} x_k = x^\star$. Then, we have $F(x^\star) =0$ and
    $d(x^\star, F^{-1}(0))>\vareps$, which is absurd.  Hence for any
    $\vareps > 0$, there exists $\eta_\vareps > 0$ such that
    $F^{-1}(\cballinfty{0}{\eta_\vareps}) \subset F^{-1}(0) +
    \cballinfty{0}{\vareps}$.  We let $\eta_0 =\eta_{\vareps_0}$.

    Second, there exists $\eta_1 > 0$ such that for any
    $x \in F^{-1}(\cballinfty{0}{\eta_1})$, $\jac{x} > 0$. Indeed, if this is
    not the case then there exists $(x_k)_{k \in \nset}$ with
    $\lim_{k \to +\infty} F(x_k) = 0$ and $\jac{x_k} =0$. Since
    $F^{-1}(\cballinfty{0}{1})$ is compact there exists $x^\star$ such that, up to taking a subsequence, $\lim_{k \to +\infty} x_k = x^\star$. Then $F(x^\star)=0$ and
    $\jac{x^\star}=0$, which is absurd.  We define
    $\msk_0 = F^{-1}(\cballinfty{0}{\eta_1})$ and
    $\msk_1 = F^{-1}(\cballinfty{0}{\eta})$ with
    $\eta = \min(\eta_1/2, \eta_0)$. Note that $\msk_1 \subset \msu$ and
    $\msk_1 \subset \mathrm{int}(\msk_0)$.
    
    In what follows for any $x \in \rset^d$ we define
    $G(x) = \rmD F(x) \rmD F(x)^\top$. Note that for any $x \in \rset^d$,
    $\det(G(x))^{1/2} = \jac{x}$ since $d \geq p$. We also have that for any
    $x \in \msk_0$, $G(x)$ is invertible since $\jac{x} > 0$. In addition, we
    have that for any $i, j \in \{1, \dots, p\}$ and $x \in \rset^d$
    \begin{equation}
      G_{i,j}(x) = \langle \nabla F_i(x), \nabla F_j(x) \rangle \eqsp .
    \end{equation}
    We define $\{f_i\}_{i=1}^p$ such that for any $i \in \{1, \dots, p\}$,
    $f_i:\ \rset^d \to \rset^d$ is given for any $x \in \msk_0$ by
    \begin{equation}
      \label{eq:def_f}
      \textstyle{
        f_i(x) = \sum_{k=1}^p h_{i, k}(x) \nabla F_k(x) \eqsp ,
        }
    \end{equation}
    with $\{h_{i,j}(x)\}_{1 \leq i,j \leq p} = G(x)^{-1}$.
    For any $x \in \msk_0$ and $i, j \in \{1, \dots, p\}$ we have
    \begin{equation}
      \textstyle{
        \langle f_i(x), \nabla F_j(x) \rangle = \sum_{k=1}^p h_{i,k}(x) \langle \nabla F_k(x), \nabla F_j(x) \rangle = \updelta_i(j) \eqsp ,
        }
    \end{equation}
    where $\updelta_i$ is the Dirac mass at $i$.  In what follows, we let
    $\{g_i\}_{i=1}^p$ such that for any $i \in \{1, \dots, p\}$,
    $g_i: \ \rset^d \to \rset^d$ and $g_i \in \rmc^1(\rset^d, \rset^d)$ such
    that $g_i(x) = f_i(x)$ for any $x \in \msk_1$, and $g_i(x)=0$ for
    $x \in \mathrm{int}(\msk_0)^\complementary$, such functions exist using
    Whitney extension theorem for instance, see \cite{whitney1934analytic}. In
    what follows, we fix $t = (t_1, \dots, t_p) \in \cballinfty{0}{\eta}$. For
    any $i \in \{1, \dots, p\}$ let $\Phi_i: \ \rset \times \rset^d \to \rset^d$
    given by $\Phi_i(0, x) = x$ for any $x \in \rset^d$ and for any
    $s \in \rset$ and $x \in \rset^d$
    \begin{equation}
      \label{eq:flow}
      \partial_s \Phi_i(s,x) = -g_i(\Phi_i(s,x)) \eqsp . 
    \end{equation}
    For any $i \in \{1, \dots, p\}$, $\Phi_i$ is well-defined using
    \Cref{lemma:flow}.  Therefore, we have for any $i \in \{1, \dots, p\}$ and
    $s \in \rset$, $x \in \rset^d$ such that $\Phi_i(s,x) \in \msk_1$
    \begin{equation}
      \partial_s F(\Phi_i(s,x)) = -(\langle g_i(\Phi_i(s,x)), \nabla F_1(\Phi_i(s,x)) \rangle, \dots, \langle g_i(\Phi_i(s,x)), \nabla F_p(\Phi_i(s,x)) \rangle) = -e_i \eqsp , 
    \end{equation}
    where we recall that $\{e_i\}_{i=1}^p$ is the canonical basis of $\rset^p$.
    We define $\bar{\Phi}_t: \ \rset^d \to \rset^d$ such that for any
    $x \in \rset^d$, $\bar{\Phi}_t(x) = x^{(p)}$ with $x^{(0)} = x$ and for any
    $i \in \{0, \dots, p-1\}$, $x^{(i+1)} = \Phi_{i+1}(t_{i+1}, x^{(i)})$. Note that
    $\bar{\Phi}_t \in \rmc^1(\rset^d, \rset)$ and is a diffeomorphism, see
    \Cref{lemma:flow}. Using \eqref{eq:flow} we have that
    $\bar{\Phi}_t(F^{-1}(t)) = F^{-1}(0)$.  
    In addition, $F^{-1}(t)$ is
    $\calH^{d-p}$ countably rectifiable using
    \Cref{prop:rectifiable_level_set}.  Using this result and the coarea
    formula, see \Cref{thm:area_coarea}, we have
    \begin{equation}
      \textstyle{
      \L_t(\varphi) = \int_{F^{-1}(0)} \varphi(\bar{\Phi}_t^{-1}(x)) \Psi(\bar{\Phi}_t^{-1}(x)) \jacinv{\bar{\Phi}_t^{-1}(x)} \absLigne{\det(\rmD \bar{\Phi}_t^{-1}(x))} \rmd \calH^{d - p}(x) \eqsp . }
    \end{equation}
    Since $F^{-1}(0) \times \ballinfty{0}{\eta}$ is compact and
    $(t,x) \mapsto \bar{\Phi}_t^{-1}(x)$ and
    $(t,x) \mapsto \rmD \bar{\Phi}_t^{-1}(x)$ are continuous with for any $x \in \rset^d$, 
    $\bar{\Phi}_0^{-1}(x) = x$ and $\rmD \bar{\Phi}_0^{-1}(x) = \Id$, we get
    that $\lim_{t \to 0} \L_t(\varphi) = \L_0(\varphi)$ using the dominated
    convergence theorem.
  \item For the second part of the proof we  control the derivative of
    $t \mapsto \chi(t,x)$ where for any $x \in \rset^d$ and $t \in \rset^p$ we have
    \begin{equation}
      \label{eq:Psi_def_nul}
      \chi(t,x) =  \varphi(\bar{\Phi}_t^{-1}(x)) \Psi(\bar{\Phi}_t^{-1}) \jacinv{\bar{\Phi}_t^{-1}(x)} \absLigne{\det(\rmD \bar{\Phi}_t^{-1}(x))} \eqsp . 
    \end{equation}
   Using \Cref{lemma:derivative_control}, there exists
    $P \in \poly{4}{\rset_+}$ such that for $x \in F^{-1}(0)$ and
    $t \in \ballinfty{0}{\eta}$ we have
    \begin{align}
      \norm{\partial_t \chi(t,x)} &\leq (1 + M_{0,\varphi}+ M_{1,\varphi})(1 + M_{0,\Psi}+ M_{1,\Psi}) \\
      & \quad \times P(M_{1,F}, M_{2,F}, M_{3,F}, 1/m_{1,F}) \exp[P(M_{1,F}, M_{2,F}, M_{3,F}, 1/m_{1,F})] \eqsp ,
    \end{align}
    with $P$ that does not depend on $\varphi$ and $\Psi$. 
    Hence we have that for any $t \in \ballinfty{0}{\eta}$
    \begin{align}
      \abs{\L_t(\varphi) - \L_0(\varphi)} &\leq \textstyle{\int_{F^{-1}(0)} \abs{\chi(t,x) - \chi(0, x)} \rmd \calH^{d - p}(x)}
      \\ &\leq (1 + M_{0,\varphi}+ M_{1,\varphi})(1 + M_{0,\Psi}+ M_{1,\Psi}) P(M_{1,F}, M_{2,F}, M_{3,F}, 1/m_{1,F}) \\
                        & \qquad \exp[P(M_{1,F}, M_{2,F}, M_{3,F}, 1/m_{1,F})] \calH^{d - p}(F^{-1}(0)) \norm{t} \eqsp ,
    \end{align}
    which concludes the proof since $\calH^{d - p}(F^{-1}(0))< +\infty$ by \Cref{prop:rectifiable_level_set}.
  \end{enumerate}
\end{proof}

{The smoothness of $F$ can be relaxed to
  $\rmc^3(\rset^d, \rset^p)$ (at least). From our analysis,
  $F \in \rmc^2(\rset^d, \rset^p)$ (or at least $F \in \rmc^1(\rset^d, \rset^p)$
  with Lipschitz derivative) seems to be necessary to obtain quantitative
  results.}

\begin{proposition}
  \label{prop:upper_bound_i_in}
  Assume \rref{assum:F}, \textup{\rref{assum:psi}} and that $d \geq p$. Let
  $\msu \subset \rset^d$ open and bounded such that $F^{-1}(0) \subset \msu$ and
  $\varphi \in \rmc(\bar{\msu}, \rset)$.  Then there exists $\eta > 0$ such that
  $\lim_{\vareps \to 0} \I_\vareps^{\mathrm{in}}(\varphi) = \I_0(\varphi)$,
  where for any $\vareps > 0$,
  $\I_\vareps^{\mathrm{in}}(\varphi) = \I_{\vareps}(\varphi
  \1_{F^{-1}(\ball{0}{\eta})})$. In addition, assume that
  $\varphi, \Psi \in \rmc^1(\bar{\msu}, \rset)$, then there exists $A_2 \geq 0$
  such that for any $\vareps > 0$
  \begin{equation}
    \textstyle{
      \abs{\I_{\vareps}^{\mathrm{in}}(\varphi) - \I_0(\varphi)} \leq A_2 (1 + M_{0, \varphi} + M_{1, \varphi}) (1 + M_{0, \Psi} + M_{1, \Psi}) \vareps^{1/k} \eqsp ,
      }
  \end{equation}
  with for any $i \in \{0,1\}$ and $f \in \rmc^1(\rset^d, \rset)$,
  $M_{i, f} = \sup \ensembleLigne{\normLigne{\nabla^i f(x)}}{x \in
    F^{-1}(\cball{0}{\eta})}$, $A_2$ and $\eta$ do not depend on 
  $\varphi$ and $\Psi$, and $F^{-1}(\cball{0}{\eta}) \subset \msu$.
\end{proposition}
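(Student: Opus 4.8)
The plan is to reduce the $d$-dimensional integral defining $\I_\vareps^{\mathrm{in}}(\varphi)$ to a $p$-dimensional Laplace-type integral over the transverse variable $t$ via the coarea formula, and then to conclude by rescaling $t=\vareps^{1/k}s$ and invoking the regularity of $t\mapsto\L_t(\varphi)$ provided by \Cref{prop:lip_l}. It suffices to take $\eta>0$ to be the constant produced by \Cref{prop:lip_l}, which already guarantees $F^{-1}(\cball{0}{\eta})\subset\msu$, $\jac{x}>0$ for every $x\in F^{-1}(\cball{0}{\eta})$, and the two conclusions of that proposition on $\cball{0}{\eta}$. The first step is the identity
\begin{equation}
  \textstyle{\I_\vareps^{\mathrm{in}}(\varphi) = C_\vareps^{-1}\int_{\ball{0}{\eta}} \exp[-\norm{t}^k/\vareps]\,\L_t(\varphi)\,\rmd t \eqsp , \qquad C_\vareps = \int_{\rset^p}\exp[-\norm{t}^k/\vareps]\,\rmd t \eqsp ,}
\end{equation}
where we recall $p=\min(d,p)$ in the present regime $d\geq p$. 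This is obtained by writing the integrand of $\I_\vareps^{\mathrm{in}}(\varphi)$ as $\big(\varphi\,\Psi\,\jacinv{\cdot}\,\exp[-\norm{F}^k/\vareps]\,\1_{F^{-1}(\ball{0}{\eta})}\big)\cdot\jac{\cdot}$, a bounded compactly supported function times the coarea factor $\jac{\cdot}=\det(\rmD F\,\rmD F^\top)^{1/2}$: since $\1_{F^{-1}(\ball{0}{\eta})}(x)=\1_{\ball{0}{\eta}}(F(x))$ is constant along each fibre $F^{-1}(t)$, since each $F^{-1}(t)$ with $\norm{t}\leq\eta$ is $\calH^{d-p}$ countably rectifiable with finite $\calH^{d-p}$-measure by \Cref{prop:rectifiable_level_set}, and since $F$ is a submersion on the tube ($\jac{\cdot}>0$ there), the coarea formula \Cref{thm:area_coarea} yields the identity. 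Rescaling $t=\vareps^{1/k}s$ turns its right-hand side into $C_\vareps^{-1}\vareps^{p/k}\int_{\ball{0}{\eta\vareps^{-1/k}}}\exp[-\norm{s}^k]\,\L_{\vareps^{1/k}s}(\varphi)\,\rmd s$, with $C_\vareps^{-1}\vareps^{p/k}=\big(\int_{\rset^p}\exp[-\norm{s}^k]\,\rmd s\big)^{-1}$.

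For the convergence statement (which requires only $\varphi\in\rmc(\bar{\msu},\rset)$), I would use the representation $\L_t(\varphi)=\int_{F^{-1}(0)}\chi(t,x)\,\rmd\calH^{d-p}(x)$ established in the proof of \Cref{prop:lip_l}, where $\chi$ is continuous on the compact set $\cball{0}{\eta}\times F^{-1}(0)$; this gives $\abs{\L_t(\varphi)}\leq\norm{\chi}_\infty\,\calH^{d-p}(F^{-1}(0))<+\infty$ uniformly in $t\in\cball{0}{\eta}$ (the last quantity being finite by \Cref{prop:rectifiable_level_set}). Since moreover $\L_t(\varphi)\to\L_0(\varphi)=\I_0(\varphi)$ as $t\to0$ by \Cref{prop:lip_l}, dominated convergence — with dominating function $s\mapsto\norm{\chi}_\infty\calH^{d-p}(F^{-1}(0))\exp[-\norm{s}^k]\in\rmL^1(\rset^p)$ and using $\1_{\ball{0}{\eta\vareps^{-1/k}}}(s)\to1$ pointwise — gives $\int_{\ball{0}{\eta\vareps^{-1/k}}}\exp[-\norm{s}^k]\L_{\vareps^{1/k}s}(\varphi)\,\rmd s\to\I_0(\varphi)\int_{\rset^p}\exp[-\norm{s}^k]\,\rmd s$, whence $\lim_{\vareps\to0}\I_\vareps^{\mathrm{in}}(\varphi)=\I_0(\varphi)$.

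For the quantitative bound, assuming in addition $\varphi,\Psi\in\rmc^1(\bar{\msu},\rset)$, I would write, using $\I_0(\varphi)=\L_0(\varphi)$, the identity from Step~1, and $C_\vareps^{-1}\int_{\ball{0}{\eta}}\exp[-\norm{t}^k/\vareps]\rmd t=1-C_\vareps^{-1}\int_{\norm{t}\geq\eta}\exp[-\norm{t}^k/\vareps]\rmd t$,
\begin{equation}
  \textstyle{\I_\vareps^{\mathrm{in}}(\varphi)-\I_0(\varphi) = C_\vareps^{-1}\!\int_{\ball{0}{\eta}}\!\exp[-\norm{t}^k/\vareps]\big(\L_t(\varphi)-\L_0(\varphi)\big)\rmd t \,-\, \L_0(\varphi)\,C_\vareps^{-1}\!\int_{\norm{t}\geq\eta}\!\exp[-\norm{t}^k/\vareps]\,\rmd t \eqsp .}
\end{equation}
The Lipschitz estimate of \Cref{prop:lip_l} bounds the first term by $C_\vareps^{-1}B_0(1+M_{0,\varphi}+M_{1,\varphi})(1+M_{0,\Psi}+M_{1,\Psi})\int_{\rset^p}\norm{t}\exp[-\norm{t}^k/\vareps]\rmd t$, and $\int_{\rset^p}\norm{t}\exp[-\norm{t}^k/\vareps]\rmd t=\vareps^{(p+1)/k}\int_{\rset^p}\norm{s}\exp[-\norm{s}^k]\rmd s$, so this contribution is $O\big((1+M_{0,\varphi}+M_{1,\varphi})(1+M_{0,\Psi}+M_{1,\Psi})\vareps^{1/k}\big)$ with a $\varphi,\Psi$-independent constant. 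For the second term, $\abs{\L_0(\varphi)}=\abs{\I_0(\varphi)}\leq\big(\sup_{F^{-1}(0)}\jacinv{\cdot}\big)\calH^{d-p}(F^{-1}(0))\,M_{0,\varphi}M_{0,\Psi}$ is controlled by a $\varphi,\Psi$-independent constant times $(1+M_{0,\varphi}+M_{1,\varphi})(1+M_{0,\Psi}+M_{1,\Psi})$, while $C_\vareps^{-1}\int_{\norm{t}\geq\eta}\exp[-\norm{t}^k/\vareps]\rmd t\leq2^{p/k}\exp[-\eta^k/(2\vareps)]$ (using $\norm{t}^k\geq\tfrac12\norm{t}^k+\tfrac12\eta^k$ for $\norm{t}\geq\eta$), which is $o(\vareps^N)$ for every $N$. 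Collecting the two contributions gives the stated bound for $\vareps$ small; for $\vareps$ bounded away from $0$ it follows by crudely bounding $\I_\vareps^{\mathrm{in}}(\varphi)$ and $\I_0(\varphi)$ together with continuity in $\vareps$, enlarging $A_2$ if needed.

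The substantive work here is delegated to \Cref{prop:lip_l} — the explicit flow/diffeomorphism relating nearby level sets, the coarea computation behind the representation $\L_t(\varphi)=\int_{F^{-1}(0)}\chi(t,x)\,\rmd\calH^{d-p}(x)$, and the uniform control of $\partial_t\chi$ — so, granting it, the only points needing care are checking that the coarea formula genuinely applies on the tube (which reduces to the rectifiability of $F^{-1}(t)$ and the positivity of $\jac{\cdot}$ there, both secured by the choice of $\eta$ and \Cref{prop:rectifiable_level_set}) and the rescaling/truncation bookkeeping that converts the $O(\norm{t})$ modulus of $t\mapsto\L_t(\varphi)$ into the $\vareps^{1/k}$ rate while discarding the exponentially small far-field $\{\norm{t}\geq\eta\}$.
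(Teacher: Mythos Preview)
Your proposal is correct and follows essentially the same route as the paper's proof: take $\eta$ from \Cref{prop:lip_l}, rewrite $\I_\vareps^{\mathrm{in}}(\varphi)$ via the coarea formula as $C_\vareps^{-1}\int_{\ball{0}{\eta}}\exp[-\norm{t}^k/\vareps]\L_t(\varphi)\,\rmd t$, split off the far-field $\{\norm{t}\geq\eta\}$ (the paper names the intermediate quantity $\I_{0,\vareps}^{\mathrm{in}}(\varphi)=C_\vareps^{-1}\int_{\ball{0}{\eta}}\exp[-\norm{t}^k/\vareps]\L_0(\varphi)\,\rmd t$ but this is exactly your decomposition), rescale $t\mapsto\vareps^{1/k}t$, and then invoke dominated convergence for the qualitative statement and the Lipschitz bound from \Cref{prop:lip_l} for the $\vareps^{1/k}$ rate. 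Your final remark about handling $\vareps$ bounded away from $0$ separately is harmless but unnecessary, since $\exp[-\eta^k/(2\vareps)]\leq C\vareps^{1/k}$ holds for all $\vareps>0$ with a single constant.
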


\begin{proof}
  First, note that $\I_0(\varphi) = \L_0(\varphi)$, see \eqref{eq:i_0_int} and
  \eqref{eq:l_t}. In what follows, we let $\eta > 0$ be given by
  \Cref{prop:lip_l} and define
  $\I_0^{\mathrm{in}}(\varphi) = C_{\vareps}^{-1} \int_{\ball{0}{\eta}}
  \exp[-\norm{t}^k/\vareps] \L_0(\varphi) \rmd t$.  We have
  \begin{equation}
    \label{eq:in_out_i0}
    \textstyle{
    \absLigne{\I_0(\varphi) - \I_0^{\mathrm{in}}(\varphi)} = \abs{\I_0(\varphi)} C_{\vareps}^{-1} \int_{\ball{0}{\eta}^\complementary} \exp[-\norm{t}^k/\vareps] \rmd t \leq 2^{d/k} \abs{\I_0(\varphi)} \exp[-\eta^k/(2\vareps)]\eqsp . }
  \end{equation}
  Using the coarea formula, see \Cref{thm:area_coarea}, we have for any $\vareps > 0$,
  \begin{equation}
    \textstyle{\I_\vareps^{\mathrm{in}}(\varphi) = C_\vareps^{-1} \int_{F^{-1}(\ball{0}{\eta})} \Psi(x) \varphi(x) \exp[-\norm{F(x)}^k/\vareps] \rmd x = C_\vareps^{-1} \int_{\ball{0}{\eta}} \exp[-\norm{t}^k/\vareps] \L_t(\varphi) \rmd t \eqsp . }
  \end{equation}
  Therefore, using this result and the change of variable
  $t \mapsto \vareps^{1/k} t$ we have
  \begin{align}
    \absLigne{\I_\vareps^{\mathrm{in}}(\varphi) - \I_0^{\mathrm{in}}(\varphi)} &\textstyle{\leq  C_\vareps^{-1} \int_{\ball{0}{\eta}} \exp[-\norm{t}^k/\vareps] \absLigne{\L_t(\varphi) - \L_0(\varphi)} \rmd t }\\
                                                                                      &\leq \textstyle{ C_1^{-1} \int_{\ball{0}{\eta/\vareps^{1/k}}} \exp[-\norm{t}^k] \absLigne{\L_{t \vareps^{1/k}}(\varphi) - \L_0(\varphi)} \rmd t \eqsp .  } \label{eq:in}
  \end{align}
  Hence, we get that
  $\lim_{\vareps \to 0} \absLigne{\I_\vareps^{\mathrm{in}}(\varphi) -
    \I_0^{\mathrm{in}}(\varphi)} =0$ using the dominated convergence theorem and
  that
  $\lim_{\vareps \to 0} \absLigne{\L_{t \vareps^{1/k}}(\varphi) - \L_0(\varphi)}
  =0$ according to \Cref{prop:lip_l}. This concludes the first part of the proof
  upon combining this result and \eqref{eq:in_out_i0}. In addition, assume that
  $\varphi \in \rmc^1(\bar{\msu}, \rset)$ then using the second part of
  \Cref{prop:lip_l} and \eqref{eq:in} we have
  \begin{equation}
    \abs{\I_\vareps^{\mathrm{in}}(\varphi) - \I_0^{\mathrm{in}}(\varphi)} \leq \textstyle{B_0C_1^{-1} \int_{\rset^p} \norm{t} \exp[-\norm{t}^k] \rmd t  (1 + M_{0,\varphi} + M_{1, \varphi}) (1 + M_{0,\Psi} + M_{1, \Psi}) \vareps^{1/k} \eqsp ,}
  \end{equation}
  which concludes the proof upon combining this result and \eqref{eq:in_out_i0}.
\end{proof}

\subsubsection{The case $d \leq p$}
\label{sec:case-d-leq}

We now turn to the case $d \leq p$. The proof of this result is more classical
and does not rely on geometric measure theory. Instead we build on the Morse
theory approach for Laplace approximation, see \cite{wong2001asymptotic} for
example.  The following proposition is a quantitative extension of \cite[Theorem
3, p.495]{wong2001asymptotic}.

\begin{proposition}
  \label{prop:upper_bound_i_laplace}
  Assume \rref{assum:F}, \textup{\rref{assum:psi}} and that $d \leq p$. Let
  $\msu \subset \rset^d$ open and bounded such that $F^{-1}(0) \subset \msu$ and
  $\varphi \in \rmc(\bar{\msu}, \rset)$.  Then
  $\lim_{\vareps \to 0} \absLigne{\I_\vareps^{\mathrm{in}}(\varphi) -
    \I_0(\varphi)} = 0$, with
  $\I_\vareps^{\mathrm{in}}(\varphi) = \I_{\vareps}(\varphi \1_{\msv})$ and
  $\msv$ open such that $F^{-1}(0) \subset \msv \subset \msu$.  In addition,
  assume that $\varphi, \Psi \in \rmc^1(\bar{\msu}, \rset)$. Then there exists
  $B_1 \geq 0$ such that for any $\vareps > 0$ we have
  \begin{equation}
    \absLigne{\I_\vareps^{\mathrm{in}}(\varphi) - \I_0(\varphi)} \leq B_1 (1 + M_{0,\varphi} + M_{1,\varphi}) (1 + M_{0,\Psi} + M_{1,\Psi}) \vareps^{1/k} \eqsp , 
  \end{equation}
 with $B_1$
  that does not depend on $\varphi$ and $\Psi$, and for any $i \in \{0,1\}$ and
  $f \in \rmc^1(\bar{\msu}, \rset)$,
  $M_{i, f} = \sup \ensembleLigne{\normLigne{\nabla^i f(x)}}{x \in \msu}$.
\end{proposition}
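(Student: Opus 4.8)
The plan is to exploit that, when $d \leq p$, the non-degeneracy condition $\jac{x} \neq 0$ on $F^{-1}(0)$ forces $\rmD F(x)$ to have full column rank there, so that $V := \normLigne{F}^2 \in \rmc^\infty(\rset^d)$ has Hessian $\nabla^2 V(x) = 2\rmD F(x)^\top \rmD F(x) \succ 0$ at every zero of $F$; combined with the fact that $F^{-1}(0)$ is compact (it is closed and, by the growth bound in \rref{assum:F}, bounded), this shows $F^{-1}(0)$ is a finite set $\{x_1,\dots,x_N\}$ of isolated non-degenerate minima of $V$. Since here $\min(d,p)=d$ and $\calH^0$ is the counting measure, $\I_0(\varphi) = \sum_{j=1}^N \varphi(x_j)\Psi(x_j)\jacinv{x_j}$. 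First I would localise: given $\msv$ with $F^{-1}(0)\subset\msv\subset\msu$, I apply the quantitative Morse lemma (\Cref{prop:quantitative_morse_lemma}, see also \cite[Theorem 3.2]{le2014numerical}) to $V$ near each $x_j$, obtaining $\rho>0$ and $\rmc^\infty$-diffeomorphisms $\psi_j \colon \ball{0}{\rho}\to\msw_j$ onto pairwise disjoint open neighbourhoods $\msw_j := \psi_j(\ball{0}{\rho})$ of $x_j$ with $\psi_j(0)=x_j$, $V(\psi_j(y))=\normLigne{y}^2$, and $\msw_j\subset\msv$; here $\rho$ and upper bounds for $\normLigne{\psi_j}_{\rmc^2(\cball{0}{\rho})}$ (hence for $\sup\normLigne{\rmD\psi_j}$ and $\mathrm{Lip}(\det\rmD\psi_j)$) and for $\sup\absLigne{\det\rmD\psi_j}$ depend only on $F$. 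Differentiating $V(\psi_j(y))=\normLigne{y}^2$ twice at $y=0$ gives $\rmD\psi_j(0)^\top\nabla^2 V(x_j)\rmD\psi_j(0)=2\Id$, whence $\absLigne{\det\rmD\psi_j(0)} = 2^{d/2}\det(\nabla^2 V(x_j))^{-1/2} = \jacinv{x_j}$.

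On $\bar{\msu}\setminus\bigcup_j \msw_j$, which is compact and disjoint from $F^{-1}(0)$, one has $\normLigne{F}\geq\delta$ for some $\delta>0$, so with $C_\vareps = \vareps^{d/k}C_1$,
\[
\absLigne{\I_\vareps(\varphi\1_{\msv\setminus\cup_j\msw_j})} \;\leq\; C_\vareps^{-1}\rme^{-\delta^k/\vareps}\,M_{0,\varphi}\,M_{0,\Psi}\,\Leb(\msu)\;\leq\; C\,(1+M_{0,\varphi})(1+M_{0,\Psi})\,\vareps^{1/k}
\]
for all $\vareps>0$, since $\vareps\mapsto\vareps^{-(d+1)/k}\rme^{-\delta^k/\vareps}$ is bounded on $(0,\infty)$. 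It remains to analyse $\I_\vareps(\varphi\1_{\msw_j})$. Changing variables $x=\psi_j(y)$ and using $\normLigne{F(x)}^k = V(x)^{k/2} = \normLigne{y}^k$, then rescaling $y=\vareps^{1/k}z$, gives
\[
\I_\vareps(\varphi\1_{\msw_j}) \;=\; C_1^{-1}\int_{\ball{0}{\rho\vareps^{-1/k}}} h_j(\vareps^{1/k}z)\,\rme^{-\normLigne{z}^k}\,\rmd z,\qquad h_j(y):=\varphi(\psi_j(y))\Psi(\psi_j(y))\absLigne{\det\rmD\psi_j(y)}.
\]
Since $h_j(0)=\varphi(x_j)\Psi(x_j)\jacinv{x_j}$ and $C_1=\int_{\rset^d}\rme^{-\normLigne{z}^k}\rmd z$, subtracting $\varphi(x_j)\Psi(x_j)\jacinv{x_j}= C_1^{-1}\int_{\rset^d}h_j(0)\rme^{-\normLigne{z}^k}\rmd z$ leaves a tail term $\int_{\normLigne{z}\geq\rho\vareps^{-1/k}}h_j(0)\rme^{-\normLigne{z}^k}\rmd z$, which is exponentially small and bounded as above, plus $\int_{\ball{0}{\rho\vareps^{-1/k}}}\bigl(h_j(\vareps^{1/k}z)-h_j(0)\bigr)\rme^{-\normLigne{z}^k}\rmd z$, bounded by $\mathrm{Lip}(h_j)\,\vareps^{1/k}\int_{\rset^d}\normLigne{z}\rme^{-\normLigne{z}^k}\rmd z$.

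To close the quantitative estimate, I bound $\mathrm{Lip}(h_j)$ on $\cball{0}{\rho}$ by the product rule: using $\normLigne{\varphi\circ\psi_j}_\infty\leq M_{0,\varphi}$, $\mathrm{Lip}(\varphi\circ\psi_j)\leq M_{1,\varphi}\sup\normLigne{\rmD\psi_j}$ (via the chain rule along the segment-image path inside $\msw_j\subset\msu$), and the analogous bounds for $\Psi$, together with the $F$-dependent bounds on $\det\rmD\psi_j$, one gets $\normLigne{h_j}_\infty + \mathrm{Lip}(h_j) \leq P_F\,(1+M_{0,\varphi}+M_{1,\varphi})(1+M_{0,\Psi}+M_{1,\Psi})$ with $P_F$ depending only on $F$. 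Summing over $j$ and adding the localisation error yields $\absLigne{\I_\vareps^{\mathrm{in}}(\varphi)-\I_0(\varphi)} \leq B_1(1+M_{0,\varphi}+M_{1,\varphi})(1+M_{0,\Psi}+M_{1,\Psi})\vareps^{1/k}$ with $B_1$ depending only on $F$, $k$, $\msu$, $\msv$, as required; the plain convergence $\lim_{\vareps\to0}\absLigne{\I_\vareps^{\mathrm{in}}(\varphi)-\I_0(\varphi)}=0$ for merely continuous $\varphi,\Psi$ follows from the same decomposition by dominated convergence, since $z\mapsto h_j(\vareps^{1/k}z)\1_{\ball{0}{\rho\vareps^{-1/k}}}(z)$ converges pointwise to $h_j(0)$ and is dominated by a constant multiple of $\rme^{-\normLigne{z}^k}$. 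The main obstacle is the quantitative Morse lemma: one needs the straightening diffeomorphisms $\psi_j$ up to $\rmc^2$ (so that $\det\rmD\psi_j$ is Lipschitz) with all constants controlled purely in terms of $V=\normLigne{F}^2$ near $x_j$, which is precisely what makes $B_1$ independent of $\varphi$ and $\Psi$ and confines their influence to $M_{0,\cdot}$ and $M_{1,\cdot}$; a minor secondary point is absorbing the $\rme^{-c/\vareps}$ remainders into $O(\vareps^{1/k})$ uniformly in $\vareps>0$, which is immediate from boundedness of $\vareps\mapsto\vareps^{-m}\rme^{-c/\vareps}$.
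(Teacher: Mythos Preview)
Your proposal is correct and follows essentially the same approach as the paper: both reduce to a finite set of non-degenerate minima of $V=\normLigne{F}^2$, apply a Morse straightening $V\circ\psi_j=\normLigne{\cdot}^2$, change variables, rescale $y=\vareps^{1/k}z$, and bound the difference via the Lipschitz constant of $h_j(y)=\varphi(\psi_j(y))\Psi(\psi_j(y))\absLigne{\det\rmD\psi_j(y)}$ (the paper's $\chi_\ell$), using dominated convergence for the merely continuous case. The only cosmetic difference is that the paper \emph{defines} $\msv$ to be the union of the Morse chart images $\cup_\ell\Phi_\ell(\Omega_\ell)$, so no intermediate region appears, whereas you take $\msv$ as given and bound the extra piece $\msv\setminus\cup_j\msw_j$ via $\normLigne{F}\geq\delta>0$ there; also the paper invokes the classical Morse lemma from \cite{nirenberg2001topics} (with $\rmc^2$ regularity coming from $F\in\rmc^\infty$) rather than the quantitative version, since here the finitely many $F$-dependent constants can simply be absorbed into $B_1$.
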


\begin{proof}
  Let $\{x_0^\ell\}_{\ell=1}^N$ and $\{\msw_\ell\}_{\ell=1}^N$ be given by
  \Cref{lemma:existence_U} such that $F^{-1}(0) = \cup_{\ell=1}^N \{x_0^\ell\}$
  and $\rmd F(x)$ is injective for any $\ell \in \{1, \dots, N\}$ and
  $x \in \msw_\ell$.  In addition, for any $\ell, m \in \{1, \dots, N\}$,
  $\msw_\ell \cap \msw_m = \emptyset$. Let $\ell \in \{1, \dots, N\}$ and
  $U: \ \rset^d \to \rset_+$ such that for any $x \in \rset^d$,
  $U(x) = \normLigne{F(x)}^2$.  Since $F \in \rmc^\infty(\rset^d, \rset^p)$ we
  have that $U \in \rmc^\infty(\rset^d, \rset)$.  We divide the rest of the
  proof into two parts.
  \begin{enumerate}[wide, labelwidth=!, labelindent=0pt, label=(\alph*)]
  \item First, we have that
    $\nabla^2U(x_0^\ell) = 2 \rmD F(x_0^\ell)^\top \rmD F(x_0^\ell)$ which is
    invertible since $\jac{x_0^\ell} > 0$. Therefore, we can apply Morse's lemma
    \citep[Theorem 3.1.1]{nirenberg2001topics} and there exists a diffeormorphism
    $\Phi_\ell \in \rmc^1(\bar{\Omega}_\ell, \bar{\msw}_\ell)$ with
    $0 \in \Omega_\ell$, $x_0^\ell \in \msw_\ell$ and
    $\Omega_\ell \subset \rset^d$ open such that for any $x \in \Omega_\ell$,
    $U(\Phi_\ell(x)) = \norm{x}^2$, $\Phi_\ell(0) = x_0^\ell$ and
    $\rmD \Phi_\ell(0) = (\rmD F(x_0^\ell)^\top \rmD F(x_0^\ell))^{-1/2}$. Note
    that $\det(\rmD \Phi_\ell(0)) = \jacinv{x_0^\ell}$.

    Let $r_\ell > 0$ such that $\cball{0}{r_\ell} \subset \Omega_\ell$. We have 
  \begin{equation}
    \label{eq:expe_I0}
    \textstyle{
    1 - \int_{\Omega_\ell/\vareps^{1/k}} \exp[-\norm{x}^k] \rmd x /C_1 \leq \int_{\rset^d} \exp[-\norm{x}^k/2] \rmd x \exp[-r_\ell^k/(2\vareps)]/C_1 \eqsp . }
\end{equation}
In what follows, we no longer consider $\ell \in \{1, \dots, N\}$ to be
fixed.  Let $\msv = \cup_{\ell=1}^N \Phi_\ell(\Omega_\ell)$ and
$\I_{0, \vareps}^{\mathrm{in}}(\varphi) = \sum_{\ell=1}^N
\int_{\Omega_\ell/\vareps^{1/k}} \exp[-\norm{x}^k] \rmd x \varphi(x_0^\ell)
\Psi(x_0^\ell) \jacinv{x_0^\ell}/C_1$. We recall that we have
  \begin{equation}
    \textstyle{
    \I_0(\varphi) = \sum_{\ell=1}^N
    \varphi(x_0^\ell)
    \Psi(x_0^\ell) \jacinv{x_0^\ell} \eqsp . }
  \end{equation}
  Combining this result and \eqref{eq:expe_I0} we get 
  \begin{equation}
    \label{eq:out_i_0}
    \textstyle{
    \abs{\I_{0, \vareps}^{\mathrm{in}}(\varphi) - \I_0(\varphi)} \leq N M \int_{\rset^d} \exp[-\norm{x}^k/2] \rmd x \exp[-r_{\mathrm{min}}^k/(2\vareps)] / C_1 \eqsp ,}
  \end{equation}
  where
  $r_{\mathrm{min}} = \min \ensembleLigne{r_\ell}{\ell \in \{1, \dots, N\}}$ and
  $M = \max \ensembleLigne{\absLigne{\varphi(x_0^\ell)} \Psi(x_0^\ell)
    \jacinv{x_0^\ell}}{\ell \in \{1, \dots, N\}}$.  Using for any
  $\ell \in \{1, \dots, N\}$ the change of variable $x \mapsto \Phi_\ell(x)$ and
  $x \mapsto \vareps^{1/k} x$ we have
  \begin{align}
    \abs{\I_{0, \vareps}^{\mathrm{in}}(\varphi) - \I_\vareps^{\mathrm{in}}(\varphi)}  &\textstyle{\leq \sum_{\ell=1}^N\int_{\Omega_\ell/\vareps^{1/k}} |  \varphi(\Phi_\ell(\vareps^{1/k} x)) \Psi(\Phi_\ell(\vareps^{1/k} x)) \det(\rmD \Phi_\ell(\vareps^{1/k} x))}   \\
    & \qquad \qquad   - \varphi(x_0^\ell) \Psi(x_0^\ell) \jacinv{x_0^\ell}  |  \exp[-\norm{x}^k]\rmd x/C_1 \eqsp ,  \label{eq:diff_laplace}
  \end{align}
  For any $\ell \in \{1, \dots, N\}$, let $\chi_\ell : \ \Omega_\ell \to \rset$
  given for any $x \in \Omega_\ell$ by
  \begin{equation}
    \chi_\ell(x) = \varphi(\Phi_\ell( x)) \Psi(\Phi_\ell(x)) \det(\rmD \Phi_\ell(x)) \eqsp . 
  \end{equation}
  We conclude the first part of the proof using \eqref{eq:out_i_0}, the
  dominated convergence theorem in \eqref{eq:diff_laplace} and that for any
  $\ell \in \{1, \dots, N\}$, $\chi \in \rmc(\Omega_\ell, \rset)$.
  \item For the second part of the proof, since
  $\varphi, \Psi \in \rmc^1(\bar{\msu}, \rset)$ and for any
  $\ell \in \{1, \dots, N\}$,
  $\Phi_\ell \in \rmc^2(\bar{\Omega}_\ell, \bar{\msw}_\ell)$ since
  $F \in \rmc^\infty(\rset^d, \rset^p)$ using \cite[Theorem
  3.1.1]{nirenberg2001topics}, we have that for any $\ell \in \{1, \dots, N\}$,
  $\chi_\ell \in \rmc^1(\bar{\Omega}_\ell, \rset)$ and there exists
  $B_1' \geq 0$ (which do not depend on $\varphi$ and $\Psi$) such that for any
  $\ell \in \{1, \dots, N\}$ and $x \in \Omega_\ell$ we have
  \begin{equation}
    \norm{\rmD \chi_\ell(x)} \leq B_1' (1 + M_{0,\varphi} + M_{1,\varphi}) (1 + M_{0,\Psi} + M_{1,\Psi}) \eqsp .
  \end{equation}
  Using this result and \eqref{eq:diff_laplace} we get that
  \begin{equation}
    \absLigne{\I_{0, \vareps}^{\mathrm{in}}(\varphi) - \I_\vareps^{\mathrm{in}}(\varphi)} \leq \textstyle{N  B_1' (1 + M_{0,\varphi} + M_{1,\varphi}) \vareps^{1/k} \int_{\rset^d} \norm{x} \exp[-\norm{x}^k] \rmd x / C_1 \eqsp .}
  \end{equation}
  Combining this result and \eqref{eq:out_i_0} concludes the proof.
\end{enumerate}
\end{proof}

\subsubsection{Proof of \Cref{thm:big_theo_extension}}
\label{sec:proof-crefthm:big}

We start by proving the results of \Cref{thm:big_theo_extension} in a smooth setting, then we deduce the general case using a smoothing lemma.

\begin{proposition}
  \label{prop:final_d_geq_p}
  Assume \rref{assum:F} and 
  \textup{\rref{assum:psi}}. Let $\msu \subset \rset^d$ open and bounded such that
  $F^{-1}(0) \subset \msu$ and $\varphi \in \rmc(\bar{\msu}, \rset)$ which
  satisfies \eqref{eq:cond_varphi}. Then,
  $\lim_{\vareps \to 0} \abs{\pi_{\vareps}^\Psi[\varphi] - \pi_0^\Psi[\varphi]}
  =0$. In addition, assume that $\varphi, \Psi \in \rmc^1(\bar{\msu},
  \rset)$. Then there exists $A \in \rmc(\rset_+^3, \rset_+)$ such that for any
  $\vareps \in \oointLigne{0, \mtt^k / (4\{C_\varphi + C_\Psi +
    C_{\varphi}C_\Psi + 1\})}$
  \begin{equation}
    \abs{\pi_{\vareps}^\Psi[\varphi] - \pi_0^\Psi[\varphi]} \leq A(C_\varphi, C_\Psi, m_{0,\Psi}) (1 + M_{0,\varphi} + M_{1,\varphi}) (1 + M_{0,\Psi} + M_{1,\Psi}) \vareps^{1/k} \eqsp ,
  \end{equation}
  with for any $i \in \{0,1\}$ and $f \in \rmc^1(\rset^d, \rset)$,
  $M_{i, f} = \sup \ensembleLigne{\normLigne{\nabla^i f(x)}}{x \in \msu}$,
  $m_{0,\Psi} = \inf \ensembleLigne{\Psi(x)}{x \in F^{-1}(0)}$ and $A$
 function that does not depend on $\varphi$ and $\Psi$. Finally, $A$ is non-decreasing w.r.t. its 
  first two variables and non-increasing w.r.t. its last variable.
\end{proposition}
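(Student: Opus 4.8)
The plan is to reduce the statement to the two ``inner'' estimates already established---\Cref{prop:upper_bound_i_in} when $d\ge p$ and \Cref{prop:upper_bound_i_laplace} when $d\le p$---together with an elementary estimate of the tail of $\pi_\vareps^\Psi$ away from $F^{-1}(0)$, after which one passes from $\I_\vareps,\J_\vareps$ to the ratios $\pi_\vareps^\Psi[\varphi]=\I_\vareps(\varphi)/\J_\vareps$ and $\pi_0^\Psi[\varphi]=\I_0(\varphi)/\J_0$. First I would record that $0<\J_0<+\infty$ and $|\I_0(\varphi)|\le M_{0,\varphi}\J_0$: the set $F^{-1}(0)$ is non-empty (since $F(0)=0$) and compact (since $\normLigne{F}\to+\infty$ by \rref{assum:F}); on it $\mathrm{J}F$ is continuous and non-vanishing, hence bounded below by some $c_F>0$; $\Psi$ is continuous and positive by \textup{\rref{assum:psi}}, so $m_{0,\Psi}=\inf_{F^{-1}(0)}\Psi>0$; and $\calH^{d-\hat{d}}(F^{-1}(0))$ is finite (by \Cref{prop:rectifiable_level_set} if $d\ge p$, and of finite cardinality by \Cref{lemma:existence_U} if $d\le p$) and positive. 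Hence $\J_0\ge c_F^{-1/2}m_{0,\Psi}\calH^{d-\hat{d}}(F^{-1}(0))>0$, while $|\I_0(\varphi)|\le(\sup_{F^{-1}(0)}|\varphi|)\J_0\le M_{0,\varphi}\J_0$ because $\Psi\,\mathrm{J}F^{-1/2}\ge0$.

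Next I would split, with $\msv\subset\rset^d$ open and bounded and $F^{-1}(0)\subset\msv\subset\msu$,
\begin{equation*}
  \I_\vareps(\varphi)-\I_0(\varphi)=\bigl(\I_\vareps^{\mathrm{in}}(\varphi)-\I_0(\varphi)\bigr)+\bigl(\I_\vareps(\varphi)-\I_\vareps^{\mathrm{in}}(\varphi)\bigr),\qquad\I_\vareps^{\mathrm{in}}(\varphi)=\I_\vareps(\varphi\1_\msv),
\end{equation*}
choosing for $\msv$ the set $F^{-1}(\ball{0}{\eta})$ of \Cref{prop:upper_bound_i_in} (with $\eta$ supplied by \Cref{prop:lip_l}) when $d\ge p$, and the union of Morse charts of \Cref{prop:upper_bound_i_laplace} when $d\le p$. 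By construction the first bracket is exactly what those propositions bound: it tends to $0$ when $\varphi\in\rmc(\bar{\msu},\rset)$, and once $\varphi,\Psi\in\rmc^1(\bar{\msu},\rset)$ it is at most $A_2(1+M_{0,\varphi}+M_{1,\varphi})(1+M_{0,\Psi}+M_{1,\Psi})\vareps^{1/k}$ (or the analogous bound with the constant $B_1$), with $A_2,B_1$ independent of $\varphi,\Psi$.

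For the second bracket, which equals $C_\vareps^{-1}\int_{\msv^\complementary}\varphi(x)\Psi(x)\exp[-\normLigne{F(x)}^k/\vareps]\,\rmd x$ with $C_\vareps=C_1\vareps^{d/k}$, I would fix $R_0\ge R$ with $F^{-1}(0)\subset\ball{0}{R_0}$ and bound $\int_{\msv^\complementary}$ by the sum of the integrals over $\msv^\complementary\cap\cball{0}{R_0}$---which is compact and disjoint from $F^{-1}(0)$, so $\normLigne{F}\ge\delta_0>0$ there---and over $\cball{0}{R_0}^\complementary$, on which $\normLigne{F(x)}^k\ge\mtt^k\normLigne{x}^{\upalpha k}$ by \rref{assum:F}. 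Bounding $|\varphi(x)\Psi(x)|\le C_\varphi C_\Psi\exp[(C_\varphi+C_\Psi)\normLigne{x}^{\upalpha k}]$ via \eqref{eq:cond_varphi} and \textup{\rref{assum:psi}}, and restricting to $\vareps$ below an explicit threshold $\bvareps$ of the form $\mtt^k/(c\{C_\varphi+C_\Psi+C_\varphi C_\Psi+1\})$ so that $\mtt^k/\vareps-C_\varphi-C_\Psi\ge\mtt^k/(2\vareps)$, the far integral is at most $C_\varphi C_\Psi\exp[-\mtt^k R_0^{\upalpha k}/(2\vareps)]$ times a finite $\vareps$-independent integral (uniform in $C_\varphi,C_\Psi$ thanks to the choice of $\bvareps$), and the integral over $\msv^\complementary\cap\cball{0}{R_0}$ is at most $C_\varphi C_\Psi\exp[(C_\varphi+C_\Psi)R_0^{\upalpha k}]\Leb(\cball{0}{R_0})\exp[-\delta_0^k/\vareps]$; multiplying by $C_\vareps^{-1}=C_1^{-1}\vareps^{-d/k}$ gives $|\I_\vareps(\varphi)-\I_\vareps^{\mathrm{in}}(\varphi)|\le C(C_\varphi,C_\Psi)\vareps^{-d/k}\exp[-c_0/\vareps]$ with $c_0>0$ depending only on $F$ and $R_0$, which is $o(\vareps^{1/k})$ and in particular $\le C'(C_\varphi,C_\Psi)\vareps^{1/k}$ after a further shrinking. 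Together with the previous paragraph this yields $|\I_\vareps(\varphi)-\I_0(\varphi)|\le\tilde{A}(C_\varphi,C_\Psi)(1+M_{0,\varphi}+M_{1,\varphi})(1+M_{0,\Psi}+M_{1,\Psi})\vareps^{1/k}$, and $\to0$ in the merely continuous case.

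Finally I would divide. Applying the last bound to $\varphi\equiv1$ (so $M_{0,\varphi}=1$, $M_{1,\varphi}=0$, and $C_\varphi=1$ is admissible) gives $|\J_\vareps-\J_0|\le2\tilde{A}(1,C_\Psi)(1+M_{0,\Psi}+M_{1,\Psi})\vareps^{1/k}$, hence $\J_\vareps\ge\J_0/2>0$ after shrinking $\vareps$ once more (or simply $\J_\vareps\ge\I_\vareps^{\mathrm{in}}(1)\to\J_0$). From
\begin{equation*}
  \pi_\vareps^\Psi[\varphi]-\pi_0^\Psi[\varphi]=\frac{\I_\vareps(\varphi)-\I_0(\varphi)}{\J_\vareps}+\frac{\I_0(\varphi)}{\J_0}\cdot\frac{\J_0-\J_\vareps}{\J_\vareps},
\end{equation*}
using $1/\J_\vareps\le2/\J_0\le2c_F^{1/2}/(m_{0,\Psi}\calH^{d-\hat{d}}(F^{-1}(0)))$ and $|\I_0(\varphi)/\J_0|\le M_{0,\varphi}$, each summand is at most a constant depending on $F,C_\varphi,C_\Psi,m_{0,\Psi}$ times $(1+M_{0,\varphi}+M_{1,\varphi})(1+M_{0,\Psi}+M_{1,\Psi})\vareps^{1/k}$; collecting these constants into one function $A(C_\varphi,C_\Psi,m_{0,\Psi})$---a finite combination of exponentials and reciprocals, hence in $\rmc(\rset_+^3,\rset_+)$, non-decreasing in $(C_\varphi,C_\Psi)$ and non-increasing in $m_{0,\Psi}$---gives the quantitative claim, the value of $\vareps$ in the intermediate range below the announced threshold being absorbed by a crude bound with a large constant so that only the régime $\vareps\to0$ matters; the qualitative limit follows from the same splitting using only the convergence parts of the two previous paragraphs and $\J_\vareps\to\J_0>0$. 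The hard part will be the tail estimate: one must isolate the genuinely-at-infinity part of $\msv^\complementary$ and balance the polynomial lower bound $\normLigne{F(x)}\ge\mtt\normLigne{x}^{\upalpha}$ from \rref{assum:F} against the exponential growth permitted by \eqref{eq:cond_varphi} and \textup{\rref{assum:psi}}, which is precisely what dictates a threshold of the announced shape; by contrast the inner estimates are quoted verbatim and the division step is routine once $\J_0>0$ is in hand.
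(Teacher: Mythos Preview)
Your proposal follows the paper's proof essentially step for step: the same decomposition $\pi_\vareps^\Psi[\varphi]-\pi_0^\Psi[\varphi]=\J_\vareps^{-1}(\I_\vareps(\varphi)-\I_0(\varphi))+(\I_0(\varphi)/\J_0)\J_\vareps^{-1}(\J_0-\J_\vareps)$, the same choice of $\msv$ according to whether $d\ge p$ or $d\le p$, the same inner estimates via \Cref{prop:upper_bound_i_in} and \Cref{prop:upper_bound_i_laplace}, and a tail estimate that reproduces by hand what the paper packages as \Cref{prop:upper_bound_i_out}.

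The one point where you diverge from the paper---and where your argument is not quite complete---is the lower bound on $\J_\vareps$. You obtain it from the quantitative convergence $|\J_\vareps-\J_0|\le 2\tilde A(1,C_\Psi)(1+M_{0,\Psi}+M_{1,\Psi})\vareps^{1/k}$, which yields $\J_\vareps\ge\J_0/2$ only for $\vareps$ below a threshold depending on $M_{0,\Psi},M_{1,\Psi}$, whereas the stated bound must hold on the full range $(0,\bvareps)$ with $\bvareps$ depending only on $C_\varphi,C_\Psi$. Your ``absorb the intermediate range into a large constant'' manoeuvre is circular: bounding $|\pi_\vareps^\Psi[\varphi]|$ crudely on that range already requires a lower bound on $\J_\vareps$ depending only on $m_{0,\Psi}$. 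The paper sidesteps this by invoking \Cref{prop:lower_bound_j_eps}, which gives $\J_\vareps\ge A_0\,m_{0,\Psi}$ directly for all $\vareps\in[0,\bvareps]$ via the elementary estimate $\normLigne{F(x)}\le M\normLigne{x}$ near $0$ together with $\Psi\ge m_{0,\Psi}/2$ on a neighbourhood of $F^{-1}(0)$---no convergence argument needed. Once you substitute this for your convergence-based lower bound, everything else in your proposal goes through exactly as in the paper.
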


\begin{proof}
Let
  $\bvareps = \mtt^k/(4 + 4 C_{\varphi, \Psi})$, with
  $C_{\varphi, \Psi} = (C_\Psi + 1)(C_\varphi + 1)$ and note that we have
  \begin{equation}
    \label{eq:bound_vareps}
 \bvareps < \min(\mtt^k/(1 + C_{\varphi, \Psi}), \mtt^k/(1 + C_{\varphi, 1})) \eqsp . 
\end{equation}
  For any $\vareps \in \ooint{0, \bvareps}$ we have 
  \begin{align}
    \label{eq:decompopo}
    \abs{\pi_\vareps^\Psi[\varphi] - \pi_0^\Psi[\varphi]} &\leq \abs{\I_\vareps(\varphi)/\J_\vareps - \I_0(\varphi)/\J_0} \\
    &\leq \J_\vareps^{-1} \abs{\I_\vareps(\varphi) - \I_0(\varphi)} + \I_0(\varphi)/(\J_0\J_\vareps) \abs{\I_\vareps(1) - \I_0(1)} \eqsp . 
  \end{align}
  Let $\eta > 0$ be given by \Cref{prop:lip_l} and let
  $\msv = F^{-1}(\ball{0}{\eta})$ if $d \geq p$, and $\msv$ given by
  \Cref{prop:upper_bound_i_laplace} otherwise.  Note that
  $F^{-1}(0) \subset \msv$. For any $\vareps \in \ooint{0, \bvareps}$ we define
  $\I_\vareps^{\mathrm{out}}(\varphi) = \I_\vareps(\varphi
  \1_{\msv^\complementary})$ and
  $\I_\vareps^{\mathrm{in}}(\varphi) = \I_\vareps(\varphi \1_{\msv})$.  
We divide the rest of the proof into two parts. First, we control
$\I_\vareps^{\mathrm{out}}(\varphi)$ using the technical bounds of
\Cref{prop:upper_bound_i_out}. Then, we control
$\absLigne{\I_\vareps^{\mathrm{in}}(\varphi) - \I_0(\varphi)}$ using either
\Cref{prop:upper_bound_i_in} if $d \geq p$ or \Cref{prop:upper_bound_i_laplace}
if $d \leq p$. We conclude upon combining these results.

  \begin{enumerate}[wide, labelwidth=!, labelindent=0pt, label=(\alph*)]
  \item Using \eqref{eq:bound_vareps} and \Cref{prop:upper_bound_i_out} we get that
  for any $\vareps \in \oointLigne{0, \bvareps}$ we have
  \begin{equation}
    \label{eq:ineq_out}
    \I_{\vareps}^{\mathrm{out}}(\varphi) \leq A_1(C_{\varphi, \Psi}) \vareps^{-d/k} \exp[-\beta_1 / \vareps]\eqsp , \qquad \I_{\vareps}^{\mathrm{out}}(1) \leq A_1(C_{1, \Psi}) \vareps^{-d/k} \exp[-\beta_1 / \vareps]\eqsp ,
  \end{equation}
  with $\beta_1 > 0$ and $A_1 \in \rmc(\rset_+, \rset_+)$ that do not depend on
  $\varphi$ and $\Psi$, and are non-decreasing.
\item Using either \Cref{prop:upper_bound_i_in} if $d \geq p$ or
  \Cref{prop:upper_bound_i_laplace} if $d \leq p$ we have that for any
  $\vareps \in \oointLigne{0, \bvareps}$
  \begin{align}
\label{eq:ineq_in}
    &\abs{\I_{\vareps}^{\mathrm{in}}(\varphi) - \I_0(\varphi)} \leq A_2 (1 + M_{0, \varphi} + M_{1, \varphi}) (1 + M_{0, \Psi} + M_{1, \Psi}) \vareps^{1/k} \eqsp , \\
            &\abs{\I_{\vareps}^{\mathrm{in}}(1) - \I_0(1)} \leq 2 A_2 (1 + M_{0, \Psi} + M_{1, \Psi}) \vareps^{1/k} \eqsp ,
  \end{align}
  with for any $i \in \{0,1\}$ and $f \in \rmc^1(\rset^d, \rset)$,
  $M_{i, f} = \sup \ensembleLigne{\normLigne{\nabla^i f(x)}}{x \in \msu}$, and $A_2$
  that does not depend on $\varphi$ and $\Psi$.
\end{enumerate}
Combining \eqref{eq:ineq_out} and
  \eqref{eq:ineq_in} we get that for any $\vareps \in \oointLigne{0, \bvareps}$
  \begin{align}
    &\abs{\I_{\vareps}(\varphi) - \I_0(\varphi)} \leq A_2 (1 + M_{0, \varphi} + M_{1, \varphi}) (1 + M_{0, \Psi} + M_{1, \Psi}) \vareps^{1/k} + A_1(C_{\varphi, \Psi}) \vareps^{-d/k} \exp[-\beta_1 / \vareps] \eqsp , \\
            &\abs{\I_{\vareps}(1) - \I_0(1)} \leq 2 A_2 (1 + M_{0, \Psi} + M_{1, \Psi}) \vareps^{1/k} + A_1(C_{1, \Psi}) \vareps^{-d/k} \exp[-\beta_1 / \vareps] \eqsp . 
  \end{align}
{Since for any $\vareps > 0$, $\vareps^{-d/k} \exp[-\beta/\vareps] \leq ((d+1)/(k\beta ))^{1/k + d/k} \vareps^{1/k}$ (this inequality comes from first multiplying by $\vareps^{-1/k}$ on both sides, and then taking the value of $\vareps$ that achieves the maximum of the left-hand term)}, there exists $\tilde{A} \in \rmc(\rset_+^2, \rset_+)$ such that for any $\vareps \in \oointLigne{0, \bvareps}$ we have
\begin{align}
  \label{eq:inter_bientot}
      &\abs{\I_{\vareps}(\varphi) - \I_0(\varphi)} \leq \tilde{A}(C_\varphi, C_\Psi) (1 + M_{0, \varphi} + M_{1, \varphi}) (1 + M_{0, \Psi} + M_{1, \Psi}) \vareps^{1/k} \eqsp , \\
            &\abs{\I_{\vareps}(1) - \I_0(1)} \leq 2 \tilde{A}(1, C_\Psi) (1 + M_{0, \Psi} + M_{1, \Psi}) \vareps^{1/k} \eqsp , 
\end{align}
with $\tilde{A}$ that does not depend on $\Psi$ and $\varphi$, and $\tilde{A}$
non-decreasing w.r.t. to each of its variables.  Using
\Cref{prop:lower_bound_j_eps}, there exists
$A_0 \in \rmc(\rset_+, \rset_+^\star)$ such that for any
$\vareps \in \ccint{0, \bvareps}$, $\J_{\vareps} \geq A_0(m_{0,\Psi})$ with
$m_{0,\Psi} = \inf \ensembleLigne{\Psi(x)}{x \in F^{-1}(0)}$, and $A_0$ that does not depend on $\Psi$ and is non-increasing. Finally, note that
{$\I_0(\varphi) \leq \sup \ensembleLigne{\jacinv{x}}{x \in F^{-1}(0)} M_{0,
  \varphi} M_{0,\Psi} \calH^{d-p}(F^{-1}(0))$} with $\calH^{d-p}(F^{-1}(0)) < +\infty$ by
\Cref{prop:rectifiable_level_set}. Combining these results, \eqref{eq:decompopo}
and \eqref{eq:inter_bientot} concludes the proof.
\end{proof}

Using this proposition along with a smoothing lemma, see
\Cref{lemma:smoothing_lemma}, we conclude the proof of
\Cref{thm:big_theo_extension}.

\begin{proof}[Proof of \Cref{thm:big_theo_extension}]
  We begin by introducing the families
  $\ensembleLigne{\varphi^\delta}{\delta \in \ooint{0, \bar{\delta}}}$ and
  $\ensembleLigne{\Psi^\delta}{\delta \in \ooint{0, \bar{\delta}}}$ which are smooth
  approximations of $\varphi$ and $\Psi$ respectively.  Since $F^{-1}(0)$ is
  compact and $\msu^\complementary$ is closed there exists $r > 0$ such that
  $F^{-1}(0) + \ball{0}{r} \subset \msu$. Let
  $\msu_0 = F^{-1}(0) + \ball{0}{r/2}$ and note that
  $F^{-1}(0) \subset \msu_0$ and $\msu_0 + \ball{0}{r/2} \subset \msu$. Let
  $\ensembleLigne{\varphi^\delta}{\delta \in \ooint{0, \bar{\delta}}}$ and
  $\ensembleLigne{\Psi^\delta}{\delta \in \ooint{0, \bar{\delta}}}$ with
  $\bar{\delta} > 0$ given by \Cref{lemma:smoothing_lemma}, $\msv \leftarrow \msu$
  and $\msu \leftarrow \msu_0$. Then using the dominated convergence theorem and
  the fact that $F^{-1}(0)$ is compact we have that
  $\lim_{\delta \to 0} \pi_0^{\Psi^\delta}[\varphi^\delta] =
  \pi_0^\Psi[\varphi]$. 
  Similarly, using the dominated convergence theorem we get that there exists
  $\bvareps_0 \in \rmc(\rset_+, \rset_+)$ such that for any
  $\vareps \in \oointLigne{0, \bvareps_0(C_\varphi)}$ we have
  $\lim_{\delta \to 0} \pi^{\Psi^\delta}_\vareps[\varphi^\delta] =
    \pi^\Psi_\vareps[\varphi]$.  We
  conclude upon using \Cref{prop:final_d_geq_p} and
  \Cref{lemma:smoothing_lemma}.
 
  \end{proof}


\subsection{Proofs of \Cref{sec:from-macr-micr}}
\label{sec:proof-macro}

In this section, we prove \Cref{prop:behavior_theta}.  Similarly to the proof of
\Cref{thm:big_theo_extension} we divide the proof into two parts depending on
whether $d \geq p$ in \Cref{sec:case-d-geq-1} or $d \leq p$ in
\Cref{sec:case-d-leq-1}. Our main result, which is a generalization of
\Cref{prop:behavior_theta} is presented in \Cref{sec:main-result}. We define
$\I_\vareps$ and $\I_0$ as in \eqref{eq:i_vareps_int} and \eqref{eq:i_0_int}.

\subsubsection{The case $d \geq p$}
\label{sec:case-d-geq-1}

Our first result corresponds to an adaptation of \Cref{prop:upper_bound_i_in} to
the case where $\varphi = \normLigne{F}^k$. Indeed, in this case we have that
$\I_0(\varphi) = 0$ and we can tighten our previous results.

\begin{proposition}
  \label{prop:limite_d_geq_p}
  Assume \rref{assum:F}, \textup{\rref{assum:psi}} and that $d \geq p$.  Then
  there exists $\eta > 0$ such that
  $\lim_{\vareps \to 0} \I_\vareps^{\mathrm{in}}(\normLigne{F}^k)/ \vareps =
  \C_k \I_0(1)$, where for any $\vareps > 0$,
  $\I_\vareps^{\mathrm{in}}(\varphi) = \I_{\vareps}(\varphi
  \1_{F^{-1}(\ball{0}{\eta})})$ for any $\varphi \in \rmc(\bar{\msu}, \rset)$
  and
  \begin{equation}
    \textstyle{
    \C_k = \left. \int_{\rset^p} \normLigne{t}^k \exp[-\normLigne{t}^k] \rmd t \middle/ \int_{\rset^p}  \exp[-\normLigne{t}^k] \rmd t \right.} \eqsp . 
  \end{equation}
 \end{proposition}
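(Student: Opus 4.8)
The plan is to obtain this as a refinement of \Cref{prop:upper_bound_i_in} together with \Cref{prop:lip_l}, exploiting that $\norm{F}^k$ is constant on the level sets of $F$. I would take for $\eta > 0$ the constant provided by \Cref{prop:lip_l}, so that $F^{-1}(\cball{0}{\eta}) \subset \msu$, $\jac{x} > 0$ on $F^{-1}(\cball{0}{\eta})$, and $t \mapsto \L_t(1)$ is continuous on $\cball{0}{\eta}$. Running the coarea formula (\Cref{thm:area_coarea}) exactly as in the proof of \Cref{prop:upper_bound_i_in}, and using that $\norm{F(x)}^k = \norm{t}^k$ for every $x \in F^{-1}(t)$, yields for all $\vareps > 0$
\begin{equation}
  \textstyle{\I_\vareps^{\mathrm{in}}(\norm{F}^k) = C_\vareps^{-1} \int_{\ball{0}{\eta}} \norm{t}^k \exp[-\norm{t}^k/\vareps] \L_t(1) \rmd t \eqsp , \qquad \I_\vareps^{\mathrm{in}}(1) = C_\vareps^{-1} \int_{\ball{0}{\eta}} \exp[-\norm{t}^k/\vareps] \L_t(1) \rmd t \eqsp . }
\end{equation}

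Next I would divide the first identity by $\vareps \, \I_\vareps^{\mathrm{in}}(1)$ — the normalizations $C_\vareps$ cancel, and $\I_\vareps^{\mathrm{in}}(1) > 0$ since $\L_0(1) = \I_0(1) > 0$ and $t \mapsto \L_t(1)$ is continuous — and perform the change of variable $t \mapsto \vareps^{1/k} t$ in both integrals, which gives
\begin{equation}
  \textstyle{\frac{\I_\vareps^{\mathrm{in}}(\norm{F}^k)}{\vareps \, \I_\vareps^{\mathrm{in}}(1)} = \frac{\int_{\ball{0}{\eta \vareps^{-1/k}}} \norm{t}^k \exp[-\norm{t}^k] \L_{\vareps^{1/k} t}(1) \rmd t}{\int_{\ball{0}{\eta \vareps^{-1/k}}} \exp[-\norm{t}^k] \L_{\vareps^{1/k} t}(1) \rmd t} \eqsp . }
\end{equation}
I would then pass to the limit $\vareps \to 0$ in the numerator and the denominator separately by dominated convergence. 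By \Cref{prop:lip_l}, $t \mapsto \L_t(1)$ is continuous on the compact set $\cball{0}{\eta}$, hence bounded there by some $M \geq 0$; consequently, for every $\vareps > 0$ the integrand $\norm{t}^k \exp[-\norm{t}^k] \L_{\vareps^{1/k} t}(1) \1_{\ball{0}{\eta \vareps^{-1/k}}}(t)$ is dominated by $M \norm{t}^k \exp[-\norm{t}^k] \in \rmL^1(\rset^p)$ and converges pointwise, as $\vareps \to 0$, to $\L_0(1) \norm{t}^k \exp[-\norm{t}^k]$ (using $\lim_{t \to 0} \L_t(1) = \L_0(1)$ from \Cref{prop:lip_l} and $\ball{0}{\eta \vareps^{-1/k}} \uparrow \rset^p$); the denominator is handled the same way. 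Since $\L_0(1) = \I_0(1) > 0$ — because $\Psi(x) > 0$ and $\jac{x} > 0$ for $x \in F^{-1}(0)$ with $\calH^{d-p}(F^{-1}(0)) \in \ooint{0, +\infty}$ by \Cref{assum:psi}, \Cref{assum:F} and \Cref{prop:rectifiable_level_set} — the denominator converges to a strictly positive limit, and the ratio converges to $\int_{\rset^p} \norm{t}^k \exp[-\norm{t}^k] \rmd t \, / \int_{\rset^p} \exp[-\norm{t}^k] \rmd t = \C_k$.

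To finish, \Cref{prop:upper_bound_i_in} applied with $\varphi = 1$ gives $\lim_{\vareps \to 0} \I_\vareps^{\mathrm{in}}(1) = \I_0(1)$; multiplying the two limits yields $\lim_{\vareps \to 0} \I_\vareps^{\mathrm{in}}(\norm{F}^k)/\vareps = \C_k \I_0(1)$. The only step carrying genuine content is the dominated-convergence argument over the expanding balls $\ball{0}{\eta \vareps^{-1/k}}$, and its hypotheses are already packaged in \Cref{prop:lip_l} (continuity, hence local boundedness, of $t \mapsto \L_t(1)$ near $0$) and in the non-degeneracy $\I_0(1) > 0$; I expect no real obstacle beyond recognizing that passing to the ratio with $\I_\vareps^{\mathrm{in}}(1)$ removes the normalization $C_\vareps$ and reduces everything to \Cref{prop:upper_bound_i_in}.
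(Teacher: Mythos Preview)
Your proof is correct and uses essentially the same ingredients as the paper: the coarea formula to reduce to an integral over $\rset^p$, the change of variable $t \mapsto \vareps^{1/k} t$, and dominated convergence via the continuity of $t \mapsto \L_t(1)$ from \Cref{prop:lip_l}. The one difference is organizational: the paper applies dominated convergence directly to
\[
\textstyle{\I_\vareps^{\mathrm{in}}(\norm{F}^k)/\vareps = C_1^{-1} \int_{\ball{0}{\eta/\vareps^{1/k}}} \norm{t}^k \exp[-\norm{t}^k] \L_{t\vareps^{1/k}}(1) \rmd t}
\]
and reads off the limit $\L_0(1) \int_{\rset^p} \norm{t}^k \exp[-\norm{t}^k] \rmd t / C_1 = \C_k \I_0(1)$ in one step, whereas you pass through the ratio $\I_\vareps^{\mathrm{in}}(\norm{F}^k)/(\vareps\,\I_\vareps^{\mathrm{in}}(1))$ and then multiply back by $\lim_{\vareps\to 0}\I_\vareps^{\mathrm{in}}(1)$ via \Cref{prop:upper_bound_i_in}. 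This detour is harmless but unnecessary: once the change of variable is done, the normalization issue has already been resolved and the single dominated-convergence argument suffices.
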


\begin{proof}
  Let $\eta > 0$ be given by \Cref{prop:lip_l} with $\varphi = 1$. We recall
  that for any $\vareps > 0$ we have that
  \begin{equation}
    \textstyle{
    \I_\vareps^{\mathrm{in}}(\normLigne{F}^k) = \int_{F^{-1}(\ball{0}{\eta})} \normLigne{F(x)}^k \Psi(x) \exp[-\normLigne{F(x)}^k/\vareps] \rmd x / C_\vareps \eqsp . }
  \end{equation}
  Using the coarea formula, see \Cref{thm:area_coarea}, we have for any $\vareps > 0$,
  \begin{equation}
    \textstyle{
      \I_\vareps^{\mathrm{in}}(\normLigne{F}^k)/\vareps  = \int_{\ball{0}{\eta}} (\norm{t}^k/\vareps) \exp[-\norm{t}^k/\vareps] \L_t(1) \rmd t / C_\vareps \eqsp ,
      }
  \end{equation}
  where $\L_t(1)$ is defined in \eqref{eq:l_t} for any $t \in
  \ball{0}{\eta}$. Using the change of variable $t \mapsto \vareps^{1/k} t$ we
  have for any $\vareps > 0$
  \begin{equation}
    \textstyle{
      \I_\vareps^{\mathrm{in}}(\normLigne{F}^k)/\vareps = \int_{\ball{0}{\eta/\vareps^{1/k}}} \norm{t}^k \exp[-\norm{t}^k] \L_{t \vareps^{1/k}}(1) \rmd t / C_1 \eqsp .
      }
  \end{equation}
  For any $\vareps > 0$ let $g_\vareps : \ \rset^p \to \rset$ such that for any
  $t \in \rset^p$,
  $g_\vareps(t) = \L_{t \vareps^{1/k}}(1)
  \1_{\ball{0}{\eta/\vareps^{1/k}}}(t)$. Note that for any $t \in \rset^d$ and
  $\vareps > 0$, $\abs{g_\vareps(t)} \leq
  \sup_{\cball{0}{\eta}}\abs{\L_t(1)}$. In addition, we have that for any
  $t \in \rset^d$, $\lim_{\vareps \to 0} g_\vareps(t) = \L_0(1)$ using
  \Cref{prop:lip_l}. Therefore, we get that
  \begin{equation}
    \textstyle{
      \lim_{\vareps \to 0} \I_\vareps^{\mathrm{in}}(\normLigne{F}^k)/\vareps = \L_0(1) \int_{\rset^p} \norm{t}^k \exp[-\norm{t}^k] \rmd t / C_1 \eqsp .
      }
  \end{equation}
We conclude the proof upon noting that $\I_0(1) = \L_0(1)$ and that $\C_k = \int_{\rset^p} \norm{t}^k \exp[-\norm{t}^k] \rmd t / C_1$.
\end{proof}

\subsubsection{The case $d \leq p$}
\label{sec:case-d-leq-1}

We now adapt \Cref{prop:upper_bound_i_laplace} to the case where
$\varphi = \normLigne{F}^k$.

\begin{proposition}
  \label{prop:upper_bound_i_laplace_exp}
  Assume \rref{assum:F}, \textup{\rref{assum:psi}} and $d \leq p$. Let
  $\msu \subset \rset^d$ open and bounded such that $F^{-1}(0) \subset \msu$.
  Then
  $\lim_{\vareps \to 0} \I_\vareps^{\mathrm{in}}(\normLigne{F}^k)/ \vareps =
  \C_k \I_0(1)$, where for any $\vareps > 0$,
  $\I_\vareps^{\mathrm{in}}(\varphi) = \I_{\vareps}(\varphi \1_{\msv})$ for any
  $\varphi \in \rmc(\bar{\msu}, \rset)$, with $\msv$ open such that
  $F^{-1}(0) \subset \msv \subset \msu$, and
  \begin{equation}
    \textstyle{
      \C_k = \left. \int_{\rset^p} \normLigne{t}^k \exp[-\normLigne{t}^k] \rmd t \middle/ \int_{\rset^p}  \exp[-\normLigne{t}^k] \rmd t \right. \eqsp .
      }
  \end{equation}
\end{proposition}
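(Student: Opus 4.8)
The plan is to rerun the Morse--Laplace argument of \Cref{prop:upper_bound_i_laplace}, now with the test function $\varphi = \normLigne{F}^k$. The point is that this $\varphi$ vanishes on $F^{-1}(0)$, so under the Laplace rescaling it carries exactly one extra power of $\vareps$; this is what turns the trivial limit $\I_0(\normLigne{F}^k) = 0$ into a finite nonzero limit for $\I_\vareps^{\mathrm{in}}(\normLigne{F}^k)/\vareps$. Note that, in contrast with \Cref{prop:upper_bound_i_laplace}, only continuity of the test function is used (which is convenient since $\normLigne{F}^k$ need not be $\rmc^1$ near $F^{-1}(0)$ when $k$ is odd).

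First I would invoke \Cref{lemma:existence_U}: since $d \leq p$ and $\jac{x} \neq 0$ on $F^{-1}(0)$, the matrix $\rmD F(x)$ is injective at each zero of $F$, so $F$ is an immersion near $F^{-1}(0)$; combined with the compactness of $F^{-1}(0)$ (from the growth condition in \rref{assum:F}) this forces $F^{-1}(0) = \{x_0^\ell\}_{\ell=1}^N$ to be a finite set, with pairwise disjoint open neighbourhoods $\msw_\ell \ni x_0^\ell$, $\bar{\msw}_\ell \subset \msu$, on which $\rmD F$ stays injective. Writing $U = \normLigne{F}^2 \in \rmc^\infty(\rset^d, \rset)$, one has $\nabla^2 U(x_0^\ell) = 2\,\rmD F(x_0^\ell)^\top \rmD F(x_0^\ell) \succ 0$, so each $x_0^\ell$ is a nondegenerate minimum of $U$ at level $0$. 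Morse's lemma (\cite[Theorem 3.1.1]{nirenberg2001topics}), applied as in the proof of \Cref{prop:upper_bound_i_laplace}, then yields, after shrinking, a $\rmc^1$ diffeomorphism $\Phi_\ell$ from a bounded open neighbourhood $\Omega_\ell$ of $0$ in $\rset^d$ onto $\msw_\ell$, with $\Phi_\ell(0) = x_0^\ell$, $U(\Phi_\ell(y)) = \normLigne{y}^2$ on $\Omega_\ell$, $\rmD\Phi_\ell(0) = (\rmD F(x_0^\ell)^\top \rmD F(x_0^\ell))^{-1/2}$, and hence $\absLigne{\det \rmD\Phi_\ell(0)} = \jacinv{x_0^\ell}$.

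The key observation is that $U(\Phi_\ell(y)) = \normLigne{y}^2$ together with $U = \normLigne{F}^2 \geq 0$ gives $\normLigne{F(\Phi_\ell(y))}^k = \normLigne{y}^k$. Taking $\msv = \bigcup_{\ell=1}^N \Phi_\ell(\Omega_\ell)$ (an open set with $F^{-1}(0) \subset \msv \subset \msu$, the pieces being disjoint), applying on each piece the change of variables $x = \Phi_\ell(y)$, then dividing by $\vareps$ and rescaling $y = \vareps^{1/k} z$ (using $C_\vareps = \vareps^{d/k} C_1$ and $\normLigne{y}^k = \vareps \normLigne{z}^k$), I would obtain
\[
\I_\vareps^{\mathrm{in}}(\normLigne{F}^k)/\vareps = C_1^{-1} \sum_{\ell=1}^N \int_{\Omega_\ell/\vareps^{1/k}} \normLigne{z}^k\, \Psi(\Phi_\ell(\vareps^{1/k} z))\, \absLigne{\det \rmD\Phi_\ell(\vareps^{1/k} z)}\, \exp[-\normLigne{z}^k]\, \rmd z \eqsp .
\]
Dominated convergence on each summand then finishes the argument: the integrand converges pointwise to $\normLigne{z}^k \Psi(x_0^\ell) \jacinv{x_0^\ell} \exp[-\normLigne{z}^k]$ (continuity of $\Psi, \Phi_\ell, \rmD\Phi_\ell$ and $\Phi_\ell(\vareps^{1/k} z) \to x_0^\ell$), it is dominated by $M\, \normLigne{z}^k \exp[-\normLigne{z}^k]$ with $M = \max_\ell \sup_{\bar{\Omega}_\ell} \Psi(\Phi_\ell(\cdot))\,\absLigne{\det \rmD\Phi_\ell(\cdot)} < +\infty$ (each $\bar{\Omega}_\ell$ compact) and $z \mapsto \normLigne{z}^k \exp[-\normLigne{z}^k] \in \rmL^1(\rset^d)$, and the domains $\Omega_\ell/\vareps^{1/k}$ increase to $\rset^d$. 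This gives $\lim_{\vareps \to 0} \I_\vareps^{\mathrm{in}}(\normLigne{F}^k)/\vareps = \big(\int_{\rset^d} \normLigne{z}^k \exp[-\normLigne{z}^k]\, \rmd z\, /\, C_1\big) \sum_{\ell=1}^N \Psi(x_0^\ell) \jacinv{x_0^\ell} = \C_k\, \I_0(1)$, since in the regime $d \leq p$ one has $\min(d,p) = d$, so $\calH^{d-\min(d,p)} = \calH^0$ is the counting measure and $\I_0(1) = \sum_\ell \Psi(x_0^\ell) \jacinv{x_0^\ell}$, while the displayed ratio of Gaussian-type integrals (over $\rset^{\min(d,p)}$) is $\C_k$.

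The argument is essentially routine given the Laplace--Morse machinery of \Cref{prop:upper_bound_i_laplace}; the only genuinely new inputs are (i) the observation that $d \leq p$ together with $\jac{\cdot} \neq 0$ on $F^{-1}(0)$ makes $F^{-1}(0)$ finite, so that Morse's lemma applies at each isolated zero and $\I_0$ is a finite sum, and (ii) the bookkeeping of the powers of $\vareps$ --- the factor $\normLigne{F}^k$ rescales like $\vareps$, which is precisely why $\I_\vareps^{\mathrm{in}}(\normLigne{F}^k)/\vareps$, rather than $\I_\vareps^{\mathrm{in}}(\normLigne{F}^k)$ itself, converges to a finite nonzero limit. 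No analytic ingredient beyond those already established is required.
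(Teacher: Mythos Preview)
Your proof is correct and follows essentially the same route as the paper's: invoke the Morse charts $\{\Phi_\ell\}$ from \Cref{prop:upper_bound_i_laplace}, observe that $\normLigne{F(\Phi_\ell(y))}^k = \normLigne{y}^k$ so that the test function produces an extra factor of $\vareps$ under the rescaling $y = \vareps^{1/k} z$, and conclude by dominated convergence. Your parenthetical remark that the resulting ratio is over $\rset^{\min(d,p)} = \rset^d$ (not $\rset^p$ as in the displayed formula for $\C_k$) is well observed; the paper's own proof also lands on integrals over $\rset^d$ and simply identifies this with $\C_k$ without comment.
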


\begin{proof}
Let $\{x_0^\ell\}_{\ell=1}^N$, $\{\msw_\ell\}_{\ell=1}^N$,   $\{\Phi_\ell\}_{\ell=1}^N$ and $\{\Omega_\ell\}_{\ell=1}^N$ be given as in  the proof of \Cref{prop:upper_bound_i_laplace}.
Let
  $\msv = \cup_{\ell=1}^N \Phi_\ell(\Omega_\ell)$.  Using for any
  $\ell \in \{1, \dots, N\}$ the change of variable $x \mapsto \Phi_\ell(x)$ and
  $x \mapsto \vareps^{1/k} x$ we have
  \begin{equation}
    \textstyle{
    \I_\vareps^{\mathrm{in}}(\normLigne{F}^k)/\vareps = \sum_{\ell=1}^N \int_{\Omega_\ell/\vareps^{1/k}} \normLigne{x}^k \exp[-\norm{x}^k] \Psi(\Phi_\ell(\vareps^{1/k}x)) \det(\rmD \Phi_\ell(\vareps^{1/k} x)) \rmd x / C_1 \eqsp . }
  \end{equation}
  For any $\ell \in \{1, \dots, N\}$ and $\vareps > 0$, let
  $g_{\ell, \vareps} : \ \rset^p \to \rset$ such that for any $\ell \in \{1, \dots, N\}$, $\vareps > 0$ and
  $x \in \rset^p$ we have
  \begin{equation}
    g_{\ell, \vareps}(x) = \Psi(\Phi_\ell(\vareps^{1/k}x)) \det(\rmD \Phi_\ell(\vareps^{1/k} x)) \1_{\Omega_\ell/\vareps^{1/k}}(x) \eqsp . 
  \end{equation}
  Note that for any $\ell \in \{1, \dots, N\}$, $\vareps > 0$ and
  $x \in \rset^p$ we have
  \begin{equation}
   \absLigne{g_{\ell, \vareps}(x)} \leq \sup \ensembleLigne{\abs{\Psi(\Phi_\ell(x))} \abs{\det(\rmD \Phi_\ell( x))}}{\ell \in \{1, \dots, N\},\ \ x \in \Omega_\ell} \eqsp . 
 \end{equation}
 In addition, we have that for any $\ell \in \{1, \dots, N\}$ and
 $x \in \rset^p$,
 $\lim_{\vareps \to 0} g_{\ell, \vareps}(x) = \Psi(x_0^\ell) \jacinv{x_0^\ell}$.
 We conclude upon using the dominated convergence theorem.  
\end{proof}

\subsubsection{Main result}
\label{sec:main-result}

\begin{proposition}
  \label{prop:lim_d_geq_p}
  Assume \rref{assum:F} and \textup{\rref{assum:psi}}. Let
  $\msu \subset \rset^d$ be open, bounded and such that $F^{-1}(0) \subset \msu$.
  Then
  $\lim_{\vareps \to 0} \pi_\vareps^\Psi(\normLigne{F}^k)/ \vareps = \C_k $,
  where
  \begin{equation}
    \textstyle{
    \C_k = \left. \int_{\rset^p} \normLigne{t}^k \exp[-\normLigne{t}^k] \rmd t \middle/ \int_{\rset^p}  \exp[-\normLigne{t}^k] \rmd t \right. \eqsp . }
  \end{equation}
\end{proposition}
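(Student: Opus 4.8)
The plan is to reduce the statement to \Cref{prop:limite_d_geq_p} (case $d \geq p$) and \Cref{prop:upper_bound_i_laplace_exp} (case $d \leq p$), together with the convergence and positivity of the normalizing constant $\J_\vareps$. Recall that $\pi_\vareps^\Psi(\normLigne{F}^k) = \I_\vareps(\normLigne{F}^k)/\J_\vareps$, hence $\pi_\vareps^\Psi(\normLigne{F}^k)/\vareps = \big(\I_\vareps(\normLigne{F}^k)/\vareps\big)/\J_\vareps$, and I would pass to the limit in the numerator and the denominator separately.

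For the numerator, fix $\msu$ as in the statement and let $\msv$ be the open set with $F^{-1}(0) \subset \msv \subset \msu$ provided by \Cref{prop:limite_d_geq_p} (namely $\msv = F^{-1}(\ball{0}{\eta})$) when $d \geq p$, and by \Cref{prop:upper_bound_i_laplace_exp} when $d \leq p$. Write $\I_\vareps^{\mathrm{in}}(\varphi) = \I_\vareps(\varphi \1_{\msv})$ and $\I_\vareps^{\mathrm{out}}(\varphi) = \I_\vareps(\varphi \1_{\msv^\complementary})$, so that $\I_\vareps(\normLigne{F}^k) = \I_\vareps^{\mathrm{in}}(\normLigne{F}^k) + \I_\vareps^{\mathrm{out}}(\normLigne{F}^k)$. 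By the two cited propositions, $\lim_{\vareps \to 0} \I_\vareps^{\mathrm{in}}(\normLigne{F}^k)/\vareps = \C_k \I_0(1)$ in both cases. It remains to show that $\I_\vareps^{\mathrm{out}}(\normLigne{F}^k)/\vareps \to 0$: using $u \rme^{-u/(2\vareps)} \leq 2\vareps/\rme$ for all $u \geq 0$, one bounds $\I_\vareps^{\mathrm{out}}(\normLigne{F}^k)$ by $(2\vareps/\rme)\, C_\vareps^{-1} \int_{\msv^\complementary} \Psi(x) \exp[-\normLigne{F(x)}^k/(2\vareps)]\,\rmd x$, and the remaining integral is controlled exactly as in \Cref{prop:upper_bound_i_out}: $\normLigne{F}^k$ is bounded below by a positive constant $\beta$ on $\msv^\complementary$ (since $F^{-1}(0) \subset \msv$ is compact, $\msv$ is open, and $\normLigne{F(x)} \to +\infty$ as $\normLigne{x} \to +\infty$ by \rref{assum:F}), and the growth bound on $\Psi$ from \rref{assum:psi} makes the tail integrable for $\vareps$ small, giving $\I_\vareps^{\mathrm{out}}(\normLigne{F}^k) \leq A\vareps^{-d/k}\exp[-\beta/\vareps]$ for some $A, \beta > 0$. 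Therefore $\lim_{\vareps \to 0} \I_\vareps(\normLigne{F}^k)/\vareps = \C_k \I_0(1)$.

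For the denominator, \Cref{thm:big_theo_extension} applied with $\varphi = 1$ (equivalently the smooth building block \Cref{prop:final_d_geq_p}) gives $\lim_{\vareps \to 0} \J_\vareps = \J_0 = \I_0(1)$, while \Cref{prop:lower_bound_j_eps} provides $A_0 > 0$ with $\J_\vareps \geq A_0$ for $\vareps$ small, so $\J_0 = \I_0(1) \geq A_0 > 0$. Combining this with the previous paragraph and using $\J_0 = \I_0(1)$,
\begin{equation*}
  \lim_{\vareps \to 0} \pi_\vareps^\Psi(\normLigne{F}^k)/\vareps = \C_k \I_0(1)/\J_0 = \C_k \eqsp .
\end{equation*}

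The main obstacle is the outer estimate: one has to verify that inserting the extra factor $\normLigne{F}^k$ into the integrand of $\I_\vareps^{\mathrm{out}}$ — a factor which can grow superpolynomially in $\normLigne{x}$ — does not spoil the exponential smallness coming from $\exp[-\normLigne{F(x)}^k/\vareps]$ on $\msv^\complementary$, so that the outer term is genuinely $o(\vareps)$ rather than merely $O(1)$. This amounts to re-running the proof of \Cref{prop:upper_bound_i_out} with the slightly larger test function, which the elementary inequality $u\rme^{-u/(2\vareps)} \leq 2\vareps/\rme$ handles cleanly. Everything else is a direct assembly of results already established.
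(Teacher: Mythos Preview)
Your proof is correct and follows essentially the same route as the paper: the same in/out decomposition with the same set $\msv$, the same use of \Cref{prop:limite_d_geq_p} and \Cref{prop:upper_bound_i_laplace_exp} for the inner piece, negligibility of the outer piece, and convergence of the denominator $\J_\vareps \to \I_0(1)>0$. Your explicit use of $u\,\rme^{-u/(2\vareps)} \leq 2\vareps/\rme$ for the outer term is in fact slightly cleaner than the paper's bare citation of \Cref{prop:upper_bound_i_out}, since $\varphi=\normLigne{F}^k$ need not satisfy the growth hypothesis $|\varphi(x)|\leq C_\varphi\exp[C_\varphi\normLigne{x}^{\upalpha k}]$ required there; your inequality reduces the problem to $\varphi=1$, where that lemma applies directly.
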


\begin{proof}
  Let $\eta > 0$ be given by \Cref{prop:lip_l} with $\varphi = 1$ and let
  $\msv = F^{-1}(\ball{0}{\eta})$ if $d \geq p$. If $d \leq p$, let $\msv$ be
  given by \Cref{prop:upper_bound_i_laplace_exp}. We have that
  $F^{-1}(0) \subset \msv$. For any $\vareps \in \ooint{0, \bvareps}$ we define
  $\I_\vareps^{\mathrm{out}}(\normLigne{F}^k) = \I_\vareps(\normLigne{F}^k
  \1_{\msv^\complementary})$ and
  $\I_\vareps^{\mathrm{in}}(\normLigne{F}^k) = \I_\vareps(\normLigne{F}^k
  \1_{\msv})$.  Using \Cref{prop:upper_bound_i_out}, we have that
  $\lim_{\vareps \to 0} \I_\vareps^{\mathrm{out}}(\normLigne{F}^k)/\vareps =
  0$. Hence, using \Cref{prop:limite_d_geq_p} if $d \geq p$ and
  \Cref{prop:upper_bound_i_laplace_exp} if $d \leq p$, we get that
  $\lim_{\vareps \to 0} \I_\vareps(\normLigne{F}^k)/\vareps = \C_k
  \I_0(1)$. Similarly, using \Cref{prop:upper_bound_i_out} and
  \Cref{prop:upper_bound_i_in} we have that
  $\lim_{\vareps \to 0} \I_\vareps(1) = \I_0(1)$, which concludes the proof upon
  remarking that for any $\vareps > 0$,
  $\pi_\vareps^\Psi(\normLigne{F}^k) = \I_\vareps(\normLigne{F}^k) /
  \I_\vareps(1)$.
\end{proof}

We are now ready to prove a generalization of \Cref{prop:behavior_theta}.

\begin{proposition}
  \label{prop:behavior_theta_gene}
  Let $\mu \in \Pens(\rset^d)$, $F: \ \rset^d \to \rset^p$ and $k \in
  \nsets$. Assume that the conditions of \Cref{prop:gibbs_measure} with
  $G_\vareps = \normLigne{F}^k - \vareps$ for any $\vareps > 0$ are satisfied
  and for any $\vareps > 0$, let $\macroeps$ the macrocanonical distribution
  with constraint $G_\vareps$ and reference measure $\mu$. Assume that there
  exists $\Psi: \ \rset^d \to \rset_+$ such that $\mu$ admits a density
  w.r.t. the Lebesgue measure given by $\Psi$. In addition, assume that
  \rref{assum:F} and \textup{\rref{assum:psi}} hold. Then, we have that
  $\theta_\vareps \sim_{\vareps \to 0} \C_k / \vareps$, where
  \begin{equation}
    \textstyle{
    \C_k = \left. \int_{\rset^p} \normLigne{t}^k \exp[-\normLigne{t}^k] \rmd t \middle/ \int_{\rset^p}  \exp[-\normLigne{t}^k] \rmd t \right. \eqsp . }
  \end{equation}  
\end{proposition}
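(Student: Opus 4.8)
The plan is to reduce the asymptotics of $\theta_\vareps$ to two facts already in hand: the Gibbs representation of $\macroeps$ from \Cref{prop:gibbs_measure} together with the constraint $\macroeps[\normLigne{F}^k]=\vareps$, and the sharp first-order expansion $\pi_{\vareps'}^\Psi[\normLigne{F}^k]\sim\C_k\vareps'$ from \Cref{prop:lim_d_geq_p}. First I would record that, since $\mu$ has density $\Psi$ w.r.t.\ $\Leb$, the Gibbs form $(\rmd\macroeps/\rmd\mu)(x)=\exp[-\theta_\vareps\normLigne{F(x)}^k-L_\vareps]$ supplied by \Cref{prop:gibbs_measure} (applied with $G_\vareps=\normLigne{F}^k-\vareps$) is exactly $\macroeps=\pi_{1/\theta_\vareps}^\Psi$ once $\theta_\vareps>0$, and that the constraint $\macroeps[G_\vareps]=0$ reads $\pi_{1/\theta_\vareps}^\Psi[\normLigne{F}^k]=\vareps$. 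So the statement amounts to proving $\theta_\vareps\to+\infty$ as $\vareps\to0$ and then reading off the rate from \Cref{prop:lim_d_geq_p}.

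For the divergence $\theta_\vareps\to+\infty$, I would study the map $m(\theta)=\pi_\theta^\Psi[\normLigne{F}^k]$, where $\pi_\theta^\Psi$ denotes the probability measure with density $\propto\Psi\exp[-\theta\normLigne{F}^k]$ w.r.t.\ $\Leb$ (well-defined for $\theta$ large under \rref{assum:F} and \rref{assum:psi}). This $m$ is positive for every such $\theta$ (because $\normLigne{F}^k>0$ off the $\Leb$-null set $F^{-1}(0)$), is continuous, and — differentiating under the integral — strictly decreasing with $m'(\theta)=-\mathrm{Var}_{\pi_\theta^\Psi}(\normLigne{F}^k)$. The crucial input is $\lim_{\theta\to+\infty}m(\theta)=0$: this follows from \Cref{thm:big_theo_extension} applied to $\varphi=\normLigne{F}^k$, which satisfies the growth bound \eqref{eq:cond_varphi} since the conditions of \Cref{prop:gibbs_measure} force $\normLigne{F}^k$ to grow at most polynomially, together with $\pi_0^\Psi[\normLigne{F}^k]=0$ (as $F\equiv0$ on $F^{-1}(0)$). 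Since $m(\theta_\vareps)=\vareps\to0$ while $m$ is bounded below away from $0$ on any bounded parameter set, $\theta_\vareps$ cannot stay bounded along a sequence $\vareps_n\to0$; a short compactness argument (extract a convergent subsequence $\theta_{\vareps_{n_j}}\to\theta_\infty$ and use continuity of $m$ at $\theta_\infty$, or blow-up of $m$ at a finite lower endpoint of its domain) contradicts $m(\theta_{\vareps_{n_j}})\to0$. Hence $\theta_\vareps\to+\infty$, and in particular $\theta_\vareps>0$ for small $\vareps$, which justifies the identification above.

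Finally I would substitute $\theta=\theta_\vareps$ into \Cref{prop:lim_d_geq_p}: rewriting its conclusion with $\vareps'=1/\theta$ gives $\theta\,m(\theta)\to\C_k$ as $\theta\to+\infty$, i.e.\ $m(\theta)\sim\C_k/\theta$; combined with $m(\theta_\vareps)=\vareps$ and $\theta_\vareps\to+\infty$ this yields $\vareps\sim\C_k/\theta_\vareps$, hence $\theta_\vareps\sim_{\vareps\to0}\C_k/\vareps$ (using $\C_k>0$, which holds since $p\geq1$). I expect the only genuinely non-automatic point to be the divergence $\theta_\vareps\to+\infty$ — not because it is deep, but because it requires assembling the monotonicity and continuity of $m$, its vanishing at infinity (which is precisely where \Cref{thm:big_theo_extension} enters), and a verification that $\varphi=\normLigne{F}^k$ is an admissible test function there; everything else is bookkeeping and a single limit substitution.
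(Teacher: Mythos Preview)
Your approach coincides with the paper's: write $\macroeps=\pi_{1/\theta_\vareps}^\Psi$ via \Cref{prop:gibbs_measure}, invoke \Cref{prop:lim_d_geq_p} to get $\theta_\vareps\,\macroeps[\normLigne{F}^k]\to\C_k$, and combine with the constraint $\macroeps[\normLigne{F}^k]=\vareps$. The paper's three-line proof leaves the divergence $\theta_\vareps\to+\infty$ implicit (it is needed to feed $1/\theta_\vareps\to0$ into \Cref{prop:lim_d_geq_p}); your monotonicity-plus-compactness argument for this step, together with the check that $\varphi=\normLigne{F}^k$ is admissible in \Cref{thm:big_theo_extension}, fills a gap the paper does not address.
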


\begin{proof}
  Recall that using \Cref{prop:gibbs_measure} we have that for any $\vareps > 0$
  and $\msa \in \mcb{\rset^d}$
  \begin{equation}
    \textstyle{
    \macroeps(\msa) = \left. \int_{\msa} \Psi(x) \exp[-\theta_\vareps \normLigne{F(x)}^k] \rmd x \middle/ \int_{\rset^d} \Psi(x) \exp[-\theta_\vareps \normLigne{F(x)}^k] \rmd x \right. \eqsp .}
  \end{equation}
  Hence, using \Cref{prop:lim_d_geq_p} we have that
  $\lim_{\vareps \to 0} \macroeps(\normLigne{F}^k) \theta_\vareps = \C_k
  $. Since $\macroeps[G_\vareps] = 0$ we have also have that
  $\macroeps[\normLigne{F}^k] = \vareps$, which concludes the proof.
\end{proof}

Note that \Cref{prop:behavior_theta} is obtained upon noting that $\C_2 = p/2$.


\subsection{Proofs of \Cref{sec:non_cvx_mon}}
\label{sec:quant_sgld}

  In \Cref{sec:application_to_sgld}, we establish
 \Cref{prop:quantitative_sgld}. 
 In \Cref{sec:stab-limit-meas} we use stability results from
 \cite{raginsky2017non, bousquet2002stab} to obtain \Cref{prop:stability}.
 Finally, we prove \Cref{prop:counter} in \Cref{proof:prop:counter}.  Additional
 technical results are postponed to \Cref{sec:techn-results-crefs}.

\subsubsection{Proof of \Cref{prop:quantitative_sgld}}
\label{sec:application_to_sgld}

In this section, we prove \Cref{prop:quantitative_sgld} which is an application
of a parametric version of the results presented in \Cref{sec:case-d-leq}. We
refer to \Cref{sec:techn-results-crefs} for a detailed presentation of these
results. We will apply them in the context of the non-convex
minimization setting presented in \Cref{sec:non_cvx_mon} which we recall here.

We aim at minimizing $U: \ \rset^d \to \rset$. We assume that there exist a
topological space $(\msz, \mcb{\msz})$, a probability measure
$\mu \in \Pens(\msz, \mcz)$ and $u: \ \rset^d \times \msz \to \rset_+$ such that
for any $x \in \rset^d$
\begin{equation}
  \textstyle{U(x) = \int_{\msz} u(x,z) \rmd \mu(z) \eqsp . }
\end{equation}
For any $n \in \nset$ we define $U_n : \rset^d \times \msz^n \to \rset$ such
that for any $x \in \rset^d$ and $z^{1:n} = \{z_{i}\}_{i=1}^n \in \msz^n$
\begin{equation}
  \textstyle{
    U_n(x,z^{1:n}) = (1/n)\sum_{i=1}^n u(x,z_i) \eqsp .
    }
\end{equation}
For all $\vareps >0$, when it is well-defined we denote by
$\Sker_\vareps : \ \msz^{n} \times \mcb{\rset^d} \to \ccint{0,1}$ the Markov
kernel such that for any $z^{1:n} \in \msz^n$ and $\msa \in \mcb{\rset^d}$ we
have
\begin{equation}
  \textstyle{
    \updelta_{z^{1:n}} \Sker_\vareps(\msa) = \left. \int_{\msa} \exp[-U_n(x,z^{1:n})/\vareps] \rmd x \middle/ \int_{\rset^d} \exp[-U_n(x,z^{1:n})/\vareps] \rmd x \right. \eqsp .
    }
\end{equation}
Similarly, when it is well-defined, we denote by $\Sker_0 : \ \msz^{n} \times \mcb{\rset^d} \to \ccint{0,1}$ the Markov
kernel such that for any $z^{1:n} \in \msz^n$ and $\msa \in \mcb{\rset^d}$ we
have
\begin{equation}
  \textstyle{
    \updelta_{z^{1:n}} \Sker_0(\msa) = \left. \int_{\msa \cap \msc_n(z^{1:n})} \det(\nabla_x^2 U_n(x, z^{1:n}))^{-1}  \rmd \calH^{0}(x) \middle/ \int_{\msc_n(z^{1:n})} \det(\nabla_x^2 U_n(x, z^{1:n}))^{-1}  \rmd \calH^{0}(x) \right. \eqsp ,}
\end{equation}
where $\msc_n(z^{1:n}) = \argmin U_n(\cdot, z^{1:n})$. We recall that for any
$\upbeta >0$, $\sigma_{\upbeta}^\star$ is defined in
\eqref{eq:def_sigma_star}. We begin with the following proposition.

\begin{proposition}
  \label{prop:wass1_res}
  Let $n \in \nset$ and assume \tup{\rref{assum:raginsky}$(n)$} and
  \tup{\rref{assum:U_n_param}$(n)$}. Then there exist $C \geq 0$ and
  $\upbeta, \bvareps > 0$ such that for any $\vareps \in \ooint{0, \bvareps}$
  \begin{align}
    & \textstyle{\int_{\msz^n} \wassersteinD[1](\updelta_{z^{1:n}} \Sker_\vareps, \updelta_{z^{1:n}} \Sker_0) \rmd \mu^{\otimes n}(z^{1:n}) } \\ & \qquad \qquad \qquad \leq \textstyle{C (1 + D_n) (\vareps^{1/2} + \vareps^{-d/2} \int_{\msz^{1:n}} \exp[-c^\star(z^{1:n})/\vareps] \rmd \mu^{\otimes n}(z^{1:n}))\eqsp ,}
  \end{align}
  with
  $D_n = \int_{\msz^n} \sigma_\upbeta^\star(z^{1:n}) \rmd \mu^{\otimes
    n}(z^{1:n}) < +\infty$ and $C, \bvareps, \upbeta$ that do not depend on $n$.
\end{proposition}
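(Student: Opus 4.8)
The plan is to reduce \Cref{prop:wass1_res} to a pointwise (in $z^{1:n}$) estimate, obtained by a parametric version of the Morse--Laplace analysis of \Cref{sec:case-d-leq}, and then integrate against $\mu^{\otimes n}$. First I would fix $z^{1:n}\in\msz^n$ and write $f = U_n(\cdot,z^{1:n})\in\rmc^\infty(\rset^d,\rset)$. Averaging the dissipativity bound of \rref{assum:raginsky}($n$)(c) over $z_1,\dots,z_n$ gives $\langle\nabla f(x),x\rangle\ge\mtt\normLigne{x}^2-\ctt$, so every critical point of $f$ lies in the ball $\cball{0}{R_0}$ with $R_0=(\ctt/\mtt)^{1/2}$ \emph{independent of} $z^{1:n}$; by \rref{assum:U_n_param}($n$)(b) the global minimizer set $\msc_n(z^{1:n})$ is finite of cardinality at most $N$, and for $\mu^{\otimes n}$-a.e.\ $z^{1:n}$ each global minimizer is non-degenerate (otherwise $\sigma_\upbeta^\star(z^{1:n})=+\infty$ and $\updelta_{z^{1:n}}\Sker_0$ would not be well defined). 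Applying the parametric quantitative Laplace/Morse estimate of \Cref{prop:conclusion_param_bounds} --- which relies on the quantitative Morse lemma \cite[Theorem 3.2]{le2014numerical} (see also \Cref{prop:quantitative_morse_lemma}) and on parametric analogues of \Cref{prop:upper_bound_i_laplace} collected in \Cref{sec:techn-results-crefs} --- one obtains constants $A\ge0$ and $\upbeta,\bvareps>0$, independent of $z^{1:n}$, such that for all $\vareps\in\ooint{0,\bvareps}$,
\[
  \wassersteinD[1](\updelta_{z^{1:n}}\Sker_\vareps,\updelta_{z^{1:n}}\Sker_0)\le A(1+\sigma_\upbeta^\star(z^{1:n}))\vareps^{1/2}+A\vareps^{-d/2}\exp[-c^\star(z^{1:n})/\vareps]\eqsp .
\]
The first term is the second-order error of the Morse change of variables around the global minimizers, whose prefactor $1+\sigma_\upbeta^\star(z^{1:n})$ absorbs the normalizing constant of $\updelta_{z^{1:n}}\Sker_0$ (the integral of $\det(\nabla^2 f)^{-1/2}$ over $\msc_n(z^{1:n})$), which also fixes the leading-order normalization of $\updelta_{z^{1:n}}\Sker_\vareps$; the second term bounds, by a crude volume estimate, the $\updelta_{z^{1:n}}\Sker_\vareps$-mass lying near the remaining (spurious) local minimizers, which by definition of the \energygap\ $c^\star(z^{1:n})$ is of order $\vareps^{-d/2}\exp[-c^\star(z^{1:n})/\vareps]$.

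The main obstacle is the uniformity of $A,\upbeta,\bvareps$ over $z^{1:n}\in\msz^n$ (and, ultimately, over $n$). The $z^{1:n}$-dependence enters the Morse/Laplace constants only through: (i) the number of global minimizers, uniform by \rref{assum:U_n_param}($n$)(b); (ii) the regularity constants of $f$ on $\cball{0}{R_0}$, uniform since \rref{assum:raginsky}($n$)(a)--(b) provide $z$-uniform bounds on $u(0,z)$, $\nabla_x u(0,z)$ and on the Lipschitz constants of $\nabla_x^k u(\cdot,z)$ for $k\in\{1,2,3\}$, and these survive averaging over $z_1,\dots,z_n$; (iii) the radii of the Morse charts and the mutual separation of the minimizers, uniform by a compactness argument, using that $\msz$ is compact (\rref{assum:U_n_param}($n$)(a)), that $(x,z^{1:n})\mapsto U_n(x,z^{1:n})$ and its first three derivatives are uniformly continuous on $\cball{0}{R_0}\times\msz^n$, and that every critical point lies in $\cball{0}{R_0}$; and (iv) the smallest eigenvalue of $\nabla^2 f$ at the global minimizers, which is genuinely \emph{not} uniform and is precisely the quantity one records in $\sigma_\upbeta^\star(z^{1:n})$ --- here one uses that the quantitative Morse constants depend polynomially on $\opnorm{(\nabla^2 f)^{-1}}$, together with $\opnorm{(\nabla^2 f)^{-1}}\le\det(\nabla^2 f)^{-1}\opnorm{\nabla^2 f}^{d-1}$ and the uniform upper bound on $\opnorm{\nabla^2 f}$ from (ii), which fixes an admissible $\upbeta$ depending only on $d$.

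Finally I would integrate the pointwise bound against $\mu^{\otimes n}$: the first term integrates to $A(1+D_n)\vareps^{1/2}$ with $D_n=\int_{\msz^n}\sigma_\upbeta^\star\rmd\mu^{\otimes n}<+\infty$ by \rref{assum:U_n_param}($n$)(c), and the second to $A\vareps^{-d/2}\int_{\msz^n}\exp[-c^\star(z^{1:n})/\vareps]\rmd\mu^{\otimes n}$; since $A\le A(1+D_n)$, summing gives
\[
  \textstyle\int_{\msz^n}\wassersteinD[1](\updelta_{z^{1:n}}\Sker_\vareps,\updelta_{z^{1:n}}\Sker_0)\rmd\mu^{\otimes n}(z^{1:n})\le A(1+D_n)\bigl(\vareps^{1/2}+\vareps^{-d/2}\int_{\msz^n}\exp[-c^\star(z^{1:n})/\vareps]\rmd\mu^{\otimes n}(z^{1:n})\bigr)\eqsp ,
\]
which is the claim with $C=A$; $C$, $\upbeta$ and $\bvareps$ are independent of $n$ because, by the discussion above, they involve only the $z$-uniform constants of \rref{assum:raginsky} and the bound $N$, all assumed uniform in $n$. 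If one prefers to keep the unified form $A(1+\sigma_\upbeta^\star(z^{1:n}))\{\vareps^{1/2}+\vareps^{-d/2}\exp[-c^\star(z^{1:n})/\vareps]\}$ analogous to \eqref{eq:upper_bound_cstar} for the pointwise bound, the integration of the cross term $\int\sigma_\upbeta^\star(z^{1:n})\exp[-c^\star(z^{1:n})/\vareps]\rmd\mu^{\otimes n}$ instead requires a Hölder inequality, exploiting that $\sigma_\upbeta^\star$ is $\mu^{\otimes n}$-integrable to every power by \rref{assum:U_n_param}($n$)(c) and that $\exp[-c^\star/\vareps]\le1$; the only point that would need genuine care is making the constants of \cite[Theorem 3.2]{le2014numerical} explicit in $\det(\nabla^2 f)^{-1}$, but since only the existence of an admissible $\upbeta$ is required this is not an obstruction, the explicit version being left to future work as noted after \Cref{prop:quantitative_sgld}.
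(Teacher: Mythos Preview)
Your approach is essentially the same as the paper's: verify that \Cref{assum:u_param} holds for $u\leftarrow U_n$, $\msz\leftarrow\msz^n$ (the growth condition following from dissipativity exactly as you indicate), apply the parametric Laplace estimate \Cref{prop:conclusion_param_bounds} pointwise in $z^{1:n}$, pass to the Wasserstein distance via Kantorovich duality as in the proof of \Cref{coro:wass_dist}, and integrate against $\mu^{\otimes n}$. Your discussion of why the constants $A,\upbeta,\bvareps$ are uniform in $z^{1:n}$ and $n$ is more detailed than the paper's, which simply records that \Cref{prop:conclusion_param_bounds} already produces $z$-independent constants once \Cref{assum:u_param} is in force. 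One genuine point you flag that the paper glosses over: the pointwise bound coming out of \Cref{prop:conclusion_param_bounds} as \emph{stated} has the factor $(1+\sigma_\upbeta^\star(z^{1:n}))$ multiplying the $\vareps^{-d/2}\exp[-c^\star(z^{1:n})/\vareps]$ term as well, which upon integration produces the cross term $\int\sigma_\upbeta^\star\exp[-c^\star/\vareps]\,\rmd\mu^{\otimes n}$; your proposed fixes (either sharpening the pointwise bound so that the exponential term carries no $\sigma_\upbeta^\star$ prefactor --- which a closer reading of the proof of \Cref{prop:conclusion_param_bounds} shows is indeed possible, using $\I_0(\varphi,z)/\J_0(z)\le M_{0,\varphi}$ in \eqref{eq:decompo_param} rather than $\I_0(\varphi,z)\le M_{0,\varphi}\sigma_1^\star(z)$ --- or handling it by H\"older using the integrability of all powers of $\sigma_\upbeta^\star$ from \Cref{assum:U_n_param}($n$)(c)) are both valid.
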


\begin{proof}
  The proof of this result is a direct application
  of \Cref{prop:conclusion_param_bounds} which is a parametric version of
  \Cref{thm:big_theo_extension}. In order to apply
  \Cref{prop:conclusion_param_bounds}, we check that \rref{assum:u_param} is
  satisfied for $\msz \leftarrow \msz^n$ and $u \leftarrow U_n$. We first 
  check that there exists $\mtt_0, \upalpha_0 > 0$ and $R_0$ such that for any
  $x \in \rset^d$ with $\normLigne{x} \geq R_0$ and $z^{1:n} \in \msz^n$,
  $U_n(x, z^{1:n}) \geq \mtt_0 \norm{x}^{\upalpha_0}$. We have that for any
  $x \in \rset^d$ and $z \in \msz$
  \begin{align}
    \textstyle{
    u(x,z) = u(0,z) + \int_0^1 \langle \nabla_x u(tx,z), x \rangle \rmd t \geq -A - \ctt + (\mtt/2)\normLigne{x}^2 \eqsp .
    }
  \end{align}
  Hence, for any $x \in \rset^d$ with
  $\normLigne{x} \geq 2((A + \ctt)/\mtt)^{1/2}$ we have that for any
  $z^{1:n} \in \msz^n$, $U_n(x, z^{1:n}) \geq (\mtt/4) \normLigne{x}^2$.
  {Let $z^{1:n} \in \msz$, we show that the number of minimizers
    of $U_n(\cdot, z^{1:n})$ is bounded. Since $x \in \rset^d$ with
    $\normLigne{x} \geq 2((A + \ctt)/\mtt)^{1/2}$ we have that
    $U_n(x, z^{1:n}) \geq (\mtt/4) \normLigne{x}^2$ and
    $\absLigne{U_n(0,z^{1:n})} \leq A$, there exists $\msk$ compact such that
    $\argmin \ensembleLigne{U_n(x, z^{1:n})}{x \in \rset^d} \subset \msk$ (see
    the remark following \rref{assum:u_param}). Assume that the number of
    minimizers is not bounded. In this case, there exists
    $(x_k)_{k \in \nset} \in \argmin \ensembleLigne{U_n(x, z^{1:n})}{x \in
      \rset^d}^\nset$ such that for any $k, \ell \in \nset$, $x_k \neq
    x_\ell$. Up to extraction we can assume that there exists
    $x^\star \in \rset^d$ such that $\lim_{k \to +\infty} x_k = x^\star$. Note
    that $x^\star \in \argmin \ensembleLigne{U_n(x, z^{1:n})}{x \in \rset^d}$ by
    continuity. In particular, $\det(\nabla^2_x U_n(x^\star, z^{1:n})) >0$ and
    there exists $r>0$ such that for any $x \in \cball{0}{r}$,
    $x \in \argmin \ensembleLigne{U_n(x, z^{1:n})}{x \in \rset^d}$ implies that
    $x = x^\star$. Hence, there exists $k_0 \in \nset$ such that for any
    $k \in \nset$ with $k \geq k_0$, $x_k = x^\star$ which is absurd.}  Hence,
  combining this result, \tup{\rref{assum:raginsky}$(n)$} and
  \tup{\rref{assum:U_n_param}$(n)$}, we can apply
  \Cref{prop:conclusion_param_bounds} which states that for any
  $\varphi: \ \rset^d \to \rset$ $M_{1,\varphi}$-Lipschitz function with
  $M_{1,\varphi}, C_\varphi \geq 0$ such that for any $x \in \rset^d$,
  $\abs{\varphi(x)} \leq C_\varphi \exp[C_\varphi\norm{x}^{ \upalpha }]$ then,
  there exist $B_2 \in \rmc(\rset_+, \rset_+)$ and $\upbeta > 0$ such that
  \begin{align}
    \label{eq:prop_a5}
    &\abs{\updelta_{z^{1:n}} \Sker_\vareps[\varphi] - \updelta_{z^{1:n}} \Sker_0[\varphi]} \\
      & \qquad \qquad \qquad \leq B_2(C_\varphi) (1 + M_{0,\varphi} + M_{1, \varphi}) (1 + \sigma_{\upbeta}^\star(z^{1:n})) \{ \vareps^{1/2} + \vareps^{-d/2} \exp[-c^\star(z^{1:n})/\vareps]\} \eqsp ,
      \end{align}
      with
      $M_{0, \varphi} = \sup \ensembleLigne{\absLigne{\varphi(x)}}{x \in \msk}$,
      $\msk$, $B_2$ and $\upbeta$ that do not depend on $z$, and $B_2$ non-decreasing.
      The rest of the proof is similar to the one of \Cref{coro:wass_dist} upon
      replacing \Cref{thm:big_theo} by \eqref{eq:prop_a5}.
\end{proof}

The proof of \Cref{prop:quantitative_sgld} is then a direct application of
\cite[Proposition 3.3]{raginsky2017non}, \cite[Equation (3.3)]{raginsky2017non}
and \Cref{prop:wass1_res}.

\subsubsection{Proof of \Cref{prop:stability}}
\label{sec:stab-limit-meas}

In this section, we prove \Cref{prop:stability}. We start by recalling a
proposition from \cite[Proposition 3.5]{raginsky2017non} about the uniform
stability of the exponential measure with potential $U_n$, see \cite{bousquet2002stab}
for a definition of the uniform stability.

\begin{lemma}
  \label{lemma:stabilite_epsilon}
  Assume that \tup{\rref{assum:raginsky}$(n)$} holds uniformly w.r.t. $n$. Then
  for any $n \in \nset$, $\vareps > 0$, $z_0^{1:n}, z_1^{1:n} \in \msz^n$ which
  only differs along one index, we have
  {
  \begin{equation}
    \textstyle{
      \absLigne{\updelta_{z^{1:n}_0} \Sker_\vareps u(\cdot, z)  - \updelta_{z^{1:n}_1} \Sker_\vareps u(\cdot, z)} \leq 4 (\Mtt^2(\ctt + d \vareps)/\mtt + B^2) \lsi(\vareps)/(n\vareps) \eqsp ,
      }
  \end{equation}
  where $\lsi(\vareps) \geq 0$ is such that for any $z^{1:n} \in \msz^n$,
  $\updelta_{z^{1:n}} \Sker_\vareps$ satisfies the logarithmic Sobolev
  inequality with constant $\lsi(\vareps)$, see \cite[Proposition 3.2]{raginsky2017non}.}  
\end{lemma}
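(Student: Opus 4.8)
The plan is to prove \Cref{lemma:stabilite_epsilon} by an interpolation argument between the two Gibbs measures, using the dissipativity of $u$ and the logarithmic Sobolev (hence Poincaré) inequality. Fix $n \in \nset$, $\vareps > 0$, $z \in \msz$, and samples $z_0^{1:n}, z_1^{1:n} \in \msz^n$ that agree in every coordinate except, say, index $j$. Write $V_s(x) = U_n(x, z_s^{1:n})$ for $s \in \{0,1\}$, so that $\updelta_{z_s^{1:n}}\Sker_\vareps$ has density proportional to $\exp[-V_s/\vareps]$, and set $\Delta = V_1 - V_0 = (1/n)(u(\cdot, z_{1,j}) - u(\cdot, z_{0,j}))$. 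For $t \in [0,1]$ I would introduce the probability measure $\pi_t$ with density proportional to $\exp[-(V_0 + t\Delta)/\vareps]$, which interpolates between $\updelta_{z_0^{1:n}}\Sker_\vareps$ (at $t=0$) and $\updelta_{z_1^{1:n}}\Sker_\vareps$ (at $t=1$).

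First I would differentiate under the integral sign to obtain the identity $\frac{\rmd}{\rmd t}\pi_t[u(\cdot, z)] = -\vareps^{-1}\cov_{\pi_t}(u(\cdot, z), \Delta)$, the recentering by $\pi_t[\Delta]$ arising from the derivative of the normalizing constant. Integrating in $t$ and applying the triangle inequality gives $|\updelta_{z_0^{1:n}}\Sker_\vareps[u(\cdot, z)] - \updelta_{z_1^{1:n}}\Sker_\vareps[u(\cdot, z)]| \le \vareps^{-1}\int_0^1 |\cov_{\pi_t}(u(\cdot, z), \Delta)|\,\rmd t$. Expanding $\Delta = (1/n)(u(\cdot, z_{1,j}) - u(\cdot, z_{0,j}))$, bilinearity of the covariance and Cauchy--Schwarz reduce the task to bounding $\operatorname{Var}_{\pi_t}(u(\cdot, w))$ uniformly over $w \in \msz$ and $t \in [0,1]$.

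For the variance bound I would invoke the structural assumptions. For each $t$, the potential $V_0 + t\Delta$ is still an average of $n$ functions satisfying the gradient-Lipschitz and dissipativity bounds of \rref{assum:raginsky}($n$) with the same constants; in particular $\langle \nabla_x(V_0 + t\Delta)(x), x\rangle \ge \mtt\norm{x}^2 - \ctt$ and $\norm{\nabla_x u(x,w)} \le B + \Mtt\norm{x}$ for all $x$ and $w$. Hence $\pi_t$ satisfies the logarithmic Sobolev inequality with the same constant $\lsi$ (the bound of \cite[Proposition 3.2]{raginsky2017non} depends only on $\mtt, \ctt, \Mtt, B, d, \vareps$, not on the particular sample), and since LSI implies the Poincaré inequality with the same constant, $\operatorname{Var}_{\pi_t}(u(\cdot, w)) \le \lsi\,\pi_t[\norm{\nabla_x u(\cdot, w)}^2] \le 2\lsi(B^2 + \Mtt^2\pi_t[\norm{x}^2])$. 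The integration-by-parts identity $\pi_t[\langle \nabla_x(V_0 + t\Delta), x\rangle] = d\vareps$ together with dissipativity yields $\pi_t[\norm{x}^2] \le (\ctt + d\vareps)/\mtt$, so $\operatorname{Var}_{\pi_t}(u(\cdot, w)) \le 2\lsi(B^2 + \Mtt^2(\ctt + d\vareps)/\mtt)$ uniformly in $w$ and $t$. Plugging this into the Cauchy--Schwarz bound gives $|\cov_{\pi_t}(u(\cdot, z), \Delta)| \le (2/n)\cdot 2\lsi(B^2 + \Mtt^2(\ctt + d\vareps)/\mtt)$, and integrating over $t \in [0,1]$ produces the claimed constant $4(\Mtt^2(\ctt + d\vareps)/\mtt + B^2)\lsi/(n\vareps)$.

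The main obstacle is the step asserting that $\lsi$ is also admissible for the interpolating measures $\pi_t$, and not merely for the Gibbs measures attached to genuine samples; I would resolve this by noting that $\pi_t$ is the Gibbs measure of a potential of the admissible form $(1/n)\sum_i \tilde u(\cdot, z_i)$ in which the $j$-th summand is replaced by the convex combination $(1-t)u(\cdot, z_{0,j}) + t\,u(\cdot, z_{1,j})$, which inherits the same gradient-Lipschitz and dissipativity constants, so \cite[Proposition 3.2]{raginsky2017non} applies verbatim. A secondary, routine point is to justify the differentiation under the integral and the finiteness of $\operatorname{Var}_{\pi_t}(u(\cdot, w))$; both follow from the at-most-quadratic growth of $u(\cdot, w)$ and $\nabla_x u(\cdot, w)$ implied by \rref{assum:raginsky}($n$) and the sub-Gaussian tails of $\pi_t$ guaranteed by the logarithmic Sobolev inequality.
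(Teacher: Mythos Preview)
The paper does not give its own proof of this lemma: it is stated as a recollection of \cite[Proposition 3.5]{raginsky2017non}, so there is nothing in the paper to compare your argument against beyond the citation. Your proposal is correct and is in fact the standard uniform-stability argument for Gibbs measures that Raginsky et al.\ use --- interpolate between the two potentials, differentiate to obtain a covariance, apply Cauchy--Schwarz and the Poincar\'e inequality (inherited from LSI) to bound the variances of $u(\cdot,w)$, and close with the second-moment bound $\pi_t[\norm{x}^2]\le(\ctt+d\vareps)/\mtt$ obtained from dissipativity via $\pi_t[\langle\nabla W_t,x\rangle]=d\vareps$. Your observation that the interpolated potential $V_0+t\Delta$ is again a $(1/n)$-average of functions obeying the same gradient-Lipschitz and dissipativity constants (the $j$-th summand being the convex combination $(1-t)u(\cdot,z_{0,j})+t\,u(\cdot,z_{1,j})$), so that the LSI constant of \cite[Proposition 3.2]{raginsky2017non} applies to every $\pi_t$, is exactly the point that makes the argument go through, and it is well identified.
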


For completeness, we recall that a probability measure $\nu \in \Pens(\rset^d)$
is said to satisfy the logarithmic Sobolev inequality with constant $\lsi$ if
for any $\pi \in \Pens(\rset^d)$ with positive density w.r.t. $\nu$ given by
$g \in \rmc^1(\rset^d, \rset)$ we have
\begin{equation}
  \textstyle{\KL{\pi}{\nu}\leq 2 \lsi \int_{\rset^d} \normLigne{\nabla \log(g(x)) }^2 \rmd \pi(x) \eqsp .}
\end{equation}
{ We can relate the constant appearing in the logarithmic Sobolev
  inequality with the uniform spectral gap given for any $\vareps > 0$ by
\begin{equation}  
  \textstyle{\lambda^\star(\vareps) = \inf_{z^{1:n} \in \msz^n, \ h \in \rmc^1(\rset^d) \cap \rmL^2(\updelta_{z^{1:n}} \Sker_\vareps)} \ensembleLigne{\updelta_{z^{1:n}}  \Sker_\vareps[ \normLigne{\nabla h}^2]  /  \updelta_{z^{1:n}} \Sker_\vareps[ \absLigne{h}^2] }{\ h \neq 0 , \ \updelta_{z^{1:n}} \Sker_\vareps[h] = 0 } \eqsp . }
\end{equation}  
More precisely, we have the following proposition (see \cite[Proposition 3.2,
Appendix B]{raginsky2017non}, see also \cite{bakry2008simple}).}

\begin{proposition}
  \label{prop:lsi_unif}
  {
  Assume that \tup{\rref{assum:raginsky}$(n)$} holds uniformly w.r.t. $n$. Then,
  there exist $A_0, A_1, \bvareps >0$ such that for any $\vareps \in \ocint{0, \bvareps}$
  \begin{equation}
    \lsi(\vareps) \leq A_0 (1 + (\lambda^\star(\vareps) \vareps)^{-1}) \eqsp .
  \end{equation}
In addition, $\lambda^\star(\vareps) \geq (1/A_1)\exp[-A_1/\vareps]$.}
\end{proposition}
Therefore, there exists $A \geq 0$ such that for any $\vareps \in \ocint{0, \bvareps}$
\begin{equation}
  \label{eq:lsi_ineq}
    \lsi(\vareps) \leq (A/\vareps) \exp[A/\vareps] \eqsp .
  \end{equation}
  We are now ready to show that the limiting measures are stable.

\begin{proposition}
  \label{prop:last_step}
  {
  Assume that \tup{\rref{assum:raginsky}$(n)$} and
  \tup{\rref{assum:U_n_param}$(n)$} hold uniformly w.r.t. $n \in \nset$. Assume that
  \begin{equation}
    \label{eq:condition_unif}
\textstyle{\lim_{\vareps \to 0} \sup \ensembleLigne{\vareps^{-d/2} \int_{\msz^n} \exp[-c^\star(z^{1:n})/\vareps] \rmd \mu^{\otimes n}(z^{1:n})}{ n \in \nset} = 0 \eqsp .  }      
\end{equation}
Then for any $\delta >0$, there exists $n_0 \in \nset$ such that for any $z \in \msz$, $n \in \nset$ with
$n \geq n_0$ and $j \in \{1, \dots, n\}$, we have
\begin{equation}
  \label{eq:result_quali}
    \textstyle{\int_{\msz^{n+1}} \absLigne{\updelta_{z^{1:n}_0} \Sker_0 u(\cdot, z)  - \updelta_{z^{1:n}_1} \Sker_0 u(\cdot, z)} \rmd \mu^{\otimes n}(z_0^{1:n})\rmd \mu(z_{1,j})\leq \delta \eqsp ,}
  \end{equation}
  where for any $z_0^{1:n} \in \msz^{n}$ we let $z_1^{1:n} \in \msz^n$ with
  $z_{0,i} = z_{1,i}$ for any $i \in \{1, \dots, N\}$ such that $i \neq j$.  In
  addition, assume that there exist $C_0, \alpha, \bvareps_0 > 0$
  such that for any $n \in \nset$ and $\vareps \in \ocintLigne{0, \bvareps_0}$
  \begin{equation}
    \label{eq:condition_bound}
  \textstyle{\vareps^{-d/2} \int_{\msz^n} \exp[-c^\star(z^{1:n})/\vareps] \rmd \mu^{\otimes n}(z^{1:n}) \leq C_0 \vareps^\alpha \eqsp ,  }  \end{equation}
Then there exists $n_0 \in \nset$ such that for any $\eta \in \ooint{0,1}$,
there exists $C \geq 0$ such that for any $z \in \msz$, $n \in \nset$ with
$n \geq n_0$ and $j \in \{1, \dots, n\}$, we have
\begin{equation}
  \label{eq:result_quanti}
    \textstyle{\int_{\msz^{n+1}} \absLigne{\updelta_{z^{1:n}_0} \Sker_0 u(\cdot, z)  - \updelta_{z^{1:n}_1} \Sker_0 u(\cdot, z)} \rmd \mu^{\otimes n}(z_0^{1:n})\rmd \mu(z_{1,j})\leq C / \log(n)^{s \eta} \eqsp ,}
  \end{equation}
  where $s = \min(\alpha/2, 1/4)$ and for any $z_0^{1:n} \in \msz^{n}$ we let
  $z_1^{1:n} \in \msz^n$ with $z_{0,i} = z_{1,i}$ for any
  $i \in \{1, \dots, N\}$ such that $i \neq j$.}
\end{proposition}

\begin{proof}
  {
  Let $\delta > 0$, $\eta \in \ooint{0,1}$. Using the triangle inequality we
  have for any $\vareps > 0$, $n \in \nset$ and
  $z_0^{1:n}, z_1^{1:n}, z \in \msz^n$ where there exists
  $j \in \{1, \dots, n\}$ such that for any $i \in \{1, \dots, n\}$, $i \neq j$,
  $z_{0,i} = z_{1,i}$
  \begin{align}
    \label{eq:triangle}
    &\absLigne{\updelta_{z^{1:n}_0} \Sker_0 u(\cdot, z)  - \updelta_{z^{1:n}_1} \Sker_0 u(\cdot, z)} \leq \absLigne{\updelta_{z^{1:n}_0} \Sker_0 u(\cdot, z)  - \updelta_{z^{1:n}_0} \Sker_\vareps u(\cdot, z)} \\
    & \qquad + \absLigne{\updelta_{z^{1:n}_0} \Sker_\vareps u(\cdot, z)  - \updelta_{z^{1:n}_1} \Sker_\vareps u(\cdot, z)}  + \absLigne{\updelta_{z^{1:n}_1} \Sker_\vareps u(\cdot, z)  - \updelta_{z^{1:n}_1} \Sker_0 u(\cdot, z)}  \eqsp . 
  \end{align}
  Using \cite[Lemma 3.5]{raginsky2017non} and \Cref{lemma:moment_bound}, there
  exists $C_0 \geq 0$ such that for any $i \in \{0,1\}$, $n \in \nset$,
  $z_0^{1:n}, z_1^{1:n}, z \in \msz^n$ where there exists
  $j \in \{1, \dots, n\}$ such that for any $i \in \{1, \dots, n\}$, $i \neq j$,
  $z_{0,i} = z_{1,i}$ and $\vareps \in \ocint{0, \bvareps_1}$ (where
  $\bvareps_1$ is given by $\bvareps_1 \leftarrow \bvareps$ in
  \Cref{prop:wass1_res}) we have
  \begin{equation}
    \absLigne{\updelta_{z^{1:n}_i} \Sker_0 u(\cdot, z)  - \updelta_{z^{1:n}_i} \Sker_\vareps u(\cdot, z)}  \leq C_0 \wassersteinD[2](\updelta_{z^{1:n}_i} \Sker_0 , \updelta_{z^{1:n}_i} \Sker_\vareps ) \eqsp . 
  \end{equation}
  Using this result, \Cref{lemma:moment_bound} and
  \Cref{lemma:wasserstein_2_bound}, there exists $C_1 \geq 0$ such that for any
  $i \in \{0,1\}$, $n \in \nset$, $z_0^{1:n}, z_1^{1:n}, z \in \msz^n$ where
  there exists $j \in \{1, \dots, n\}$ such that for any
  $i \in \{1, \dots, n\}$, $i \neq j$, $z_{0,i} = z_{1,i}$ and
  $\vareps \in \ocint{0, \bvareps_1}$ (where $\bvareps_1$ is given by
  $\bvareps_1 \leftarrow \bvareps$ in \Cref{prop:wass1_res}) we have
  \begin{equation}
    \label{eq:upper_bound_w1}
    \absLigne{\updelta_{z^{1:n}_i} \Sker_0 u(\cdot, z)  - \updelta_{z^{1:n}_i} \Sker_\vareps u(\cdot, z)} \leq C_1 \wassersteinD[1]^{\eta/2}(\updelta_{z^{1:n}_i} \Sker_0, \updelta_{z^{1:n}_i} \Sker_\vareps) \eqsp . 
  \end{equation}
  We divide the rest of the proof into two parts. First, we start with our
  qualitative result by showing that \eqref{eq:result_quali} holds under
  \eqref{eq:condition_unif}. Then, we turn to our quantitative bounds by showing
  that \eqref{eq:result_quanti} holds under \eqref{eq:condition_bound}
  \begin{enumerate}[wide, labelwidth=!, labelindent=0pt, label=(\alph*)]
  \item Using \Cref{prop:wass1_res} and \eqref{eq:condition_unif} we have that for 
  any $i \in \{0,1\}$
  \begin{equation}
    \textstyle{\lim_{\vareps \to 0} \sup \ensembleLigne{\int_{\msz^n}\wassersteinD[1](\updelta_{z^{1:n}_i} \Sker_0, \updelta_{z^{1:n}_i} \Sker_\vareps)\rmd \mu^{\otimes n}(z_i^{1:n})}{n \in \nset} =0  \eqsp .}
  \end{equation}
  Combining this result, \eqref{eq:upper_bound_w1} and that
  $t \mapsto t^{\eta/2}$ is concave, we get that 
  for any $i \in \{0,1\}$
  \begin{equation}
    \label{eq:intermediate_s_eps_s0_quali}
    \textstyle{ \lim_{\vareps \to 0} \sup \ensembleLigne{\int_{\msz^n} \absLigne{\updelta_{z^{1:n}_i} \Sker_0 u(\cdot, z)  - \updelta_{z^{1:n}_i} \Sker_\vareps u(\cdot, z)} \rmd \mu^{\otimes n}(z_i^{1:n})}{n \in \nset} = 0  \eqsp . }
  \end{equation}
  In addition, using \Cref{lemma:stabilite_epsilon} we have for any $n \in \nset$ and $\vareps > 0$
  \begin{equation}
    \textstyle{\int_{\msz^{n+1}} \absLigne{\updelta_{z^{1:n}_0} \Sker_\vareps u(\cdot, z)  - \updelta_{z^{1:n}_1} \Sker_\vareps u(\cdot, z)} \rmd \mu^{\otimes}(z^{1:n}_0) \rmd \mu(z_{1,j}) \leq 4 (\Mtt^2(\ctt + d \vareps)/\mtt + B^2) \lsi(\vareps)/(n\vareps) \eqsp ,}
  \end{equation}
  Combining this result, \eqref{eq:triangle} and \eqref{eq:intermediate_s_eps_s0_quali} we get that there exists $\vareps > 0$ such that 
  \begin{align}
    &\textstyle{\int_{\msz^{n+1}} \absLigne{\updelta_{z^{1:n}_0} \Sker_0 u(\cdot, z)  - \updelta_{z^{1:n}_1} \Sker_0 u(\cdot, z)} \rmd \mu^{\otimes}(z^{1:n}_0) \rmd \mu(z_{1,j})}\\
    &\qquad \qquad \leq 4 (\Mtt^2(\ctt + d \bvareps)/\mtt + B^2) \lsi(\vareps)/(n\vareps) \\
    & \qquad \qquad \quad + \textstyle{2 \sup \ensembleLigne{\int_{\msz^n} \absLigne{\updelta_{z^{1:n}} \Sker_0 u(\cdot, z)  - \updelta_{z^{1:n}} \Sker_\vareps u(\cdot, z)} \rmd \mu^{\otimes n}(z^{1:n})}{n \in \nset}}  \\
    &\qquad \qquad \leq 4 (\Mtt^2(\ctt + d \bvareps)/\mtt + B^2) \lsi(\vareps)/(n\vareps) + \delta/2  \eqsp .     
  \end{align}
  Hence, there exists $n_0 \in \nset$ such that for any $n \in \nset$ with
  $n \geq n_0$,
  $4 (\Mtt^2(\ctt + d \bvareps)/\mtt + B^2) \lsi(\vareps)/(n\vareps) \leq
  \delta/2$, which concludes the first part of the proof.
\item Let $n \in \nset$ with $n \geq n_0$ and
  $(2/A) \log(n_0)^{-1} < \bvareps = \min(\bvareps_0, \bvareps_1, \bvareps_2)$
  (where $\bvareps_1$ is given by $\bvareps_1 \leftarrow \bvareps$ in
  \Cref{prop:wass1_res} and $A, \bvareps_2$ are given by
  $\bvareps_2 \leftarrow \bvareps$ in \Cref{prop:lsi_unif} and
  \eqref{eq:lsi_ineq}).  In addition, using
  \Cref{prop:wass1_res}, there exists $C_2 \geq 0$ (that does not depend on $n$) such
  that for any $i \in \{0,1\}$ and $\vareps \in \ocint{0, \bvareps}$
  \begin{equation}
    \textstyle{\int_{\msz^n}\wassersteinD[1](\updelta_{z^{1:n}_i} \Sker_0, \updelta_{z^{1:n}_i} \Sker_\vareps)\rmd \mu^{\otimes n}(z_i^{1:n}) \leq C_2 \max(\vareps^{1/2}, \vareps^{\alpha })\eqsp .}
  \end{equation}
  Combining this result, \eqref{eq:upper_bound_w1} and the fact that
  $t \mapsto t^{\eta/2}$ is concave, we get that there exists $C_3 \geq 0$ (that does not
  depend on $n$) such that for any $i \in \{0,1\}$ and
  $\vareps \in \ocint{0, \bvareps}$
  \begin{equation}
    \label{eq:intermediate_s_eps_s0}
    \textstyle{\int_{\msz^n} \absLigne{\updelta_{z^{1:n}_i} \Sker_0 u(\cdot, z)  - \updelta_{z^{1:n}_i} \Sker_\vareps u(\cdot, z)} \rmd \mu^{\otimes n}(z_i^{1:n}) \leq C_3 \max(\vareps^{\eta/4}, \vareps^{\alpha \eta /2})  \eqsp , }
  \end{equation}
  In addition, using \Cref{lemma:stabilite_epsilon} we have for any
  $\vareps \in \ocint{0, \bvareps}$
  \begin{equation}
    \textstyle{\int_{\msz^{n+1}} \absLigne{\updelta_{z^{1:n}_0} \Sker_\vareps u(\cdot, z)  - \updelta_{z^{1:n}_1} \Sker_\vareps u(\cdot, z)} \rmd \mu^{\otimes}(z^{1:n}_0) \rmd \mu(z_{1,j}) \leq 4 (\Mtt^2(\ctt + d \vareps)/\mtt + B^2) \lsi(\vareps)/(n\vareps) \eqsp .}
  \end{equation}
  Combining this result, \eqref{eq:triangle} and \eqref{eq:intermediate_s_eps_s0} we get
  \begin{align}
    &\textstyle{\int_{\msz^{n+1}} \absLigne{\updelta_{z^{1:n}_0} \Sker_0 u(\cdot, z)  - \updelta_{z^{1:n}_1} \Sker_0 u(\cdot, z)} \rmd \mu^{\otimes}(z^{1:n}_0) \rmd \mu(z_{1,j})}\\
    &\qquad \qquad \leq 4 (\Mtt^2(\ctt + d \bvareps)/\mtt + B^2) \lsi(\vareps)/(n\vareps) + 2C_3 \vareps^{s \eta}\\
                                                                                                                                                          &\qquad \qquad  \leq 4 A (\Mtt^2(\ctt + d \bvareps)/\mtt + B^2) \exp[A/\vareps]/(n\vareps^2) + 2C_3 \vareps^{s \eta} \eqsp ,
  \end{align}
  with $s = \min(\alpha/2,1/4)$.
  We conclude the proof upon letting $\vareps = (2/A) \log(n)^{-1}$.
\end{enumerate}}
\end{proof}

The stability of the limiting measures allows us to establish
\Cref{prop:stability} which provides quantitative bounds on
$\mu^{\otimes n} \Sker_0[U] - U^\star$ for large values of $n \in
\nset$. Indeed, once \Cref{prop:last_step} is established the proof of
\Cref{prop:stability} is classical and follows the lines of \cite[Section
3.7]{raginsky2017non}.

\begin{proof}
  Let $n \in \nset$ and $n \geq n_0$ with $n_0$ given by
  \Cref{prop:stability}. {Using the definition of $U$ and
    $\Sker_0$ we have
    $\mu^{\otimes n} \Sker_0 [U] = \int_{\msz^n} \int_{\msz^n} \int_{\rset^d}
    U_n(x, \tilde{z}^{1:n}) \Sker_0(z^{1:n}, \rmd x) \rmd \mu^{\otimes
      n}(z^{1:n}) \rmd \mu^{\otimes n}(\tilde{z}^{1:n})$.}  For any
  $z^{1:n} \in \msz^n$ we define
  $U_n^\star(z^{1:n}) = \inf \ensembleLigne{U_n(x, z^{1:n})}{x \in
    \rset^d}$. Using that
  $U^\star \geq \int_{\msz^n} U_n^\star(z^{1:n}) \rmd \mu^{\otimes n}(z^{1:n})$
  and that {$\updelta_{z^{1:n}}\Sker_0$ is concentrated on
    $\text{argmin} \ensembleLigne{U_n(x, z^{1:n})}{x \in \rset^d$} we have}
  \begin{align}
    &\mu^{\otimes n} \Sker_0 [U] - U^\star \leq \textstyle{\mu^{\otimes n} \Sker_0 [U] - \int_{\msz^n} U_n^\star(z^{1:n}) \rmd \mu^{\otimes n}(z^{1:n})} \\
                                        &\leq \textstyle{\mu^{\otimes n} \Sker_0 [U] - \int_{\msz^n} \int_{\rset^d} U_n(x, z^{1:n}) \Sker_0(z^{1:n}, \rmd x) \rmd \mu^{\otimes n}(z^{1:n}) }\\
    &\leq \textstyle{\int_{\msz^n} \int_{\msz^n} \int_{\rset^d} U_n(x, \tilde{z}^{1:n}) \Sker_0(z^{1:n}, \rmd x) \rmd \mu^{\otimes n}(z^{1:n}) \rmd \mu^{\otimes n}(\tilde{z}^{1:n})} \\
    & \qquad \textstyle{- \int_{\msz^n} \int_{\rset^d} U_n(x, z^{1:n}) \Sker_0(z^{1:n}, \rmd x) \rmd \mu^{\otimes n}(z^{1:n})  }  \\
    &\leq\textstyle{ (1/n) \sum_{i=1}^n \int_{\msz} \int_{\msz^n} \int_{\rset^d} \{ u(x,\tilde{z}_i) - u(x,z_i)\} \Sker_0(z^{1:n}, \rmd x) \rmd \mu^{\otimes n}(z^{1:n}) \rmd \mu(\tilde{z}_i)} \\
    &\leq \textstyle{(1/n) \sum_{i=1}^n \int_{\msz} \int_{\msz^n} \{\int_{\rset^d}  u(x,z_i) \Sker_0(z^{1:n}, \rmd x) - \int_{\rset^d}  u(x,z_i) \Sker_0(\tilde{z}_i^{1:n}, \rmd x) \} \} \rmd \mu^{\otimes n}(z^{1:n}) \rmd \mu(\tilde{z}_i)  \eqsp ,}
  \end{align}
  where for any $i \in \{1, \dots, n\}$, we have that for any
  $j \in \{1, \dots, n\}$, $\tilde{z}_{i,j} = z_i$ and
  $\tilde{z}_{i,i} = \tilde{z}_i$. We conclude using \Cref{prop:stability}.

  \subsubsection{Proof of \Cref{prop:counter}}
  \label{proof:prop:counter}

  We recall that $u$ is given in \eqref{eq:def_u_counter}. We divide the proof
  into two parts.
    \begin{enumerate}[wide, labelwidth=!, labelindent=0pt, label=(\alph*)]
    \item First, we prove that
      $\lim_{n \to +\infty} \mu^{\otimes n} \Sker_0[\varphi] = (\varphi(-\uppi)
      + \varphi(\uppi))/2$. Let $n \in \nset$. Assume that $z < 0$. Then the
      minimum of $x \mapsto u(x,z)$ is attained on
      $\coint{\uppi,+\infty}$. Denote $\bar{u}: \ \rset \times \coint{-1/2,0}$
      such that for any $x \in \rset$ and $z \in \coint{-1/2, 0}$,
      $\bar{u}(x,z) = h(x) + xz + \uppi z + 1 - \cos(3x)$ with
      $h(x) = x^4/(1+x^2)$. Note that for any $x \geq 0$ and
      $z \in \coint{-1/2, 0}$, $\bar{u}(x,z) = u(x+\uppi,z)$. There exists
      $a \in \ccint{0,\uppi/3}$ such that for any $x \geq 0$, $h'(x) - 1 \leq 0$
      if $x \leq a$ and $h'(x)-1>0$ otherwise. Hence, we get that for any
      $x \in \rset$ with $x \geq \uppi/3$ and $z \in \coint{-1/2,0}$
      \begin{equation}
        \textstyle{
          \bar{u}(x,z) - \bar{u}(0,z) = \bar{u}(x,z) - \uppi z\geq h(\uppi/3) - \uppi/6 > 0  \eqsp .
          }
        \end{equation}
        Therefore, for any $z \in \coint{-1/2,0}$, the global minimum of
        $x \mapsto u(x,z)$ is attained on $\ooint{\uppi,4\uppi/3}$. We have that for
        any $x \in \ccint{\uppi/6, \uppi/3}$ and $z \in \coint{-1/2,0}$
        \begin{equation}
          \textstyle{\partial_1 \bar{u}(x,z) \geq h'(x) - 1/2 + 3(2 - (6/\uppi)x) > 0 \eqsp . }
        \end{equation}
        In addition, we have that for any $z \in \coint{-1/2,0}$,
        $\partial_1\bar{u}(0,z) = -z$. Hence, there exists
        $\bar{x}(z) \in \ccint{0,\uppi/6}$ such that
        $\partial_1\bar{u}(\bar{x}(z),z) = 0$. In addition we have that for any
        $z \in \coint{-1/2,0}$, $x \mapsto \partial_1\bar{u}(x,z)$ is increasing on
        $\ccint{0,\uppi/6}$. Therefore, for any $z \in \coint{-1/2,0}$ there
        exists a unique minimizer of $x \mapsto u(x,z)$ on
        $\ccint{\uppi,5\uppi/6}$ given by $x^\star(z) = \uppi + \bar{x}(z)$. The
        same conclusion holds with $x^\star(z) \in \ccint{-5\uppi/6,-\uppi}$ if
        $z \in \ocint{0,1/2}$.  We have that
        $\lim_{z \to 0} \sup \ensembleLigne{\normLigne{u(x,z) - u(x,0)}}{x \in
          \ccint{-5\uppi/6,5\uppi/6}} = 0$. Therefore we have that every limit
        point of $\{x^\star(z)\}_{z < 0}$ when $z \to 0$ is a global minimizer
        of $u(\cdot, 0)$. But recall that
        $\{x^\star(z)\}_{z < 0} \subset \ccint{\uppi, 5\uppi/6}$. Therefore,
        every limit point of $\{x^\star(z)\}_{z < 0}$ is equal to $\uppi$ and we
        have that $\lim_{z\to 0, z > 0}x^\star(z) = \uppi$. For any
        $n \in \nset$, denote by $g_n$ the density of $T_\# \mu^{\otimes n}$
        where $T: \ \msz^n \to \msz$ is given by
        $T(z^{1:n}) = (1/n) \sum_{i=1}^n z_i$. For any $r, \vareps > 0$ there
        exists $n_0 \in \nset$ such that for any $n \in \nset$ with
        $n \geq n_0$,
        $\int_{\ball{0}{r}^\complementary} g_n(z) \rmd z \leq \vareps$. Let
        $\varphi \in \rmc(\rset, \rset)$ bounded and $\vareps > 0$. Let $r> 0$ such that for any
        $z \in \ccintLigne{0,r}$,
        $\absLigne{\varphi(x^\star(z)) - \varphi(-\uppi)} \leq \vareps$ and for
        any $z \in \ccintLigne{-r,0}$,
        $\absLigne{\varphi(x^\star(z)) - \varphi(\uppi)} \leq \vareps$. Using
        this result, we have for any $n \in \nset$ with $n \geq n_0$
        \begin{align}
          & \textstyle{\absLigne{\mu^{\otimes n} \Sker_0[\varphi] - (\varphi(-\uppi) + \varphi(\uppi))/2 }} \\
          & \qquad \qquad \leq \textstyle{\int_0^{+\infty} \absLigne{\varphi(x^\star(z)) - \varphi(-\uppi)} g_n(z) \rmd z + \int_{-\infty}^0 \absLigne{\varphi(x^\star(z)) - \varphi(\uppi)} g_n(z) \rmd z} \\
          & \qquad \qquad \leq (1 + 2 \normLigne{\varphi}_\infty) \vareps  \eqsp . 
        \end{align}
        Therefore, we get that
        $\lim_{n \to +\infty} \mu^{\otimes n} \Sker_0[\varphi] =
          (\varphi(-\uppi) + \varphi(\uppi))/2$, which concludes the first part
          of the proof.
      \item Second, we prove that for any $n \in \nset$,
        $\mu^{\otimes n} \Sker_0[U] - U^\star \leq (\uppi/(6\sqrt{3}))
        n^{-1/2}$. Note that for any $n \in \nset$ and
        $z^{1:n} \in \ccint{-1/2,1/2}^n$, with $\sum_{i=1}^n z_i \neq 0$,
        $\updelta_{z^{1:n}} \Sker_0[U] = U(x^\star(\bar{z}^{1:n}))$ with
        $x^\star(\bar{z}^{1:n}) \in \ccint{-5\uppi/6,5\uppi/6}$. We also have that for
        any $x \in \rset$ and $z_1, z_2 \in \ccint{-1/2,1/2}$,
        $\textstyle{\absLigne{u(x,z_1) - u(x,z_2)} \leq
          \absLigne{x}\absLigne{z_1 - z_2}}$.  In particular, we have that for
        any $z^{1:n} \in \ccint{-1/2,1/2}^n$ and
        $x \in \ccint{-\uppi/3, \uppi/3}$
        \begin{equation}
          \textstyle{\absLigne{U(x) - U_n(x,z^{1:n})} \leq (\uppi/3) \absLigne{(1/n)\sum_{i=1}^n z_i} \eqsp .}
        \end{equation}
        Hence, using this result and that $U^\star = U(\uppi/3) = 0$, we have that for any $z^{1:n} \in \ccint{-1/2,1/2}$
        \begin{equation}
          \textstyle{U_n(x^\star(\bar{z}^{1:n}), z^{1:n}) \leq U_n(\uppi/3, z^{1:n}) \leq U^\star + (\uppi/3) \absLigne{(1/n)\sum_{i=1}^n z_i} \eqsp .}
        \end{equation}
        Combining this result and that $\int_{\rset} z^2 \rmd \mu(z) = 1/12$ we have $\mu^{\otimes n} \Sker_0[U] - U^\star \leq (\uppi/(6\sqrt{3})) n^{-1/2}$, 
        which concludes the proof.
    \end{enumerate}
  \end{proof}



\section*{Acknowledgement}
\label{sec:acknowledgement}

This work is part of the research program MISTIC, supported by the French Agence
Nationale pour la Recherche (ANR-19-CE40-0005). V. De Bortoli was also supported
by EPSRC grant EP/R034710/1.  We thank Francesca Crucinio for pointing to us the
use of \emph{thermodynamic barriers} and \emph{kinetic barriers} in chemistry.


\bibliographystyle{plain}
\bibliography{BiblioMacroMicro}

\appendix

\counterwithin{theorem}{section}
\counterwithin{lemma}{section}
\counterwithin{corollary}{section}
\counterwithin{proposition}{section}
\counterwithin{definition}{section}

\section*{Organization of the appendix}
\label{sec:organ-append}

In this supplementary material we derive technical lemmas and additional
results. In particular, we gather the technical lemmas of
\Cref{sec:main-results} in \Cref{sec:technical-bounds} and the ones of
\Cref{sec:application_to_sgld} in \Cref{sec:techn-results-crefs}. In
\Cref{sec:basics-geom-meas}, we recall basic results from differential geometry
and geometric measure theory.


\section{Technical results for \Cref{sec:main-results}}
\label{sec:technical-bounds}

In this section, we derive some technical lemmas used in 
\Cref{sec:proof_thm_macro} in order to prove \Cref{thm:big_theo_extension} and
other results from \Cref{sec:main-results}. We recall that for any
$\varphi: \ \rset^d \to \rset_+$ and $\vareps > 0$, when this is well-defined,
we set
\begin{align}
  &\textstyle{\I_{\vareps}(\varphi) = C_{\vareps}^{-1} \int_{\rset^d} \varphi(x) \Psi(x) \exp[-\norm{F(x)}^k/\vareps] \rmd x \eqsp , \quad \J_{\vareps} = \I_{\vareps}(1) \eqsp ,} \\ &\textstyle{C_{\vareps} = \int_{\rset^d} \exp[-\norm{x}^k/\vareps] \rmd x = \vareps^{d/k} \int_{\rset^d} \exp[-\norm{x}^k]  \rmd x = \vareps^{d/k} C_1 \eqsp .}
\end{align}
In addition, we define
\begin{equation}
  \textstyle{
    \I_0(\varphi) =  \int_{F^{-1}(0)} \varphi(x) \Psi(x) \jacinv{x} \rmd \calH^{d-\hat{d}}(x) \eqsp , \quad \J_0 = \I_0(1) \eqsp ,
    }
  \end{equation}
  with $\hat{d} = \min(d,p)$.

  In \Cref{sec:from-normal-hessian} we establish a link between a Hessian
  computed on the normal bundle of a manifold and the generalized Jacobian.  In
  \Cref{sec:trunc-bounds-lower} we derive technical truncation bounds for the
  proof of \Cref{thm:big_theo_extension}. Explicit controls of some derivative
  are presented in \Cref{sec:quant-contr-deriv} in order to derive
  \Cref{prop:lip_l}.  Finally, we present a smoothing lemma in
  \Cref{sec:regularity-results} which is key to weaken the regularity
  assumptions of \Cref{prop:final_d_geq_p}.

  \subsection{From normal Hessian to generalized Jacobian}
  \label{sec:from-normal-hessian}

  Let $f \in \rmc^2(\rset^d)$ and $\mathrm{M}$ a manifold in $\rset^d$. For any
  $x \in \mathrm{M}$ we define $\nabla_{\perp}^2 f(x)$ to be the projection
  of the Hessian on the orthogonal of the tangent space of $\mathrm{M}$ at $x$,
  see \cite{hwang1980}.
  
  \begin{lemma}
    \label{lemma:hessian_normal}
    Let $U: \ \rset^d \to \rset$ and $F\in \rmc^\infty(\rset^d, \rset^p)$ such
    that for any $x \in \rset^d$, $U(x) = \normLigne{F(x)}^2$. In addition,
    assume that $F^{-1}(0) \neq \emptyset$ and that for any $x \in F^{-1}(0)$,
    $\jac{x}>0$. Then, $\argmin \ensembleLigne{U(x)}{x \in \rset^d}$ is a smooth
    manifold and for any $x \in F^{-1}(0)$ we have that
    $\det(\nabla^2_{\perp} U(x)) = \jac{x}^2$.
  \end{lemma}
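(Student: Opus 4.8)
The plan is to split the argument according to the sign of $d-p$, use the maximal-rank hypothesis to obtain the manifold structure, and then compute the Hessian restricted to the normal bundle by a short linear-algebra argument.

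\textbf{Step 1: manifold structure.} Since $U=\normLigne{F}^2\ge 0$ and $U(x)=0$ exactly when $F(x)=0$, we have $\argmin \ensembleLigne{U(y)}{y\in\rset^d}=F^{-1}(0)$, which is nonempty by hypothesis. The assumption $\jac{x}>0$ for every $x\in F^{-1}(0)$ (the same non-degeneracy condition as in \Cref{assum:F}) says precisely that $\rmD F(x)$ has maximal rank $r=\min(d,p)$ on $F^{-1}(0)$; by lower semicontinuity of the rank and smoothness of $F$ this persists on a neighbourhood of $F^{-1}(0)$. When $d\ge p$, $F$ is then a submersion near $F^{-1}(0)$, so the regular-value theorem makes $F^{-1}(0)$ a smooth embedded submanifold of dimension $d-p$ with $T_x F^{-1}(0)=\ker\rmD F(x)$; when $d\le p$, $F$ is an immersion near $F^{-1}(0)$, so $F^{-1}(0)$ is a $0$-dimensional manifold (a discrete set) with $T_x F^{-1}(0)=\{0\}$. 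In both cases $\argmin U$ is a smooth manifold $M$.

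\textbf{Step 2: Hessian and restriction to the normal bundle.} A direct computation gives $\nabla U(x)=2\,\rmD F(x)^\top F(x)$ and $\nabla^2 U(x)=2\,\rmD F(x)^\top\rmD F(x)+2\sum_{i=1}^p F_i(x)\nabla^2 F_i(x)$, so at any $x\in F^{-1}(0)$ the last sum vanishes and $\nabla^2 U(x)=2\,\rmD F(x)^\top\rmD F(x)$. Fix such an $x$. The normal space is $N_xM=(T_xM)^\perp=(\ker\rmD F(x))^\perp=\mathrm{range}(\rmD F(x)^\top)$, of dimension $r=\min(d,p)$. Let $V$ be a $d\times r$ matrix whose columns form an orthonormal basis of $N_xM$; then $V^\top V=\mathrm{I}_r$ and $VV^\top$ is the orthogonal projector onto $\mathrm{range}(\rmD F(x)^\top)=(\ker\rmD F(x))^\perp$, so that $\rmD F(x)\,VV^\top=\rmD F(x)$. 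In this basis $\nabla^2_\perp U(x)$ is represented by the $r\times r$ matrix $V^\top\nabla^2 U(x)V=2\,(\rmD F(x)V)^\top(\rmD F(x)V)$. If $d\ge p$ the matrix $\rmD F(x)V$ is square ($p\times p$) and $(\rmD F(x)V)(\rmD F(x)V)^\top=\rmD F(x)\,VV^\top\rmD F(x)^\top=\rmD F(x)\rmD F(x)^\top$, so $\det(\nabla^2_\perp U(x))=2^p\det(\rmD F(x)V)^2=2^p\det(\rmD F(x)\rmD F(x)^\top)=2^p\jac{x}^2$; if $d\le p$ then $V$ is an orthogonal $d\times d$ matrix and $\det(\nabla^2_\perp U(x))=2^d\det(V^\top\rmD F(x)^\top\rmD F(x)V)=2^d\det(\rmD F(x)^\top\rmD F(x))=2^d\jac{x}^2$. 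In both cases $\det(\nabla^2_\perp U(x))=\jac{x}^2$ up to the multiplicative constant $2^{\min(d,p)}$ coming from the normalization $U=\normLigne{F}^2$, which is harmless since it cancels in the definition of $\pi_0$.

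\textbf{Main obstacle.} There is no substantial difficulty here; the only points needing a little care are checking that the determinant of the bilinear form $\nabla^2_\perp U(x)$ on the abstract space $N_xM$ is well defined — independent of the chosen orthonormal frame $V$, which is clear since two such frames differ by an element of $\mathrm{O}(r)$ — and keeping the two regimes $d\ge p$ and $d\le p$ (together with the bookkeeping of the constant $2^{\min(d,p)}$) straight. Everything else is the short computation of Step 2.
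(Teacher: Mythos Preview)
Your argument is correct and follows essentially the same strategy as the paper: compute $\nabla^2 U(x)=2\,\rmD F(x)^\top\rmD F(x)$ at $x\in F^{-1}(0)$, pick an orthonormal basis of the normal space, and reduce the determinant to one involving $\rmD F(x)\rmD F(x)^\top$ (or its transpose). The paper does this via Gram--Schmidt on the columns of $\rmD F(x)^\top$, obtaining $O(x)=\rmD F(x)^\top T(x)$ with $T(x)$ triangular and $\det T(x)=\jac{x}^{-1}$, then expanding $\det(O^\top\nabla^2 U\,O)$; you instead use an abstract orthonormal frame $V$ together with the projector identity $\rmD F(x)VV^\top=\rmD F(x)$, which is a slightly cleaner packaging of the same linear algebra. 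Two small points where your write-up is actually more careful than the paper's: you explicitly treat both regimes $d\ge p$ and $d\le p$ (the paper's Gram--Schmidt step tacitly assumes the $p$ columns of $\rmD F(x)^\top$ are independent, i.e.\ $d\ge p$), and you keep track of the factor $2^{\min(d,p)}$ coming from $\nabla^2 U=2\,\rmD F^\top\rmD F$, which the paper silently drops; as you note, this constant is irrelevant for the application to $\pi_0$.
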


  \begin{proof}
    First, we have that
    $\argmin \ensembleLigne{U(x)}{x \in \rset^d} = F^{-1}(0)$. Hence,
    $\argmin \ensembleLigne{U(x)}{x \in \rset^d}$ is a smooth manifold since
    $F \in \rmc^\infty(\rset^d, \rset^p)$. Let $x \in F^{-1}(0)$. We have that
    $\nabla^2 U(x) = \rmD F(x)^\top \rmD F(x)$. Note that
    $\rmD F(x)^\top = (\nabla F_1(x), \dots, \nabla F_p(x))$ is a basis of
    $\mathrm{ker}(\rmD F(x))^\perp$, where we recall that
    $\mathrm{ker}(\rmD F(x))$ is the tangent space to $F^{-1}(0)$ at $x$. Denote
    by $O(x) = (f_1(x), \dots, f_p(x))$ the orthonormal basis of
    $\mathrm{ker}(\rmD F(x))^\perp$ obtained from $\rmD F(x)^\top$ using the
    Gram-Schmidt process. There exists a triangular $p \times p$ matrix $T(x)$
    such that $O(x) = \rmD F(x)^\top T(x)$. We also have
    \begin{equation}
      \Id = O(x)^\top O(x) = T(x)^\top \rmD F(x) \rmD F(x)^\top T(x) \eqsp . 
    \end{equation}
    Hence, we get that $\det(T(x)) = \jac{x}^{-1}$. We also have that
    \begin{align}
      \det(\nabla_{\perp}^2 U(x)) &= \det(O(x)^\top \nabla^2 U(x) O(x)) \\
      &= \det(T(x)^\top \rmD F(x) \rmD F(x)^\top \rmD
    F(x) \rmD F(x)^\top T(x)) = \jac{x}^2 \eqsp ,
  \end{align}
  which concludes the proof.
  \end{proof}

  \subsection{Truncation and lower bounds}
  \label{sec:trunc-bounds-lower}

\begin{lemma}
  \label{prop:lower_bound_j_eps}
  Assume \rref{assum:F} and \textup{\rref{assum:psi}}. Then, for any
  $\bvareps \geq 0$ there exists $A_0 \geq 0$ such that for any
  $\vareps \in \ccint{0, \bvareps}$, $\J_{\vareps} \geq A_0 m_{0,\Psi}$ with
  $m_{0,\Psi} = \inf \ensembleLigne{\Psi(x)}{x \in F^{-1}(0)}$ and $A_0$
  that does not depend on $\Psi$.
\end{lemma}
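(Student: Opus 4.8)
The plan is to bound $\J_{\vareps}$ from below by the $C_{\vareps}$-renormalized mass that $\Psi \exp[-\normLigne{F}^k/\vareps]$ places on a fixed small neighborhood of the zero set $F^{-1}(0)$, and then to estimate that mass uniformly over $\vareps \in \ccint{0,\bvareps}$ using tools already at hand: the coarea formula and \Cref{prop:lip_l} when $d \geq p$, and the Morse charts of \Cref{prop:upper_bound_i_laplace} when $d \leq p$.

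First I would dispose of the boundary case $\vareps = 0$. By definition $\J_0 = \I_0(1) = \int_{F^{-1}(0)} \Psi(x)\jacinv{x}\,\rmd\calH^{d-\hat d}(x) \geq m_{0,\Psi}\,\ttamin_F$, where $\hat d = \min(d,p)$ and $\ttamin_F = \int_{F^{-1}(0)}\jacinv{x}\,\rmd\calH^{d-\hat d}(x)$. This constant depends only on $F$; it is finite by \Cref{prop:rectifiable_level_set} when $d \geq p$ and because $F^{-1}(0)$ is a nonempty finite set (\Cref{lemma:existence_U}) when $d \leq p$; and it is strictly positive because $\mathrm{J}F$ is continuous and, by \rref{assum:F}, strictly positive on the compact set $F^{-1}(0)$, so $\jacinv{\cdot} \geq (\sup_{F^{-1}(0)}\mathrm{J}F)^{-1} > 0$, while $\calH^{d-\hat d}(F^{-1}(0)) > 0$ since $F^{-1}(0)$ is a nonempty smooth submanifold of dimension $d-\hat d$ (a nonempty finite set when $d \leq p$).

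For $\vareps \in \ooint{0,\bvareps}$ I would use continuity of $\Psi$ and compactness of $F^{-1}(0)$ — together with the shrinking-neighborhood argument already carried out in the proof of \Cref{prop:lip_l}(a) — to fix a small radius $\eta_0 > 0$ with $\mathrm{J}F > 0$ on $F^{-1}(\cball{0}{\eta_0}) \subset \msu$, with $\Psi \geq m_{0,\Psi}/2$ there, and such that $t \mapsto \int_{F^{-1}(t)}\jacinv{x}\,\rmd\calH^{d-p}(x)$ (which equals $\ttamin_F$ at $t=0$ and is continuous there by \Cref{prop:lip_l} applied with the constant weight $1$) stays $\geq \ttamin_F/2$ on $\ball{0}{\eta_0}$; when $d \leq p$ the set $F^{-1}(\cball{0}{\eta_0})$ is replaced by a union of the Morse charts $\Phi_\ell(\Omega_\ell)$ shrunk so that $\Psi \geq m_{0,\Psi}/2$ on each. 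Then $\J_{\vareps} \geq (m_{0,\Psi}/2)\,C_{\vareps}^{-1}\int_{F^{-1}(\cball{0}{\eta_0})}\exp[-\normLigne{F(x)}^k/\vareps]\,\rmd x$. In the case $d \geq p$ the coarea formula (\Cref{thm:area_coarea}), used exactly as in \Cref{prop:upper_bound_i_in}, rewrites this integral as $C_{\vareps}^{-1}\int_{\ball{0}{\eta_0}}\exp[-\normLigne{t}^k/\vareps]\,(\int_{F^{-1}(t)}\jacinv{x}\,\rmd\calH^{d-p}(x))\,\rmd t \geq (\ttamin_F/2)\,C_{\vareps}^{-1}\int_{\ball{0}{\eta_0}}\exp[-\normLigne{t}^k/\vareps]\,\rmd t$, and after the substitution $t = \vareps^{1/k}s$ together with $C_{\vareps} = \vareps^{d/k}C_1$ this equals $(\ttamin_F/2)\,\vareps^{(p-d)/k}C_1^{-1}\int_{\ball{0}{\eta_0\vareps^{-1/k}}}\exp[-\normLigne{s}^k]\,\rmd s$; since $p \leq d$ here and $\vareps \leq \bvareps$, the factor $\vareps^{(p-d)/k}$ is bounded below by a positive constant depending only on $d,p,k,\bvareps$, and the remaining integral is at least $\int_{\ball{0}{\eta_0\bvareps^{-1/k}}}\exp[-\normLigne{s}^k]\,\rmd s > 0$. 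In the case $d \leq p$ I would instead pull back each chart by $\Phi_\ell$ and rescale via $x \mapsto \vareps^{1/k}x$ exactly as in the proof of \Cref{prop:upper_bound_i_laplace}, obtaining again a strictly positive lower bound uniform in $\vareps \in \ooint{0,\bvareps}$. Taking $A_0$ to be the minimum of the resulting constant and $\ttamin_F$ gives $\J_{\vareps} \geq A_0 m_{0,\Psi}$ for all $\vareps \in \ccint{0,\bvareps}$.

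The main difficulty is obtaining a lower bound uniform in $\vareps$ near $\vareps = 0$: a naive comparison of $\int_{F^{-1}(\cball{0}{\eta_0})}\exp[-\normLigne{F}^k/\vareps]\,\rmd x$ with the normalizing constant $C_{\vareps} = \vareps^{d/k}C_1$ is lossy precisely when $d > p$, where the genuine order of $\J_{\vareps}$ is $\vareps^{(p-d)/k}$ and diverges as $\vareps \to 0$; the coarea identity above is what makes this visible and shows the bound only improves for small $\vareps$. (One should also take $\bvareps$ below the integrability threshold $\mtt^k/C_\Psi$ implicit in \rref{assum:psi}, so that $\J_{\vareps}$ is well-defined on the whole range.)
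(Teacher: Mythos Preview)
Your proposal is correct, but the paper takes a much more elementary route that avoids the coarea formula, \Cref{prop:lip_l}, and the Morse charts entirely, and makes no case distinction on $d$ versus $p$. The paper exploits the specific point $0 \in F^{-1}(0)$ guaranteed by \rref{assum:F}: since $F \in \rmc^1$ and $F(0)=0$, there is $M \geq 0$ with $\normLigne{F(x)} \leq M\normLigne{x}$ on $\cball{0}{1}$; choosing $\eta \in \ooint{0,1}$ small enough that $\Psi \geq m_{0,\Psi}/2$ on $\ball{0}{\eta}$ (possible by continuity and $\Psi(0) \geq m_{0,\Psi}$), one gets directly
\[
\J_\vareps \geq (m_{0,\Psi}/2)\,C_\vareps^{-1}\int_{\ball{0}{\eta}}\exp[-M^k\normLigne{x}^k/\vareps]\,\rmd x,
\]
and the substitution $x \mapsto \vareps^{1/k}x/M$ gives a positive lower bound uniform over $\vareps \in \ocint{0,\bvareps}$. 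Your approach has the merit of recovering the correct order $\vareps^{(p-d)/k}$ when $d > p$ (the paper's bound is merely $O(1)$), and it would still work if the hypothesis $F(0)=0$ were weakened to $F^{-1}(0)\neq\emptyset$; but for the stated lemma the paper's shortcut is considerably cleaner. Your closing integrability caveat is also unnecessary: $\J_\vareps$ is the integral of a nonnegative function, so a lower bound holds regardless of whether it is finite.
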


\begin{proof}
  Since $F(0) = 0$ and $F \in \rmc^1(\rset^d, \rset^p)$, there exists $M \geq 0$
  such that for any $x \in \cball{0}{1}$, $\norm{F(x)} \leq M \norm{x}$. Note
  that $F^{-1}(0)$ is compact since
  $\lim_{\norm{x} \to +\infty} \norm{F(x)} = +\infty$ and
  $F \in \rmc(\rset^d, \rset^p)$. Hence, since for any $x \in F^{-1}(0)$,
  $\Psi(x) > 0$ and $\Psi \in \rmc(\rset^d, \rset_+)$ there exists
  $\eta \in \ooint{0,1}$ such that for any
  $x \in \ball{0}{\eta} \cup F^{-1}(0)$, $\Psi(x) \geq
  m_{0,\Psi}/2$
  . Using this result we have for any $\vareps > 0$
  \begin{align}
    \J_{\vareps} &= \textstyle{\vareps^{-d/k} C_1^{-1} \int_{\rset^d} \Psi(x) \exp[-\norm{F(x)}^k/\vareps] \rmd x} \\
                 &\geq \textstyle{\vareps^{-d/k} (m_{0,\Psi}/2)  C_1^{-1} \int_{\ball{0}{\eta}} \exp[-M^k \norm{x}^k/\vareps] \rmd x}  \\
    &\textstyle{\geq  C_1^{-1} (m_{0,\Psi}/2) M^{-d} \int_{\cball{0}{M\eta/\bvareps^{1/k}}} \exp[-\norm{x}^k] \rmd x \eqsp . }
  \end{align}
  Using that $F^{-1}(0)$ is compact, there exists $\Mtt \geq 0$ such that for
  any $x \in F^{-1}(0)$, $\jac{x} \leq \Mtt$.  Therefore, we get that
  \begin{equation}
    \textstyle{\J_0 = \int_{F^{-1}(0)} \Psi(x) \jacinv{x} \rmd \calH^{d-\hat{d}}(x) \geq m_{0,\Psi} \Mtt^{-1} \calH^{d-\hat{d}}(F^{-1}(0)) \eqsp . }
  \end{equation}
  Since $\calH^{d-\hat{d}}(F^{-1}(0)) < +\infty$ using
  \Cref{prop:rectifiable_level_set} in the case where $d \geq p$ and the fact
  that $\calH^0(F^{-1}(0))< +\infty$ if $d \leq p$ (see the first part of the
  proof of \Cref{lemma:existence_U}), we have that for any
  $\vareps \in \ccint{0, \bvareps}$
  \begin{align}
    &\J_{\vareps} \geq A_0m_{0,\Psi} \eqsp ,  \qquad A_0 = \min(A_0^1, A_0^2) \eqsp , \\
    &\textstyle{A_0^1 = (1/2)C_1^{-1} M^{-d} \int_{\cball{0}{M\eta/\bvareps^{1/k}}} \exp[-\norm{x}^k]
  \rmd x \eqsp ,} \quad A_0^2 = M^{-1} \calH^{d-\hat{d}}(F^{-1}(0)) \eqsp , 
  \end{align}
  which concludes the proof.
\end{proof}

\begin{lemma}
  \label{prop:upper_bound_i_out}
  Assume \rref{assum:F} and \textup{\rref{assum:psi}}. Let
  $\varphi: \ \rset^d \to \rset$ and $C_\varphi \geq 0$ such that for any
  $x \in \rset^d$
  \begin{equation}
    \label{eq:cond_varphi_prop}
    \textstyle{\absLigne{\varphi(x)} \leq C_\varphi \exp[C_\varphi \normLigne{x}^{\upalpha k}] \eqsp . }
  \end{equation}
  Then, for any $\bvareps \in \oointLigne{0, \mtt^k/(1 + C_{\varphi, \Psi})}$ and
  $\msv \subset \rset^d$ open and bounded such that $F^{-1}(0) \subset \msv$ there exist
  $\beta_1 > 0$ and $A_1 \in \rmc(\rset_+, \rset_+)$ such that for any $\vareps \in \ooint{0, \bvareps}$
  \begin{equation}
    \I_{\vareps}^{\mathrm{out}}(\varphi) \leq A_1(C_{\varphi, \Psi}) \vareps^{-d/k} \exp[-\beta_1 / \vareps]\eqsp ,
  \end{equation}
  with
  $\I_{\vareps}^{\mathrm{out}}(\varphi) = \I_{\vareps}(\varphi
  \1_{\msv^\complementary})$,
  $C_{\varphi, \Psi} = C_\varphi + C_\Psi + C_\varphi C_\Psi$ and $A_1, \beta_1$
  functions that do not depend on $\varphi$ and $\Psi$. Finally, $A_1$ is
  non-decreasing.
\end{lemma}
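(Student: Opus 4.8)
The plan is to estimate $\I_{\vareps}^{\mathrm{out}}(\varphi) = \vareps^{-d/k} C_1^{-1} \int_{\msv^\complementary} \varphi(x) \Psi(x) \exp[-\norm{F(x)}^k/\vareps] \rmd x$ directly, by splitting $\msv^\complementary$ into a bounded part and a far part and tracking an exponential decay in $1/\vareps$ on each. First I would fix the geometry: since $\lim_{\norm{x}\to+\infty}\norm{F(x)}=+\infty$, the set $F^{-1}(0)$ is compact; as $\msv$ is bounded there is $R_1>0$ with $\msv\subset\ball{0}{R_1}$, and I set $R_2 = \max(R,R_1,1)$ with $R$ given by \rref{assum:F}. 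Two facts drive the argument: (i) on the compact set $\msk \defeq \cball{0}{R_2}\cap\msv^\complementary$ one has $F^{-1}(0)\cap\msk=\emptyset$ (because $F^{-1}(0)\subset\msv$), hence $\delta_0 \defeq \inf\ensembleLigne{\norm{F(x)}}{x\in\msk}>0$ by continuity and compactness — if $\msk=\emptyset$ the corresponding contribution vanishes; (ii) for $\norm{x}\geq R_2$, \rref{assum:F} gives $\norm{F(x)}^k\geq\mtt^k\norm{x}^{\upalpha k}$ together with $\norm{x}^{\upalpha k}\geq 1$.

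Next I would use the pointwise growth bound on the integrand. From \eqref{eq:cond_varphi_prop} and \rref{assum:psi} we get $\absLigne{\varphi(x)}\Psi(x)\leq C_\varphi C_\Psi\exp[(C_\varphi+C_\Psi)\norm{x}^{\upalpha k}]\leq C_{\varphi,\Psi}\exp[C_{\varphi,\Psi}\norm{x}^{\upalpha k}]$, since $C_\varphi C_\Psi\leq C_{\varphi,\Psi}$ and $C_\varphi+C_\Psi\leq C_{\varphi,\Psi}$; as $\Psi\geq0$ we may replace $\varphi$ by $\absLigne{\varphi}$ throughout. On $\msk$ this yields $\int_{\msk}\absLigne{\varphi}\Psi\exp[-\norm{F}^k/\vareps]\leq C_{\varphi,\Psi}\exp[C_{\varphi,\Psi}R_2^{\upalpha k}]\Leb(\ball{0}{R_2})\exp[-\delta_0^k/\vareps]$. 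On $\ensembleLigne{x}{\norm{x}\geq R_2}$ the exponent of the integrand obeys
\begin{equation}
  C_{\varphi,\Psi}\norm{x}^{\upalpha k}-\mtt^k\norm{x}^{\upalpha k}/\vareps = -\norm{x}^{\upalpha k}(\mtt^k/\vareps-C_{\varphi,\Psi}-1)-\norm{x}^{\upalpha k}\leq C_{\varphi,\Psi}+1-\mtt^k/\vareps-\norm{x}^{\upalpha k} \eqsp ,
\end{equation}
where I used that $\vareps<\bvareps<\mtt^k/(1+C_{\varphi,\Psi})$ forces $\mtt^k/\vareps-C_{\varphi,\Psi}-1>0$ and that $\norm{x}^{\upalpha k}\geq 1$. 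Hence the far integral is at most $C_{\varphi,\Psi}\rme^{C_{\varphi,\Psi}+1}\exp[-\mtt^k/\vareps]\int_{\rset^d}\exp[-\norm{x}^{\upalpha k}]\rmd x$, and $C_2\defeq\int_{\rset^d}\exp[-\norm{x}^{\upalpha k}]\rmd x<+\infty$ is a fixed constant.

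Finally I would recombine the two bounds: setting $\beta_1 = \min(\mtt^k,\delta_0^k)>0$, which depends only on $F$ and $\msv$, and
\begin{equation}
  A_1(c) = C_1^{-1}c\left(\exp[cR_2^{\upalpha k}]\Leb(\ball{0}{R_2})+\rme^{c+1}C_2\right) \eqsp ,
\end{equation}
which is continuous, non-decreasing on $\rset_+$, and independent of $\varphi$ and $\Psi$, we obtain $\I_{\vareps}^{\mathrm{out}}(\varphi)\leq A_1(C_{\varphi,\Psi})\vareps^{-d/k}\exp[-\beta_1/\vareps]$ for all $\vareps\in\ooint{0,\bvareps}$, as claimed. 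The only delicate point — indeed the whole reason the statement is phrased with the threshold $\mtt^k/(1+C_{\varphi,\Psi})$ — is extracting a genuine $\exp[-\beta_1/\vareps]$ from the far region, since a crude bound on the exponent there only produces $\exp[-\norm{x}^{\upalpha k}]$ with no $\vareps$-dependence; the splitting $\mtt^k/\vareps = (\mtt^k/\vareps-C_{\varphi,\Psi}-1)+(C_{\varphi,\Psi}+1)$ combined with $\norm{x}^{\upalpha k}\geq 1$ is what resolves it. Minor care is also needed for the sign of $\varphi$ (pass to $\absLigne{\varphi}$, using $\Psi\geq0$) and for the degenerate case $\msk=\emptyset$.
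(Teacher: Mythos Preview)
Your proof is correct and follows essentially the same route as the paper: split $\msv^\complementary$ into a compact piece $\msk=\cball{0}{R_2}\cap\msv^\complementary$ (where $\norm{F}$ is bounded below by a positive constant) and a far piece $\{\norm{x}\geq R_2\}$ (where the growth assumption $\norm{F(x)}\geq\mtt\norm{x}^\upalpha$ kicks in), and extract an $\exp[-\mathrm{const}/\vareps]$ from each. The only cosmetic difference is in the far-region estimate: the paper rescales via $x\mapsto(\mtt^k/\vareps-C_{\varphi,\Psi})^{1/(\upalpha k)}x$ and then bounds the resulting tail integral, whereas you manipulate the exponent directly using $\norm{x}^{\upalpha k}\geq 1$; your argument is slightly more elementary and gives $\beta_1=\min(\mtt^k,\delta_0^k)$ in place of the paper's $\min((R')^{\upalpha k}\mtt^k/2,\,m^k)$, but the structure and dependencies are identical.
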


\begin{proof}
  First using \Cref{assum:psi} and \eqref{eq:cond_varphi_prop} there exists
  $C_{\varphi, \Psi}$ such that for any $x \in \rset^d$
  \begin{equation}
    \label{eq:bound_prod}
    \abs{\varphi(x)} \Psi(x) \leq C_{\varphi, \Psi} \exp[C_{\varphi, \Psi} \norm{x}] \eqsp , \qquad C_{\varphi, \Psi} = C_\varphi + C_\Psi + C_\varphi  C_\Psi \eqsp .
  \end{equation}  
  Since $\msv$ is bounded
  there exists $R' \geq R$ (where $R$ is given in \Cref{assum:F}) such that
  $\msv \subset \cball{0}{R'}$. Note that for any $\vareps > 0$, we have
  \begin{align}
    &\I_{\vareps}^{\mathrm{out}}(\varphi) = \I_{\vareps}^{1}(\varphi) + \I_{\vareps}^{2}(\varphi) \eqsp , \\
    &\I_{\vareps}^{1}(\varphi) = \I_{\vareps}(\varphi \1_{\cball{0}{R'}^\complementary}) \eqsp , \quad  \I_{\vareps}^{2}(\varphi) =  \I_{\vareps}(\varphi
  \1_{\msv^\complementary \cap \cball{0}{R'}}) \eqsp . 
  \end{align}
  Let $\vareps \in \ooint{0, \bvareps}$, we divide the rest of the proof into two parts. First, we
  bound $\I_{\vareps}^{1}(\varphi)$ and then
  $\I_{\vareps}^{2}(\varphi)$.
  \begin{enumerate}[wide, labelwidth=!, labelindent=0pt, label=(\alph*)]
  \item Let $u = (\mtt^k/\vareps - C_{\varphi, \Psi})^{1/\upalpha k}$ (which
    makes sense, since $\vareps < \mtt^k/(C_{\varphi, \Psi} + 1)$). Since
    $R' \geq R$ we have using \eqref{eq:cond_varphi} and that $u \geq 1$
    \begin{align}
      \I_{\vareps}^{1}(\varphi) &= \textstyle{C_1^{-1} \vareps^{-d/k} \int_{\cball{0}{R'}^\complementary} \varphi(x)\Psi(x) \exp[-\norm{F(x)}^k/\vareps]\rmd x}  \\
                                     &\leq \textstyle{C_1^{-1} C_{\varphi, \Psi} \vareps^{-d/k}  \int_{\cball{0}{R'}^\complementary} \exp[- (\mtt^k/\vareps - C_{\varphi, \Psi}) \norm{x}^{\upalpha k}] \rmd x} \\
                                       &\leq \textstyle{C_1^{-1} C_{\varphi, \Psi} \vareps^{-d/k}  \int_{\cball{0}{R' u}^\complementary} \exp[- \norm{x}^{\upalpha k}] \rmd x \eqsp .} \label{int_I_eta_1}
    \end{align}
    Let $C_{\upalpha} = \int_{\rset^d} \exp[- \norm{x}^{\upalpha k}] \rmd
    x$. Using that $u = (\mtt^k / \vareps - C_{\varphi, \Psi})^{1/\upalpha k}$,
    we have
    \begin{align}
      \I_{\vareps}^{1}(\varphi) &\textstyle{\leq  C_1^{-1} C_{\varphi, \Psi} \vareps^{-d/k}  \int_{\cball{0}{R' u}^\complementary} \exp[- \norm{x}^{\upalpha k}] \rmd x }\\
                                     &\textstyle{\leq  C_1^{-1} C_{\varphi, \Psi} \vareps^{-d/k} \int_{\rset^d} \exp[\norm{x}^{\upalpha k}/2] \exp[-\norm{x}^{\upalpha k}] \rmd x  \exp[-(R'u)^{\upalpha k}/2]} \\
      &\leq  2^{d/\upalpha k} C_\upalpha  C_1^{-1} C_{\varphi, \Psi} \exp[(R')^{\upalpha k}C_{\varphi, \Psi}/2] \vareps^{-d/k} \exp[-(R')^{\upalpha k}\mtt^k/(2\vareps) ] \leq A_1^1 \vareps^{-d/k} \exp[-\beta_1^1/\vareps] \eqsp , \label{eq:I_eta_1}
    \end{align}
    with
    \begin{equation}
      A_1^1 =  2^{d/\upalpha k} C_\upalpha  C_1^{-1} C_{\varphi, \Psi} \exp[(R')^{\upalpha k}C_{\varphi, \Psi}/2]  \eqsp, \qquad \beta_1^1 = (R')^{\upalpha k}\mtt^k/2 \eqsp . 
    \end{equation}
  \item Second, note that
    $\msk = \msv^\complementary \cap \cball{0}{R'}$ is
    bounded and closed, \ie \ $\msk$ is compact. Note that for any
    $x \in \msk$, $\norm{F(x)} > 0$, hence there exists $m > 0$ such that for
    any $x \in \msk$, $\norm{F(x)} \geq m$. In addition, we have that for any
    $x \in \msk$,
    \begin{equation}
      \abs{\varphi(x)}\Psi(x) \leq C_{\varphi, \Psi} \exp[C_{\varphi, \Psi} \norm{x}^{\upalpha k}] \leq C_\varphi \exp[C_{\varphi, \Psi} (R')^{\upalpha k}] \eqsp . 
    \end{equation}
    Therefore, we have
    \begin{equation}
      \I_{\vareps}^{2}(\varphi) \leq C_1^{-1} C_{\varphi, \Psi} \vareps^{-d/k}  \exp[C_{\varphi, \Psi}(R')^{\upalpha k}] \exp[-m^k/\vareps] \Leb(\msk) \eqsp ,
    \end{equation}
    where we recall that $\lambda(\msk)$ is the Lebesgue measure of $\msk$.
    Since $\msk \subset \cball{0}{R'}$ we have
    \begin{equation}
      \I_{\vareps}^{2}(\varphi) \leq \uppi^{d/2} (R')^d \Gamma^{-1}(d/2+1) C_1^{-1} C_{\varphi, \Psi}   \exp[(R')^{\upalpha k}]\vareps^{-d/k} \exp[-m^k/\vareps]  \leq A_1^2 \vareps^{-d/k} \exp[-\beta_1^2 / \vareps ]\eqsp , \label{eq:I_eta_2}
    \end{equation}
    where $\Gamma: \ \ooint{0,+\infty} \to \rset_+$ is given for any
    $s \in \ooint{0, +\infty}$ by
    $\Gamma(s) = \int_0^{+\infty} t^{s-1} \exp[-t] \rmd t$ and
    \begin{equation}
      A_1^2 = \uppi^{d/2} (R')^d \Gamma^{-1}(d/2+1) C_1^{-1} C_{\varphi, \Psi}   \exp[C_{\varphi, \Psi}(R')^{\upalpha k}] \eqsp , \qquad \beta_1^2 = m^k \eqsp . 
    \end{equation}
  \end{enumerate}
  We conclude the proof upon combining \eqref{eq:I_eta_1}, \eqref{eq:I_eta_2},
  letting $\beta_1 = \min(\beta_1^1, \beta_1^2)$ and $A_1 = A_1^1 + A_1^2$.
\end{proof}

\begin{lemma}
  \label{lemma:existence_U}
  Assume \rref{assum:F} and that $d \leq p$. Then there exist $N \in \nset$,
  $\{x_0^k\}_{k=1}^N \in (\rset^d)^N$ and $\msw_k \subset \rset^d$ open such
  that for any $k \in \{1, \dots, N\}$, $x_0^k \in \msw_k$,
  $F: \ \bar{\msw}_k \to F(\bar{\msw}_k)$ is a bi-Lipschitz homeomorphism, for
  any $x \in \msw_k$, $\rmd F(x)$ is injective and for any
  $j \in \{1, \dots, N\}$, $\bar{\msw}_k \cap \bar{\msw}_j = \emptyset$. In
  addition, $F^{-1}(0) = \cup_{k=1}^N \{x_0^k\}$.
\end{lemma}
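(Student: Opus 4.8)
The plan is to show that, when $d \le p$, the set $F^{-1}(0)$ consists of finitely many isolated points, each of which possesses a small ball around it on which $F$ is bi-Lipschitz. First I would record that $F^{-1}(0)$ is compact: it is closed since $F \in \rmc(\rset^d,\rset^p)$, and the growth bound of \rref{assum:F} gives $\|F(x)\| \ge \mtt\|x\|^\upalpha > 0$ whenever $\|x\| \ge R$, so $F^{-1}(0) \subset \cball{0}{R}$.

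Next, fix $x_0 \in F^{-1}(0)$. Since $d \le p$ we have $\jac{x_0} = \det(\rmD F(x_0)^\top \rmD F(x_0))^{1/2} \ne 0$, which means $\rmD F(x_0)$ has full rank $d$, i.e. is injective; set $c = \sigma_{\min}(\rmD F(x_0)) > 0$. By continuity of $\rmD F$ there is $\rho(x_0) > 0$ with $\opnorm{\rmD F(x) - \rmD F(x_0)} \le c/2$ for all $x \in \cball{x_0}{\rho(x_0)}$. For $x,y$ in this (convex) ball, writing $F(x) - F(y) = \rmD F(x_0)(x-y) + \int_0^1 (\rmD F(y+t(x-y)) - \rmD F(x_0))(x-y)\,\rmd t$ and estimating the integral term gives $\|F(x)-F(y)\| \ge (c/2)\|x-y\|$; since $\opnorm{\rmD F}$ is bounded on the compact ball $\cball{x_0}{\rho(x_0)}$ (so $F$ is Lipschitz there), this shows $F|_{\cball{x_0}{\rho(x_0)}}$ is a bi-Lipschitz injection, hence a homeomorphism onto its image as a continuous injective map on a compact set. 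In particular $x_0$ is the unique zero of $F$ in that ball, so every point of $F^{-1}(0)$ is isolated; being also compact, $F^{-1}(0)$ is finite, say $F^{-1}(0) = \{x_0^1,\dots,x_0^N\}$, with $N \ge 1$ since $0 \in F^{-1}(0)$. (This finiteness is exactly the fact $\calH^0(F^{-1}(0)) < +\infty$ invoked elsewhere.)

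Finally I would set $\delta = \min_k \rho(x_0^k)/2 \wedge \min_{j \ne k}\|x_0^k - x_0^j\|/3 > 0$ and $\msw_k = \ball{x_0^k}{\delta}$. Then $\bar{\msw}_k \subset \cball{x_0^k}{\rho(x_0^k)}$, so $\rmD F(x)$ is injective for every $x \in \msw_k$ and $F : \bar{\msw}_k \to F(\bar{\msw}_k)$ is a bi-Lipschitz homeomorphism by the previous step; the $\bar{\msw}_k$ are pairwise disjoint by the choice of $\delta$; and $F^{-1}(0) \cap \bar{\msw}_k = \{x_0^k\}$ by injectivity of $F$ on $\bar{\msw}_k$, so $F^{-1}(0) = \cup_{k=1}^N\{x_0^k\}$, as required. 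The only delicate point is the second paragraph: injectivity of $\rmD F(x_0)$ alone does not give local injectivity of $F$, so the uniform lower Lipschitz estimate near $x_0$ must be extracted from the first-order Taylor expansion together with a smallness bound on $\rmD F(\cdot) - \rmD F(x_0)$; the rest is routine compactness and point-set topology.
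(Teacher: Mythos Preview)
Your proof is correct and follows the same overall strategy as the paper: compactness of $F^{-1}(0)$, isolation of each zero via a local lower Lipschitz bound for $F$, finiteness from compact plus discrete, then shrinking to pairwise disjoint balls. The only notable difference is in how the lower Lipschitz estimate is extracted. The paper writes $\|F(x)-F(y)\|^2 = \int_0^1\!\int_0^1 (y-x)^\top H(x_s,x_t)(y-x)\,\rmd s\,\rmd t$ with $H(a,b)=\rmD F(a)^\top \rmD F(b)$ and bounds $\|H(\cdot,\cdot)-H(x_0^k,x_0^k)\|$ from above; you instead bound $\opnorm{\rmD F(\cdot)-\rmD F(x_0)}$ directly and use a single Taylor integral. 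Your route is slightly more direct and yields bi-Lipschitz (hence injectivity and the homeomorphism property) in one step, whereas the paper first establishes mere local injectivity, then upgrades to a homeomorphism via compactness, and only afterwards proves bi-Lipschitz on a possibly smaller set $\msw_k$. One small point you leave implicit: the injectivity of $\rmD F(x)$ for all $x\in\msw_k$ follows from your bound $\opnorm{\rmD F(x)-\rmD F(x_0^k)}\le c/2$ since then $\|\rmD F(x)v\|\ge c/2$ for every unit $v$; it would be worth stating this explicitly.
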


\begin{proof}
  Since, for any $x \in F^{-1}(0)$, $\jac{x} > 0$ and $d \leq p$ there exists
  $r_x > 0$ such that for any $y \in \cball{x}{r_x}$, $F(y) = F(x)$ implies that
  $y=x$. Since $\lim_{\norm{x} \to +\infty} \norm{F(x)} = +\infty$ we have that
  $F^{-1}(0)$ is compact. Assume that $\calH^0(F^{-1}(0)) = +\infty$. Then,
  there exists $(x_k)_{k \in \nset}$ such that for any $k \in \nset$,
  $F(x_k) = 0$ and for any $j \in \{0, \dots, k-1\}$, $x_j \neq x_k$. Up to taking a subsequence, there exists $x^\star \in F^{-1}(0)$ such that
  $\lim_{k \to +\infty} x_k = x^\star$ and for any $k \in \nset$,
  $x_k \neq x^\star$. Hence, there exists $k \in \nset$ such that
  $x_k \in \cball{0}{r_{x^\star}}$ which is absurd. Hence
  $\calH^0(F^{-1}(0))<+\infty$. In what follows, we let $N = \calH^0(F^{-1}(0))$
  and denote $\{x_0^k\}_{k=1}^N \in (\rset^d)^N$ such that
  $F^{-1}(0) = \{x_0^k\}_{k=1}^N$. There exists
  $\{r_k'\}_{k=1}^N \in (\rset_+)^N$ such that for any
  $k, j \in \{1, \dots, N\}$,
  $\cball{x_0^k}{r_k'} \cap \cball{x_0^j}{r_j'} = \emptyset$. For any
  $k \in \{1, \dots, N\}$, we let
  $\msv_k = \ball{x_0^k}{\min(r_k', r_{x_0^k}/2)}$. Let $k \in \{1, \dots,
  N\}$. By construction, $F: \ \bar{\msv}_k \to F(\bar{\msv}_k)$ is bijective
  and continuous. Since $\bar{\msv}_k$ is compact, we have that
  $F: \ \bar{\msv}_k \to F(\bar{\msv}_k)$ is a homeomorphism. Since for any
  $k \in \{1, \dots, N\}$, $\jac{x_0^k} > 0$, there exists $m > 0$ such that for
  any $k \in \{1, \dots, N\}$ and $v \in \rset^d$,
  $v^\top H(x_0^k,x_0^k) v \geq m \normLigne{\v}^2$ with
  $H(x,y) = \rmD F(x)^\top \rmD F(y)$ for any $x, y \in \rset^d$. For any
  $k \in \{1, \dots, N\}$, there exists $\msw_k \subset \msv_k$ such that for
  any $x, y \in \msw_k$ we have
  $\normLigne{H(x,y) - H(x_0^k,x_0^k)}_2 \leq m/2$.  Therefore we have for any
  $x, y \in \msw_k$
  \begin{align}
    &\norm{F(x) - F(y)}^2 = \textstyle{\int_0^1 \int_0^1 \langle \rmD F(x+t(y-x)) (y-x), \rmD F(x+s(y-x)) (y-x) \rangle \rmd t \rmd s} \\
                         & \qquad \textstyle{= \int_0^1 \int_0^1 (y-x)^\top H(x_s,x_t)(y-x)\rmd t  \rmd s} \\
    & \qquad  \textstyle{=(y-x)^\top H(x_0^k,x_0^k)(y-x)+  \int_0^1 \int_0^1 (y-x)^\top (H(x_t,x_s) - H(x_0^k,x_0^k))(y-x)\rmd t \rmd s} \\
    & \qquad \geq (m/2) \norm{y-x}^2 \eqsp , 
  \end{align}
  where $x_t = x + t(y-x)$, which concludes the proof.
\end{proof}

\subsection{Quantitative control of the derivative}
\label{sec:quant-contr-deriv}

  \begin{lemma}
    \label{lemma:derivative_control}
    Under the same assumptions as \Cref{prop:lip_l}, there exist $\eta > 0$ and
    $P \in \poly{4}{\rset_+}$ such that for $x \in F^{-1}(0)$ and
    $t \in \ballinfty{0}{\eta}$ we have
    \begin{align}
      \norm{\rmD_t \chi(t,x)} &\leq (1 + M_{0,\varphi}+ M_{1,\varphi})(1 + M_{0,\Psi}+ M_{1,\Psi}) \\
      & \quad \times P(M_{1,F}, M_{2,F}, M_{3,F}, 1/m_{1,F}) \exp[P(M_{1,F}, M_{2,F}, M_{3,F}, 1/m_{1,F})] \eqsp ,
    \end{align}
    where we recall that $\chi$ is defined in \eqref{eq:Psi_def_nul} and for any
    $\ell \in \nset$, $i \in \{0,\dots,\ell\}$ and
    $f \in \rmc^\ell(\rset^d, \rset^p)$,
    $M_{i, f} = \sup \ensembleLigne{\normLigne{\rmD^i f(x)}}{x \in
      F^{-1}(\ballinfty{0}{\eta})}$.
  \end{lemma}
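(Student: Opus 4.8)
The plan is to differentiate the four factors of $\chi$ through the diffeomorphism $\bar\Phi_t^{-1}$ and to control every resulting term by elementary ODE (Gronwall) estimates on the flows $\Phi_i$ constructed in the proof of \Cref{prop:lip_l}. First I would fix $\eta>0$ small enough (shrinking the $\eta$ of \Cref{prop:lip_l} if necessary) so that for $x\in F^{-1}(0)$ and $t\in\ballinfty{0}{\eta}$ every point $\bar\Phi_t^{-1}(x)$, which lies in $F^{-1}(t)$, belongs to the compact set $\msk_1=F^{-1}(\cballinfty{0}{\eta})$ on which $\mathrm{J}F$ is bounded below by some $m_{1,F}>0$ and on which $g_i=f_i=\sum_k h_{i,k}\nabla F_k$ with $(h_{i,j})=(\rmD F\rmD F^\top)^{-1}$. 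Writing $G=\rmD F\rmD F^\top$, Cramer's rule expresses each $h_{i,k}$ as a cofactor of $G$ divided by $\det G=\mathrm{J}F^2$, so on $\msk_1$ one gets $\norm{f_i},\norm{\rmD f_i},\norm{\rmD^2 f_i}\le P_0(M_{1,F},M_{2,F},M_{3,F},1/m_{1,F})$ for an explicit polynomial $P_0$; the second derivative $\rmD^2 f_i$ is the only place $\rmD^3 F$ enters, which is exactly the highest order appearing in the statement.

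Next I would estimate $\bar\Phi_t^{-1}$ together with its spatial Jacobian, its $t$-gradient, and the $t$-gradient of that Jacobian. Since $\bar\Phi_t=\Phi_p(t_p,\cdot)\circ\cdots\circ\Phi_1(t_1,\cdot)$ and the flows are reversible, $\bar\Phi_t^{-1}$ is the analogous composition of time $-t_i$ flows, so it suffices to treat one flow $\Phi_i$ and chain the estimates. For $\Phi_i$ the variational equation $\partial_s\rmD_x\Phi_i(s,\cdot)=-\rmD g_i(\Phi_i(s,\cdot))\,\rmD_x\Phi_i(s,\cdot)$ and Gronwall give $\norm{\rmD_x\Phi_i(s,\cdot)}\le\exp[\absLigne{s}\,\norm{\rmD g_i}_\infty]$; differentiating once more in $x$ bounds $\norm{\rmD_x^2\Phi_i(s,\cdot)}$ by $\absLigne{s}\,\norm{\rmD^2 g_i}_\infty\exp[2\absLigne{s}\norm{\rmD g_i}_\infty]$; and $\partial_{t_i}\Phi_i(-t_i,y)=g_i(\Phi_i(-t_i,y))$ is bounded by $\norm{g_i}_\infty$. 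Composing and using $\bar\Phi_0^{-1}=\Id$, $\rmD\bar\Phi_0^{-1}=\Id$, one obtains for $\normLigne{t}_\infty\le\eta$ that
\[
\norm{\rmD_x\bar\Phi_t^{-1}(x)},\ \norm{(\rmD_x\bar\Phi_t^{-1}(x))^{-1}},\ \norm{\rmD_t\bar\Phi_t^{-1}(x)},\ \norm{\rmD_t\rmD_x\bar\Phi_t^{-1}(x)}\ \le\ P_1\exp[P_1],
\]
with $P_1=P_1(M_{1,F},M_{2,F},M_{3,F},1/m_{1,F})$, the fixed constant $\eta$ being absorbed into the polynomial; the bound on the inverse $(\rmD_x\bar\Phi_t^{-1})^{-1}$ follows from the same estimate applied to $\bar\Phi_t$ itself.

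Then I would apply the product rule to $\chi=\varphi\!\circ\!\bar\Phi_t^{-1}\cdot\Psi\!\circ\!\bar\Phi_t^{-1}\cdot(\mathrm{J}F^{-1})\!\circ\!\bar\Phi_t^{-1}\cdot\absLigne{\det\rmD_x\bar\Phi_t^{-1}}$ and bound each of the four terms. By the chain rule $\rmD_t(\varphi\circ\bar\Phi_t^{-1})=\rmD\varphi(\bar\Phi_t^{-1}(x))\,\rmD_t\bar\Phi_t^{-1}(x)$ is bounded by $M_{1,\varphi}P_1\exp[P_1]$, and likewise the $\Psi$-term by $M_{1,\Psi}P_1\exp[P_1]$; for the Jacobian factor, $\mathrm{J}F^{-1}=(\det G)^{-1/2}$ has differential $-\tfrac12(\det G)^{-1/2}\operatorname{tr}(G^{-1}\rmD G)$, bounded on $\msk_1$ by a polynomial in $(M_{1,F},M_{2,F},1/m_{1,F})$, so its $t$-derivative is again of the form $P_1\exp[P_1]$; and for the determinant factor, $\rmD_t\det(\rmD_x\bar\Phi_t^{-1})=\det(\rmD_x\bar\Phi_t^{-1})\operatorname{tr}((\rmD_x\bar\Phi_t^{-1})^{-1}\rmD_t\rmD_x\bar\Phi_t^{-1})$ is controlled by the bounds of the previous paragraph. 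Since each of the four factors of $\chi$ is itself bounded on this region by $(1+M_{0,\varphi}+M_{1,\varphi})(1+M_{0,\Psi}+M_{1,\Psi})P_1\exp[P_1]$, every term of the product rule, hence $\norm{\rmD_t\chi(t,x)}$, is of the claimed form; taking $P\in\poly{4}{\rset_+}$ to dominate $P_0$ and $P_1$ (and absorbing polynomial prefactors into $\exp[P(\cdots)]$) gives the bound. The main obstacle is the bookkeeping: one must check that no derivative of $F$ beyond third order ever appears — it arises only through $\rmD^2 g_i$, hence $\rmD^3F$, which propagates into $\rmD_x^2\Phi_i$ and thence into $\rmD_t\rmD_x\bar\Phi_t^{-1}$ inside the determinant factor — and that the dependence on $1/\mathrm{J}F$ stays polynomial (it does, since $G^{-1}$, $(\det G)^{-1/2}$ and their derivatives produce only finitely many powers of $\det G$ in denominators), while the Gronwall estimates are what generate the single outer exponential.
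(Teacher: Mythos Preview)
Your proposal is correct and follows essentially the same route as the paper: bound $g_i,\rmD g_i,\rmD^2 g_i$ on $\msk_1$ via Cramer's rule for $G^{-1}$, push these through the variational equations for the flows $\Phi_i$ with Gr\"onwall (producing the exponential), chain to control $\bar\Phi_t^{-1}$, $\rmD_x\bar\Phi_t^{-1}$, $\rmD_t\bar\Phi_t^{-1}$ and $\rmD_t\rmD_x\bar\Phi_t^{-1}$, and finish with the product rule on the four factors of $\chi$. The only cosmetic difference is that the paper differentiates the determinant via the adjugate $\trace(\Adj(\rmD_x\bar\Phi_t^{-1})\,\rmD_t\rmD_x\bar\Phi_t^{-1})$ rather than your $\det\cdot\trace((\rmD_x\bar\Phi_t^{-1})^{-1}\,\rmD_t\rmD_x\bar\Phi_t^{-1})$, which spares the separate estimate on $(\rmD_x\bar\Phi_t^{-1})^{-1}$; your version is equivalent and your observation that $\rmD^3F$ enters only through $\rmD^2 g_i\to\rmD_x^2\Phi_i\to\rmD_t\rmD_x\bar\Phi_t^{-1}$ is exactly right.
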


  \begin{proof}
    In this proof, for any $f: \ \rset^{m_0} \to \rset^{m_1}$ with
    $m_0, m_1 \in \nset$ differentiable, we denote $\rmd f$ its differential.    
    Recall that $\bar{\Phi}_t: \ \rset^d \to \rset^d$ is defined such that for
    any $x \in \rset^d$, $\bar{\Phi}_t(x) = x^{(p)}$ with $x^{(0)} = x$ and for
    any $i \in \{0, \dots, p-1\}$, $x^{(i+1)} = \Phi_{i+1}(t_{i+1}, x^{(i)})$,
    with $\Phi_{i+1}$ given by \eqref{eq:flow}.  The compact sets $\msk_0$ and
    $\msk_1$ are defined in the proof of \Cref{prop:lip_l}. Since for any
    $x \in F^{-1}(0)$, $\absLigne{\det(\rmD \bar{\Phi}_t^{-1}(x))} > 0$, we
    assume without loss of generality that
    $\det(\rmD \bar{\Phi}_t^{-1}(x)) > 0$. For ease of notation we denote
    $\varphi_\Psi = \varphi \times \Psi$. We have
    \begin{equation}
      M_{0, \varphi_\Psi} = M_{0, \varphi} M_{0, \Psi}  \eqsp , \qquad M_{1, \varphi_\Psi} = M_{0, \varphi} M_{1, \Psi} + M_{1, \varphi} M_{0, \Psi} \eqsp . 
    \end{equation}
    In addition, we have for any $x \in \rset^d$ and $t \in \msk_0$
    \begin{equation}
      \label{eq:Psi_def}
      \chi(t,x) =  \varphi_\Psi(\bar{\Phi}_t^{-1}(x)) \jacinv{\bar{\Phi}_t^{-1}(x)} \det(\rmD \bar{\Phi}_t^{-1}(x)) \eqsp . 
    \end{equation}
    We now control the first derivative of $\chi$. We divide the rest of the proof in three steps.
    
    \begin{enumerate}[wide, labelwidth=!, labelindent=0pt, label=(\alph*)]
    \item We start by providing upper bounds for $x \mapsto h_{i,j}(x)$,
      $x \mapsto \rmD h_{i,j}(x)$ and $x \mapsto \rmD^2 h_{i,j}(x)$ for any
      $i,j \in \{1, \dots, p\}$ where we recall that for any $x \in \msk_1$ we
      have
    \begin{equation}
     \{h_{i,j}(x)\}_{1 \leq i,j \leq p} = G(x)^{-1} = \Adj(G(x)) / \det(G(x)) =  \Adj(G(x)) / \jac{x}^2 \eqsp ,
    \end{equation}
    where $\Adj(G(x))$ is the adjugate of
    $G(x) = \{ \langle \nabla F_i(x), \nabla F_j(x) \rangle \}_{1 \leq i,j \leq p}$, where
    for any $x \in \msk_1$ and $i,j \in \{1, \dots, p\}$
    \begin{equation}
      \Adj(G(x)) = \{(-1)^{i+j} \det (G^{i,j}(x))\}_{1 \leq i,j \leq p} \eqsp , 
    \end{equation}
    where for any $i,j \in \{1, \dots, p\}$, $\det(G^{i,j}(x))$ is the $(i,j)$
    minor of $G(x)$. Hence, using the Cauchy-Schwarz inequality there exists
    $D_0 \geq 0$ such that for any $x \in \msk_1$ and $i,j \in \{1, \dots, p\}$
    \begin{equation}
      \label{eq:bound_hij}
      \abs{h_{i,j}(x)} \leq D_0 M_{1,F}^{2p-2}/m_{1,F}^2 \eqsp ,
    \end{equation}
    where we define
    \begin{equation}
      M_{1,F} = \sup \ensembleLigne{\norm{\nabla F_i(x)}}{x \in \msk_1, \ i \in \{1, \dots, p\}} \eqsp , \quad m_{1,F} = \inf \ensembleLigne{\jac{x}}{x \in \msk_1} \eqsp . 
    \end{equation}
    Recall that $m_{1,F} > 0$ since $\msk_1$ is compact and for any $x \in \msk_1$, $\jac{x} > 0$.
    We have that for any $x \in \msk_1$, $u \in \rset^d$ and $i, j \in \{1, \dots, p\}$
    \begin{equation}
      \label{eq:derivative_Gx}
      \rmd h_{i,j}(x)(u) = e_i^\top \rmd G(x)^{-1}(u) e_j = -e_i^\top G(x)^{-1} \rmd G(x)(u) G(x)^{-1} e_j \eqsp . 
    \end{equation}
    In addition, we have for any $x, u \in \rset^d$ and $i,j \in \{1, \dots, p\}$
    \begin{equation}
      \label{eq:derive_g}
      \rmd G_{i,j}(x)(u) = \rmd^2 F_i(x)(\nabla F_j(x), u) + \rmd^2 F_j(x)(\nabla F_i(x), u) \eqsp . 
    \end{equation}
    Combining this result and \eqref{eq:derivative_Gx} we get that there exists $C_1 \geq 0$ such that for any
    $x \in \msk_1$ and $i, j \in \{1, \dots, p\}$
    \begin{equation}
      \label{eq:bound_d_hij}
      \norm{\rmD h_{i,j}(x)} \leq C_1 M_{1,F}^{4p-3} M_{2,F}/m_{1,F}^4 \eqsp , 
    \end{equation}
    where
    $M_{2,F} = \sup \ensembleLigne{\normLigne{\nabla^2 F_i(x)}}{x \in \msk_1, \ i \in
      \{1, \dots, p\}}$.  Hence using \eqref{eq:def_f}, \eqref{eq:bound_hij} and
    \eqref{eq:bound_d_hij}, there exist $C_2, C_3 \geq 0$ such that for any
    $x \in \msk_1$ and $i \in \{1, \dots, p\}$
    \begin{equation}
      \label{eq:bound_g}
      \norm{g_i(x)} \leq C_2 M_{1,F}^{2p-1}/m_{1,F}^2 \eqsp , \quad \norm{\rmD g_i(x)} \leq C_3 M_{2,F}/m_{1,F}^2\{M_{1,F}^{4p-2}/m_{1,F}^2  + M_{1,F}^{2p-2} \} \eqsp .
    \end{equation}
    Similarly, for any $x \in \msk_1$, $u, v \in \rset^d$ and
    $i, j \in \{1, \dots, p\}$ we have
    \begin{align}
      \rmd^2 h_{i,j}(x)(u,v) &= e_i^{\top} \rmd^2G(x)^{-1}(u,v) e_j \\
                             &= e_i^{\top} \left\lbrace G(x)^{-1} \rmd G(x)(u) G(x)^{-1} \rmd G(x)(v) G(x)^{-1} \right. \\
                             & \qquad \left. + G(x)^{-1} \rmd G(x)(v) G(x)^{-1} \rmd G(x)(u) G(x)^{-1} \right. \\
      &\qquad \left. - G(x)^{-1} \rmd^2G(x)(u,v) G(x)^{-1} \right\rbrace e_j \eqsp . 
    \end{align}
    We also have for any $x, u, v \in \rset^d$ and $i, j \in \{1, \dots, p\}$
    \begin{align}
      \rmd^2 G_{i,j}(x) &=  \rmd^3 F_i(x)(\nabla F_j(x), u, v) + \rmd^3 F_j(x)(\nabla F_i(x), u, v) \\
      & \qquad + \rmd^2 F_i(x)(\rmd^2 F_j(x)(v), u)  + \rmd^2 F_j(x)(\rmd^2 F_i(x)(v), u) \eqsp . 
    \end{align}
    Hence there exist $P_3 \in \poly{4}{\rset_+}$ such that for any $x \in \msk_1$ we have
    \begin{equation}
      \label{eq:bound_h_2}
      \normLigne{\rmD^2 h_{i,j}(x)} \leq P_3(M_{1,F}, M_{2,F}, M_{3,F}, 1/m_{1,F}) \eqsp ,
    \end{equation}
    where
    $M_{3,F} = \sup \ensembleLigne{\normLigne{\nabla^3 F_i(x)}}{x \in \msk_1, \
      i \in \{1, \dots, p\}}$. Next, note that for any $i \in \{1, \dots, p\}$,
    we can choose $g_i$ such that $g_i \in \rmc^2(\rset^d, \rset^d)$ (and
    therefore $\Phi_i \in \rmc^{3,2}(\rset \times \rset^d, \rset^d)$ by \Cref{lemma:flow}).
    Combining this result, \eqref{eq:bound_g} and \eqref{eq:bound_h_2}, there
    exist $P_1 \in \poly{2}{\rset_+}$, $P_2 \in \poly{3}{\rset_+}$ and
    $P_3 \in \poly{4}{\rset_+}$ such that for any $x \in \msk_1$ and
    $i \in \{1, \dots, p\}$
    \begin{align}
      \label{eq:bound_total}
      &\norm{g_i(x)} \leq P_1(M_{1,F}, 1/m_{1,F}) \eqsp , \qquad \norm{\rmD g_i(x)} \leq P_2(M_{1,F}, M_{2,F}, 1/m_{1,F}) \eqsp , \\
      &\normLigne{\rmD^2 g_i(x)} \leq P_3(M_{1,F}, M_{2,F}, M_{3,F}, 1/m_{1,F}) \eqsp .
    \end{align}
  \item For any $x \in \msk_1$, $i \in \{1, \dots, p\}$ and $t_i \in \rset$ such
    that $\Phi_i(t_i, x) \in \msk_1$ we have $\abs{t_i} \leq 2 \eta$, since
    $F_i(\Phi_i(t_i,x)) = F_i(x) - t_i$. Hence combining this result, the fact
    that for any $x \in \rset^d$, $\Phi_i(0, x) = x$, \eqref{eq:flow} and
    \eqref{eq:bound_g}, for any $x \in \msk_1$, $i \in \{1, \dots, p\}$ and
    $t_i \in \rset$ such that $\Phi_i(t_i, x) \in \msk_1$ we have
    \begin{equation}
      \textstyle{
        \norm{\Phi_i(t_i,x)} \leq \norm{x} + \int_0^{t_i} \norm{\rmD_t \Phi_i(s, x)} \rmd s \leq \norm{x} + 2 \eta C_2 M_{1,F}^{2p-1}/m_{1,F}^2   \eqsp .
        }
    \end{equation}
    Therefore, there exists $C_4 \geq 0$ such that for any $x \in \msk_1$,
    $i \in \{1, \dots, p\}$ and $t_i \in \rset$ such that
    $\Phi_i(t_i,x) \in \msk_1$ we have
    \begin{equation}
      \label{eq:borne_Phi_i}
      \norm{\Phi_i(t_i,x)} \leq C_4 (1 + M_{1,F}^{2p-1}/m_{1,F}^2) \eqsp , \quad \norm{\rmD_t \Phi_i(t_i,x)} \leq C_2 M_{1,F}^{2p-1}/m_{1,F}^2  \eqsp . 
    \end{equation}
    Similarly, for any $x \in \msk_1$, $i \in \{1, \dots, p\}$ and
    $t_i \in \rset$ such that $\Phi_i(t_i, x) \in \msk_1$ we have
    \begin{align}
      \norm{\rmD_x \Phi_i(t_i,x)} &\textstyle{\leq \norm{\Id} + \int_0^{t_i} \norm{\rmD_{t,x} \Phi_i(s, x)} \norm{\rmD_{x} \Phi_i(s,x)} \rmd s} \\
                                      &\textstyle{\leq \norm{\Id} + \int_0^{t_i} \norm{\rmD g_i(\Phi_i(t_i,x))} \norm{\rmD_{x} \Phi_i(s,x)} \rmd s} \\
                                      &\textstyle{\leq \norm{\Id} + C_3 (M_{2,F}/m_{1,F}^2)\{M_{1,F}^{4p-2}/m_{1,F}^2  + M_{1,F}^{2p-2} \}\int_0^{t_i}  \norm{\rmD_{x} \Phi_i(t_i,x)} \rmd s \eqsp . }
    \end{align}
    Hence, using Gr\"{o}nwall's lemma, for any $x \in \msk_1$,
    $i \in \{1, \dots, p\}$ and $t_i \in \rset$ such that
    $\Phi_i(t_i, x) \in \msk_1$ we have
    \begin{equation}
      \norm{\rmD_x \Phi_i(t_i,x)} \leq  \norm{\Id}  \exp[2 \eta C_3 (M_{2,F}/m_{1,F}^2)\{M_{1,F}^{4p-2}/m_{1,F}^2  + M_{1,F}^{2p-2} \}] \eqsp . 
    \end{equation}
    Therefore, there exists $C_5 \geq 0$ such that for any $x \in \msk_1$,
    $i \in \{1, \dots, p\}$ and $t_i \in \rset$ with $\Phi_i(t_i, x) \in \msk_1$
    we have
    \begin{equation}
      \label{eq:borne_d_Phi_i}
      \norm{\rmD_x \Phi_i(t_i,x)} \leq C_5 \exp[C_5 (M_{2,F}/m_{1,F}^2)\{M_{1,F}^{4p-2}/m_{1,F}^2  + M_{1,F}^{2p-2} \}] \eqsp . 
    \end{equation}
    Using this result, \eqref{eq:flow} and \eqref{eq:bound_total}, there
    exists $C_6 \geq 0$ such that for any $x \in \msk_1$,
    $i \in \{1, \dots, p\}$ and $t_i \in \rset$ with $\Phi_i(t_i, x) \in \msk_1$
    we have
    \begin{equation}
      \label{eq:borne_d_Phi_i_time}
      \norm{\rmD_{t,x} \Phi_i(t_i,x)} \leq C_6 \exp[C_6 (M_{2,F}/m_{1,F}^2)\{M_{1,F}^{4p-2}/m_{1,F}^2  + M_{1,F}^{2p-2} \}] \eqsp . 
    \end{equation}
    Hence combining \eqref{eq:borne_Phi_i}, \eqref{eq:borne_d_Phi_i} and
    \eqref{eq:borne_d_Phi_i_time}, there exist $P_4 \in \poly{2}{\rset_+}$ and 
    $P_5 \in \poly{3}{\rset_+}$  such
    that for any $x \in \msk_1$, $i \in \{1, \dots, p\}$, $t_i \in \rset$ with
    $\Phi_i(t_i, x) \in \msk_1$ we have
    \begin{align}
      \label{eq:polynomial_Phi_int}
      &\normLigne{\rmD_t \Phi_i(t_i,x)} + \normLigne{\Phi_i(t_i,x)} \leq P_4(M_{1,F}, 1/m_{1,F}) \eqsp , \\
      &\normLigne{\rmD_x \Phi_i(t_i,x)} + \normLigne{\rmD_{t,x} \Phi_i(t_i,x)} \leq P_5(M_{1,F}, M_{2,F}, 1/m_{1,F}) \exp[P_5(M_{1,F}, M_{2,F}, 1/m_{1,F})] \eqsp . 
    \end{align}
        In addition, using \eqref{eq:flow} we have for
    any $i \in \{1, \dots, p\}$, $s \in \rset$, $x \in \rset^d$
    \begin{equation}
      \rmD_t \rmD_x^2 \Phi_i(s,x) = \rmD g_i(\Phi_i(s,x)) \rmD_x^2 \Phi_i(s,x) + \rmD^2 g_i(\Phi_i(s,x)) \rmD_x \Phi_i(s,x) \eqsp .
    \end{equation}
    Therefore, using \eqref{eq:bound_total} and \eqref{eq:polynomial_Phi_int},
    we have for any $x \in \msk_1$, $i \in \{1, \dots, p\}$ and $t_i \in \rset$
    such that $\Phi_i(t_i,x) \in \msk_1$
    \begin{align}
      &\textstyle{\normLigne{\rmD_x^2 \Phi_i(t_i,x)} \leq \int_0^{t_i} \{ \normLigne{\rmD g_i(\Phi_i(s,x))}\normLigne{\rmD_x^2 \Phi_i(s,x)} + \normLigne{\rmD^2 g_i(\Phi_i(s,x))}\normLigne{\rmD_x \Phi_i(s,x)} \} \rmd s} \\
                                             &\leq 2 \eta P_3(M_{1,F}, M_{2,F}, M_{3,F}, 1/m_{1,F})P_5(M_{1,F}, M_{2,F}, 1/m_{1,F})\exp[P_5(M_{1,F}, M_{2,F}, 1/m_{1,F})] \\
                                             & \qquad \textstyle{+ P_2(M_{1,F}, M_{2,F}, 1/m_{1,F}) \int_0^{t_i}  \normLigne{\rmD_x^2 \Phi_i(s,x)}  \rmd s \eqsp . }
    \end{align}
    Hence, using Gr\"{o}nwall's lemma, there exists
    $P_6 \in \poly{4}{\rset_+}$ such that for any $x \in \msk_1$,
    $i \in \{1, \dots, p\}$ and $t_i \in \rset$ such that
    $\Phi_i(t_i,x) \in \msk_1$ we have
    \begin{equation}
      \label{eq:borne_d_2_Phi_i}
      \normLigne{\rmD_x^2 \Phi_i(t_i,x)} \leq P_6(M_{1,F}, M_{2,F}, M_{3,F}, 1/m_{1,F}) \exp[P_6(M_{1,F}, M_{2,F}, M_{3,F}, 1/m_{1,F})] \eqsp . 
    \end{equation}
    Hence, combining this result and \eqref{eq:polynomial_Phi_int}, we have for
    any $x \in \msk_1$, $i \in \{1, \dots, p\}$ and $t_i \in \rset$ such that
    $\Phi_i(t_i,x) \in \msk_1$
    \begin{align}
      &\normLigne{\rmD_t \Phi_i(t_i,x)} + \normLigne{\Phi_i(t_i,x)} \leq P_4(M_{1,F}, 1/m_{1,F}) \eqsp , \\
      &\normLigne{\rmD_x \Phi_i(t_i,x)} + \normLigne{\rmD_{t,x} \Phi_i(t_i,x)} \leq P_5(M_{1,F}, M_{2,F}, 1/m_{1,F}) \exp[P_5(M_{1,F}, M_{2,F}, 1/m_{1,F})] \eqsp , \\
      &\normLigne{\rmD_x^2 \Phi_i(t_i,x)} \leq P_6(M_{1,F}, M_{2,F}, M_{3,F}, 1/m_{1,F}) \exp[P_6(M_{1,F}, M_{2,F}, M_{3,F}, 1/m_{1,F})] \eqsp .  \label{eq:polynomial_Phi}
    \end{align}
   \item  In what follows, we fix $t \in \ballinfty{0}{\eta}$ and use
    \eqref{eq:polynomial_Phi} to provide uniform bounds for
    $\rmD_t \bar{\Phi}^{-1}_t$, $\rmD_t \rmD \bar{\Phi}^{-1}_t$ and
    $\rmD \bar{\Phi}^{-1}_t$ on $F^{-1}(0)$. We introduce
    $\{\bar{\Phi}_{i,t}^{-1}\}_{i=1}^p$ such that for any $x \in F^{-1}(0)$ and
    $i \in \{1, \dots, p\}$
    \begin{equation}
      \bar{\Phi}_{t, i}^{-1}(x) = \Phi_{i}(-t_{i}, \bar{\Phi}_{t, i+1}^{-1}(x)) \eqsp , \qquad \bar{\Phi}_{t, p+1}^{-1}(x) = x \eqsp . 
    \end{equation}
    Note that $\bar{\Phi}_t^{-1} = \bar{\Phi}_{t,1}^{-1}$. Let $j \in \{1, \dots, p\}$.
    For any $i \in \{1, \dots, p\}$ we distinguish three cases:
    \begin{enumerate}[label=(\roman*)]
    \item $i>j$, then for any $x \in F^{-1}(0)$, $\rmD_{t_j} \bar{\Phi}_{t, i}^{-1}(x) = 0$.
    \item $i=j$, then for any $x \in F^{-1}(0)$, $\rmD_{t_j} \bar{\Phi}_{t, i}^{-1}(x) = - \rmD_t \Phi_j(-t_j, \bar{\Phi}_{t, j+1}^{-1}(x))$.
    \item $i<j$, then for any $x \in F^{-1}(0)$,
      $\rmD_{t_j} \bar{\Phi}_{t, i}^{-1}(x) = \rmD_x \Phi_i(-t_i,
      \bar{\Phi}_{t, i+1}^{-1}(x)) \rmD_{t_j} \bar{\Phi}_{t, i+1}^{-1}(x)$.
    \end{enumerate}
    Combining these results, \eqref{eq:polynomial_Phi} and the fact that
    $\bar{\Phi}_t^{-1} = \bar{\Phi}_{t,1}^{-1}$ we get that there exists
    $\bar{P}_7 \in \poly{3}{\rset_+}$ such that for any $x \in F^{-1}(0)$, and
    $j \in \{1, \dots, p\}$ we have
    \begin{equation}
      \normLigne{\rmD_{t_j} \bar{\Phi}_t^{-1}(x)} \leq  \bar{P}_7(M_{1,F}, M_{2,F}, 1/m_{1,F}) \exp[\bar{P}_7(M_{1,F}, M_{2,F}, 1/m_{1,F})]  \eqsp .
    \end{equation}
    Hence, there exists $P_7 \in \poly{3}{\rset_+}$ such that for any $x \in F^{-1}(0)$ we have
    \begin{equation}
      \label{eq:bound_d_time_bar_phi}
      \normLigne{\rmD_{t} \bar{\Phi}_t^{-1}(x)} \leq P_7(M_{1,F}, M_{2,F}, 1/m_{1,F}) \exp[P_7(M_{1,F}, M_{2,F}, 1/m_{1,F})] \eqsp . 
    \end{equation}
    Next, note that for any $x \in F^{-1}(0)$, $t \in \ballinfty{0}{\eta}$ and $i \in \{1, \dots, p\}$ we have 
    \begin{equation}
       \rmD \bar{\Phi}_{t, i}^{-1}(x) = \rmD_x \Phi_{i}(-t_{i}, \bar{\Phi}_{t, i+1}^{-1}(x)) \rmD \bar{\Phi}_{t, i+1}^{-1}(x) \eqsp , \qquad \rmD \bar{\Phi}_{t, p+1}^{-1}(x) = \Id \eqsp . 
    \end{equation}
    Combining this result and \eqref{eq:polynomial_Phi}, there exists
    $P_8 \in \poly{3}{\rset_+}$ such that for any $x \in F^{-1}(0)$ and $i \in \{1, \dots, p\}$
    \begin{equation}
      \label{eq:bound_d_space_bar_phi_i}
      \normLigne{\rmD \bar{\Phi}_{t, i}^{-1}(x)} \leq P_8(M_{1,F}, M_{2,F}, 1/m_{1,F}) \exp[P_8(M_{1,F}, M_{2,F}, 1/m_{1,F})] \eqsp . 
    \end{equation}
    In particular, since $\bar{\Phi}_{t} = \bar{\Phi}_{t, 1}$ we have for any
    $x \in F^{-1}(0)$
    \begin{equation}
      \label{eq:bound_d_space_bar_phi}
      \normLigne{\rmD \bar{\Phi}_{t}^{-1}(x)} \leq P_8(M_{1,F}, M_{2,F}, 1/m_{1,F}) \exp[P_8(M_{1,F}, M_{2,F}, 1/m_{1,F})] \eqsp . 
    \end{equation}    
  Finally, we give a uniform upper-bound on $\rmD_t \rmD
  \bar{\Phi}_t^{-1}$. Let $j \in \{1, \dots, p\}$. For any
  $i \in \{1, \dots, p\}$ we distinguish three cases:
  \begin{enumerate}[label=(\roman*)]
  \item $i>j$, then for any $x \in F^{-1}(0)$, $\rmD_{t_j}\rmD \bar{\Phi}_{t, i}^{-1}(x) = 0$.
  \item $i=j$, then for any $x \in F^{-1}(0)$, $\rmD_{t_j}\rmD \bar{\Phi}_{t, i}^{-1}(x) = -\rmD_{t,x}\Phi_j(-t_j, \bar{\Phi}_{t, j+1}^{-1}(x)) \rmD \bar{\Phi}_{t, j+1}^{-1}(x)$.
  \item $i<j$, then for any $x \in F^{-1}(0)$, we have
    \begin{align}
            \rmD_{t_j}\rmD \bar{\Phi}_{t, i}^{-1}(x) &= \rmD_x \Phi_{i}(-t_{i}, \bar{\Phi}_{t, i+1}^{-1}(x)) \rmD_{t_j} \rmD \bar{\Phi}_{t, i+1}^{-1}(x) \\ & \qquad + \rmD_x^2 \Phi_{i}(-t_{i}, \bar{\Phi}_{t, i+1}^{-1}(x)) (\rmD \bar{\Phi}_{t, i+1}^{-1}(x), \rmD_{t_j} \rmD \bar{\Phi}_{t, i+1}^{-1}(x)) \eqsp .
    \end{align}
      \end{enumerate}
      Using \eqref{eq:polynomial_Phi} and \eqref{eq:bound_d_space_bar_phi_i},
      there exists $P_9 \in \poly{4}{\rset_+}$ such that for any
      $x \in F^{-1}(0)$ we have
    \begin{equation}
      \label{eq:bound_d_spatial_time_bar_phi}
      \normLigne{\rmD_t \rmD \bar{\Phi}_t^{-1}(x)} \leq P_9(M_{1,F}, M_{2,F}, M_{3,F}, 1/m_{1,F}) \exp[P_9(M_{1,F}, M_{2,F}, M_{3,F}, 1/m_{1,F})]  \eqsp . 
    \end{equation}
    Therefore, summarizing \eqref{eq:bound_d_time_bar_phi},
    \eqref{eq:bound_d_space_bar_phi} and
    \eqref{eq:bound_d_spatial_time_bar_phi}, there exist
    $P_7, P_8 \in \poly{\rset_+}{3}$ and $P_9 \in \poly{4}{\rset_+}$ such that
    for any $x \in F^{-1}(0)$ and $t \in \ballinfty{0}{\eta}$ we have
    \begin{align}
      \label{eq:summary_phi}
      & \normLigne{\rmD_{t} \bar{\Phi}_t^{-1}(x)} \leq P_7(M_{1,F}, M_{2,F}, 1/m_{1,F}) \exp[P_7(M_{1,F}, M_{2,F}, 1/m_{1,F})] \eqsp , \\
      & \normLigne{\rmD \bar{\Phi}_{t}^{-1}(x)} \leq P_8(M_{1,F}, M_{2,F}, 1/m_{1,F}) \exp[P_8(M_{1,F}, M_{2,F}, 1/m_{1,F})] \eqsp , \\
      & \normLigne{\rmD_t \rmD \bar{\Phi}_t^{-1}(x)} \leq P_9(M_{1,F}, M_{2,F}, M_{3,F}, 1/m_{1,F}) \exp[P_9(M_{1,F}, M_{2,F}, M_{3,F}, 1/m_{1,F})] \eqsp .
    \end{align}
  \item Next, we use \eqref{eq:summary_phi} to conclude the proof by providing
    uniform upper-bounds on the differential $\chi$ on $F^{-1}(0)$ w.r.t. $t$,
    where $\chi$ given in \eqref{eq:Psi_def}. For any $x \in F^{-1}(0)$ and
    $t \in \ballinfty{0}{\eta}$ we have
    \begin{align}
      \label{eq:chi_der}
      \rmD_t \chi(t,x) &= \rmD_t \varphi_\Psi(\bar{\Phi}_t^{-1}(x)) \jacinv{\bar{\Phi}_t^{-1}(t)} \det(G(\bar{\Phi}_t^{-1}(x)))\\
                           &\qquad +\varphi_\Psi(\bar{\Phi}_t^{-1}(x)) \rmD_t \jacinv{\bar{\Phi}_t^{-1}(t)} \det(G(\bar{\Phi}_t^{-1}(x))) \\
                           &\qquad +\varphi_\Psi(\bar{\Phi}_t^{-1}(x)) \jacinv{\bar{\Phi}_t^{-1}(t)} \rmD_t \det(G(\bar{\Phi}_t^{-1}(x))) \eqsp . 
    \end{align}
    First, for any $x \in F^{-1}(0)$ and
    $t \in \ballinfty{0}{\eta}$ we have
    \begin{equation}
      \label{eq:last_bound_1}
      \absLigne{\varphi_{\Psi}(\bar{\Phi}_t^{-1}(x))} \leq M_{0,\varphi_\Psi} \eqsp , \qquad \jacinv{\bar{\Phi}_t^{-1}(x)} \leq 1/m_{1,F} \eqsp .
    \end{equation}
    In addition, using \eqref{eq:summary_phi}, there exists
    $P_{10} \in \poly{3}{\rset_+}$ such that for any $x \in \msk_1$ and
    $t \in \ballinfty{0}{\eta}$
    \begin{equation}
      \label{eq:last_bound_2}
      \absLigne{\det(\rmD \bar{\Phi}_t^{-1}(x))} \leq P_{10}(M_{1,F}, M_{2,F}, 1/m_{1,F}) \exp[P_{10}(M_{1,F}, M_{2,F}, 1/m_{1,F})] \eqsp . 
    \end{equation}
    For any $x \in F^{-1}(0)$ and $t \in \ballinfty{0}{\eta}$ we have
    \begin{equation}
      \rmD_t \varphi_\Psi(\bar{\Phi}_t^{-1}(x)) = \rmD \varphi_\Psi(\bar{\Phi}_t^{-1}(x)) \rmD_t \bar{\Phi}_t^{-1}(x)\eqsp . 
    \end{equation}
    Combining this result and \eqref{eq:summary_phi}, there exists
    $P_{11} \in \poly{3}{\rset_+}$ such that for any $x \in F^{-1}(0)$ and
    $t \in \ballinfty{0}{\eta}$ we have
    \begin{equation}
            \label{eq:last_bound_der_1}
      \normLigne{\rmD_t \varphi(\bar{\Phi}_t^{-1}(x))} \leq M_{1, \varphi_\Psi} P_{11}(, M_{1,F}, M_{2,F}, 1/m_{1,F})\exp[P_{11}(, M_{1,F}, M_{2,F}, 1/m_{1,F})] \eqsp . 
    \end{equation}
    For any $x \in F^{-1}(0)$, $t \in \ballinfty{0}{\eta}$  we have
    \begin{equation}
      \label{eq:der_jac}
      \rmD_t \jacinv{\bar{\Phi}_t^{-1}(x)} = \rmD_t \det(G(\bar{\Phi}_t^{-1}(x)))^{1/2} = \jacinvpower{\bar{\Phi}_t^{-1}(x)}{-1} \rmD_t \det(G(\bar{\Phi}_t^{-1}(x))^{-1}) \eqsp . 
    \end{equation}
    In addition, for any $x \in F^{-1}(0)$, $t \in \ballinfty{0}{\eta}$ and
    $h \in \rset^p$ we have
    \begin{equation}
      \rmD_t \det(G(\bar{\Phi}_t^{-1}(x)))(h) =   \trace(\Adj(G(\bar{\Phi}_t^{-1}(x)))\rmD G(\bar{\Phi}_t^{-1}(x))(\rmD_t\bar{\Phi}_t^{-1}(x)(h))) \eqsp . 
    \end{equation}
    Using this result, \eqref{eq:der_jac}, \eqref{eq:derive_g} and
    \eqref{eq:summary_phi}, there exists
    $P_{12} \in \poly{3}{\rset_+}$ such that for any $x \in F^{-1}(0)$,
    $t \in \ballinfty{0}{\eta}$ we have
    \begin{equation}
            \label{eq:last_bound_der_2}
      \normLigne{\rmD_t \jacinv{\bar{\Phi}_t^{-1}(x)}} \leq P_{12}(M_{1,F}, M_{2,F}, 1/m_{1,F})\exp[P_{12}(M_{1,F}, M_{2,F}, 1/m_{1,F})] \eqsp . 
    \end{equation}   
    Finally, for any $x \in F^{-1}(0)$, $t \in \ballinfty{0}{\eta}$ and $h \in \rset^p$ we have
    \begin{equation}
      \rmD_t \det(\rmD \bar{\Phi}_t^{-1}(x)) = \trace(\Adj(\rmD \bar{\Phi}_t^{-1}(x)) \rmD_t \rmD \bar{\Phi}_t^{-1}(x)(h)) \eqsp . 
    \end{equation}
    Hence, using \eqref{eq:summary_phi}, there exists
    $P_{13} \in \poly{4}{\rset_+}$ such that for any $x \in F^{-1}(0)$ and
    $t \in \ballinfty{0}{\eta}$ we have
    \begin{align}
      \label{eq:last_bound_der_3}
      \normLigne{\rmD_t \det(\rmD \bar{\Phi}_t^{-1}(x))} &\leq P_{13}(M_{1,F}, M_{2,F}, M_{3,F}, 1/m_{1,F})\\
      & \qquad \times \exp[P_{13}(M_{1,F}, M_{2,F}, M_{3,F}, 1/m_{1,F})] \eqsp . 
    \end{align}
    \end{enumerate}
    We conclude the proof upon combining \eqref{eq:chi_der},
    \eqref{eq:last_bound_1}, \eqref{eq:last_bound_2},
    \eqref{eq:last_bound_der_1}, \eqref{eq:last_bound_der_2} and
    \eqref{eq:last_bound_der_3}.
  \end{proof}

\subsection{Regularity results}
\label{sec:regularity-results}

In the following section we prove a smoothing lemma which is key to extend
\Cref{prop:final_d_geq_p} to the case where $\Psi$ and $\varphi$ are no longer
in $\rmc^1(\bar{\msu}, \rset)$ but are Lipschitz continuous.

\begin{lemma}
  \label{lemma:smoothing_lemma}
  Let $\msu$ be open bounded, $r >0$,
  $\msv = \msu +\ball{0}{r}$ and $\varphi \in \rmc(\rset^d, \rset)$ which
  satisfies \eqref{eq:cond_varphi} and is Lipschitz on $\msv$, \ie \ there
  exists $M_{1, \varphi} \geq 0$ such that for any $x, y \in \msv$ we have
  \begin{equation}
    \abs{\varphi(x) - \varphi(y)} \leq M_{1, \varphi} \norm{x-y} \eqsp .
  \end{equation}
  In addition, let
  $M_{0, \varphi} = \sup \ensembleLigne{\absLigne{\varphi(x)}}{x \in \msv}$.
  Then there exist $\bdelta > 0$ and
  $\ensembleLigne{\varphi^\delta}{\delta \in \oointLigne{0, \bdelta}}$ such
  that the following hold:
  \begin{enumerate}[wide, labelwidth=!, labelindent=0pt, label=(\alph*)]
  \item \label{item:primo} For any $x \in \msu$,
    $\lim_{\delta \to 0} \varphi^\delta(x) = \varphi(x)$.
  \item  \label{item:duo} For any $\delta \in \oointLigne{0, \bdelta}$,
    $\varphi^\delta \in \rmc^1(\rset^d, \rset)$ and there exists
    $\Lip_\delta \geq 0$ such that for any $x, y \in \msu$,
  \begin{equation}
    \abs{\varphi^\delta(x) - \varphi^\delta(y)} \leq \Lip_\delta \norm{x-y} \eqsp .
  \end{equation}
  Let
  $\Lip_0 = \sup \ensembleLigne{\Lip_\delta}{\delta \in \oointLigne{0, \bdelta}}
  < +\infty$ and we have
  $\Lip_0 \leq C_1 (1 + M_{0,\varphi} + M_{1,\varphi})(1 + C_\varphi)$, with
  $C_1 \geq 0$ that does not depend on $\varphi$.
\item \label{item:tertio} For any $\delta \in \oointLigne{0, \bdelta}$ there
  exists $M_{\delta} \geq 0$ such that for any $x \in \msu$,
  $\absLigne{\varphi^\delta(x)} \leq M_\delta$ and
  $M_0 = \sup \ensembleLigne{M_\delta}{\delta \in \oointLigne{0, \bdelta}} <
  +\infty$. In addition, $M_0 \leq C_2 (1 + M_{0,\varphi})(1 + C_\varphi)$ with
  $C_2 \geq 0$ that does not depend on $\varphi$.
\item \label{item:quattro} For any $\delta \in \oointLigne{0, \bdelta}$ there
  exists $D_\delta \geq 0$ such that for any $x \in \rset^d$  we have
  \begin{equation}
    \absLigne{\varphi^\delta(x)} \leq D_\delta \exp[D_\delta \norm{x}^{\upalpha k}] \eqsp ,
  \end{equation}
  and $D_\delta \leq C_3 (1 + C_\varphi) \exp[C_3C_\varphi]$ with $C_3$ that does not depend on $\varphi$.
\item \label{item:cinco} Assume \rref{assum:F}, that $F^{-1}(0) \subset \msu$
  and that there exists $m_{0, \varphi} > 0$ such that for any
  $x \in F^{-1}(0)$, $\varphi(x) \geq m_{0, \varphi}$. Then there exists $\bdelta^\star > 0$ such that for any
  $\delta \in \oointLigne{0, \bdelta^\star}$ and $x \in F^{-1}(0)$,
  $\varphi^\delta(x) \geq m_{0, \varphi}/2$. In addition if $\varphi \geq 0$
  then for any $\delta > 0$, $\varphi^\delta \geq 0$.
\end{enumerate}

In addition, $\bdelta = f_1(C_\varphi)$ and
$\bdelta^\star = f_2(M_{0, \varphi}, M_{1, \varphi}, 1/m_{0,\varphi},
C_\varphi)$ with $f_1 \in \rmc(\rset_+, \rset_+)$ and
$f_2 \in \rmc(\rset_+^4, \rset_+)$ and are non-decreasing w.r.t. to each of their
variables. Finally, there exists $C_4 \geq0$ such that
    \begin{equation}
      \limsup_{\delta \to 0} M_{\delta}  \leq C_4 M_{0,\varphi} \eqsp , \quad \limsup_{\delta \to 0} \Lip_{\delta} \leq C_4 M_{1,\varphi} \eqsp , \quad \limsup_{\delta \to 0} D_\delta \leq C_4  C_\varphi \eqsp ,
    \end{equation}
with $C_4$ that does not depend on $\varphi$.
\end{lemma}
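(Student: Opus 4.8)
The plan is to obtain $\varphi^\delta$ by mollification. I fix once and for all a mollifier $\rho \in \rmc^\infty(\rset^d, \rset_+)$ with $\int_{\rset^d}\rho = 1$ and $\supp(\rho) \subset \cball{0}{1}$, set $\rho_\delta(x) = \delta^{-d}\rho(x/\delta)$, and define $\varphi^\delta = \varphi \ast \rho_\delta$, that is $\varphi^\delta(x) = \int_{\cball{0}{1}}\varphi(x - \delta y)\rho(y)\rmd y$. Since $\varphi$ is continuous and $\rho$ compactly supported, each such integral is finite and $\varphi^\delta \in \rmc^\infty(\rset^d, \rset) \subset \rmc^1(\rset^d, \rset)$ (differentiation under the integral sign, the integrands being dominated locally uniformly in $x$). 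I would take $\bdelta = \min(r, 1)$; being constant it is trivially of the form $f_1(C_\varphi)$ with $f_1$ continuous. The key mechanism is that for $\delta \in \oointLigne{0, \bdelta}$ and $x \in \msu$ the convolution only samples $\varphi$ on $\msu + \ball{0}{\delta} \subset \msv$, where $\varphi$ is $M_{1,\varphi}$-Lipschitz with $\sup \absLigne{\varphi} = M_{0,\varphi}$. Property \ref{item:primo} then follows from continuity of $\varphi$ and dominated convergence.

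For \ref{item:duo} and \ref{item:tertio}, for $x, x' \in \msu$ and $\delta < r$ every point $x - \delta y$, $x' - \delta y$ with $\norm{y} \leq 1$ lies in $\msv$, so $\absLigne{\varphi^\delta(x)} \leq \int_{\cball{0}{1}}\absLigne{\varphi(x - \delta y)}\rho(y)\rmd y \leq M_{0, \varphi}$ and $\absLigne{\varphi^\delta(x) - \varphi^\delta(x')} \leq M_{1, \varphi}\norm{x - x'}$. Hence one may take $M_\delta = M_{0, \varphi}$ and $\Lip_\delta = M_{1, \varphi}$, giving $M_0 = M_{0, \varphi}$, $\Lip_0 = M_{1, \varphi}$ (so $C_1 = C_2 = 1$ work, even without the $(1 + C_\varphi)$ factors), and $\limsup_{\delta \to 0}M_\delta = M_{0,\varphi}$, $\limsup_{\delta \to 0}\Lip_\delta = M_{1,\varphi}$.

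The one step requiring genuine care is \ref{item:quattro}, the global growth bound, because here $x$ ranges over all of $\rset^d$ and the control on $\msv$ is irrelevant. I would estimate, for arbitrary $x \in \rset^d$,
\begin{equation}
  \absLigne{\varphi^\delta(x)} \leq \int_{\cball{0}{1}}\absLigne{\varphi(x - \delta y)}\rho(y)\rmd y \leq C_\varphi \sup_{\norm{y} \leq 1}\exp[C_\varphi(\norm{x} + \delta)^{\upalpha k}] = C_\varphi \exp[C_\varphi(\norm{x} + \delta)^{\upalpha k}] \eqsp ,
\end{equation}
and then split on the size of $\upalpha k$: if $\upalpha k \geq 1$ use convexity, $(\norm{x} + \delta)^{\upalpha k} \leq 2^{\upalpha k - 1}(\norm{x}^{\upalpha k} + \delta^{\upalpha k})$; if $0 < \upalpha k < 1$ use subadditivity of $t \mapsto t^{\upalpha k}$, $(\norm{x} + \delta)^{\upalpha k} \leq \norm{x}^{\upalpha k} + \delta^{\upalpha k}$. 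In both cases, since $\delta < 1$ we have $\delta^{\upalpha k} < 1$, so $\absLigne{\varphi^\delta(x)} \leq D_\delta\exp[D_\delta\norm{x}^{\upalpha k}]$ with $D_\delta = \max(2^{\upalpha k - 1}, 1)\,C_\varphi\exp[\max(2^{\upalpha k - 1}, 1)\,C_\varphi\,\delta^{\upalpha k}] \leq C_3(1 + C_\varphi)\exp[C_3 C_\varphi]$ and $\limsup_{\delta \to 0}D_\delta = \max(2^{\upalpha k - 1}, 1)\,C_\varphi =: C_4 C_\varphi$, where $C_3, C_4$ depend only on $\upalpha k$ (ambient data) and not on $\varphi$. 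This is exactly the place where a naive bound would leave an uncontrolled extra exponential factor in $x$, so the convexity/subadditivity trick together with the normalization $\delta < 1$ is what forces $D_\delta$ into the required shape; I expect this to be the main (and essentially only) obstacle.

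Finally, for \ref{item:cinco}: $\rho \geq 0$ and $\int\rho = 1$ give at once $\varphi \geq 0 \Rightarrow \varphi^\delta \geq 0$. Assume now $F^{-1}(0) \subset \msu$ and $\varphi \geq m_{0, \varphi}$ on $F^{-1}(0)$. By \rref{assum:F}, $\norm{F(x)} \to +\infty$, so $F^{-1}(0)$ is compact and contained in $\msu$; for $x \in F^{-1}(0)$, $\norm{y} \leq 1$ and $\delta < r$ we have $x - \delta y \in \msv$, hence $\absLigne{\varphi(x - \delta y) - \varphi(x)} \leq M_{1, \varphi}\delta$, so $\varphi^\delta(x) \geq m_{0, \varphi} - M_{1, \varphi}\delta \geq m_{0, \varphi}/2$ as soon as $\delta \leq \bdelta^\star := \min\!\big(\bdelta,\, m_{0, \varphi}/(2(M_{1, \varphi} + 1))\big)$, which exhibits $\bdelta^\star$ as an explicit continuous function $f_2(M_{0, \varphi}, M_{1, \varphi}, 1/m_{0, \varphi}, C_\varphi)$ of the indicated quantities. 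Collecting $\bdelta$, $\bdelta^\star$, the constants $C_1, \dots, C_4$ and the $\limsup$ estimates above then completes the argument, everything other than \ref{item:quattro} being a routine consequence of mollification confined to the safety neighbourhood $\msv$ of $\msu$.
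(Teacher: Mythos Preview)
Your argument is correct and in fact cleaner than the paper's. The paper mollifies with the \emph{non}-compactly supported kernel $k_\delta(x) \propto \exp[-\norm{x}^{2p}/\delta]$ (with $2p > \upalpha k$), which forces it to split every integral into a near part on $\cball{0}{r/2}$ and a tail part on $\cball{0}{r/2}^\complementary$; the tail is controlled via the growth bound \eqref{eq:cond_varphi}, producing the $(1+C_\varphi)$ factors in the estimates for $\Lip_0$ and $M_0$ and the $C_\varphi$-dependence of $\bdelta$. By taking $\rho$ compactly supported in $\cball{0}{1}$ and $\bdelta = \min(r,1)$, you confine the convolution for $x \in \msu$ entirely to $\msv$, so parts \ref{item:duo}, \ref{item:tertio}, \ref{item:cinco} become one-line Lipschitz/sup-norm estimates with the sharper constants $\Lip_\delta = M_{1,\varphi}$, $M_\delta = M_{0,\varphi}$; part \ref{item:quattro} is the only place the global growth condition enters, and your convexity/subadditivity split of $(\norm{x}+\delta)^{\upalpha k}$ handles it directly. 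Both approaches yield the same lemma, but yours avoids all tail bookkeeping; the paper's heavier kernel buys nothing here (it would matter only if one needed analytic dependence of $\varphi^\delta$ on $\delta$ or some specific moment structure, which the lemma does not require).
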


\begin{proof}
  Since $\msu$ is bounded there exists $R \geq 0$ such that
  $\msu \subset \cball{0}{R}$.  Let $p \in \nsets$ such that $2p > \upalpha k$
  and for any $\delta >0$ define $k_{\delta}: \ \rset^d \to \rset_+$ such that
  for any $x \in \rset^d$ we have
  \begin{equation}
    \textstyle{k_{\delta}(x) = \exp[-\norm{x}^{2p}/\delta] / \int_{\rset^d} \exp[-\norm{\tilde{x}}^{2p}/\delta] \rmd \tilde{x} \eqsp .} 
  \end{equation}
  Let $C_1 = \int_{\rset^d} \exp[-\norm{\tilde{x}}^{2p}] \rmd \tilde{x}$ and we
  have for any $x \in \rset^d$,
  $k_{\delta}(x) = C_1^{-1} \delta^{-d/2p} \exp[-\norm{x}^{2p}/\delta]$.  Note
  that since $2p > \upalpha k$ and using \eqref{eq:cond_varphi} we have that for
  any $x \in \rset^d$
  $\int_{\rset^d} \absLigne{\varphi(x-y)}k_{\delta}(y) \rmd y < +\infty$. For
  any $\delta >0$ we define $\varphi^\delta$ such that for any $x \in \rset^d$
    \begin{equation}
      \label{eq:def_smoothing}
      \textstyle{
        \varphi^\delta(x) = \int_{\rset^d} \varphi(x-y) k_{\delta}(y) \rmd y \eqsp .
        }
      \end{equation}
      We divide the rest of the proof into five parts.
      \begin{enumerate}[wide, labelwidth=!, labelindent=0pt, label=(\alph*)]
      \item We have that for any $x \in \rset^d$,
        $\lim_{\delta \to 0} \varphi^\delta(x) = \varphi(x)$, since
        $\varphi \in \rmc(\rset^d, \rset)$ and
        $\ensembleLigne{k_{\delta}}{\delta > 0}$ is a mollifier.  This concludes
        the proof of \Cref{lemma:smoothing_lemma}-\ref{item:primo}.
      \item Using that $2p > \upalpha k$ and \eqref{eq:cond_varphi} we have that
        for any $\upbeta \in \nset$,
        $\int_{\rset^d} \norm{y}^{\upbeta} \absLigne{\varphi(x-y)}k_{\delta}(y)
        < +\infty$. Hence, $\varphi^\delta \in \rmc^\infty(\rset^d, \rset)$. Let
        $x \in \msu$.  Using that
        $\int_{\rset^d} y \normLigne{y}^{2p-1} k_\delta(y) \rmd y = 0$, we have
        that for any $\delta > 0$
    \begin{align}
      &\textstyle{\normLigne{\nabla \varphi^\delta(x)} = (2p/\delta) \normLigne{\int_{\rset^d} \varphi(x-y) y \normLigne{y}^{2p-2} k_{\delta}(y) \rmd y }} \\
                                       & \quad\textstyle{= (2p/\delta) \int_{\rset^d} \absLigne{\varphi(x-y) - \varphi(x)} \normLigne{y}^{2p-1} k_{\delta}(y) \rmd y}  \\
      &\quad \textstyle{\leq (2pM_{1, \varphi}/\delta)\int_{\rset^d}\normLigne{y}^{2p} k_{\delta}(y) \rmd y + (2p/\delta)\int_{\cball{0}{r/2}^\complementary} \absLigne{\varphi(x-y) - \varphi(x)} \normLigne{y}^{2p-1} k_{\delta}(y) \rmd y} \\
      &\quad \textstyle{\leq (2pM_{1, \varphi}/C_1) \int_{\rset^d}\normLigne{y}^{2p} \exp[-\normLigne{y}^{2p}] \rmd y }\\
      & \qquad \qquad \textstyle{+ (2p/\delta)\int_{\cball{0}{r/2}^\complementary} \absLigne{\varphi(x-y) - \varphi(x)} \normLigne{y}^{2p-1} k_{\delta}(y) \rmd y  \eqsp .} \label{eq:bound_grad}
    \end{align}
    We now bound the second term.  Let
    $\bar{\upalpha}_k = \ceilLigne{\upalpha k}$. Using \eqref{eq:cond_varphi},
    we have for any and $x \in \msu$ and $y \in \rset^d$ 
    \begin{align}
      \absLigne{\varphi(x-y) - \varphi(x)} &\leq (1 + M_{0, \varphi})C_\varphi \exp[C_\varphi\norm{x-y}^{\upalpha k}] \\
                                           &\leq (1 + M_{0, \varphi})C_\varphi \exp[3^{\bar{\upalpha}_k-1}C_\varphi(1 + \normLigne{x}^{\bar{\upalpha}_k}+\normLigne{y}^{\bar{\upalpha}_k})]\\
      &\leq (1 + M_{0, \varphi})C_\varphi \exp[3^{\bar{\upalpha}_k-1}C_\varphi(1 + R^{\bar{\upalpha}_k})]  \exp[3^{\bar{\upalpha}_k-1}C_\varphi\normLigne{y}^{\bar{\upalpha}_k}] \eqsp . 
    \end{align}
    Therefore we get that for any $x \in \msu$ and $y \in \cball{0}{r/2}^\complementary$
    \begin{align}
      &\absLigne{\varphi(x-y) - \varphi(x)} \norm{y}^{2p-1}\\
      & \qquad \qquad \qquad \leq (1 + M_{0, \varphi})C_\varphi \exp[3^{\bar{\upalpha}_k-1}C_\varphi(1 + R^{\bar{\upalpha}_k})] \exp[(3^{\bar{\upalpha}_k}(r/2)^{\bar{\upalpha}_k- 2p}C_\varphi + 1)\norm{y}^{2p}] \eqsp . 
    \end{align}
    Therefore there exists $C_a \geq 0$ such that for any $x \in \msu$ and
    $y \in \rset^d$ we have
    \begin{equation}
      \absLigne{\varphi(x-y) - \varphi(x)} \norm{y}^{2p-1} \leq C_a (1 + M_{0, \varphi})C_\varphi \exp[C_a (1 + C_\varphi) \norm{y}^{2p}] \eqsp ,
    \end{equation}
    with $C_a \geq 0$ that does not depend on $\varphi$.  Using this result, we have
    for any $x \in \msu$ and $\delta \in \ooint{0, 1/(2C_a(1 + C_\varphi))}$
    \begin{align}
      &\textstyle{(2p/\delta)\int_{\cball{0}{r/2}^\complementary} \absLigne{\varphi(x-y) - \varphi(x)} \norm{y}^{2p-1} k_{\delta}(y) \rmd y} \\
      &\qquad \qquad \qquad \textstyle{\leq 2pC_a(1+M_{0,\varphi})C_\varphi\delta^{-d/2p-1}C_1^{-1}\int_{\cball{0}{r/2}^\complementary}  \exp[(C_a(1 + C_\varphi)-1/\delta)\norm{y}^{2p}]  \rmd y} \\
        &\qquad \qquad \qquad \textstyle{\leq 2^{1+d(1+2p)/2p}pC_a(1+M_{0,\varphi})C_\varphi/(\delta C_1)\int_{\cball{0}{r/(2^{1 + 2p} \delta)^{1/2p}}^\complementary}  \exp[-\norm{y}^{2p}]  \rmd y \eqsp . }
    \end{align}
We have for any $\delta \in \ooint{0, 1/(2C_a(1 + C_\varphi))}$
\begin{equation}
  \textstyle{
    \int_{\cball{0}{r/(2^{1+2p}\delta)^{1/2p}}^\complementary}  \exp[-\norm{y}^{2p}]  \rmd y / C_1 \leq \exp[-r^{2p}/(2^{2 + 2p}\delta)]\int_{\rset^d}  \exp[-\norm{y}^{2p}/2]  \rmd y / C_1 \eqsp .
    }
\end{equation}
Therefore, there exists $C_b \geq0$ (that does not depend on $\varphi$) such that for any
$x \in \msu$ and $\delta \in \oointLigne{0,1/(2C_a(1+C_\varphi))}$ we have
\begin{equation}
  \label{eq:bound_outside_gradient}
  \textstyle{
    (2p/\delta)\int_{\cball{0}{r/2}^\complementary} \absLigne{\varphi(x-y) - \varphi(x)} \norm{y}^{2p-1} k_{\delta}(y) \rmd y \leq C_b(1+M_{0, \varphi})C_\varphi \delta^{-1} \exp[-r^{2p}/(2^{2 + 2p}\delta)] \eqsp .
    }
\end{equation}
Combining this bound and \eqref{eq:bound_grad}, we get that for any
$\delta \in \oointLigne{0, 1/(2C_a(1+C_\varphi))}$ there exists
$\Lip_\delta \geq 0$ such that for any $x \in \msu$,
$\normLigne{\nabla \varphi^\delta(x)} \leq \Lip_\delta$. Let
$\Lip_0 = \sup \ensembleLigne{\Lip_\delta}{\delta \in \oointLigne{0, \bdelta}} <
+\infty$ and using, \eqref{eq:bound_grad}, \eqref{eq:bound_outside_gradient} and
that for any $t \geq 0$, $t \exp[-r^{2p}t/2^{2+2p}] \leq 2^{2+2p}/(\rme r^{2p})$ we have
\begin{equation}
  \textstyle{
    \Lip_0 \leq (2pM_{1, \varphi}/C_1) \int_{\rset^d} \normLigne{y}^{2p} \exp[-\normLigne{y}^{2p}] \rmd y + 2^{2+2p}C_b(1+M_{0,\varphi})C_\varphi/(\rme r^{2p}) \eqsp .
    }
\end{equation}
This concludes the proof of \Cref{lemma:smoothing_lemma}-\ref{item:duo}.
\item For any $x \in \msu$ and $\delta \in \oointLigne{1/(2C_a(1+C_\varphi))}$ we have
\begin{align}
  \label{eq:decompo_order_zero}
  \absLigne{\varphi(x)} &\textstyle{\leq \int_{\cball{0}{r/2}} \absLigne{\varphi(y-x)} k_{\delta}(y) \rmd y + \int_{\cball{0}{r/2}^\complementary} \absLigne{\varphi(y-x)} k_{\delta}(y) \rmd y} \\
  &\textstyle{\leq M_{0, \varphi} + \int_{\cball{0}{r/2}^\complementary} \absLigne{\varphi(y-x)} k_{\delta}(y) \rmd y \eqsp . }
\end{align}
Similarly to \eqref{eq:bound_outside_gradient}, there exists $c \geq 0$
(that does not depend on $\varphi$) such that for $x \in \msu$ and
$\delta \in \oointLigne{1/(2C_a(1+C_\varphi))}$
\begin{equation}
  \textstyle{
  \int_{\cball{0}{r/2}^\complementary} \absLigne{\varphi(y-x)} k_{\delta}(y) \rmd y \leq (C_\varphi/c) \exp[-c/\delta] \eqsp . }
\end{equation}
Combining this result and \eqref{eq:decompo_order_zero} for any
$\delta \in \ooint{0, 1/(2C_a(1+C_\varphi))}$ there exists $M_\delta > 0$ such
that for any $x \in \msu$, $\absLigne{\varphi^\delta(x)} \le M_\delta$. Let
$M_0 = \sup \ensembleLigne{M_\delta}{\delta \in \oointLigne{0, \bdelta}} <
+\infty$. We have that
\begin{equation}
  M_0 \leq M_{0, \varphi} + ( C_\varphi / c) \exp[-c(1+C_\varphi)] \eqsp ,
\end{equation}
which concludes the proof of \Cref{lemma:smoothing_lemma}-\ref{item:tertio}.
\item Using that for any $a, b \geq 0$ and $p \geq 0$,
$(a+b)^{p} \leq 2^{\min(p-1, 0)}(a^p + b^p)$ we have that for any
$x \in \rset^d$ and $\delta > 0$
\begin{equation}
  \textstyle{
  \absLigne{\varphi^\delta(x)} \leq C_\varphi \int_{\rset^d} \exp[C_\varphi \beta_{\upalpha,k}(\norm{x}^{\upalpha k} + \norm{y}^{\upalpha k})] k_\delta(y) \rmd  y\eqsp , }
\end{equation}
where $\beta_{\upalpha, k} = 2^{\min(\upalpha k-1, 0)}$. Hence, using this
result we have for any $\delta \in \ooint{0,1/(2\beta_{\upalpha,k}C_\varphi)}$
\begin{align}
  \absLigne{\varphi^\delta(x)} &\textstyle{\leq C_\varphi /(C_1 \delta^{d/2p})\exp[C_\varphi \beta_{\upalpha,k}\norm{x}^{\upalpha k}] \int_{\rset^d} \exp[(C_\varphi \beta_{\upalpha, k} - 1/\delta) \norm{y}^{2p}] \rmd y} \\
  &\textstyle{\leq C_\varphi /(C_1 \delta^{d/2p})\exp[C_\varphi \beta_{\upalpha,k}\norm{x}^{\upalpha k}]  \int_{\rset^d} \exp[-\norm{y}^{2p}/(2\delta)] \rmd y \eqsp} \\
  &\textstyle{\leq 2^{d/2p} C_\varphi  \exp[C_\varphi \beta_{\upalpha,k}\norm{x}^{\upalpha k}]  \eqsp .}
\end{align}
Therefore, for any $\delta \in \ooint{0,1/(2 \beta_{\upalpha,k} C_\varphi)}$
there exists $D_\delta \geq 0$ such that for any $x \in \rset^d$,
$\varphi(x) \leq D_\delta \exp[D_\delta \norm{x}^{\upalpha k}]$. In addition,
there exists $C_d \geq 0$ (that does not depend on $\varphi$) such that
$D_\delta \leq C_d (1 + C_\varphi) \exp[C_d C_\varphi]$, which concludes the
proof of \Cref{lemma:smoothing_lemma}-\ref{item:quattro}.
\item If $\varphi \geq 0$ then for any $\delta > 0$, $\varphi^\delta \geq 0$
  using \eqref{eq:def_smoothing}. For any $x \in \msu$ we have
  \begin{equation}
    \label{eq:allewz}
    \textstyle{
      \varphi^\delta(x) = \varphi(x) + \int_{\rset^d} (\varphi(y-x) - \varphi(x)) k_{\delta}(y) \rmd y \geq m_{0, \varphi} - \int_{\rset^d} \abs{\varphi(y-x) - \varphi(x)} k_{\delta}(y) \eqsp .
      }
  \end{equation}
For any $x \in \msu$ we have
\begin{equation}
  \textstyle{
    \int_{\rset^d} \abs{\varphi(y-x) - \varphi(x)} k_{\delta}(y) \leq M_{1, \varphi} \delta^{1/k} + \int_{\cball{0}{r/2}^\complementary} \absLigne{\varphi(y-x) - \varphi(x)}k_\delta(y) \rmd y \eqsp .
    }
\end{equation}
Similarly to \eqref{eq:bound_outside_gradient}, there exists $C_d \geq 0$ that does not depend on $\varphi$ and such that for any $x \in \msu$
\begin{equation}
  \textstyle{
    \int_{\cball{0}{r/2}^\complementary} \absLigne{\varphi(y-x) - \varphi(x)}k_\delta(y) \rmd y \leq C_d(1 + M_{0, \varphi}) C_\varphi \exp[-1/(C_d \delta)] \eqsp .
    }
\end{equation}
Let $\bdelta_1 = (m_{0,\varphi}/(4M_{1,\varphi}))^k$ and
$\bdelta_2 = C_d^{-1}(\log(4C_dC_\varphi(1+M_{0,\varphi})) - \min(0,
\log(m_{0,\varphi})))^{-1}$. Then for any
$\delta \in \oointLigne{0, \min(\bdelta_1, \bdelta_2)}$ we have that for any
$x \in \msu$
\begin{equation}
  \textstyle{
    \int_{\rset^d} \abs{\varphi(y-x) - \varphi(x)} k_{\delta}(y) \leq m_{0,\varphi} / 2 \eqsp ,
    }
\end{equation}
which concludes the proof of \Cref{lemma:smoothing_lemma}-\ref{item:cinco} upon
combining this result with \eqref{eq:allewz}.
\end{enumerate}
\end{proof}

\section{Technical results for \Cref{sec:application_to_sgld}}
\label{sec:techn-results-crefs}

In this section, we derive a quantitative parametric theory for Laplace-type
expansion in \Cref{sec:quant-morse-lemma} and \Cref{sec:param-lapl-type}. We
start by deriving technical bounds in \Cref{sec:quant-morse-lemma}. Our main
result, \Cref{prop:conclusion_param_bounds}, is presented in 
\Cref{sec:param-lapl-type} along with a quantitative Morse lemma. Finally, in
\Cref{sec:sgld_moments} we derive some moments bounds.

Let $u: \ \rset^d \times \msz \to \rset$ with $\msz$ a metric space. We
consider the following assumption.

\begin{assumptionH}
  \label{assum:u_param}
  $u \in \rmc(\rset^d \times \msz, \rset)$, for any $z \in \msz$,
  $u(\cdot, z) \in \rmc^2(\rset^d, \rset)$ and the following hold:
  \begin{enumerate}[wide, labelwidth=!, labelindent=0pt, label=(\alph*)]
  \item $\msz$ is compact. 
  \item There exists $A \geq 0$ such that for any $z \in \msz$, $\absLigne{u(0,z)} \leq A$.
  \item There exists $\Mtt \geq 0$ such that for any $x_1, x_2 \in \rset^d$ and
    $z \in \msz$,
    $\normLigne{\nabla^k_x u(x_1,z) - \nabla^k_x u(x_2,z)} \leq \Mtt \normLigne{x_1-x_2}$.
  \item There exists $\mtt, \upalpha > 0$ and $R \geq 0$ such that for any $x \in \rset^d$
    with $\normLigne{x} \geq R$, $u(x,z) \geq \mtt \normLigne{x}^\upalpha$.
  \item For any $z \in \msz$ the number of global minimizers is bounded.
  \end{enumerate}
\end{assumptionH}

{There exists $R_A \geq 0$ such that for any $x \in \rset^d$ with
  $\normLigne{x} \geq R_A$ and $z \in \msz$,
  $\absLigne{u(x,z)} \geq A \geq \absLigne{u(0,z)}$. Hence, for any
  $z \in \msz$,
  $\argmin \ensembleLigne{u(x,z)}{x \in \rset^d} \subset \msc =
  \cball{0}{R_A}$. }

We denote $u^\star : \ \msz \to \rset$ such that for any $z \in \msz$,
$u^\star(z) = \min \ensembleLigne{u(x,z)}{x \in \rset^d}$.
{ We have that $u^\star \in \rmc(\msz, \rset)$. Indeed we have that $u$ is
uniformly continuous on $\msz \times \msc$. Hence, for any $\vareps >0$ there
exists $\delta >0$ such that for any $(z_0,x_0), (z_0,x_0) \in \msz \times \msc$
with $d(z_0,z_1) + \normLigne{x_0-x_1} \leq \delta$,
$\absLigne{u(x_0,z_0) - u(x_1,z_1)} \leq \vareps$. Let $z_0 \in \msz$ and
$x^\star_0 \in \argmin \ensembleLigne{u(x,z_0)}{x \in \rset^d} \subset
\msc$. Let $\vareps > 0$, $z \in \msz$ such that $d(z,z_0) \leq \delta$ and
$x^\star \in \argmin \ensembleLigne{u(x,z_0)}{x \in \rset^d} \subset \msc$ then
\begin{equation}
 u^\star(z) \leq u(x^\star,z) \leq u(x^\star_0,z) \leq u(x^\star_0,z) \leq u(x^\star_0,z_0) \leq u^\star(z_0) \eqsp .
\end{equation}
Similarly we have $u^\star(z_0) \leq u^\star(z)+ \vareps$ which concludes the
proof.  }For any $\varphi: \ \rset^d \to \rset_+$ and $\vareps > 0$ we define
\begin{align}
  \label{eq:i_vareps}
  &\textstyle{\I_{\vareps}(\varphi, z) = C_{\vareps}^{-1} \int_{\rset^d} \varphi(x) \exp[-(u(x,z) - u^\star(z))/\vareps] \rmd x \eqsp , \quad \J_{\vareps}(z) = \I_{\vareps}(1, z) \eqsp ,}\\
   &\textstyle{C_{\vareps} = \int_{\rset^d} \exp[-\norm{x}^2/\vareps] \rmd x = (\uppi \vareps)^{d/2} \eqsp .}
\end{align}
In addition, we define
\begin{equation}
  \label{eq:i_0}
  \textstyle{
    \I_0(\varphi, z) =  \int_{\argmin u(\cdot, z)} \varphi(x) \hessinvx{x}{z} \rmd \calH^{0}(x) \eqsp , \quad \J_0(z) = \I_0(1, z) \eqsp ,
    }
\end{equation}
where $\hessinvx{x}{z} = +\infty$ if $\nabla^2_xu(x,z)$ is not
invertible.  If $\varphi: \ \rset^d \to \rset$ and
$I_{\vareps}(\abs{\varphi}) < +\infty$ for some $\vareps \geq 0$ we define
$\I_{\vareps}(\varphi)$ similarly as in \eqref{eq:i_vareps}, \eqref{eq:i_0}. For
any $\vareps \geq 0$ and $\varphi: \ \rset^d \to \rset^p$ such that it is
defined we let
$\updelta_z \Sker_{\vareps}[\varphi] = \I_\vareps(\varphi, z) /
\J_\vareps(z)$. We emphasize that these definitions are the parametric
counterparts to the ones introduced in \Cref{sec:proof_thm_macro}.

In what follows, we define $\sigma: \ \rset^d \times \msz \to \ccint{0,+\infty}$
such that for any $z \in \msz$ and $x \in \rset^d$ $\sigma(x,z)$ is the inverse
of the minimum eigenvalue of $\nabla^2_x u(x,z)$. In addition, for any
$\upbeta > 0$, we define $\sigma^\star_\upbeta: \ \msz \to \ccint{0,+\infty}$ such that
for any $z \in \msz$,
$\sigma^\star_\upbeta(z) = \int_{\argmin u(\cdot, z)} \sigma(x, z)^\upbeta \rmd \calH^0(x)$.

\subsection{Parametric lower and truncation bounds}
\label{sec:quant-morse-lemma}

\begin{lemma}
  \label{prop:lower_bound_j_eps_param}
  Assume \rref{assum:u_param}. Then, for any $\bvareps \geq 0$ there exists
  $A_0 > 0$ such that for any $z \in \msz$ and $\vareps \in \ccint{0, \bvareps}$
  we have $\J_{\vareps}(z) \geq A_0$.
\end{lemma}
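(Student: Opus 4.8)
The plan is to bound the fluctuation $u(x,z)-u^\star(z)$ from above by a quadratic in $x$, uniformly in $z$, and then compare $\J_\vareps(z)$ to a Gaussian integral. First I would reduce to the case $\Mtt>0$: replacing $\Mtt$ by $\max(\Mtt,1)$ keeps condition (c) valid, and in any event the coercivity in condition (d) is incompatible with $\Mtt=0$ when $d\geq 1$. Fix $z\in\msz$. By condition (b) the function $u(\cdot,z)$ attains its minimum at some $x^\star=x^\star(z)$, so $\nabla_x u(x^\star,z)=0$ and $u(x^\star,z)=u^\star(z)$. Condition (c) (with $k=1$) says that $x\mapsto\nabla_x u(x,z)$ is $\Mtt$-Lipschitz, so by the fundamental theorem of calculus and the Cauchy--Schwarz inequality,
\[
u(x,z)-u^\star(z)=\int_0^1 \langle \nabla_x u(x^\star+t(x-x^\star),z),\,x-x^\star\rangle\,\rmd t \leq (\Mtt/2)\,\norm{x-x^\star}^2,
\]
for every $x\in\rset^d$, with the constant independent of $z$.

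For $\vareps>0$ I would insert this estimate into the definition of $\J_\vareps(z)$ and use the change of variables $x=x^\star+\vareps^{1/2}y$ together with $C_\vareps=(\uppi\vareps)^{d/2}$, which gives
\[
\J_\vareps(z)\geq C_\vareps^{-1}\int_{\rset^d}\exp[-(\Mtt/2)\norm{x-x^\star}^2/\vareps]\,\rmd x = \uppi^{-d/2}(2\uppi/\Mtt)^{d/2}=(2/\Mtt)^{d/2},
\]
a bound independent of $z$ and of $\vareps$. For the case $\vareps=0$, the same Lipschitz estimate shows that the operator norm of the positive semidefinite matrix $\nabla^2_x u(x^\star,z)$ is at most $\Mtt$, hence $\det(\nabla^2_x u(x^\star,z))\leq \Mtt^d$ and $\det(\nabla^2_x u(x^\star,z))^{-1/2}\geq \Mtt^{-d/2}$ (with the value $+\infty$ when the Hessian is singular); since $\argmin u(\cdot,z)\neq\emptyset$, summing over its elements yields $\J_0(z)\geq \Mtt^{-d/2}$. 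Taking $A_0=\Mtt^{-d/2}>0$ then works for all $\vareps\in\ccint{0,\bvareps}$ — in fact for all $\vareps\geq 0$ — and does not even use the upper truncation $\bvareps$.

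I do not expect a genuine obstacle here; the only two points requiring a little care are the harmless reduction to $\Mtt>0$ and the degenerate case $\vareps=0$, where the Gaussian comparison is unavailable and one must argue directly from the boundedness of the Hessian at the minimizers, invoking the convention that $\det(\nabla^2_x u(x,z))^{-1/2}=+\infty$ on singular Hessians so that the inequality $\J_0(z)\geq A_0$ holds unconditionally.
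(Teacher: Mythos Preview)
Your proof is correct and follows essentially the same approach as the paper: a quadratic upper bound on $u(x,z)-u^\star(z)$ from the Lipschitz gradient gives $\J_\vareps(z)\geq(2/\Mtt)^{d/2}$ for $\vareps>0$, and the Hessian eigenvalue bound $\det(\nabla_x^2 u)\leq \Mtt^d$ handles $\vareps=0$. Your final constant $A_0=\Mtt^{-d/2}$ is slightly more conservative than the paper's $A_0=\min(N,2^{d/2})/\Mtt^{d/2}$, but the argument is the same; your observation that the upper truncation $\bvareps$ plays no role is also correct.
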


\begin{proof}
  Let $z \in \msz$ and $\vareps > 0$. Let $x^\star(z)$ be a global minimizer of
  $x \mapsto u(x,z)$ which exists since
  $\argmin \ensembleLigne{u(x,z)}{x \in \rset^d} \neq \emptyset$. Using the change of variable
  $x \mapsto x + x^\star(z)$ we have
  \begin{equation}
    \label{eq:change_j_eps}
    \textstyle{
    \J_{\vareps}(z) = C_{\vareps}^{-1} \int_{\rset^d} \exp[-(u(x,z) - u^\star(z))/\vareps] \rmd x = C_{\vareps}^{-1} \int_{\rset^d} \exp[-(u(x^\star(z)+x,z) - u^\star(z))/\vareps] \rmd x \eqsp . }
  \end{equation}
  For any $x \in \rset^d$ we have
  \begin{equation}
    \textstyle{
    u(x^\star(z)+x,z) - u^\star(z) = \int_0^1 \langle \nabla_x u(x^\star(z)+tx,z) - \nabla_x u(x^\star(z),z), x \rangle \rmd t \leq \Mtt \normLigne{x}^2/2 \eqsp . }
  \end{equation}
  Combining this result and \eqref{eq:change_j_eps} we get
  $\J_{\vareps}(z) \geq \int_{\rset^d} \exp[-(\Mtt/2)\normLigne{x}^2/\vareps]
  \rmd x / C_\vareps \geq (2 / \Mtt)^{d/2}$.  In addition, we have that for any
  $x \in \rset^d$,
  $\J_0(z) = \int_{\argmin u(\cdot, z)} \hessinvx{x}{z} \rmd \calH^{0}(x)$.  Note
  that if $\sigma_1^\star(z) = +\infty$, then there exists
  $\tilde{x}(z) \in \argmin \ensembleLigne{u(x,z)}{x \in \rset^d}$ such that
  $\sigma(\tilde{x}(z),z) = +\infty$ and therefore $\J_0(z) = +\infty$. We have
  that for any $x \in \rset^d$, $\det(\nabla^2_x u(x,z)) \leq \Mtt^d$ and
  therefore, $\J_0(z) \geq N \Mtt^{-d/2}$. We conclude upon letting
  $A_0 = \min(N, 2^{d/2})/\Mtt^{d/2}$.
\end{proof}

\begin{lemma}
  \label{prop:upper_bound_i_out_param}
  Assume \rref{assum:u_param}. Let $\varphi: \ \rset^d \to \rset$ and
  $C_\varphi \geq 0$ such that for any $x \in \rset^d$
  \begin{equation}
    \label{eq:cond_varphi_prop_param}
    \abs{\varphi(x)} \leq C_\varphi \exp[C_\varphi\norm{x}^{ \upalpha }] \eqsp . 
  \end{equation}
  Let $\bvareps \in \oointLigne{0, \mtt/(1 + C_{\varphi})}$ and $z \in \msz$.
  Assume that there exists $\msv(z) \subset \rset^d$ open and bounded such that
  \begin{equation}
   \argmin \ensembleLigne{u(x,z)}{x \in \rset^d} \subset \msv(z)  \subset \argmin \ensembleLigne{u(x,z)}{x \in \rset^d} + \cball{0}{1} \eqsp . 
  \end{equation}
  Then, there exist $\beta_1 > 0$ and $A_1 \in \rmc(\rset_+, \rset_+)$ such that
  for any $\vareps \in \ooint{0, \bvareps}$
  \begin{equation}
    \I_{\vareps}^{\mathrm{out}}(\varphi, z) \leq A_1(C_{\varphi}) \vareps^{-d/2} \{\exp[- m(z) / \vareps]+ \exp[-\beta_1/\vareps]\}\eqsp ,
  \end{equation}
  with $m(z) = \inf \ensembleLigne{u(x,z)}{x \in \rset^d\backslash \msv(z)} - u^\star(z)$, 
  $\I_{\vareps}^{\mathrm{out}}(\varphi, z) = \I_{\vareps}(\varphi
  \1_{\msv(z)^\complementary}, z)$ and $A_1, \beta_1$ that do not depend on $\varphi$ and
  $z$, with $A_1$ non-decreasing.
\end{lemma}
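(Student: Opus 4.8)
The statement is a parametric analogue of \Cref{prop:upper_bound_i_out}, so the plan is to mimic that proof while tracking the dependency on $z$ through the quantity $m(z)$. First I would split the integral defining $\I_\vareps^{\mathrm{out}}(\varphi,z)$ into the contribution from a large ball and the contribution from the bounded annular region $\msv(z)^\complementary \cap \cball{0}{R'}$, where $R' \geq R$ is chosen large enough that $\msv(z) \subset \cball{0}{R'}$; such an $R'$ exists uniformly in $z$ because $\msv(z) \subset \argmin u(\cdot,z) + \cball{0}{1}$ and the minimizers $z \mapsto x_0^k(z)$ are continuous on the compact set $\msz$, hence uniformly bounded. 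This gives $\I_\vareps^{\mathrm{out}}(\varphi,z) = \I_\vareps^1(\varphi,z) + \I_\vareps^2(\varphi,z)$ with $\I_\vareps^1(\varphi,z) = \I_\vareps(\varphi \1_{\cball{0}{R'}^\complementary}, z)$ and $\I_\vareps^2(\varphi,z) = \I_\vareps(\varphi \1_{\msv(z)^\complementary \cap \cball{0}{R'}}, z)$.

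\textbf{Bounding $\I_\vareps^1$.} On $\cball{0}{R'}^\complementary$ we have $\normLigne{x} \geq R$, so \rref{assum:u_param}(d) gives $u(x,z) - u^\star(z) \geq \mtt \normLigne{x}^\upalpha - u^\star(z)$; since $u^\star \in \rmc(\msz,\rset)$ and $\msz$ is compact, $u^\star$ is bounded above by some constant, which I absorb into the estimate. Combined with the growth bound \eqref{eq:cond_varphi_prop_param} on $\varphi$ and with $C_\vareps = (\uppi\vareps)^{d/2}$, and choosing $\vareps < \mtt/(1+C_\varphi)$ so that the exponent $\mtt/\vareps - C_\varphi$ is positive, I would perform the change of variable $x \mapsto (\mtt/\vareps - C_\varphi)^{1/\upalpha} x$ exactly as in the proof of \Cref{prop:upper_bound_i_out}, step (a), and then split off a factor $\exp[-(R'u)^{\upalpha}/2]$ where $u = (\mtt/\vareps - C_\varphi)^{1/\upalpha}$. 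This yields $\I_\vareps^1(\varphi,z) \leq A_1^1(C_\varphi) \vareps^{-d/2} \exp[-\beta_1^1/\vareps]$ with $\beta_1^1 = (R')^\upalpha \mtt/2$ and $A_1^1$ non-decreasing and independent of $z$ (the only $z$-dependence was in $u^\star$, which is uniformly bounded).

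\textbf{Bounding $\I_\vareps^2$ and conclusion.} On the bounded region $\msv(z)^\complementary \cap \cball{0}{R'}$ we have, by definition, $u(x,z) - u^\star(z) \geq m(z)$, and $\abs{\varphi(x)} \leq C_\varphi \exp[C_\varphi (R')^\upalpha]$; bounding the Lebesgue measure of the region by that of $\cball{0}{R'}$ gives $\I_\vareps^2(\varphi,z) \leq A_1^2(C_\varphi) \vareps^{-d/2} \exp[-m(z)/\vareps]$ with $A_1^2$ non-decreasing and independent of $z$. Adding the two bounds and setting $\beta_1 = \beta_1^1$, $A_1 = A_1^1 + A_1^2 \in \rmc(\rset_+,\rset_+)$ non-decreasing gives the claimed inequality. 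The main thing to be careful about — the only place where this differs from the non-parametric case — is ensuring that all constants ($R'$, $\beta_1$, $A_1$) can be chosen uniformly in $z \in \msz$; this is where compactness of $\msz$ together with continuity of $z \mapsto x_0^k(z)$ and of $u^\star$ are used, and it is the step I would write out most carefully. The rest is a routine repetition of the computations in the proof of \Cref{prop:upper_bound_i_out}.
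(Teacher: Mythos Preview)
Your proposal is correct and matches the paper's proof essentially step for step: the paper performs the same splitting into $\I_\vareps^1$ over $\cball{0}{R'}^\complementary$ and $\I_\vareps^2$ over $\msv(z)^\complementary \cap \cball{0}{R'}$, obtains $R'$ uniformly in $z$ via compactness of $\msz$ and continuity of the $x_0^k$, bounds $u^\star$ uniformly by $\bar{u}^\star = \sup_{z\in\msz} u^\star(z)$, and carries out the same change of variable and tail estimates as in \Cref{prop:upper_bound_i_out}, arriving at $\beta_1 = (R')^{\upalpha}\mtt/2$ and $A_1 = A_1^1 + A_1^2$. Your identification of the uniformity-in-$z$ step as the only new ingredient is exactly right.
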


\begin{proof}
  {First, we have using the remark following \rref{assum:u_param} 
  \begin{equation}
    \msv(z) \subset \msc + \cball{0}{1} \eqsp . 
  \end{equation}
 Since $\msc + \cball{0}{1}$ is compact}, there exists $R' \geq 0$ (that does
  not depend on $z$) such that $R' \geq R$ (where $R$ is given in
  \Cref{assum:u_param}) and $\msv(z) \subset \cball{0}{R'}$. Note that for any
  $\vareps > 0$, we have
  \begin{align}
    &\I_{\vareps}^{\mathrm{out}}(\varphi, z) = \I_{\vareps}^{1}(\varphi, z) + \I_{\vareps}^{2}(\varphi, z) \eqsp , \\
    &\I_{\vareps}^{1}(\varphi, z) = \I_{\vareps}(\varphi \1_{\cball{0}{R'}^\complementary}, z) \eqsp , \quad  \I_{\vareps}^{2}(\varphi, z) =  \I_{\vareps}(\varphi
  \1_{\msv^\complementary \cap \cball{0}{R'}}, z) \eqsp . 
  \end{align}
  Let $\vareps \in \ooint{0, \bvareps}$. The rest of the proof is similar to the
  one of \Cref{prop:upper_bound_i_out} but is given for completeness. We divide
  the proof into two parts. First, we bound $\I_{\vareps}^{1}(\varphi, z)$ and
  then $\I_{\vareps}^{2}(\varphi, z)$.
  \begin{enumerate}[wide, labelwidth=!, labelindent=0pt, label=(\alph*)]
  \item Let $w = (\mtt/\vareps - C_{\varphi})^{1/\upalpha}$ (which makes
    sense, since $\vareps < \mtt/(C_{\varphi} + 1)$). Since $R' \geq R$, using \eqref{eq:cond_varphi_prop_param} and the fact that $u \geq 1$, we have
    \begin{align}
      \I_{\vareps}^{1}(\varphi,z) &\textstyle{= C_1^{-1} \vareps^{-d/2} \int_{\cball{0}{R'}^\complementary} \varphi(x) \exp[-(u(x,z)-u^\star(z))/\vareps]\rmd x  }\\
                                &\textstyle{\leq C_1^{-1} C_{\varphi} \vareps^{-d/2} \exp[\bar{u}^\star] \int_{\cball{0}{R'}^\complementary} \exp[- (\mtt/\vareps - C_{\varphi}) \norm{x}^{\upalpha}] \rmd x} \\
                                &\leq \textstyle{C_1^{-1} C_{\varphi}  \exp[\bar{u}^\star] \vareps^{-d/2}  \int_{\cball{0}{R' w}^\complementary} \exp[- \norm{x}^{ \upalpha}] \rmd x \eqsp ,} \label{int_I_eta_1_param_int}
    \end{align}
    where $\bar{u}^\star = \sup \ensembleLigne{u^\star(z)}{z \in \msz}$.  Let
    $C_{\upalpha} = \int_{\rset^d} \exp[- \norm{x}^{\upalpha}] \rmd x$. Using that
    $w = (\mtt / \vareps - C_\varphi)^{1/ \upalpha}$, we have
    \begin{align}
      \I_{\vareps}^{1}(\varphi,z) &\leq \textstyle{ C_1^{-1} C_{\varphi} \exp[\bar{u}^\star] \vareps^{-d/2}  \int_{\cball{0}{R' w}^\complementary} \exp[- \norm{x}^{ \upalpha}] \rmd x} \\
                                     &\leq \textstyle{C_1^{-1} C_{\varphi} \exp[\bar{u}^\star] \vareps^{-d/2} \int_{\rset^d}  \exp[-\norm{x}^{\upalpha}/2] \rmd x  \exp[-(R'u)^{\upalpha}/2]} \\
                                  &\leq  \textstyle{2^{d/ \upalpha} C_\upalpha  C_1^{-1} C_{\varphi} \exp[\bar{u}^\star] \exp[(R')^{\upalpha}C_\varphi/2] \vareps^{-d/2} \exp[-(R')^{\upalpha}\mtt/(2\vareps) ] }\\
      &\leq \textstyle{A_1^1 \vareps^{-d/2} \exp[-\beta_1^1/\vareps] \eqsp ,} \label{eq:I_eta_1_param}
    \end{align}
    with
    \begin{equation}
      A_1^1 =  2^{d/\upalpha} C_\upalpha  C_1^{-1} C_{\varphi} \exp[\bar{u}^\star] \exp[(R')^{\upalpha}C_\varphi/2]  \eqsp, \qquad \beta_1^1 = (R')^{\upalpha}\mtt/2 \eqsp . 
    \end{equation}
  \item Second, let $\msk(z) = \msv^\complementary(z) \cap \cball{0}{R'}$. Note
    that for any $x \in \msk(z)$, $u(x,z) - u^\star(z) \geq m(z)$.  Hence, we
    have
    \begin{equation}
      \I_{\vareps}^{2}(\varphi, z) \leq C_1^{-1} C_{\varphi} \vareps^{-d/2}  \exp[C_{\varphi}(R')^{\upalpha}] \exp[-m(z)/\vareps] \Leb(\msk(z)) \eqsp ,
    \end{equation}
    where we recall that $\lambda(\msk(z))$ is the Lebesgue measure of $\msk(z)$.
    Since $\msk(z) \subset \cball{0}{R'}$ we have
    \begin{align}
      \I_{\vareps}^{2}(\varphi, z) &\leq \uppi^{d/2} (R')^d \Gamma^{-1}(d/2+1) C_1^{-1} C_\varphi   \exp[(R')^{\upalpha}]\vareps^{-d/2} \exp[-m(z)/\vareps]  \\
      &\leq A_1^2 \vareps^{-d/2} \exp[-m(z) / \vareps ]\eqsp , \label{eq:I_eta_2_param}
    \end{align}
    where $\Gamma: \ \ooint{0,+\infty} \to \rset_+$ is given for any
    $s \in \ooint{0, +\infty}$ by
    $\Gamma(s) = \int_0^{+\infty} t^{s-1} \exp[-t] \rmd t$ and
    \begin{equation}
      A_1^2 = \uppi^{d/2} (R')^d \Gamma^{-1}(d/2+1) C_1^{-1} C_{\varphi}   \exp[C_{\varphi}(R')^{\upalpha}]  \eqsp . 
    \end{equation}
  \end{enumerate}
  We conclude the proof upon combining \eqref{eq:I_eta_1_param},
  \eqref{eq:I_eta_2_param} and letting $\beta_1 = \beta_1^1$ and
  $A_1 = A_1^1 + A_1^2$.
\end{proof}

\subsection{Quantitative Morse lemma and parametric Laplace-type results}
\label{sec:param-lapl-type}

We begin by recalling a quantitative version of the Morse lemma, see
\cite[Theorem 4.2]{le2014numerical}.

\begin{proposition}
  \label{prop:quantitative_morse_lemma}
  Let $\msu \subset \rset^d$ be open with $x_0 \in \msu$ and
  $f \in \rmc^k(\msu, \rset)$ with $k \in \nset$ and $k \geq 3$. Assume that
  $\nabla f(x_0) = 0$ and let $K \geq 0$ such that 
  $K \geq \max_{j \in \{1, \dots, k\}} \sup
  \ensembleLigne{\normLigne{\rmD^jf(x)}}{x \in \msu} < +\infty$. Let $\sigma$ be
  the minimal eigenvalue of $\nabla^2 f(x_0)$ and assume that $\sigma > 0$. Let
  $\sigma_0 = \min(\sigma, 1)$. Then, there exist $c_0 > 0$,
  $\delta = c_0 \sigma_0^4$ and
  $\Phi: \ \ball{0}{\delta} \to \Phi(\ball{0}{\delta})$ such that the following
  hold:
  \begin{enumerate}[wide, labelwidth=!, labelindent=0pt, label=(\alph*)]
  \item $\Phi\in \rmc^{k-1}(\ball{0}{\delta}, \Phi(\ball{0}{\delta}))$ is a diffeomorphism.
  \item For any $x \in \ball{0}{\delta}$, $f(\Phi(x)) = f(x_0) + \normLigne{x}^2$.
  \item There exist $c_1, \upbeta > 0$ such that $\max_{j \in \{1, \dots, k-1\}} \sup \ensembleLigne{\normLigne{\rmD^j \Phi(x)}}{x \in \ball{0}{\delta}} \leq c_1 \sigma_0^{-\upbeta}$.
  \end{enumerate}
  In addition, $c_0, c_1$ and $\upbeta$ depend only on $k$, $d$ and $K$.
\end{proposition}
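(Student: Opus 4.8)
The plan is to run the classical Morse lemma argument via Taylor's formula with integral remainder and a matrix square root, keeping every constant explicit in terms of $K$, $d$, $k$ and the minimal eigenvalue; the precise statement with the stated exponents is \cite[Theorem 4.2]{le2014numerical}, which one may alternatively invoke directly. First I would translate so that $x_0 = 0$ and $f(x_0) = 0$, which changes nothing. Since $\nabla f(0) = 0$, Taylor's formula with integral remainder gives, for $v$ in a ball around $0$ contained in $\msu$,
\begin{equation}
  f(v) = \langle A(v) v, v \rangle \eqsp , \qquad A(v) = \int_0^1 (1-t) \nabla^2 f(tv) \rmd t \eqsp ,
\end{equation}
where $A$ is symmetric-matrix valued, of class $\rmc^{k-2}$, with $A(0) = \tfrac12 \nabla^2 f(0)$. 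Differentiating under the integral sign, $\normLigne{\rmD^j A(v)} \leq K/((j+1)(j+2))$ for $0 \leq j \leq k-2$; in particular $\normLigne{A(v) - A(0)} \leq (K/6)\normLigne{v}$, while $A(0)$ has spectrum in $[\sigma/2, K/2]$. Hence there is an absolute constant $c>0$ so that on $\ball{0}{r_1}$ with $r_1 = c\,\sigma_0/K$ the matrix $A(v)$ is positive definite with spectrum in $[\sigma_0/4, K]$.

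Next I would set $\eta(v) = A(v)^{1/2} v$, using the positive definite square root, so that $\normLigne{\eta(v)}^2 = \langle A(v) v, v \rangle = f(v)$, $\eta(0) = 0$, and $\rmD\eta(0) = A(0)^{1/2}$ is invertible with $\normLigne{A(0)^{-1/2}} \leq (2/\sigma_0)^{1/2}$. The quantitative heart of the argument is to control $M \mapsto M^{1/2}$ and its derivatives up to order $k-2$ on the symmetric matrices with spectrum in $[\sigma_0/4, K]$. For this I would use the integral representation
\begin{equation}
  M^{1/2} = \pi^{-1} \int_0^{+\infty} M (s \Id + M)^{-1} s^{-1/2} \rmd s \eqsp ,
\end{equation}
together with the resolvent bound $\normLigne{(s\Id + M)^{-1}} \leq (s + \sigma_0/4)^{-1}$; differentiating under the integral (each derivative producing one extra resolvent factor and convergent $s$-integrals) yields $\normLigne{\rmD^j(M^{1/2})} \leq P_j(K, 1/\sigma_0)$ for polynomials $P_j \in \poly{2}{\rset_+}$ depending only on $j$ and $d$. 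Composing with the $\rmc^{k-2}$ bounds on $A$, this gives, on $\ball{0}{r_1}$, $\normLigne{\rmD^j \eta(v)} \leq P(K, 1/\sigma_0)$ for $1 \leq j \leq k-2$, with $P$ depending only on $d$ and $k$.

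Finally I would apply a quantitative inverse function theorem to $\eta$. The contraction-mapping proof applied to $T_w(v) = v - \rmD\eta(0)^{-1}(\eta(v) - w)$ uses only $\normLigne{\rmD\eta(0)^{-1}} \leq (2/\sigma_0)^{1/2}$ and the bound on $\rmD^2\eta$ (degree at most $2$ in $1/\sigma_0$), so it produces a radius $\delta$ equal to a constant depending on $K, d, k$ times a power of $\sigma_0$ whose exponent is \emph{independent} of $k$; shrinking the constant and using $\sigma_0 \leq 1$ one may take $\delta = c_0 \sigma_0^4$, on which $\eta$ is a $\rmc^{k-2}$ diffeomorphism onto its image. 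Setting $\Phi := \eta^{-1}$ (then translating back by $x_0$), the defining property of $\eta$ gives $f(\Phi(x)) = f(x_0) + \normLigne{x}^2$, and differentiating the identity $\rmD(\eta^{-1})(w) = [\rmD\eta(\eta^{-1}(w))]^{-1}$ inductively, via Faà di Bruno together with the spectral bounds, gives $\max_{1 \leq j \leq k-2} \sup_{\ball{0}{\delta}} \normLigne{\rmD^j \Phi} \leq c_1 \sigma_0^{-\upbeta}$ for explicit $c_1, \upbeta$ depending only on $K, d, k$ (here $\upbeta$ grows with $k$, since the order-$j$ bound on $\Phi$ pulls in $\rmD^{\leq j+1}\eta$, of degree $\sim j$ in $1/\sigma_0$).

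The main obstacle is not conceptual but is the bookkeeping in the last two paragraphs: producing genuinely explicit, polynomial-in-$\sigma_0^{-1}$ bounds for the higher derivatives of the matrix square root and of the inverse of $\eta$, and — a minor point — matching the exact regularity class claimed, since the square-root construction above naturally yields a $\rmc^{k-2}$ chart whereas the statement asserts $\rmc^{k-1}$; for the sharper derivative count and the precise exponent in $\delta = c_0\sigma_0^4$ we take \cite[Theorem 4.2]{le2014numerical} as a black box.
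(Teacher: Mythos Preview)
Your proposal is correct and in fact aligns with the paper: the paper does not give an independent proof of this proposition but presents it as a recall of \cite[Theorem 4.2]{le2014numerical}, adding only the remark that a close reading of that proof shows the constant $M(K,\sigma_0,k)$ there is of order $\sigma_0^{-\upbeta}$ for some explicit $\upbeta>0$. You cite the same source and, beyond that, sketch the underlying mechanism (Taylor remainder, quantitative square root via the resolvent integral, quantitative inverse function theorem, Faà di Bruno for the derivatives of the inverse), which is precisely the content one would unpack from that reference; the regularity-count discrepancy you flag ($\rmc^{k-2}$ versus $\rmc^{k-1}$) is exactly the kind of detail for which the paper, like you, defers to \cite{le2014numerical}.
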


Note that in \cite[Theorem 4.2]{le2014numerical}, the constant
$c_1 \sigma_0^{-\upbeta}$ is replaced by $M(K, \sigma_0, k)$ where
$M: \ \rset_+^3 \to \rset_+$. However, a close examination of the proof reveals
that the dependency of $M(K, \sigma_0, k)$ with respect to $\sigma_0^{-1}$ is of
order $\sigma_0^{-\upbeta}$ for some $\upbeta >0$ which can be made
explicit. Using \Cref{prop:quantitative_morse_lemma} we derive the following
parametric version of \Cref{prop:upper_bound_i_laplace}.

This proposition relies on the notion of \energygap \  associated with $u$
which we define as follows. Assume that
$\argmin \ensembleLigne{u(x,z)}{x \in \rset^d} \neq \emptyset$ for any
$z \in \msz$. For any $z \in \msz$ we introduce $\msa(z)$ such that
$\msa(z) = \emptyset$ if there are no other minimizers than the global
minimizers and $\msa(z) = \ensembleLigne{u(x,z)}{\text{$x$ is a local minimizer of $u(\cdot,z)$
    but not a global minimizer}}$ otherwise. Then we define
$c^\star: \ \msz \to \rset$ such that for any $z \in \msz$ we have
\begin{equation}
  \label{eq:locglob}
  c^\star(z) = \inf \msa(z) - \inf \ensembleLigne{u(x,z)}{x \in \rset^d} \eqsp ,
\end{equation}
with the convention that $\inf \emptyset = +\infty$. In words, the \energygap \
constant quantifies how close the value of the local minimizers are from the
global ones. We refer to \Cref{sec:import-local-glob} for a discussion on the
importance of \energygap \ when establishing parametric Laplace-type expansions.

\begin{proposition}
  \label{prop:upper_bound_i_laplace_param}
  Assume \rref{assum:u_param}. Let $\varphi \in \rmc(\rset^d, \rset)$ and
  $z \in \msz$, and assume that $\sigma_1^\star(z) < +\infty$.  There exists $\msv(z)$ open such that
  \begin{equation}
    \argmin \ensembleLigne{u(x,z)}{x \in \rset^d} \subset \msv(z) \subset \argmin \ensembleLigne{u(x,z)}{x \in \rset^d} + \cball{0}{1} \eqsp .
  \end{equation} We have that 
  $\lim_{\vareps \to 0} \absLigne{\I_\vareps^{\mathrm{in}}(\varphi, z) -
    \I_0(\varphi, z)} = 0$, with
  $\I_\vareps^{\mathrm{in}}(\varphi, z) = \I_{\vareps}(\varphi \1_{\msv(z)},
  z)$.  
  Assume that $\varphi \in \rmc^1(\rset^d, \rset)$. Then there exist
  $B_1 \geq 0$ and $\upbeta > 0$ such that for any $\vareps > 0$ we have
  \begin{equation}
    \label{eq:bound_I_vareps}
    \absLigne{\I_\vareps^{\mathrm{in}}(\varphi, z) - \I_0(\varphi, z)} \leq B_1 \sigma_\upbeta^\star(z) (1 + M_{0,\varphi}(z) + M_{1,\varphi}(z)) \vareps^{1/2} \eqsp , 
  \end{equation}
 with $B_1$ that does not depend on $\varphi$ and $z$, and for any $i \in \{0,1\}$, 
  $M_{i, \varphi}(z) = \sup \ensembleLigne{\normLigne{\nabla^i \varphi(x)}}{x \in \msv(z)}$.
  In addition, there exist $c_1, \upgamma > 0$ (that do not depend on $z$) such that
  $m(z) \geq \min(c_1/\sigma_\upgamma^\star(z), c^\star(z))$, with
  $m(z) = \inf \ensembleLigne{u(x,z)}{x \in \rset^d\backslash \msv(z)} -
  u^\star(z)$.
\end{proposition}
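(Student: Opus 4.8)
\textbf{Proof plan for \Cref{prop:upper_bound_i_laplace_param}.}
The plan is to mimic the proof of \Cref{prop:upper_bound_i_laplace}, replacing the qualitative Morse lemma \cite[Theorem 3.1.1]{nirenberg2001topics} by the quantitative version \Cref{prop:quantitative_morse_lemma} so that all constants can be tracked in terms of the minimal eigenvalue $\sigma(x,z)$ of $\nabla^2_x u(x,z)$ at the global minimizers. First I would fix $z \in \msz$ and, using \rref{assum:u_param}(b), enumerate the global minimizers of $u(\cdot,z)$ as a subset of $\{x_0^k(z)\}_{k=1}^N$. For each such global minimizer $x^\ell(z)$, since $\sigma_1^\star(z) < +\infty$ forces $\nabla^2_x u(x^\ell(z),z)$ to be invertible and positive definite, I would apply \Cref{prop:quantitative_morse_lemma} (with $k=3$, $K = \Mtt$ plus the relevant sup bounds coming from \rref{assum:u_param}(c), which are uniform in $z$ by compactness of $\msz$) to obtain a $\rmc^2$ diffeomorphism $\Phi_\ell$ defined on a ball $\ball{0}{\delta_\ell(z)}$ with $\delta_\ell(z) = c_0 \sigma_0(x^\ell(z),z)^4$, such that $u(\Phi_\ell(x),z) = u^\star(z) + \norm{x}^2$, $\Phi_\ell(0) = x^\ell(z)$, $\rmD\Phi_\ell(0) = (\nabla^2_x u(x^\ell(z),z))^{-1/2}$ (so that $\det \rmD \Phi_\ell(0) = \hessinvx{x^\ell(z)}{z}$), and with derivative bounds $\norm{\rmD^j \Phi_\ell} \leq c_1 \sigma_0^{-\upbeta}$ on $\ball{0}{\delta_\ell(z)}$. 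Shrinking the radii if necessary so that the images $\Phi_\ell(\ball{0}{\delta_\ell(z)})$ are pairwise disjoint and contained in $\argmin u(\cdot,z) + \cball{0}{1}$, I set $\msv(z) = \cup_\ell \Phi_\ell(\ball{0}{\delta_\ell(z)})$; this gives the claimed inclusions.

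Next I would reproduce the two-part estimate of \Cref{prop:upper_bound_i_laplace}. Define $\I_{0,\vareps}^{\mathrm{in}}(\varphi,z) = \sum_\ell \int_{\ball{0}{\delta_\ell(z)}/\vareps^{1/2}} \exp[-\norm{x}^2]\rmd x \, \varphi(x^\ell(z)) \hessinvx{x^\ell(z)}{z}/C_1$. On the one hand, the tail $\absLigne{\I_{0,\vareps}^{\mathrm{in}}(\varphi,z) - \I_0(\varphi,z)}$ is bounded by $\sum_\ell \hessinvx{x^\ell(z)}{z}\abs{\varphi(x^\ell(z))} \int_{\ball{0}{\delta_\ell(z)}^\complementary} \exp[-\norm{x}^2/\vareps]\rmd x/C_\vareps$, which using $\delta_\ell(z) \gtrsim \sigma_0(x^\ell(z),z)^4$ and the crude bound $\hessinvx{x^\ell(z)}{z} \leq \sigma(x^\ell(z),z)^{d/2}$ is of order $\sigma^\star_{\upbeta'}(z) \exp[-c/(\sigma^\star_{\upgamma'}(z)^{4}\vareps)]$ for suitable exponents, hence $o(\vareps^{1/2})$ after absorbing the stretched exponential into a power of $\vareps$ times a power of $\sigma^\star$. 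On the other hand, for the inner piece I change variables $x \mapsto \Phi_\ell(x)$ then $x \mapsto \vareps^{1/2}x$ to write $\I_\vareps^{\mathrm{in}}(\varphi,z) = \sum_\ell \int_{\ball{0}{\delta_\ell(z)}/\vareps^{1/2}} \varphi(\Phi_\ell(\vareps^{1/2}x)) \det(\rmD\Phi_\ell(\vareps^{1/2}x)) \exp[-\norm{x}^2]\rmd x/C_1$, and compare with $\I_{0,\vareps}^{\mathrm{in}}$. Writing $\chi_\ell(x) = \varphi(\Phi_\ell(x))\det(\rmD\Phi_\ell(x))$ and using that $\chi_\ell \in \rmc^1$ with $\norm{\rmD\chi_\ell(x)} \leq B_1'(1 + M_{0,\varphi}(z) + M_{1,\varphi}(z)) c_1' \sigma_0(x^\ell(z),z)^{-\upbeta'}$ on $\ball{0}{\delta_\ell(z)}$ (the derivative bound on $\Phi_\ell$ from \Cref{prop:quantitative_morse_lemma}(c) together with the chain and product rules, and the fact that $\det$ is smooth), the mean value theorem gives $\absLigne{\chi_\ell(\vareps^{1/2}x) - \chi_\ell(0)} \leq c \sigma_0^{-\upbeta'}(1 + M_{0,\varphi}(z) + M_{1,\varphi}(z)) \vareps^{1/2}\norm{x}$, so the difference is bounded by $c \sigma^\star_\upbeta(z)(1 + M_{0,\varphi}(z) + M_{1,\varphi}(z))\vareps^{1/2}\int_{\rset^d}\norm{x}\exp[-\norm{x}^2]\rmd x/C_1$. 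Combining the two pieces yields \eqref{eq:bound_I_vareps}, and $\lim_{\vareps\to0}\absLigne{\I_\vareps^{\mathrm{in}}(\varphi,z) - \I_0(\varphi,z)} = 0$ follows in the non-smooth case by the dominated convergence theorem exactly as in \Cref{prop:upper_bound_i_laplace}.

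Finally, for the lower bound on $m(z) = \inf\ensembleLigne{u(x,z)}{x \in \rset^d\setminus\msv(z)} - u^\star(z)$, I would argue that a point $x \notin \msv(z)$ is either far from every global minimizer in the Morse chart — in which case $u(x,z) - u^\star(z) \geq \delta_\ell(z)^2 \gtrsim \sigma_0(x^\ell(z),z)^8 \geq c_1/\sigma^\star_\upgamma(z)$ using the quadratic normal form on the boundary sphere $\norm{\Phi_\ell^{-1}(x)} = \delta_\ell(z)$ — or it lies in the basin of a non-global local minimizer or otherwise away from all global minimizers, in which case $u(x,z) - u^\star(z) \geq c^\star(z)$ by definition of the \energygap \ in \eqref{eq:locglob} (more precisely one has to check that every point outside $\msv(z)$ with value below $c^\star(z)$ lies in a region that flows, under $-\nabla u$, into a global minimizer and hence into $\msv(z)$; this uses \rref{assum:u_param}(d) to get compactness of sublevel sets). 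Taking the minimum gives $m(z) \geq \min(c_1/\sigma^\star_\upgamma(z), c^\star(z))$. The main obstacle is the bookkeeping of exponents: one must verify that the exponent $\upgamma$ controlling the normal-form radius, the exponent $\upbeta$ in the derivative bounds, and the exponent in $\sigma^\star_\upbeta$ can all be chosen uniformly in $z$ and consistently so that the stretched-exponential tail terms $\vareps^{-d/2}\exp[-c/(\sigma^\star\vareps)]$ are genuinely $O(\sigma^\star_{\upbeta}(z)\vareps^{1/2})$; this is where the explicit $\sigma_0^{-\upbeta}$ dependence in \Cref{prop:quantitative_morse_lemma} (as opposed to the opaque $M(K,\sigma_0,k)$ of \cite{le2014numerical}) is essential, and some care is needed in the remark following \Cref{prop:quantitative_morse_lemma} to justify that polynomial dependence.
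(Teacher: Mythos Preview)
Your overall strategy matches the paper's: apply the quantitative Morse lemma at each global minimizer, build $\msv(z)$ as the union of Morse chart images, split $\I_\vareps^{\mathrm{in}} - \I_0$ into a tail piece and a chart piece, and control the chart piece via the Lipschitz bound on $\chi_\ell(x) = \varphi(\Phi_\ell(x))\det(\rmD\Phi_\ell(x))$ coming from \Cref{prop:quantitative_morse_lemma}(c). That part is essentially the paper's proof.

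Two points of execution differ, and one of them is a genuine gap.

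\textbf{Disjointness of the Morse charts.} You propose to ``shrink the radii if necessary'' to make the images $\Phi_\ell(\ball{0}{\delta_\ell(z)})$ pairwise disjoint. The paper instead proves that, once all radii are set to a common $\delta(z)$, the images are \emph{automatically} disjoint: if $\msw_i \cap \msw_j \neq \emptyset$ then $\msw_i \cup \msw_j$ is connected, so a path from $x_\star^i(z)$ to $x_\star^j(z)$ inside $\msw_i \cup \msw_j$ must first leave $\msw_i$ and hit $\partial\msw_j$, where $u = u^\star + \delta(z)^2$; but at that instant the path is still in $\msw_i$, where $u < u^\star + \delta(z)^2$, a contradiction. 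This matters because an ad hoc shrinkage could depend on the (uncontrolled) pairwise distances between minimizers rather than on $\sigma^\star$, which would break uniformity in $z$.

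\textbf{Lower bound on $m(z)$.} Your dichotomy ``either $x$ is outside every Morse chart, giving $u(x,z)-u^\star(z) \geq \delta_\ell(z)^2$, or $x$ is in the basin of a non-global local minimizer'' is not exhaustive and the first clause is false as stated: being outside the charts does not force $u-u^\star \geq \delta^2$ pointwise. The gradient-flow patch you suggest would need to rule out saddles and is unnecessarily heavy. The paper's argument is a one-liner: let $\tilde{x}(z)$ realize the infimum of $u$ on the closed set $\mse = \rset^d\setminus\msv(z)$ (it exists since $u\to\infty$). If $\tilde{x}(z)\in\interior(\mse)$ then it is a local minimizer of $u$ on an open set, hence a local minimizer of $u$ on $\rset^d$, and since it is not global, $m(z)\geq c^\star(z)$. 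If $\tilde{x}(z)\in\partial\mse=\partial\msv(z)$ then it lies on some $\Phi_\ell(\partial\ball{0}{\delta(z)})$ where $u=u^\star+\delta(z)^2$, so $m(z)\geq\delta(z)^2\gtrsim 1/\sigma_\upgamma^\star(z)$. Replacing your flow argument by this minimizer dichotomy closes the gap cleanly.
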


\begin{proof}
  Let
  $\{x_\star^k(z)\}_{k=1}^{M} = \argmin \ensembleLigne{u(x,z)}{x \in
    \rset^d}$. 
  Using \rref{assum:u_param} $\{x_\star^k(z)\}_{k=1}^{M} \subset \msc$ with
  $\msc$ compact. {We define $K_{\mathrm{global}} \geq 0$ such that
  \begin{equation}
    K_{\mathrm{global}} = \sup \ensembleLigne{\normLigne{\nabla_x^j u(x,z)}}{x \in \msk, \ z \in \msz, \ j \in \{1, 2, 3\}} \eqsp ,
  \end{equation}
  with $\msk = \msc + \cball{0}{1}$, where $\msc$ is defined in the remark
  following \rref{assum:u_param}}.  Let $i \in \{1, \dots, M\}$. Using that
$\sigma_1^\star(z) < +\infty$ and \Cref{prop:quantitative_morse_lemma} with
$K \leftarrow K_{\mathrm{global}}$ and $k = 3$, there exist
$\Phi_i^z \in \rmc^2(\ball{0}{\delta_i(z)}, \Phi_i^z(\ball{0}{\delta_i(z)}))$
which is a diffeomorphism, with $\Phi_i^z(0) = x_\star^i(z)$,
$\delta_i(z) = c_0 \min(\sigma^4(x_\star^i(z), z), 1)$ and for any
$x \in \ball{0}{\delta_i(z)}$ we have
$u(\Phi_i^z(x), z) = u^\star + \normLigne{x}^2$.

  We let
  $\delta_0(z) = \min \ensembleLigne{\delta_i(z)}{i \in \{1, \dots, M\}}$.
  Using \Cref{prop:quantitative_morse_lemma}, there exist $c_0'\geq0$ and
  $\upalpha > 0$ that do not depend on $z$ such that for any
  $\ell \in \{1, \dots, M\}$ and $x \in \ball{0}{\delta_0(z)}$,
  $\norm{\rmd \Phi_\ell^z(x)} \leq c_0' \sigma_\upalpha^\star(z)$. Let
  $\delta(z) = \min(\delta_0(z), 1/(c_0' \sigma_\upalpha^\star(z)))$. We have
  that for any $\ell \in \{1, \dots, N\}$,
  $\Phi_\ell(\ball{0}{\delta(z)}) \subset \ball{x_\star^\ell(z)}{c_0'
    \sigma_\upalpha^\star(z) \delta(z)} \subset \argmin \ensembleLigne{u(x,z)}{x
    \in \rset^d} + \cball{0}{1}$. We let
  $\msv(z) = \cup_{i=1}^M \Phi_\ell(\ball{0}{\delta(z)})$.
  
  We now show that for any $i, j \in \{1, \dots, M\}$,
  $\Phi_i^z(\ball{0}{\delta(z)}) \cap \Phi_j^z(\ball{0}{\delta(z)}) =
  \emptyset$. Let $i, j \in \{1, \dots, M\}$ and for ease of notation let
  $\msw_\ell = \Phi_\ell^z(\ball{0}{\delta(z)})$ for any
  $\ell \in \{1, \dots, M\}$. Assume that $\msw_i \cap \msw_j \neq
  \emptyset$. Then, since $\msw_i$ and $\msw_j$ are connected,
  $\msw = \msw_i \cup \msw_j$ is connected as well. In addition, note that
  $\Phi_i^z(0) \notin \msw_j$ and $\Phi_j^z(0) \notin \msw_i$. There exists
  $\gamma \in \rmc(\ccint{0,1}, \msw)$ such that $\gamma(0) = \Phi_i^z(0)$ and
  $\gamma(1) = \Phi_j^z(0)$. Denote
  $t^\star = \inf \ensembleLigne{t \in \ccint{0,1}}{\gamma(t) \in \msw_j}$. We
  have that $\gamma(t^\star) \in \bar{\msw}_j \backslash \msw_j$. Hence
  $u(\gamma(t^\star), z) = \delta(z)^2 + u^\star(z)$. But
  $\gamma(t^\star) \in \msw_i$ and therefore
  $u(\gamma(t^\star), z) < \delta(z)^2 + u^\star(z)$. This is absurd hence for
  any $i,j \in \{1, \dots, M\}$, $\msw_i \cap \msw_j = \emptyset$.

  Let $\vareps > 0$ and $\I_{0, \vareps}^{\mathrm{in}}$ be given by
  \begin{equation}
    \label{eq:def_I_0_eps_inter}
    \textstyle{
   \I_{0, \vareps}^{\mathrm{in}}(\varphi, z) = \sum_{\ell=1}^M
   \int_{\ball{0}{\delta(z)/\vareps^{1/2}}} \exp[-\norm{x}^2] \rmd x \varphi(x_\star^\ell(z)) \hessinvx{x_\star^\ell(z)}{z}/C_1 \eqsp .
   }
\end{equation}
Recall that $\I_{0}(\varphi, z) = \sum_{\ell=1}^M \varphi(x_\star^\ell(z)) \hessinvx{x_\star^\ell(z)}{z}$.
Therefore,  we have
\begin{align}
  \label{eq:inter_truc}
  \textstyle{
    \abs{\I_{0}(\varphi, z) - \I_{0, \vareps}^{\mathrm{in}}(\varphi, z)}} &\leq \textstyle{\sigma_{1/2}^\star M_{0,\varphi} \exp[-\delta(z)^2/(2\vareps)] \int_{\rset^d} \exp[-\normLigne{x}^2/2] \rmd x / C_1}\\
  &\leq 2^{d/2}  \sigma_{1/2}^\star M_{0,\varphi} \exp[-\delta(z)^2/(2\vareps)] \eqsp .
\end{align}
Finally, using \eqref{eq:def_I_0_eps_inter}, that for any
$\ell \in \{1, \dots, M\}$ and $x \in \ball{0}{\delta(z)}$,
$u(\Phi_\ell^z(x), z) = u^\star(z) + \normLigne{x}^2$, that for any
$\ell \in \{1, \dots, M\}$,
$\det(\rmd \Phi_\ell^z(0)) = \hessinvx{x_\star^\ell(z)}{z}$ and
$\Phi_\ell^z(0)= x_\star^\ell(z)$, we have 
\begin{align}
  \label{eq:the_big_diff}
  \textstyle{\absLigne{\I_{0, \vareps}^{\mathrm{in}}(\varphi, z) - \I_{\vareps}^{\mathrm{in}}(\varphi, z)} }&\textstyle{= \sum_{\ell=1}^M
  \int_{\ball{0}{\delta(z)/\vareps^{1/2}}} \left| \vphantom{ \varphi(\Phi_\ell^z(\vareps^{1/2} x)) \det(\rmd \Phi_\ell^z(\vareps^{1/2} x))}\varphi(\Phi_\ell^z(0)) \det(\rmd \Phi_\ell^z(0)) \right.} \\
  &\qquad \qquad \textstyle{\left. - \varphi(\Phi_\ell^z(\vareps^{1/2} x)) \det(\rmd \Phi_\ell^z(\vareps^{1/2} x)) \right|  \exp[-\norm{x}^2] \rmd x /C_1 \eqsp ,}
\end{align}
which concludes the first part of the proof upon combining this result and the
dominated convergence theorem. Next assume that
$\varphi \in \rmc^1(\rset^d, \rset)$ and for any $\ell \in \{1, \dots, M\}$, let
$\chi_\ell^z: \ \ball{0}{\delta(z)} \to \rset$ given for any
$x \in \ball{0}{\delta(z)}$ by
\begin{equation}
  \chi_\ell^z(x) = \varphi(\Phi_\ell^z(x)) \det(\rmd \Phi_\ell^z(x))
\end{equation}
We have that for any $\ell \in \{1, \dots, M\}$,
$\chi_\ell^z \in \rmc^1(\ball{0}{\delta})$. Therefore, we have that for any
$\ell \in \{1, \dots, M\}$ and $x \in \ball{0}{\delta(z)}$ and $h \in \rset^d$
\begin{equation}
  \rmd \chi_\ell(x)(h) = \rmd \varphi(\Phi_\ell^z(x)) \rmd \Phi_\ell^z(x)(h) \det(\rmd \Phi_\ell^z(x)) + \varphi(\Phi_\ell^z(x)) \trace(\Adj(\rmd \Phi_\ell^z(x))\rmd^2 \Phi_\ell^z(x)(h)) \eqsp . 
\end{equation}
Therefore, using \Cref{prop:quantitative_morse_lemma} we have that there exist
$C \geq 0$ and $\upbeta > 0$ such that for any $\ell \in \{1, \dots, M\}$ and
$x \in \ball{0}{\delta(z)}$,
$\normLigne{\rmd \chi_\ell^z(x)} \leq C (1 + M_{0, \varphi} + M_{1, \varphi})
\sigma^\star_\upbeta(z)$. Using this result in \eqref{eq:the_big_diff} we get that
\begin{equation}
  \textstyle{
    \abs{\I_{0, \vareps}^{\mathrm{in}}(\varphi, z) - \I_{\vareps}^{\mathrm{in}}(\varphi, z)} \leq C \sigma^\star_\upbeta(z) \vareps^{1/2} (1 + M_{0, \varphi} + M_{1, \varphi}) \int_{\rset^d} \norm{x} \exp[-\norm{x}^2] \rmd x / C_1 \eqsp ,
    }
\end{equation}
which concludes the proof of \eqref{eq:bound_I_vareps} upon combining this
result, \eqref{eq:inter_truc}, and the fact that $\exp[-t] \leq 1/t$ for any
$t > 0$.

Next, we show that there exist $c_1, \upbeta > 0$ such that
$m(z) \geq c_1 \min(1/\sigma_\beta^\star(z), c^\star(z))$ with
$c_1, \upbeta > 0$ that do not depend on $z$. Since
$\lim_{\normLigne{x} \to +\infty} u(x,z) = +\infty$, there exists $\tilde{x}(z)$
which minimizes $x \mapsto u(x,z)$ on $\mse = \rset^d\backslash \msv(z)$. We
distinguish two cases. If $\tilde{x}(z) \in \interior(\mse)$ then $\tilde{x}(z)$
is a local minimizer of $x \mapsto u(x,z)$. Hence $m(z) \geq c^\star(z)$. If
$\tilde{x}(z) \in \mse \backslash \interior(\mse)$ then
$\tilde{x}(z) \in \bar{\msv}(z)\backslash \msv(z)$ and we have that
$m(z) \geq \delta(z)^2$, which concludes the proof.
\end{proof}

Finally using \Cref{prop:lower_bound_j_eps_param},
\Cref{prop:upper_bound_i_out_param} and \Cref{prop:upper_bound_i_laplace_param}
we establish our main result.

\begin{proposition}
  \label{prop:conclusion_param_bounds}
  Assume \rref{assum:u_param}. Let $\varphi: \ \rset^d \to \rset$ be a
  $M_{1,\varphi}$-Lipschitz function, $M_{1,\varphi}, C_\varphi \geq 0$ and
  $z \in \msz$. Assume that $\sigma_1^\star(z) < +\infty$ and that for any
  $x \in \rset^d$,
  $\abs{\varphi(x)} \leq C_\varphi \exp[C_\varphi\norm{x}^{ \upalpha }]$.  Then,
  there exist $B_2 \in \rmc(\rset_+, \rset_+)$ and $\upbeta > 0$ such that
      \begin{equation}
        \abs{\updelta_z \Sker_\vareps[\varphi] - \updelta_z \Sker_0[\varphi]} \leq B_2(C_\varphi) (1 + M_{0,\varphi} + M_{1, \varphi}) (1 + \sigma_{\upbeta}^\star(z)) \{ \vareps^{1/2} + \vareps^{-d/2} \exp[-c^\star(z)/\vareps]\} \eqsp ,
      \end{equation}
      with 
      $M_{0, \varphi} = \sup \ensembleLigne{\absLigne{\varphi(x)}}{x \in \msk}$,
      $\msk$, $B_2$ and $\upbeta$ that do not depend on $z$, and $B_2$ non-decreasing.
\end{proposition}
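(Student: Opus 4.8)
The plan is to combine the three technical ingredients established above---the lower bound on $\J_\vareps(z)$ from \Cref{prop:lower_bound_j_eps_param}, the truncation (``outside'') bound from \Cref{prop:upper_bound_i_out_param}, and the ``inside'' Laplace/Morse bound from \Cref{prop:upper_bound_i_laplace_param}---in exactly the same pattern as the proof of \Cref{prop:final_d_geq_p}, but now tracking the dependence on the parameter $z$. First I would fix $z \in \msz$ and let $\msv(z)$ be the open set produced by \Cref{prop:upper_bound_i_laplace_param}, so that $\argmin u(\cdot,z) \subset \msv(z) \subset \argmin u(\cdot,z) + \cball{0}{1}$, and set $\I_\vareps^{\mathrm{in}}(\varphi,z) = \I_\vareps(\varphi \1_{\msv(z)}, z)$ and $\I_\vareps^{\mathrm{out}}(\varphi,z) = \I_\vareps(\varphi \1_{\msv(z)^\complementary}, z)$. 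The triangle-inequality decomposition is
\begin{equation}
  \abs{\updelta_z \Sker_\vareps[\varphi] - \updelta_z \Sker_0[\varphi]} \leq \J_\vareps(z)^{-1} \abs{\I_\vareps(\varphi,z) - \I_0(\varphi,z)} + \frac{\I_0(\abs{\varphi},z)}{\J_0(z)\J_\vareps(z)} \abs{\I_\vareps(1,z) - \I_0(1,z)} \eqsp ,
\end{equation}
and $\abs{\I_\vareps(\varphi,z) - \I_0(\varphi,z)} \leq \abs{\I_\vareps^{\mathrm{in}}(\varphi,z) - \I_0(\varphi,z)} + \I_\vareps^{\mathrm{out}}(\abs{\varphi},z)$, with the analogous bound for $\varphi \equiv 1$.

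Next I would assemble the pieces. From \Cref{prop:lower_bound_j_eps_param} there is $A_0 > 0$, independent of $z$, with $\J_\vareps(z) \geq A_0$ for all $\vareps \in \ccint{0,\bvareps}$; in particular $\J_0(z) \geq A_0$. From \Cref{prop:upper_bound_i_laplace_param} (applied with $\varphi$ and with $1$), for $\vareps$ small enough,
\begin{equation}
  \abs{\I_\vareps^{\mathrm{in}}(\varphi,z) - \I_0(\varphi,z)} \leq B_1 \sigma_\upbeta^\star(z)(1 + M_{0,\varphi} + M_{1,\varphi}) \vareps^{1/2} \eqsp ,
\end{equation}
where here $M_{i,\varphi}$ may be taken as the supremum over the fixed compact neighbourhood $\msk$ of $\bigcup_k \ensembleLigne{x_0^k(z)}{z \in \msz}$ (this requires first passing through the smoothing lemma \Cref{lemma:smoothing_lemma}, exactly as in the proof of \Cref{thm:big_theo_extension}, to reduce the general $M_{1,\varphi}$-Lipschitz $\varphi$ to a $\rmc^1$ one while controlling the relevant constants; the $z$-independence of $\bdelta$ and of the mollifier constants is what makes this uniform in $z$). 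For the outside term, \Cref{prop:upper_bound_i_out_param} gives $\beta_1 > 0$ and $A_1 \in \rmc(\rset_+,\rset_+)$, independent of $z$, with $\I_\vareps^{\mathrm{out}}(\abs{\varphi},z) \leq A_1(C_\varphi)\vareps^{-d/2}\{\exp[-m(z)/\vareps] + \exp[-\beta_1/\vareps]\}$; and the last clause of \Cref{prop:upper_bound_i_laplace_param} gives $c_1, \upgamma > 0$ with $m(z) \geq \min(c_1/\sigma_\upgamma^\star(z), c^\star(z))$. The only real point of care is converting $\exp[-m(z)/\vareps]$ into the stated form $(1+\sigma_\upbeta^\star(z))\vareps^{-d/2}\exp[-c^\star(z)/\vareps]$: using $m(z) \geq c_1/\sigma_\upgamma^\star(z)$ we have $\exp[-m(z)/\vareps] \leq \exp[-c_1/(\sigma_\upgamma^\star(z)\vareps)]$, and the elementary inequality $t^{a}\exp[-c_1 t/\sigma] \leq C(a,c_1)\sigma^{a}$ (applied with $t = 1/\vareps$, $a = d/2+1/2$, $\sigma = \sigma_\upgamma^\star(z)$) lets us trade $\vareps^{-d/2}\exp[-m(z)/(2\vareps)]$ for a power of $\sigma_\upgamma^\star(z)$ times $\vareps^{1/2}$; the remaining half $\exp[-m(z)/(2\vareps)] \leq \exp[-c^\star(z)/(2\vareps)]$ is absorbed into the $\vareps^{-d/2}\exp[-c^\star(z)/\vareps]$ term (possibly after enlarging $B_2$ and using that $\exp[-c^\star(z)/(2\vareps)] \leq 2^{d/2}\vareps^{d/2}(d/c^\star(z))^{d/2}$ is not needed if one just keeps the cruder bound), and $\exp[-\beta_1/\vareps] \leq C\vareps^{1/2}$ with $C$ absolute. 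Finally $\I_0(\abs{\varphi},z) \leq M_{0,\varphi}\,\sigma_1^\star(z) \leq M_{0,\varphi}(1+\sigma_\upbeta^\star(z))$ (enlarging $\upbeta \geq 1$ if necessary, using $\sigma \geq$ the minimal eigenvalue so $\sigma_1^\star \leq \sigma_\upbeta^\star$ when eigenvalues are $\geq 1$, or more simply bounding $\hessinvx{x}{z} \leq \sigma(x,z)^{d/2}$ and choosing $\upbeta \geq d/2$).

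Collecting everything, dividing by $\J_\vareps(z)\geq A_0$ and $\J_0(z)\J_\vareps(z)\geq A_0^2$, and taking $B_2$ to be a suitable non-decreasing continuous function of $C_\varphi$ dominating all the accumulated constants (and $\upbeta$ the maximum of the finitely many exponents appearing, which are all $z$-independent), yields the claimed inequality. I expect the main obstacle to be bookkeeping rather than anything deep: one must (i) carry out the smoothing reduction uniformly in $z$ so that the Lipschitz and growth constants of the mollified $\varphi^\delta$ are controlled by $M_{1,\varphi}, M_{0,\varphi}, C_\varphi$ with $z$-independent multiplicative constants, and (ii) handle the algebra that converts the $m(z)$-dependent exponential tail into the advertised $(1+\sigma_\upbeta^\star(z))$ prefactor times either $\vareps^{1/2}$ or $\vareps^{-d/2}\exp[-c^\star(z)/\vareps]$. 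Both are routine given \Cref{prop:quantitative_morse_lemma} and the $z$-uniformity already built into the statements of \Cref{prop:lower_bound_j_eps_param}, \Cref{prop:upper_bound_i_out_param} and \Cref{prop:upper_bound_i_laplace_param}.
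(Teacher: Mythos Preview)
Your plan is correct and follows essentially the same route as the paper: the same triangle decomposition of $\updelta_z\Sker_\vareps[\varphi]-\updelta_z\Sker_0[\varphi]$, the same three ingredients (\Cref{prop:lower_bound_j_eps_param}, \Cref{prop:upper_bound_i_out_param}, \Cref{prop:upper_bound_i_laplace_param}), the reduction from Lipschitz to $\rmc^1$ via \Cref{lemma:smoothing_lemma}, and the bound $\I_0(|\varphi|,z)\leq M_{0,\varphi}\sigma_{d/2}^\star(z)$. The only place you are slightly more convoluted than necessary is the conversion of $\vareps^{-d/2}\exp[-m(z)/\vareps]$: rather than splitting $\exp[-m(z)/\vareps]$ into two half-factors (which as written appeals to both inequalities $m(z)\geq c_1/\sigma_\upgamma^\star(z)$ and $m(z)\geq c^\star(z)$, whereas only their minimum is guaranteed), the paper simply uses $\exp[-\min(a,b)/\vareps]\leq\exp[-a/\vareps]+\exp[-b/\vareps]$ and then applies $\exp[-1/t]\leq c_3 t^{(d+1)/2}$ to the $a=c_1/\sigma_\upgamma^\star(z)$ summand, yielding the $(1+\sigma_\upbeta^\star(z))\vareps^{1/2}$ contribution directly.
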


\begin{proof}
  In this proof we assume that $\varphi \in \rmc^1(\rset^d, \rset)$. The
  extension to Lipschitz function is similar to the proof of
  \Cref{thm:big_theo_extension}, see \Cref{sec:proof-crefthm:big}, \ie \ we use
  the smoothing \Cref{lemma:smoothing_lemma}. Let
  $\bvareps \in \ooint{0, \mtt/(1+C_\varphi)}$. {First, let
    $\msv(z)$ be given by \Cref{prop:upper_bound_i_laplace_param} and set $\msk$
    such that $\msk= \msc + \cball{0}{1}$, with $\msc$ given in the remark
    following \rref{assum:u_param}.}
Note that $\msv(z) \subset \msk$. Applying
  \Cref{prop:lower_bound_j_eps_param} there exists $A_0 \geq 0$ such that for
  any $\vareps \in \ccint{0, \bvareps}$,
  \begin{equation}
    \label{eq:lower_param}
    \J_\vareps(z) \geq A_0 \eqsp .
  \end{equation}
  In addition, using \Cref{prop:upper_bound_i_out_param} we have that there
  exist $\beta_1 > 0$ and $A_1 \in \rmc(\rset_+, \rset_+)$ such that for any
  $\vareps \in \ooint{0, \bvareps}$ we have 
  \begin{equation}
    \label{eq:inter_out_param}
    \I_{\vareps}^{\mathrm{out}}(\varphi, z) \leq A_1(C_{\varphi}) \vareps^{-d/2} \{\exp[- m(z) / \vareps]+ \exp[-\beta_1/\vareps]\}\eqsp .
  \end{equation}
  Using \Cref{prop:upper_bound_i_laplace_param} there exist $c_1, \upgamma > 0$
  such that $m(z) \geq \min(c_1/\sigma_\upgamma^\star(z), c^\star(z))$ (with
  $c_1, \upgamma > 0$ that do not depend on $z$). Hence, combining this result,
  \eqref{eq:inter_out_param} and the fact that there exists $c_3$ such that for
  any $t > 0$, $\exp[-1/t] \leq c_3 t^{(d+1)/2}$, we get
  \begin{equation}
    \label{eq:inter_out_param_duo}
    \I_{\vareps}^{\mathrm{out}}(\varphi, z) \leq A_1(C_{\varphi}) \vareps^{-d/2} \{\exp[- c^\star(z) / \vareps] + (\sigma_\upbeta^\star(z))^{(d+1)/2} \vareps^{(d+1)/2}/c_3 + \exp[-\beta_1/\vareps]\}\eqsp .
  \end{equation}
  Therefore, there exist $\upbeta' > 0$ and $A_2 \in \rmc(\rset_+, \rset_+)$
  (non-increasing) such that for any $\vareps \in \ooint{0, \bvareps}$ we have
  \begin{equation}
    \label{eq:inter_out_param_attention}
    \I_{\vareps}^{\mathrm{out}}(\varphi, z) \leq A_2(C_{\varphi})  \{(1 + \sigma_{\upbeta'}^\star(z)) \vareps^{1/2} + \vareps^{-d/2} \exp[-c^\star(z)/\vareps]\}\eqsp .
  \end{equation}
  In addition, using \Cref{prop:upper_bound_i_laplace_param}, there exist
  $B_1 \geq 0$ and $\upbeta'' > 0$ such that for any $\vareps > 0$ we have
  \begin{equation}
    \label{eq:bound_I_vareps_attention}
    \absLigne{\I_\vareps^{\mathrm{in}}(\varphi, z) - \I_0(\varphi, z)} \leq B_1 \sigma_{\upbeta''}^\star(z) (1 + M_{0,\varphi}(z) + M_{1,\varphi}(z)) \vareps^{1/2} \eqsp , 
  \end{equation}
  Note that $M_{i,\varphi}(z) \leq M_{i, \varphi}$ for $i \in \{0,1\}$.  Hence,
  combining \eqref{eq:inter_out_param_attention} and
  \eqref{eq:bound_I_vareps_attention}, there exist
  $A_3 \in \rmc(\rset_+, \rset_+)$ (non-increasing) and $\upbeta_0 > 0$ such that for any
  $\vareps \in \ooint{0, \bvareps}$ we have
  \begin{equation}
    \absLigne{\I_\vareps(\varphi, z) - \I_0(\varphi, z)} \leq A_3(C_\varphi) (1 + M_{0,\varphi} + M_{1,\varphi}) \{ (1 + \sigma_{\upbeta_0}^\star(z))\vareps^{1/2} + \vareps^{-d/2} \exp[-c^\star(z)/\vareps]\} \eqsp , 
  \end{equation}
  Similar results hold if $\varphi$ is replaced by $1$ and we get that there
  exist $A_4 \geq 0$, $\upbeta_1 > 0$ such that for any
  $\vareps \in \ooint{0, \bvareps}$ we have
  \begin{align}
    &\absLigne{\I_\vareps(\varphi, z) - \I_0(\varphi, z)} \leq A_3(C_\varphi) (1 + M_{0,\varphi} + M_{1,\varphi}) \{ (1 + \sigma_{\upbeta_1}^\star(z))\vareps^{1/2} + \vareps^{-d/2} \exp[-c^\star(z)/\vareps]\} \eqsp ,  \\
        &\absLigne{\J_\vareps(z) - \J_0(z)} \leq A_4  \{ (1 + \sigma_{\upbeta_1}^\star(z))\vareps^{1/2} + \vareps^{-d/2} \exp[-c^\star(z)/\vareps]\} \eqsp .     \label{eq:summary_param}
  \end{align}
   In addition, we have that 
   \begin{equation}
     \label{eq:decompo_param}
    \abs{\updelta_z \Sker_\vareps(\varphi) - \updelta_z \Sker_0(\varphi)}\leq (\I_\vareps(\varphi,z) - \I_0(\varphi,z))/\J_\vareps(z) + \I_0(\varphi, z) (\J_0(z) - \J_\vareps(z))/(\J_0(z) \J_\vareps(z)) \eqsp .
  \end{equation}
  Finally, we have that
  $\I_0(\varphi, z) \leq M_{0, \varphi} \sigma_1^\star(z)$. Combining this
  result \eqref{eq:lower_param}, \eqref{eq:summary_param} and
  \eqref{eq:decompo_param} concludes the proof.

\end{proof}

  \subsection{Control of the moments}
\label{sec:sgld_moments}

In order to derive the uniform stability of the
limiting measure, we first need to control the moments of
$\updelta_{z^{1:n}} \Sker_\vareps$ uniformly in w.r.t. $\vareps$, $z$ and $n$. Let
$z^{1:n} \in \msz^n$ and $\vareps > 0$. We consider the Langevin diffusion
$(\bfX_t^{\vareps}(z^{1:n}))_{t \geq 0}$ given by the following Stochastic Differential
Equation (SDE): $\bfX_0^\vareps(z^{1:n}) \in \rset^d$ and  
\begin{equation}
  \rmd \bfX_t^{\vareps}(z^{1:n}) = -\nabla_x U_n(\bfX_t^{\vareps}(z^{1:n}), z^{1:n}) \rmd t + \sqrt{2 \vareps} \rmd \bfB_t \eqsp ,
\end{equation}
where $(\bfB_t)_{t \geq 0}$ is a $d$-dimensional Brownian motion with filtration
$(\mathcal{F}_t)_{t \geq 0}$. We recall that for any $n \in \nset$,
$z^{1:n} \in \msz^n$ and $x \in \rset^d$ we have
$U_n(x, z^{1:n}) = (1/n) \sum_{i=1}^n u(x, z_i)$. Therefore, under
\Cref{assum:raginsky}($n$) we have that $(\bfX_t^{\vareps}(z^{1:n}))_{t \geq 0}$ is
well-defined and admits $\updelta_{z^{1:n}} \Sker_\vareps$ as an invariant
measure, see \cite{roberts1996exponential} for instance.

\begin{lemma}
  \label{lemma:moment_bound}
  Let $n \in \nset$ and assume \tup{\rref{assum:raginsky}($n$)} and
  \tup{\rref{assum:U_n_param}($n$)}. Let $z^{1:n} \in \msz^n$ and assume that
  $\sigma_1^\star(z^{1:n}) < +\infty$. Then there exist $\bvareps > 0$ such that
  for any $k \in \nset$ there exists $C_k \geq 0$ such that for any
  $\vareps \in \coint{0, \bvareps}$
  \begin{equation}
    \textstyle{
      \int_{\rset^d} \norm{x}^{2k} \Sker_\vareps(z^{1:n}, \rmd x) \leq C_k \eqsp , 
      }
    \end{equation}
    with $C_k$ and  $\bvareps$ that do not depend on $n \in \nset$ and $z^{1:n} \in \msz^n$.
\end{lemma}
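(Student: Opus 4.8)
The goal is to bound uniformly in $\vareps \in \coint{0,\bvareps}$, $n \in \nset$ and $z^{1:n} \in \msz^n$ the $2k$-th moments of $\updelta_{z^{1:n}} \Sker_\vareps$, the Gibbs measure with potential $U_n(\cdot, z^{1:n})/\vareps$. The natural approach is to use the Langevin diffusion $(\bfX_t^\vareps(z^{1:n}))_{t \geq 0}$ introduced just above the statement, which admits $\updelta_{z^{1:n}} \Sker_\vareps$ as invariant measure, together with a Lyapunov/drift argument applied to $V(x) = \normLigne{x}^{2k}$ (or more conveniently to $\exp[\lambda \normLigne{x}^2]$ followed by extracting polynomial moments). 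First I would record the dissipativity estimate coming from \rref{assum:raginsky}($n$)(c): since for any $z \in \msz$, $\langle \nabla_x u(x,z), x \rangle \geq \mtt \normLigne{x}^2 - \ctt$, averaging over $z_i$ gives $\langle \nabla_x U_n(x, z^{1:n}), x \rangle \geq \mtt \normLigne{x}^2 - \ctt$ for all $x$, $n$, $z^{1:n}$, with $\mtt, \ctt$ independent of $n$ and $z^{1:n}$. This is the only structural input needed and it is already uniform, which is the crucial point.

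\textbf{Key steps.} Applying the generator $\mathcal{A}^\vareps = -\langle \nabla_x U_n(\cdot, z^{1:n}), \nabla \cdot \rangle + \vareps \Delta$ to $\normLigne{x}^{2k}$, I compute
\begin{equation}
  \mathcal{A}^\vareps \normLigne{x}^{2k} = -2k \normLigne{x}^{2k-2} \langle \nabla_x U_n(x, z^{1:n}), x \rangle + \vareps \, 2k(2k + d - 2) \normLigne{x}^{2k-2} \eqsp .
\end{equation}
Using the dissipativity bound, this is at most $-2k\mtt \normLigne{x}^{2k} + (2k\ctt + 2k(2k+d-2)\vareps) \normLigne{x}^{2k-2}$, and by Young's inequality $\normLigne{x}^{2k-2} \leq (\mtt/(2 \cdot (2k\ctt + \dots)))\normLigne{x}^{2k} + C$, we obtain a drift inequality $\mathcal{A}^\vareps \normLigne{x}^{2k} \leq -k\mtt \normLigne{x}^{2k} + b_k$ with $b_k \geq 0$ depending only on $k, d, \mtt, \ctt$ and $\bvareps$ (using $\vareps \leq \bvareps$), hence independent of $n, z^{1:n}, \vareps$. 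Then I integrate this against the invariant measure $\updelta_{z^{1:n}} \Sker_\vareps$: since the diffusion is well-defined and ergodic under \rref{assum:raginsky}($n$) (the superlinear drift guarantees non-explosion and the needed integrability, which can be justified by first applying the same argument to a truncated/localized version or by citing standard results such as those in \cite{roberts1996exponential}), one has $\int \mathcal{A}^\vareps \normLigne{x}^{2k} \, \rmd \updelta_{z^{1:n}}\Sker_\vareps = 0$, and therefore $\int \normLigne{x}^{2k} \, \rmd \updelta_{z^{1:n}}\Sker_\vareps \leq b_k/(k\mtt) =: C_k$. One can run this inductively on $k$ so that the required integrability of $\normLigne{x}^{2k-2}$ is already established at the previous stage, the base case $k=0$ being trivial, and the $k=1$ moment bound following directly.

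\textbf{Main obstacle.} The calculus is entirely routine; the only genuinely delicate point is justifying that the expectation of the generator against the invariant measure vanishes, i.e.\ that $\normLigne{x}^{2k}$ is in the domain of the generator in the appropriate sense and that no boundary-at-infinity term appears. The clean way around this is the standard localization argument: apply Itô's formula to $\normLigne{\bfX_{t \wedge \tau_R}^\vareps}^{2k}$ with $\tau_R$ the exit time of the ball of radius $R$, take expectations (the martingale part vanishes), use the drift inequality and Grönwall to get $\expec{\normLigne{\bfX_{t\wedge\tau_R}^\vareps}^{2k}}{} \leq \normLigne{\bfX_0^\vareps}^{2k}\rme^{-k\mtt t} + b_k/(k\mtt)$, let $R \to +\infty$ by Fatou, and finally let $t \to +\infty$ after taking $\bfX_0^\vareps$ distributed according to the stationary measure (so the left side equals $\int \normLigne{x}^{2k}\, \rmd \updelta_{z^{1:n}}\Sker_\vareps$ for all $t$). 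All constants produced this way depend only on $k$, $d$, $\mtt$, $\ctt$ and $\bvareps$, hence are uniform in $n$ and $z^{1:n}$, which is exactly what is asserted. I would also remark that an alternative, purely analytic route avoids the diffusion entirely: integrate by parts in $\int \Delta(\normLigne{x}^{2k}) \exp[-U_n/\vareps]$ to relate it to $\int \langle \nabla_x U_n, \nabla \normLigne{x}^{2k}\rangle \exp[-U_n/\vareps]$, yielding the same inequality on the normalized measure directly; but the diffusion-based presentation is cleaner and matches the references already cited.
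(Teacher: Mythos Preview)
Your approach is correct and is essentially the same as the paper's: both use the dissipativity bound $\langle \nabla_x U_n(x,z^{1:n}), x\rangle \geq \mtt\normLigne{x}^2 - \ctt$ (uniform in $n$ and $z^{1:n}$) to derive a Foster--Lyapunov inequality for $\normLigne{x}^{2k}$ via the Langevin diffusion, and then pass to the invariant measure. The paper carries this out by applying It\^o's formula to $\normLigne{\bfX_t^\vareps}^{2k}\exp[\mtt_0 t]$, taking expectations, using Gr\"onwall and Young, obtaining $\sup_t \expeLigne{\normLigne{\bfX_t^\vareps}^{2k}} \leq C_k$, and then invoking weak convergence of the time-$t$ law towards $\updelta_{z^{1:n}}\Sker_\vareps$ together with monotone convergence; your version (integrate the generator against the invariant measure, justified by localization and starting at stationarity) is an equivalent packaging of the same computation.

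One small point you should address explicitly: the diffusion argument only covers $\vareps > 0$, while the statement includes $\vareps = 0$. The paper handles this endpoint separately (by appealing to its parametric Laplace result, \Cref{prop:conclusion_param_bounds}). In fact the case $\vareps = 0$ is immediate once you observe that $\updelta_{z^{1:n}}\Sker_0$ is supported on $\{x_0^k(z^{1:n})\}_{k=1}^N$, which by \rref{assum:U_n_param}($n$) lies in the compact set $\bigcup_{k=1}^N x_0^k(\msz^n)$ independent of $z^{1:n}$; but you should say so.
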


\begin{proof}
  Let $\vareps > 0$.  First, since $x \mapsto \nabla_x U_n(x, z^{1:n})$ is
  Lipschitz continuous we have that $(\bfX_t(z^{1:n})^\vareps)_{t \geq 0}$ is
  well-defined and is a continuous semi-martingale using \cite[Theorem 2.3,
  Theorem 2.4 , Chapter 4]{ikeda2014stochastic} such that for any $t \geq 0$
  \begin{equation}
    \textstyle{\bfX_t^\vareps(z^{1:n}) = \bfX_0^\vareps(z^{1:n}) - \int_0^t \nabla_x U_n(\bfX_s^\vareps(z^{1:n}), z^{1:n}) \rmd s + \sqrt{2\vareps} \bfB_t \eqsp . }
  \end{equation}
  Let $\mtt_0 > 0$. Using It\^{o}'s formula, see \cite[Theorem
  5.1]{ikeda2014stochastic} we have that for any $t \geq 0$ and $k \in \nset$
  with $k \geq 2$
  \begin{align}
    &\normLigne{\bfX_t^\vareps(z^{1:n})}^{2k} \exp[\mtt_0 t] \\
    &  = \textstyle{\bfX_0^\vareps(z^{1:n}) + 2k \int_0^t \langle \bfX_s^\vareps(z^{1:n}), \nabla_x U_n(\bfX_s^\vareps(z^{1:n}), z^{1:n}) \rangle \normLigne{\bfX_s^\vareps(z^{1:n})}^{2(k-1)} \exp[\mtt_0 s] \rmd s} \\
                                                        &  \quad  \textstyle{+ \mtt_0 \int_0^t \normLigne{\bfX_s^\vareps(z^{1:n})}^{2k} \exp[\mtt_0 s] \rmd s + 4\vareps k(2k-1) \int_0^t \normLigne{\bfX_s^\vareps(z^{1:n})}^{2(k-1)} \exp[\mtt_0 s] \rmd s + \bfM_t^\vareps(z^{1:n}) \eqsp ,}
  \end{align}
  with $(\bfM_t^\vareps(z^{1:n}))_{t \geq 0}$ a $\mathcal{F}_t$-martingale such
  that $\bfM_0^\vareps(z^{1:n}) = 0$. First, using Fubini-Tonelli theorem, we have that for any $t \geq 0$ and $k \in \nset$
  \begin{align}
    &\textstyle{\expeLigne{\normLigne{\bfX_t^\vareps(z^{1:n})}^{2k}} \exp[\mtt_0 t] \leq \bfX_0^\vareps(z^{1:n}) + \abs{2k\mtt -\mtt_0} \int_0^t \expeLigne{\normLigne{\bfX_s^\vareps(z^{1:n})}^{2k}} \exp[\mtt_0 s]  \rmd s } \\
    & \qquad \textstyle{+ (4\vareps k(2k-1) +2k\ctt) (1 + \int_0^t \expeLigne{\normLigne{\bfX_s^\vareps(z^{1:n})}^{2k}} \exp[\mtt_0 s] \rmd s) \eqsp .  }
  \end{align}
  Using Gr\"{o}nwall's lemma we have that for any $t \geq 0$ and $k \in \nset$,
  $\expeLigne{\normLigne{\bfX_t^\vareps(z^{1:n})}^{2k}} < +\infty$.  Hence,
  using this result, the Fubini theorem and \rref{assum:raginsky}($n$), we have
  that for any $t \geq 0$ and $k \in \nset$
  \begin{align}
    &\textstyle{\expeLigne{\normLigne{\bfX_t^\vareps(z^{1:n})}^{2k}} \exp[\mtt_0 t] \leq \bfX_0^\vareps(z^{1:n}) - (2k\mtt -\mtt_0) \int_0^t \expeLigne{\normLigne{\bfX_s^\vareps(z^{1:n})}^{2k}} \exp[\mtt_0 s]  \rmd s } \\
    & \qquad \textstyle{+ (4\vareps k(2k-1) +2k\ctt) \int_0^t \expeLigne{\normLigne{\bfX_s^\vareps(z^{1:n})}^{2(k-1)}} \exp[\mtt_0 s] \rmd s \eqsp .}
  \end{align}
  Combining this result and the fact that for any $t, a, b > 0$ and
  $k \in \nset$, $a t^{2(k-1)} \leq b t^{2k} + a^k/b^{k-1}$, we have for any $t \geq 0$ and $k \in \nset$
  \begin{align}
    \label{eq:allez_quasi}
    \expeLigne{\normLigne{\bfX_t^\vareps(z^{1:n})}^{2k}} \exp[\mtt_0 t] &\textstyle{\leq \bfX_0^\vareps(z^{1:n}) - (k\mtt -\mtt_0) \int_0^t \expeLigne{\normLigne{\bfX_s^\vareps(z^{1:n})}^{2k}} \exp[\mtt_0 s]  \rmd s } \\
        & \quad + (4\vareps k(2k-1) +2k\ctt)^k/((k\mtt)^{k-1}\mtt_0)\exp[\mtt_0 t] \eqsp . 
  \end{align}
  Therefore, for any $k \in \nset$, there exists $C_k \geq 0$ such that for any
  $t \geq 0$, $\expeLigne{\normLigne{\bfX_t^\vareps(z^{1:n})}^{2k}} \leq C_k$
  upon letting $\mtt_0 = k\mtt$ in \eqref{eq:allez_quasi} with $C_k$ that does not depend on $\vareps$ and $z^{1:n}$. Therefore, using that the sequence of
  distributions associated with $(\bfX_t^\vareps(z^{1:n}))_{t \geq 0}$ weakly
  converges towards $\updelta_{z^{1:n}} \Sker_\vareps$, see
  \cite{roberts1996exponential} for instance, and the monotone convergence theorem we get that
  $\int_{\rset^d} \norm{x}^{2k} \Sker_\vareps(z^{1:n}, \rmd x) \leq C_k$. We
  conclude upon using \Cref{prop:conclusion_param_bounds} in the case where
  $\vareps = 0$.
\end{proof}

Finally, we will make use of the following lemma.

\begin{lemma}
  \label{lemma:wasserstein_2_bound}
  Let $\mu, \nu \in \Pens(\rset^d)$  such that for any $k \in \nset$
  there exists $C_k \geq 0$ such that
  $\int_{\rset^d} \normLigne{x}^{2k} \rmd \mu(x) + \int_{\rset^d} \normLigne{x}^{2k}
  \rmd \nu(x) \leq C_k$. Then, for any $k \in \nset$ we have that
  \begin{equation}
   \wassersteinD[2](\mu, \nu)\leq \sqrt{2 C_k}
  \wassersteinD[1](\mu, \nu)^{(k-1)/(2k-1)} \eqsp . 
  \end{equation}
\end{lemma}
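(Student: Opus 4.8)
The plan is to interpolate between the $\wassersteinD[1]$ and $\wassersteinD[2]$ distances using an optimal transport plan for $\wassersteinD[1]$ together with Hölder's inequality, and then to control the extra moment factor that appears by the uniform moment bound $C_k$. First I would fix $k \in \nset$ and let $\zeta \in \transference(\mu, \nu)$ be an optimal transference plan for $\wassersteinD[1](\mu,\nu)$, so that $\int_{\rset^d \times \rset^d} \normLigne{x-y} \rmd \zeta(x,y) = \wassersteinD[1](\mu,\nu)$. The quantity to bound is $\wassersteinD[2](\mu,\nu)^2 \leq \int \normLigne{x-y}^2 \rmd \zeta(x,y)$, and the idea is to write $\normLigne{x-y}^2 = \normLigne{x-y}^{\theta} \cdot \normLigne{x-y}^{2-\theta}$ for a suitable exponent $\theta \in (0,1)$ and apply Hölder with conjugate exponents chosen so that the first factor integrates to a power of $\wassersteinD[1](\mu,\nu)$ and the second to a moment.

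Concretely, I would take $\theta$ so that after applying Hölder with exponents $(p,q)$ one gets $\int \normLigne{x-y}^{\theta p} \rmd \zeta \leq \wassersteinD[1](\mu,\nu)$ — this forces $\theta p = 1$ — and $\int \normLigne{x-y}^{(2-\theta)q} \rmd \zeta$ to be a bounded moment. Writing $p = 1/\theta$, the conjugate is $q = 1/(1-\theta)$, and $(2-\theta)q = (2-\theta)/(1-\theta)$. Setting this equal to $2k$ gives $(2-\theta) = 2k(1-\theta)$, i.e. $\theta(2k-1) = 2k - 2$, so $\theta = (2k-2)/(2k-1)$ and $1/p = \theta$, hence the $\wassersteinD[1]$ factor comes out with exponent $1/p = \theta = (2k-2)/(2k-1)$. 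Hmm — this gives exponent $(2k-2)/(2k-1)$ rather than the claimed $(k-1)/(2k-1)$, so I would instead apply the Hölder step to $\left(\int \normLigne{x-y}^2 \rmd\zeta\right)$ and bound the moment factor crudely by $\int \normLigne{x-y}^{2k} \rmd \zeta \leq 2^{2k-1}(\int \normLigne{x}^{2k}\rmd\mu + \int\normLigne{x}^{2k}\rmd\nu) \leq 2^{2k-1} C_k$ via convexity of $t \mapsto t^{2k}$, and then take a square root at the end: $\wassersteinD[2](\mu,\nu) = (\wassersteinD[2](\mu,\nu)^2)^{1/2} \leq (\wassersteinD[1](\mu,\nu)^{\theta} (2^{2k-1}C_k)^{1/q})^{1/2}$, which with $\theta = (2k-2)/(2k-1)$ yields exponent $(k-1)/(2k-1)$ on $\wassersteinD[1]$, as desired; the constant $(2^{2k-1}C_k)^{1/(2q)}$ can then be absorbed into $\sqrt{2C_k}$ after tracking that $1/q = 1/(2k-1)$ so the power of $2$ contributes $2^{(2k-1)/(2(2k-1))} = \sqrt 2$, matching the stated $\sqrt{2C_k}$ exactly (up to the harmless replacement of $C_k$ by $C_k^{1/(2k-1)}$, which I would handle by relabelling constants or noting $C_k \geq 1$ w.l.o.g.).

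The main steps in order: (i) reduce $\wassersteinD[2]^2$ to an integral against an optimal $\wassersteinD[1]$-plan $\zeta$; (ii) split $\normLigne{x-y}^2 = \normLigne{x-y}^{1/p}\normLigne{x-y}^{2-1/p}$ with $p = (2k-1)/(2k-2)$ and apply Hölder's inequality with conjugate exponent $q = 2k-1$; (iii) identify the first resulting factor as $\wassersteinD[1](\mu,\nu)^{1/p}$ (here using $\normLigne{x-y} \geq 0$ and that the integrand is exactly $\normLigne{x-y}$ raised to the power $1$); (iv) bound the second factor $\left(\int \normLigne{x-y}^{2k}\rmd\zeta\right)^{1/q}$ by $(2^{2k-1}C_k)^{1/q}$ using $\normLigne{x-y}^{2k} \leq 2^{2k-1}(\normLigne{x}^{2k}+\normLigne{y}^{2k})$ and the marginal property of $\zeta$; (v) take the square root and collect constants. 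I do not expect any serious obstacle: the only points requiring care are the bookkeeping of the Hölder exponents so that the final exponent on $\wassersteinD[1]$ is precisely $(k-1)/(2k-1)$, and checking that the moment hypothesis is used at the right power $2k$ (not $2(k-1)$), which the convexity bound in step (iv) handles cleanly. The case $k = 1$ is degenerate ($\theta = 0$) and should be treated separately or noted trivial since $\wassersteinD[1] \leq \wassersteinD[2]$ always and the claimed inequality reads $\wassersteinD[2] \leq \sqrt{2C_1}$, which follows from the moment bound directly.
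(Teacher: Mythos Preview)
Your proposal is correct and follows essentially the same route as the paper: take an optimal $\wassersteinD[1]$-coupling, apply H\"older's inequality to $\int \normLigne{x-y}^2\,\rmd\zeta$ with conjugate exponents $p=(2k-1)/(2k-2)$ and $q=2k-1$, bound the $2k$-th moment of $\normLigne{x-y}$ via convexity and the marginal moment hypothesis, and take a square root. The paper's proof is exactly this (including the same minor looseness in the constant that you flagged, writing $\expeLigne{\normLigne{Z}^{2k}}^{1/(2k-1)} \leq 2C_k$ rather than $2C_k^{1/(2k-1)}$), and your treatment of the degenerate case $k=1$ is a reasonable addition.
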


\begin{proof}
  Let $k \in \nset$ and $(X,Y)$ be the optimal coupling between $\mu$ and $\nu$
  w.r.t. the Wasserstein distance of order one. Let $Z = X- Y$. Using
  H\"{o}lder's inequality we have 
  \begin{equation}
    \textstyle{\expeLigne{\normLigne{Z}^2} \leq \expeLigne{\normLigne{Z}^{2k}}^{1/(2k-1)} \expeLigne{\normLigne{Z}}^{(2k-2)/(2k-1)} \leq \expeLigne{\normLigne{Z}^{2k}}^{1/(2k-1)} \wassersteinD[1](\mu, \nu)^{(2k-2)/(2k-1)} \eqsp . }
  \end{equation}
  We conclude upon combining this result and the fact that
  $\expeLigne{\normLigne{Z}^{2k}}^{1/(2k-1)} \leq 2 C_k$.
\end{proof}


\section{Basics on flows and geometric measure theory}
\label{sec:basics-geom-meas}

In this section, we recall basic facts from geometric measure theory. We refer to
\cite{federer1969geometric,morgan2016geometric,ambrosio2000functions} for a
complete exposition of geometric measure theory concepts. We begin with a
proposition establishing the existence of flows. Then, we prove useful facts on
rectifiable sets. Finally, we state area and coarea formulas which are central to
our analysis.

\begin{lemma}
  \label{lemma:flow}
  Let $k \in \nsets$  and
  $X \in \rmc^k(\rset^d, \rset^d)$. Assume that $X$ is compactly supported. Then, there
  exists a unique mapping $\Phi \in \rmc^{k+1, k}(\rset \times \rset^d, \rset^d)$ such that for
  any $x \in \rset^d$, $\Phi(0, x) = x$ and $\partial_s \Phi(s,\rset^d) = X(\Phi(s,\rset^d))$.  In
  addition, for any $s \in \rset$, $x \mapsto \Phi(s, x)$ is a diffeomorphism.
\end{lemma}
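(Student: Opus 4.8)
The plan is to prove \Cref{lemma:flow} by the standard ODE flow argument, carefully tracking regularity. First I would establish \emph{local} existence and uniqueness: for each $x \in \rset^d$, the initial value problem $\partial_s \Phi(s,x) = X(\Phi(s,x))$, $\Phi(0,x)=x$, has a unique maximal solution by the Picard--Lindel\"of theorem, since $X \in \rmc^k(\rset^d,\rset^d)$ with $k \geq 1$ is in particular locally Lipschitz. Next I would invoke compact support: let $\msk = \supp(X)$, which is compact, and observe that any integral curve starting inside $\msk$ stays in $\msk$ (wait—no, it stays bounded), while any integral curve starting outside $\msk$ is constant. More precisely, $X$ is globally bounded, say $\norm{X}_\infty \leq C$, so $\norm{\Phi(s,x) - x} \leq C\abs{s}$, which rules out finite-time blow-up; hence the maximal solution is defined for all $s \in \rset$. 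This gives a globally defined map $\Phi: \rset \times \rset^d \to \rset^d$ with $\Phi(0,\cdot) = \Id$.

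Second, I would address the regularity and the group/diffeomorphism property. The group law $\Phi(s+t, x) = \Phi(s, \Phi(t,x))$ follows from uniqueness of solutions: both sides solve the same ODE with the same value at $s=0$. Taking $t = -s$ shows $\Phi(s,\cdot)$ is a bijection with inverse $\Phi(-s,\cdot)$. For the smoothness claim $\Phi \in \rmc^{k+1,k}(\rset \times \rset^d, \rset^d)$ (meaning $k+1$ times differentiable in the time variable and $k$ times in the space variable), I would cite the classical theorem on smooth dependence of ODE solutions on initial conditions: if the vector field is $\rmc^k$ in the space variable, then the flow is $\rmc^k$ jointly, and since $\partial_s \Phi = X \circ \Phi$ with $X \in \rmc^k$ and $\Phi \in \rmc^k$, one gains an extra time derivative by bootstrapping, giving $\rmc^{k+1}$ in $s$. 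See, e.g., \cite{hartman2002ordinary} or any standard ODE reference. Since $\Phi(s,\cdot)$ and its inverse $\Phi(-s,\cdot)$ are both $\rmc^k$, each $\Phi(s,\cdot)$ is a $\rmc^k$-diffeomorphism of $\rset^d$.

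I do not anticipate a genuine obstacle here—this is a textbook result—so the main point of care is simply matching the precise regularity indices $\rmc^{k+1,k}$ claimed in the statement and making sure the compact-support hypothesis is used exactly where needed (to upgrade local existence to global existence for all $s \in \rset$, and to ensure the flow is the identity outside a compact set, which is what makes the later Whitney-extension-based arguments in the proof of \Cref{prop:lip_l} go through). The cleanest write-up would be: (a) global existence via the a priori bound $\norm{\Phi(s,x)-x} \leq \norm{X}_\infty \abs{s}$; (b) uniqueness and the flow property from Picard--Lindel\"of; (c) regularity and the diffeomorphism property by citing smooth dependence on initial conditions plus the bootstrap for the time variable.
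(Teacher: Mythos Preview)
Your proposal is correct and follows the standard ODE flow argument; the paper's own proof simply cites the corresponding textbook results (\cite[Theorems 2.5, 2.6]{lang2002introduction} for existence/uniqueness/regularity and \cite[Lemma 2.4]{MR0163331} for the diffeomorphism property), so your write-up is a fleshed-out version of exactly what the paper invokes.
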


\begin{proof}
  The first part of the proposition is an application of \cite[Theorem 2.5,
  Theorem 2.6]{lang2002introduction}. The second part is an application of
  \cite[Lemma 2.4]{MR0163331}.
\end{proof}

Note that the previous lemma can be extended to smooth manifolds.

\begin{definition}{\cite[Definition 2.57]{ambrosio2000functions}}
  Let $\mse \subset \rset^d$ and $k \in \nset$ with $k \leq d$. $\mse$ is
  countably $\calH^k$-rectifiable if there exists $(\psi_i)_{i \in \nset}$ such
  that for any $i \in \nset$, $\psi_i: \ \rset^k \to \rset^d$ is Lipschitz
  continuous and
  \begin{equation}
\calH^k\parenthese{\mse \backslash \cup_{i \in \nset} \psi_i(\rset^k)} = 0 \eqsp . 
  \end{equation}
\end{definition}
In what follows, we provide an easy criterion to verify if a given level set is
a countably $\calH^k$-rectifiable set. We start by recalling the following
proposition.

\begin{proposition}{\cite[Theorem 10.5]{lang2007notes}}
  \label{thm:rectifiable_almost}
  Let $k \in \nset$ with $k \leq d$ and $F: \ \rset^d \to \rset^k$ be Lipschitz continuous. Then
  for $\Leb$-almost every $y \in \rset^k$, $F^{-1}(\{y\})$ is countably
  $\calH^{d-k}$-rectifiable.
\end{proposition}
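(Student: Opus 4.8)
Since this statement is quoted verbatim from \cite{lang2007notes}, I only sketch the standard proof. The plan is to split $\rset^d$ according to the differentiability behaviour of $F$ and to analyse the level sets $F^{-1}(\{y\})$ piece by piece. By Rademacher's theorem $F$ is differentiable $\Leb$-a.e.; let $\msn$ be the $\Leb$-null set where $F$ is not differentiable, and partition the remaining points into $\msd_0 = \{x : \rmD F(x) \text{ not surjective}\}$ and $\msd_+ = \{x : \rmD F(x) \text{ surjective}\}$. It is enough to show that for $\Leb$-a.e.\ $y \in \rset^k$ the slices $\msn \cap F^{-1}(\{y\})$ and $\msd_0 \cap F^{-1}(\{y\})$ are $\calH^{d-k}$-negligible while $\msd_+ \cap F^{-1}(\{y\})$ is countably $\calH^{d-k}$-rectifiable: a countable union of countably $\calH^{d-k}$-rectifiable sets is countably $\calH^{d-k}$-rectifiable, an $\calH^{d-k}$-negligible set is trivially so, and the property passes to subsets (all three facts follow immediately from the definition above, using only monotonicity of the outer measure $\calH^{d-k}$), so the three contributions can be combined over a common conull set of parameters $y$.

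The null set $\msn$ is dealt with by the Eilenberg (coarea) inequality, which needs no rectifiability hypothesis: $\int_{\rset^k}^\ast \calH^{d-k}(\msn \cap F^{-1}(\{y\})) \, \rmd y \le c(d,k)(\mathrm{Lip}\,F)^k \Leb(\msn) = 0$, so the slices are $\calH^{d-k}$-null for a.e.\ $y$; the same inequality kills the slices of any other $\Leb$-null set appearing below. For $\msd_0$ and $\msd_+$ I would invoke the $\rmc^1$-Lusin (Whitney-extension) property of Lipschitz maps: there exist countably many Borel sets $\mse_i$, each contained in $\msd_0$ or in $\msd_+$, and maps $g_i \in \rmc^1(\rset^d, \rset^k)$ with $F = g_i$ on $\mse_i$ and $\Leb(\rset^d \setminus \cup_i \mse_i) = 0$; discarding a further $\Leb$-null set we may also assume every point of $\mse_i$ is a density point, so that $\rmD g_i = \rmD F$ on $\mse_i$. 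If $\mse_i \subseteq \msd_0$, then $\rmD g_i$ has rank $<k$ on $\mse_i$, so by Sard's theorem the critical-value set $g_i(\{x : \mathrm{rank}\,\rmD g_i(x) < k\})$ is $\Leb$-null in $\rset^k$, whence for a.e.\ $y$ one has $y \notin g_i(\mse_i)$, i.e.\ $F^{-1}(\{y\}) \cap \mse_i = \emptyset$. If $\mse_i \subseteq \msd_+$, then $\rmD g_i$ is surjective on $\mse_i$, and by Sard's theorem $y$ is a regular value of $g_i$ for a.e.\ $y$, so $g_i^{-1}(\{y\})$ is an embedded $\rmc^1$ submanifold of dimension $d-k$, hence countably $\calH^{d-k}$-rectifiable (covered by countably many $\rmc^1$, in particular Lipschitz, charts), and therefore so is its subset $F^{-1}(\{y\}) \cap \mse_i$.

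Intersecting the countably many conull sets of ``good'' $y$ produced above, on a conull set of $y$ one has simultaneously that $\msn \cap F^{-1}(\{y\})$, $\msd_0 \cap F^{-1}(\{y\})$ and the slice of $\rset^d \setminus \cup_i \mse_i$ are $\calH^{d-k}$-null, while $\msd_+ \cap F^{-1}(\{y\})$ is contained in the countable union $\cup_i (g_i^{-1}(\{y\}) \cap \mse_i)$ of countably $\calH^{d-k}$-rectifiable sets; assembling these pieces and adding back the negligible parts shows $F^{-1}(\{y\})$ is countably $\calH^{d-k}$-rectifiable for a.e.\ $y$. The one genuinely nontrivial ingredient, and the step I expect to be the main obstacle to write out in full, is the $\rmc^1$-Lusin decomposition of a Lipschitz map, which rests on the Whitney extension theorem applied to (an approximation of) the derivative together with Rademacher's theorem; Sard's theorem, the Eilenberg inequality, and the closure properties of countable rectifiability used above are classical or immediate. (If the form of the coarea formula available here, \Cref{thm:area_coarea}, already lists the $\calH^{d-k}$-rectifiability of a.e.\ level set among its conclusions, the statement is of course immediate.)
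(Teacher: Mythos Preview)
Your sketch is a reasonable outline of the standard argument, but note that the paper does not provide its own proof of this statement: it is simply quoted from \cite[Theorem 10.5]{lang2007notes} and used as a black box. So there is nothing to compare against; your proposal goes beyond what the paper does here.
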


The following lemma ensures the stability of countably
$\calH^k$-rectifiable sets through diffeomorphisms.

\begin{lemma}
  \label{prop:diffeo_recti}
  Let $k \in \nset$ with $k \leq d$, $\mse \subset \rset^d$ be a compact
  countably $\calH^k$-rectifiable set and $\Phi \in \rmc^1(\rset^d, \rset^d)$ be
  a diffeomorphism. Then $\Phi(\mse)$ is countably $\calH^k$-rectifiable.
\end{lemma}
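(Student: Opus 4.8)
\textbf{Proof plan for \Cref{prop:diffeo_recti}.}
The plan is to reduce the claim directly to the definition of countable $\calH^k$-rectifiability and exploit the fact that a $\rmc^1$ diffeomorphism is locally Lipschitz. By hypothesis, $\mse$ is countably $\calH^k$-rectifiable, so there exists a family $(\psi_i)_{i \in \nset}$ of Lipschitz maps $\psi_i: \ \rset^k \to \rset^d$ with $\calH^k(\mse \backslash \cup_{i \in \nset} \psi_i(\rset^k)) = 0$. First I would write $\Phi(\mse) \subset \cup_{i \in \nset} \Phi(\psi_i(\rset^k)) \cup \Phi(\mse \backslash \cup_{i \in \nset} \psi_i(\rset^k))$, and set $\tilde{\psi}_i = \Phi \circ \psi_i: \ \rset^k \to \rset^d$. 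The natural candidate witnessing rectifiability of $\Phi(\mse)$ is then the family $(\tilde{\psi}_i)_{i \in \nset}$.

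The first key step is to check that each $\tilde{\psi}_i$ is Lipschitz continuous. Since $\mse$ is compact and $\psi_i$ is continuous, the relevant portions of the $\psi_i$ take values in a compact set; more directly, one observes that $\Phi \in \rmc^1(\rset^d, \rset^d)$, so on any bounded set $\Phi$ is Lipschitz with constant bounded by the supremum of $\normLigne{\rmD\Phi}$ there. A clean way to handle the non-compactness of $\rset^k$ is to replace each $\psi_i$ by its restrictions $\psi_{i,m} = \psi_i|_{\cball{0}{m}}$ for $m \in \nsets$ (and, if one insists on maps defined on all of $\rset^k$, extend them to Lipschitz maps $\rset^k \to \rset^d$ using boundedness, or simply note that the definition of rectifiability is unchanged if one allows a countable family of Lipschitz maps defined on bounded pieces, since these still cover up to an $\calH^k$-null set). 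Each $\psi_{i,m}(\cball{0}{m})$ is compact, hence contained in a ball $\cball{0}{R_{i,m}}$, and $\Phi$ restricted to $\cball{0}{R_{i,m}}$ is Lipschitz; composing, $\tilde{\psi}_{i,m} = \Phi \circ \psi_{i,m}$ is Lipschitz. Reindexing $(i,m)$ by $\nset$ gives a countable family of Lipschitz maps whose images cover $\Phi(\cup_{i} \psi_i(\rset^k))$.

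The second key step is to show that the leftover set $\msn = \mse \backslash \cup_{i \in \nset} \psi_i(\rset^k)$, which satisfies $\calH^k(\msn) = 0$, is mapped by $\Phi$ to an $\calH^k$-null set. This follows because $\Phi$ is locally Lipschitz: writing $\msn = \cup_{j \in \nset} (\msn \cap \cball{0}{j})$ and using that $\Phi$ is $\Lip_j$-Lipschitz on $\cball{0}{j}$, the standard scaling property of Hausdorff measure gives $\calH^k(\Phi(\msn \cap \cball{0}{j})) \leq \Lip_j^k \calH^k(\msn \cap \cball{0}{j}) = 0$, and hence $\calH^k(\Phi(\msn)) = 0$ by countable subadditivity. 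Combining the two steps, $\Phi(\mse)$ is covered, up to an $\calH^k$-null set, by a countable family of images of Lipschitz maps, which is exactly the definition of countable $\calH^k$-rectifiability.

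I do not expect any serious obstacle here; the only mildly delicate point is the bookkeeping to accommodate the definition's requirement that the covering maps be defined on all of $\rset^k$ rather than on bounded pieces, which is resolved either by a trivial Lipschitz extension or by the elementary observation that both formulations of rectifiability coincide. The compactness of $\mse$ is used only to keep the Lipschitz constants of $\Phi$ under control on the relevant pieces and is, in fact, not strictly necessary given the local-Lipschitz argument, but it simplifies the exposition.
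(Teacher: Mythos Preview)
Your proposal is correct and follows essentially the same approach as the paper: cover $\Phi(\mse)$ by the images of $\Phi\circ\psi_i$ and show the leftover is $\calH^k$-null via the Lipschitz scaling of Hausdorff measure. The paper's version is slightly more compact in that it uses compactness of $\mse$ once to obtain a single Lipschitz constant $\Lip_\mse$ for $\Phi$ on $\mse$ (rather than your countable localization to balls), and it uses injectivity of $\Phi$ to write the inclusion $\Phi(\mse)\backslash\cup_i(\Phi\circ\psi_i)(\rset^k)\subset\Phi(\mse\backslash\cup_i\psi_i(\rset^k))$ directly; on the other hand, you are more careful than the paper about why the composed maps $\Phi\circ\psi_i$ are Lipschitz on all of $\rset^k$, a point the paper leaves implicit.
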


\begin{proof}
  Since $\mse \subset \rset^d$ is countably $\calH^k$-rectifiable there exists
  $(\psi_i)_{i \in \nset}$ such that for any $i \in \nset$,
  $\psi_i: \ \rset^k \to \rset^d$ is Lipschitz continuous and
  \begin{equation}
\calH^k\parenthese{\mse \backslash \cup_{i \in \nset} \psi_i(\rset^k)} = 0 \eqsp . 
  \end{equation}
  Let $y \in \Phi(\mse) \backslash \cup_{i\in \nset} (\Phi \circ
  \psi_i)(\rset^k)$. Then there exists $x \in \mse$ such that $y = \Phi(x)$ and for
  any $i \in \nset$ and $y_i \in \psi_i(\rset^k)$, $\Phi(x) \neq \Phi(y_i)$, \ie \
  $x \neq y_i$. Therefore
  $\Phi(\mse) \backslash \cup_{i\in \nset} (\Phi \circ \psi_i)(\rset^k) \subset
  \Phi(\mse \backslash \cup_{i\in \nset} \psi_i(\rset^k))$. $\Phi$ is
  Lipschitz-continuous on $\mse$ with constant $\Lip_\mse \geq 0$ and therefore
  using \cite[Proposition 2.49 (iv)]{ambrosio2000functions} we get
  \begin{equation}
    0 \leq \calH^k\parenthese{\Phi(\mse) \backslash \cup_{i\in \nset} (\Phi \circ \psi_i)(\rset^k))} \leq \Lip_\mse^k \calH^k\parenthese{\mse \backslash \cup_{i\in \nset} \psi_i(\rset^k)} \leq 0 \eqsp ,
  \end{equation}
  which concludes the proof.
\end{proof}

Finally, we show that if a function is regular enough then its level-sets are
countably $\calH^k$-rectifiable.

\begin{lemma}
  \label{prop:rectifiable_level_set}
  Let $F \in \rmc^1(\rset^d, \rset^p)$ with $ d \geq p$ and
  $\msa \subset \rset^p$ compact such that $F^{-1}(\msa)$ is compact and
  $0 \in \interior(\msa)$. In addition, assume that for any
  $x \in F^{-1}(\{0\})$, $\absLigne{\jac{x}} > 0$. Then, there exists $\eta > 0$
  such that for any $t \in \cballinfty{0}{\eta}$, $F^{-1}(t)$ is countably
  $\calH^{d-p}$-rectifiable and $\calH^{d-p}(F^{-1}(t)) < +\infty$.
\end{lemma}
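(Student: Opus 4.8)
The plan is to exploit that, near $t=0$, the map $F$ is a submersion on a fixed open neighbourhood of $F^{-1}(0)$ which also contains every level set $F^{-1}(t)$ for small $t$; each such level set is then a compact $C^1$ submanifold of dimension $d-p$, and countable $\calH^{d-p}$-rectifiability together with finiteness of $\calH^{d-p}$ are classical for compact $C^1$ manifolds.

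First I would produce the uniform neighbourhood. Since $0 \in \interior(\msa)$, pick $\eta_0 > 0$ with $\ballinfty{0}{\eta_0} \subset \msa$. I claim there is $\eta_1 \in \ooint{0, \eta_0}$ such that $\jac{x} > 0$ for every $x \in F^{-1}(\cballinfty{0}{\eta_1})$. If not, there is a sequence $(x_k)_{k \in \nset}$ with $\normLigne{F(x_k)}_\infty \le 1/(k+1)$ and $\jac{x_k} = 0$; for $k$ large $F(x_k) \in \ballinfty{0}{\eta_0} \subset \msa$, so $x_k \in F^{-1}(\msa)$, which is compact, and up to extraction $x_k \to x^\star$; continuity of $F$ and of $x \mapsto \jac{x}$ gives $F(x^\star)=0$ and $\jac{x^\star}=0$, contradicting the hypothesis. (This is the same compactness-plus-extraction device as in step (a) of the proof of \Cref{prop:lip_l}, and it is precisely where compactness of $F^{-1}(\msa)$ is used.) Set $\msw = F^{-1}(\ballinfty{0}{\eta_1})$: it is open, it contains $F^{-1}(\cballinfty{0}{\eta})$ for every $\eta < \eta_1$, and $\jac{x} > 0$ on $\msw$; since $d \ge p$, $\jac{x}>0$ means $\rmD F(x)$ has rank $p$, so $F|_{\msw}$ is a $C^1$ submersion.

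Fix any $\eta \in \ooint{0, \eta_1}$ and $t \in \cballinfty{0}{\eta}$. As $\normLigne{t}_\infty < \eta_1$ one has $F^{-1}(t) = (F|_{\msw})^{-1}(t)$, so by the regular-value (implicit function) theorem $F^{-1}(t)$ is a, possibly empty, $C^1$ embedded submanifold of $\rset^d$ of dimension $d-p$. It is also closed (preimage of the closed set $\{t\}$) and contained in $F^{-1}(\cballinfty{0}{\eta}) \subset F^{-1}(\msa)$, hence compact. A compact $C^1$ $m$-manifold with $m = d-p$ admits a finite atlas, so it is a finite union $\bigcup_{i} \psi_i(V_i)$ with each $\psi_i \in \rmc^1$, each $V_i \subset \rset^m$ a bounded open set, and each $\psi_i$ Lipschitz on $\overline{V_i}$ (being $C^1$ on a compact set); this already shows $F^{-1}(t)$ is countably $\calH^{d-p}$-rectifiable, and by the Lipschitz estimate for Hausdorff measure (as in \cite[Proposition 2.49]{ambrosio2000functions}) $\calH^{d-p}(\psi_i(V_i)) \le L_i^{d-p}\, \calH^{d-p}(V_i) < +\infty$ with $L_i$ the Lipschitz constant of $\psi_i$ on $\overline{V_i}$, so $\calH^{d-p}(F^{-1}(t)) < +\infty$ after summing. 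When $d=p$ this degenerates to the statement that $F$ is a local diffeomorphism near $F^{-1}(0)$, so each $F^{-1}(t)$ is a finite discrete set, consistent with $\calH^0$ being counting measure.

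The only genuine obstacle is the uniform-neighbourhood step, which forces the hypothesis that $F^{-1}(\msa)$ be compact; once $F$ is a submersion on a fixed open set that traps all nearby level sets, the rest is standard differential topology and the elementary behaviour of $\calH^{d-p}$ under Lipschitz maps. As an alternative to the last paragraph, one could first establish that $F^{-1}(0)$ is a compact $C^1$ $(d-p)$-manifold, hence $\calH^{d-p}$-rectifiable of finite measure, and then transfer to $F^{-1}(t)$ via the explicit diffeomorphism $\bar\Phi_t$ with $\bar\Phi_t(F^{-1}(t)) = F^{-1}(0)$ constructed in the proof of \Cref{prop:lip_l} together with \Cref{prop:diffeo_recti}; the direct submersion argument above is preferable since it avoids any apparent dependency on results that themselves invoke this lemma.
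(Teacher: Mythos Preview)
Your proof is correct and takes a genuinely different route from the paper's. The paper does not invoke the regular-value theorem; instead it rebuilds, inside this lemma, the same flow machinery used in \Cref{prop:lip_l}: vector fields $g_i$ obtained from $G(x)^{-1}$, their flows $\Phi_i$, and the composite diffeomorphism $\bar\Phi_t$ satisfying $\bar\Phi_t(F^{-1}(t))=F^{-1}(0)$. Rectifiability is then obtained indirectly: \Cref{thm:rectifiable_almost} provides \emph{some} $t_0$ for which $F^{-1}(t_0)$ is countably $\calH^{d-p}$-rectifiable, and \Cref{prop:diffeo_recti} pushes this to every nearby level set via $\bar\Phi_{-s}\circ\bar\Phi_{t_0}$. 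Finiteness of $\calH^{d-p}(F^{-1}(t))$ is handled separately, through the coarea formula (which yields one level set of finite measure) and the Lipschitz bound \cite[Proposition~2.49(iv)]{ambrosio2000functions}. Your submersion argument is shorter and more self-contained: once $F$ is a submersion on a fixed open set containing all nearby level sets, the implicit function theorem gives each $F^{-1}(t)$ as a compact $C^1$ $(d-p)$-manifold, and rectifiability and finite measure follow immediately from a finite atlas. The paper's approach has the incidental benefit of exhibiting an explicit diffeomorphism between level sets (reused in \Cref{prop:lip_l}), but for the statement at hand your direct argument is cleaner, and your remark about avoiding any appearance of circularity with \Cref{prop:lip_l} is well taken---the paper avoids it only by duplicating the flow construction rather than citing that proposition.
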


\begin{proof}
  In this proof, we show that the level sets $F^{-1}(t)$ and $F^{-1}(s)$ are
  diffeomorphic for $t, s \in \cball{0}{\eta}$. Then, we conclude upon combining
  \Cref{thm:rectifiable_almost} and \Cref{prop:diffeo_recti}. First, note that
  $F^{-1}(0)$ is compact since it is closed in $F^{-1}(\msa)$ which is
  compact. There exists $\eta_0 > 0$ such that for any
  $x \in F^{-1}(\cballinfty{0}{\eta_0})$, $\abs{\jac{x}} > 0$. Indeed, if this
  is not the case then there exists $(x_k)_{k \in \nset}$ with
  $\lim_{k \to +\infty} F(x_k) = 0$ and $\abs{\jac{x_k}} =0$. Since
  $0 \in \interior(\msa)$ there exists $\eta' > 0$ such that
  $F^{-1}(\cball{0}{\eta'})$ is compact and therefore there exists $x^\star$
  such that, up to taking a subsequence, $\lim_{k \to +\infty} x_k = x^\star$. Then
  $F(x^\star)=0$ and $\jac{x^\star}=0$, which is absurd.  We define
  $\msk_0 = F^{-1}(\cballinfty{0}{\eta_0})$ and
  $\msk_1 = F^{-1}(\cballinfty{0}{\eta})$ with $\eta = \min(\eta_0/2,
  \eta')$. Note that $\msk_1 \subset \interior(\msk_0)$.
    
  Let $G(x) = \rmD F(x) \rmD F(x)^\top$. Note that for any $x \in \msk_0$,
  $G(x)$ is invertible.  We define $\{f_i\}_{i=1}^p$ such that for
  any $i \in \{1, \dots, p\}$, $f_i:\ \rset^d \to \rset^d$ with for any
  $x \in \msk_0$
  \begin{equation}
    \textstyle{
      f_i(x) = \sum_{k=1}^p h_{i, k}(x) \nabla F_k(x) \eqsp ,
      }
    \end{equation}
    with $\{h_{i,j}(x)\}_{1 \leq i,j \leq p} = G^{-1}(x)$. In addition, we
    assume that $f_i(x) = 0$ for any $x \notin \msk_1$.  For any $x \in \msk_1$
    and $i, j \in \{1, \dots, p\}$ we have
    \begin{equation}
      \textstyle{
        \langle f_i(x), \nabla F_j(x) \rangle = \sum_{k=1}^p h_{i,k}(x) \langle \nabla F_k(x), \nabla F_j(x) \rangle = \updelta_i(j) \eqsp .
        }
    \end{equation}
    In what follows, we let $\{g_i\}_{i=1}^p$ such that for any
    $i \in \{1, \dots, p\}$, $g_i: \ \rset^d \to \rset^d$ and
    $g_i \in \rmc^1(\rset^d, \rset^d)$ such that for any $x \in \msk_1$,
    $g_i(x) = f_i(x)$ and for any $x \notin \interior(\msk_1)$, $g_i(x)=0$, such
    functions exist using Whitney extension theorem for instance, see
    \cite{whitney1934analytic}. In what follows, we fix
    $t = (t_0, \dots, t_p) \in \cballinfty{0}{\eta}$. For any
    $i \in \{1, \dots, p\}$ let $\Phi_i: \ \rset \times \rset^d \to \rset^d$
    given by $\Phi_i(0, \cdot) = \Id$ and for any $s \in \rset$ and
    $x \in \rset^d$
    \begin{equation}
      \partial_s \Phi_i(s,x) = -g_i(\Phi_i(s,x)) \eqsp . 
    \end{equation}
    For any $i \in \{1, \dots, p\}$, $\Phi_i$ is well-defined using
    \Cref{lemma:flow}.  Therefore, we have for any $i \in \{1, \dots, p\}$ and
    $s \in \rset$, $x \in \rset^d$ such that $\Phi(s,x) \in \msk_1$
    \begin{equation}
      \partial_s F(\Phi_i(s,x)) = -(\langle g_i(\Phi_i(s,x)), \nabla F_1(\Phi_i(s,x)) \rangle, \dots, \langle g_i(\Phi_i(s,x)), \nabla F_p(\Phi_i(s,x)) \rangle) = -e_i \eqsp , 
    \end{equation}
    where we recall that $\{e_i\}_{i=1}^p$ is the canonical basis of
    $\rset^p$. We define $\bar{\Phi}_t: \ \rset^d \to \rset^d$ such that for any
    $x \in \rset^d$, $\bar{\Phi}_t(x) = x^{(p)}$ with $x^{(0)} = x$ and for any
    $i \in \{0, \dots, p-1\}$, $x^{(i+1)} = \Phi_{i+1}(t_{i+1}, x^{(i)})$. Note that
    $\bar{\Phi}_t \in \rmc^1(\rset^d, \rset)$ is a diffeomorphism using
    \Cref{lemma:flow}. Using \Cref{thm:rectifiable_almost}, there exists
    $t_0 \in \cballinfty{0, \eta}$ such that $F^{-1}(t_0)$ is countably
    $\calH^{d-p}$ rectifiable. Let $s \in \cballinfty{0}{\eta}$. Using
    \Cref{prop:diffeo_recti} and that $\bar{\Phi}_{-s} \circ \bar{\Phi}_{t_0}$
    is a diffeomorphism between $F^{-1}(t_0)$ and $F^{-1}(s)$ we get that
    $F^{-1}(s)$ is countably $\calH^{d-p}$ rectifiable, which concludes the
    first part of the proof.

    For the second part of the proof, we let $R \geq 0$ such that
    $F^{-1}(\msa) \subset \cball{0}{R}$. Since $\cball{0}{R}$ is $\calH^d$-rectifiable
    we obtain using the coarea formula \Cref{thm:area_coarea}
    \begin{equation}
      \textstyle{
        \int_{\cball{0}{R}} \absLigne{\jac{x}} \rmd x   = \int_{\rset^p} \calH^{d-p}(\cball{0}{R} \cap F^{-1}(t)) \rmd t \geq \int_{\cballinfty{0}{\eta}} \calH^{d-p}(F^{-1}(t)) \rmd t \eqsp .
        }
  \end{equation}
  Therefore, there exists $t_0 \in \cballinfty{0}{\eta}$ such that
  $\calH^{d-p}(F^{-1}(t_0))<+\infty$. Let
  $\Psi = \bar{\Phi}_{-t} \circ \bar{\Phi}_{t_0}$ and $\Lip \geq 0$ such that
  for any $x \in F^{-1}(t_0)$ we have $\norm{\rmd \Psi(x)} \leq \Lip$. Then,
  using \cite[Proposition 2.49 (iv)]{ambrosio2000functions} we have
  \begin{equation}
    \calH^{d-p}(F^{-1}(t)) =  \calH^{d-p}(\Psi(F^{-1}(t_0)))\leq \Lip^{d-p} \calH^{d-p}(F^{-1}(t_0)) < +\infty \eqsp ,
  \end{equation}
  which concludes the proof.
\end{proof}

We conclude this section with the area and coarea formulae. 

\begin{theorem}
  \label{thm:area_coarea}
  Let $F: \rmc^1(\rset^d, \rset^p)$ be Lipschitz continuous, $k \in \nset$ with
  $k \leq d$ and $\mse \subset \rset^d$ be a countably $\calH^k$-rectifiable
  set. Let $\varphi : \ \rset^d \to \rset$ measurable such that
  \begin{equation}
    \textstyle{
      \int_{\mse} \abs{\varphi(x)} \jac{x} \rmd \calH^k(x) < +\infty \eqsp ,
      }
  \end{equation}
or assume that $\varphi: \ \rset^d \to \coint{0, +\infty}$. Then, the following hold:
\begin{itemize}
\item (Area formula) If $d \leq p$ then
  \begin{equation}
    \textstyle{
    \int_{\mse} \varphi(x) \jac{x} \rmd \calH^{k}(x) = \int_{\rset^p} \parenthese{\int_{\mse \cap F^{-1}(y)} \varphi(x) \rmd \calH^0(x)} \rmd \calH^k(y) \eqsp . }
  \end{equation}
\item (Coarea formula) If $k \geq p$ then
  \begin{equation}
    \textstyle{
      \int_{\mse} \varphi(x) \jac{x} \rmd \calH^{k}(x) = \int_{\rset^p} \parenthese{\int_{\mse \cap F^{-1}(y)} \varphi(x) \rmd \calH^{k-p}(x)} \rmd \calH^p(y) \eqsp .
      }
  \end{equation}
\end{itemize}
\end{theorem}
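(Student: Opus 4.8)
The plan is to deduce this from the classical area and coarea formulae for countably rectifiable sets; see \cite[Sections~3.2.3, 3.2.12]{federer1969geometric}, \cite[Chapter~2]{ambrosio2000functions}, or \cite[Chapters~3--4]{morgan2016geometric}. We indicate the main steps. By linearity and the monotone convergence theorem it suffices to prove each identity for $\varphi = \1_B$ with $B \subset \rset^d$ Borel; the case of a general measurable $\varphi \geq 0$ then follows by approximation with nonnegative simple functions, and the case of $\varphi$ satisfying the integrability hypothesis by writing $\varphi = \varphi^{+} - \varphi^{-}$ and subtracting, both resulting integrals being finite. Since $F \in \rmc^1(\rset^d, \rset^p)$ is Lipschitz, $\jac{\cdot}$ is bounded on $\rset^d$; and since $\mse$ is countably $\calH^k$-rectifiable we may write $\mse = \mse_\infty \cup \bigcup_{j \in \nset} \mse_j$, a disjoint union with $\calH^k(\mse_\infty) = 0$ and each $\mse_j$ a compact subset of an embedded $\rmc^1$ $k$-submanifold $M_j$ of $\rset^d$ (structure theorem for rectifiable sets, \cite[Chapter~2]{ambrosio2000functions}). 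The part $\mse_\infty$ contributes nothing: to the left-hand side of either formula because $\calH^k(\mse_\infty) = 0$ and $\jac{\cdot}$ is bounded; and to the right-hand side because, on the area side ($k \leq d \leq p$), $F(\mse_\infty)$ is $\calH^k$-negligible (Lipschitz maps do not increase $\calH^k$), so $\mse_\infty \cap F^{-1}(y) = \emptyset$ for $\calH^k$-a.e. $y$, while on the coarea side ($p \leq k \leq d$) the Eilenberg coarea inequality, valid for arbitrary sets, bounds the iterated integral by a multiple of $\calH^k(\mse_\infty) = 0$.

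It remains to prove each formula with $\mse$ replaced by a compact subset $\mse_j$ of a $\rmc^1$ $k$-submanifold. Fixing a $\rmc^1$ local parametrization $\psi : V \to M_j$, $V \subset \rset^k$ open, and using the $\rmc^1$ change of variables on $M_j$ (itself the area formula for $\psi$), the statement is pulled back to an identity on an open subset of $\rset^k$ for the $\rmc^1$ map $g = F \circ \psi : V \to \rset^p$, in which $\jac{\cdot}$ restricted to $T_x M_j$ becomes the ordinary $k$-dimensional Jacobian of $g$. One first checks the linear case: for a linear $L : \rset^k \to \rset^p$, if $k \leq p$ then $\det(L^{\top}L)^{1/2}$ is exactly the factor by which $L$ rescales $\calH^k$ (both sides vanishing when $L$ is not injective), which gives the area formula; if $k \geq p$ then, decomposing $\rset^k = \ker L \oplus (\ker L)^{\perp}$ and noting that $\det(LL^{\top})^{1/2}$ is the Jacobian of $L|_{(\ker L)^{\perp}}$, Fubini's theorem on this splitting yields the coarea formula (both sides vanishing when $\operatorname{rank} L < p$). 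For general $\rmc^1$ $g$, one covers $V$, up to the closed set $\{\,x : Jg(x) = 0\,\}$, by countably many small balls on which, after an affine change of coordinates, $g$ differs from its differential by at most $\varepsilon$ in $\rmc^1$-norm; applying the linear identity on each ball produces a multiplicative error $(1+C\varepsilon)^{\pm 1}$, and summing and letting $\varepsilon \to 0$ gives the identity on $\{\,Jg \neq 0\,\}$. The critical set $\{\,Jg = 0\,\}$ contributes zero to the left-hand side of both formulae; on the area side it also contributes zero to the right-hand side because its $g$-image is $\calH^k$-null (a Sard-type fact for Lipschitz maps, using $k \leq p$), and on the coarea side it contributes zero for $\calH^p$-a.e. $y$ again by Fubini on the affine pieces.

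Two measure-theoretic points finish the argument. First, the fiber functions $y \mapsto \int_{\mse \cap F^{-1}(y)} \varphi \, \rmd \calH^{0}(x)$ (area) and $y \mapsto \int_{\mse \cap F^{-1}(y)} \varphi \, \rmd \calH^{k-p}(x)$ (coarea) are measurable: this is clear piece by piece from the chart reduction, and globally by a monotone class and countable additivity argument. Second, the passage from $\varphi = \1_B$ to general $\varphi$ is as in the first paragraph, the hypothesis $\int_{\mse} |\varphi| \jac{x} \, \rmd \calH^k(x) < +\infty$ guaranteeing that $\varphi = \varphi^{+} - \varphi^{-}$ yields finite quantities on both sides.

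The main obstacle is the passage from the linear case to the $\rmc^1$ case: upgrading the exact linear identity to the global formula with the correct multiplicative constant requires a careful Vitali/Besicovitch-type covering together with the quantitative statement that $g$ is almost its differential at small scales, and one must simultaneously dispose of the critical set via a Sard-type argument for Lipschitz maps. All of these are standard, so for the present paper one may instead simply invoke \cite[Sections~3.2.3, 3.2.12]{federer1969geometric} or \cite[Chapter~2]{ambrosio2000functions}, the sketch above being included only for completeness.
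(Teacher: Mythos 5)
Your proposal is correct and ultimately takes the same route as the paper: the paper's entire proof is a citation of the classical area and coarea formulas for countably rectifiable sets (\cite{ambrosio2000functions}, Theorems 2.91 and 2.93, together with Exercise 2.12), which is exactly the fallback you invoke in your final paragraph. The preceding sketch is the standard textbook argument (reduction to indicators, decomposition into $\rmc^1$ pieces, linearization plus a covering argument, Eilenberg's inequality for the negligible part) and is a faithful outline of how those cited results are proved, so nothing further is needed.
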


\begin{proof}
  These results follow from \cite[Theorem 2.91, Theorem
  2.93]{ambrosio2000functions} combined with \cite[Exercise
  2.12]{ambrosio2000functions}.
\end{proof}

In particular, note that if $F\in \rmc^1(\rset^d, \rset^d)$ is a Lipschitz
diffeomorphism we have using \Cref{thm:area_coarea} that for any
$\varphi : \ \rset^d \to \rset$ measurable such that
\begin{equation}
  \textstyle{
    \int_{\mse} \abs{\varphi(x)} \abs{J \Phi(x)} \rmd \calH^k(x) < +\infty \eqsp ,
    }
  \end{equation}
  the following change of variable formula with respect to $\calH^k$ holds
  \begin{equation}
    \label{eq:change_hausdorff}
    \textstyle{
  \int_{\mse} \varphi(x) \jac{x} \rmd \calH^{k}(x) = \int_{\rset^d} \parenthese{\int_{\mse \cap F^{-1}(y)} \varphi(x) \rmd \calH^0(x)} \rmd \calH^k(y) =  \int_{F(\mse)} \varphi(F^{-1}(y)) \rmd \calH^{k}(y) \eqsp .}
\end{equation}
Note that in order for \eqref{eq:change_hausdorff} to hold, $\mse$ needs to be
countably $\calH^k$-rectifiable.


\end{document}